\newtheorem{thm}{Theorem}[section] 
\newtheorem{cor}[thm]{Corollary}
\newtheorem{prop}[thm]{Proposition}
\newtheorem{conj}[thm]{Conjecture}
\newtheorem{lem}[thm]{Lemma}
\theoremstyle{definition} 
\newtheorem{defn}[thm]{Definition}
\newtheorem{eg}[thm]{Example} 
\theoremstyle{remark}
\newtheorem{rem}[thm]{Remark}
\newtheorem{notation}[thm]{Notation}
\newtheorem*{ack}{Acknowledgements}
\begin{document}
\title[Direct images of pluricanonical bundles]{Direct images of pluricanonical bundles and Frobenius stable canonical rings of fibers}
\author{Sho Ejiri}
\email{shoejiri.math@gmail.com}
\address{Department of Mathematics, Graduate School of Science, Osaka Metropolitan University, Osaka City, Osaka 558--8585, Japan}
\subjclass[2010]{14D06, 14D10, 14E30}
\keywords{positive characteristic, Popa and Schnell's conjecture, Fujita's freeness conjecture, weak positivity theorem, Iitaka's conjecture}
\thanks{He was supported by JSPS KAKENHI Grant Number 18J00171.}
\begin{abstract}
In this paper, we study an algebraic fiber space in positive characteristic whose generic fiber $F$ has finitely generated canonical ring and sufficiently large Frobenius stable canonical ring.
An example of such a case is when $F$ is $F$-pure and its dualizing sheaf is invertible and ample.
We treat a Fujita-type conjecture due to Popa and Schnell concerning direct images of pluricanonical bundles, and prove it under some additional hypotheses.
As an application, we show the subadditivity of Kodaira dimensions in some new cases.
We also prove an analog of Fujino's result regarding his Fujita-type conjecture.
\end{abstract}
\maketitle
\setcounter{tocdepth}{1}
\tableofcontents
\section{Introduction} \label{section:intro}
We first recall a conjecture of Popa and Schnell~\cite{PS14} regarding direct images of pluricanonical bundles: 
\begin{conj}[\textup{\cite[Conjecture~1.3]{PS14}}] \label{conj:PS} \samepage
Let $f:X \to Y$ be a morphism of smooth projective varieties over an algebraically closed field of characteristic zero, 
with $Y$ of dimension $n$, 
and let $\mathcal L$ be an ample line bundle on $Y$.
Then, for every $m \ge 1$,  the sheaf 
$$
f_* \omega_X^m \otimes \mathcal L^l
$$
is generated by its global sections for $l \ge m(n+1)$. 
\end{conj}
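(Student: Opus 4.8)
The plan is to reduce the assertion to a cohomology-vanishing statement via Castelnuovo--Mumford regularity, and then to prove that vanishing by combining Koll\'ar's vanishing and torsion-freeness package with Viehweg's weak positivity of direct images of relative pluricanonical sheaves. By the projection formula $f_*\omega_X^m\otimes\mathcal L^l\cong f_*\bigl(\omega_X^m\otimes f^*\mathcal L^l\bigr)=:\mathcal F_l$, it suffices to show $\mathcal F_l$ is globally generated. Recall that a coherent sheaf $\mathcal F$ on $Y$ is globally generated once it is $0$-regular with respect to some globally generated ample line bundle $\mathcal B$, i.e.\ $H^i(Y,\mathcal F\otimes\mathcal B^{-i})=0$ for every $i>0$. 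When $\mathcal L$ is globally generated (in particular very ample) one takes $\mathcal B=\mathcal L$, so the goal becomes
\[
H^i\bigl(Y,\ \mathcal F_{l-i}\bigr)=0\qquad(i>0,\ l\ge m(n+1)).
\]
For $\mathcal L$ merely ample no such clean regularity reduction is available: one may enlarge $\mathcal L$ to a very ample power (losing the sharp constant) or prove the twisted statement ``$\mathcal F_l\otimes\mathcal B$ globally generated'' for an auxiliary globally generated ample $\mathcal B$, but the sharp bound for merely ample $\mathcal L$ is the genuine difficulty, discussed at the end. I therefore carry out the argument assuming $\mathcal L$ is globally generated, where $\mathcal B=\mathcal L$ serves.

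To establish the vanishing I treat $m=1$ and $m\ge2$ separately. For $m=1$ one has $\mathcal F_{l-i}=f_*\omega_X\otimes\mathcal L^{l-i}$, and Koll\'ar's vanishing theorem gives $H^i(Y,f_*\omega_X\otimes\mathcal A)=0$ for every $i>0$ and every nef and big (in particular ample) line bundle $\mathcal A$; since $l-i\ge n+1-i\ge1$ for $1\le i\le n$ one applies this with $\mathcal A=\mathcal L^{l-i}$, while $H^i(Y,-)=0$ automatically for $i>n=\dim Y$. For $m\ge2$ the obstruction is that, writing $\omega_X^m\otimes f^*\mathcal L^l=\omega_X\otimes\bigl(\omega_X^{m-1}\otimes f^*\mathcal L^l\bigr)$, the second factor is not of the form $\omega_X\otimes(\text{nef and big})$: $f^*\mathcal L$ is never big when the fibers are positive-dimensional, and $\omega_X$ need not be nef, so Koll\'ar's package does not apply directly. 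Here one invokes Viehweg's theorem that $f_*\omega_{X/Y}^{m-1}$ is weakly positive: by Viehweg's fiber-product and covering construction one builds, after a generically finite base change $Y'\to Y$ and a log resolution, a smooth model $f'\colon X'\to Y'$ on which $\omega_{X'/Y'}^{m-1}$ coincides, up to a divisor not dominating $Y'$, with the pullback of an ample $\mathbb Q$-divisor on $Y'$; twisting by $(f')^*\mathcal L^l$ with $l\ge m(n+1)$ keeps the relevant $\mathbb Q$-divisor big even after the $i$-fold untwist by $\mathcal L^{-i}$, so Koll\'ar's torsion-freeness and vanishing for $\omega_{X'}$ tensored with this nef-and-big class, followed by descent along $Y'\to Y$, yield the required vanishing of $H^i(Y,\mathcal F_{l-i})$. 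The constant $m(n+1)$ is precisely the threshold at which bigness survives the untwisting.

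The principal obstacle, and the reason the statement remains a conjecture, is the combination of the optimal constant with the hypothesis that $\mathcal L$ is \emph{only} ample. Taking $X=Y$, $f=\mathrm{id}$ and $m=1$, Conjecture~\ref{conj:PS} specializes to ``$\omega_Y\otimes\mathcal L^{n+1}$ is globally generated for every ample $\mathcal L$'', which is exactly Fujita's freeness conjecture---open for $\dim Y\ge6$; the multiplier-ideal methods of Ein--Lazarsfeld, Kawamata and Angehrn--Siu that settle its low-dimensional cases are the only known approach and do no better here. Thus the Castelnuovo--Mumford argument above genuinely needs $\mathcal L$ globally generated---so that Kodaira-type vanishing supplies the $0$-regularity of the twists $\mathcal F_{l-i}$ for free---and the weak-positivity reduction for $m\ge2$ is moreover designed so that Fujita's freeness conjecture in dimension $n$ would close the remaining gap entirely. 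This is why, in what follows, we instead establish the conjecture under the additional hypotheses announced above, and then deduce from that global-generation statement, by the standard weak-positivity formalism, the subadditivity of Kodaira dimensions and the analogue of Fujino's theorem.
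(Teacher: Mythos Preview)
The statement you were asked to prove is a \emph{conjecture}: the paper does not prove it, and neither do you. Your write-up is not a proof attempt but a survey of the known partial results together with an explanation of why the full statement is open. On that level your discussion is accurate and matches the paper's own introductory remarks: the case $m=1$ with $\mathcal L$ globally generated follows from Koll\'ar's vanishing, the case $m\ge 2$ with $\mathcal L$ globally generated is the theorem of Popa and Schnell, and the specialization $X=Y$, $f=\mathrm{id}$, $m=1$ recovers Fujita's freeness conjecture, which is why the bare ampleness hypothesis is the hard part.

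Two minor comments on the content of your sketch. First, your outline for $m\ge 2$ (``Viehweg's fiber-product and covering construction \ldots\ followed by descent'') is not quite how Popa and Schnell argue; their proof is a direct application of Castelnuovo--Mumford regularity combined with a Kawamata--Viehweg-type vanishing for $f_*\omega_X^m$ twisted by a suitable line bundle, without passing through weak positivity or base change. Second, your final paragraph reads as though it were meant to segue into the body of the present paper, but the paper works in positive characteristic, where the conjecture is actually \emph{false} as stated (the Moret--Bailly example is recalled in the introduction), so the transition you suggest would be misleading. The paper's results are positive-characteristic analogues under Frobenius-stability hypotheses on the fibers, not partial cases of Conjecture~\ref{conj:PS} itself.
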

This is an extension of a famous conjecture due to Fujita~\cite[Conjecture]{Fujita87} to the relative setting. 
Under the additional assumption that $\mathcal L$ is globally generated,
Conjecture~\ref{conj:PS} follows from a result of Koll\'ar~\cite{Kol87} when $m=1$, 
and has been verified by Popa and Schnell~\cite[Theorem~1.4]{PS14} when $m \ge 2$. 
If $f$ is smooth over the complement of a normal crossing divisor on $Y$, 
then we can remove the above additional assumption by using 
a theorem of Kawamata~\cite[Theorem~1.7]{Kaw02} when $m=1$ and $\dim Y\le 4$. 
Deng~\cite{Den20}, Dutta~\cite{Dut20}, Dutta--Murayama~\cite{DM19} and Iwai~\cite{Iwa17} have studied Conjecture~\ref{conj:PS},
and they have given sufficient conditions, in terms of lower bounds on $l$, 
for the sheaf $f_*\omega_X^m \otimes \mathcal L^l$ to be (generically) globally generated. 

Recently, Fujino~\cite{Fuj19} proposed a new generalization of Fujita's freeness conjecture: 
\begin{conj}[\textup{\cite[Conjecture~1.3]{Fuj19}}] \label{conj:Fujino} \samepage
Let $f:X \to Y$ be a surjective morphism of smooth projective varieties over an algebraically closed field of characteristic zero with $\dim Y =n$, 
and let $\mathcal L$ be an ample line bundle on $Y$.
Then, for each $m \ge 1$, the sheaf 
$$
f_* \omega_{X/Y}^m \otimes \omega_Y \otimes \mathcal L^l
$$
is generated by its global sections on $U$ for $l \ge n+1$, 
where $U$ is the largest Zariski open subset of $Y$ such that $f$ is smooth over $U$. 
\end{conj}
Fujino~\cite{Fuj19} has shown that the above sheaf is generically globally generated, 
assuming additionally that either $\mathcal L$ is globally generated or $l \ge n^2 + \mathrm{min}\{2,m\}$. 
 
%
In positive characteristic, however, 
there exists a counterexample to Conjecture~\ref{conj:PS},
even when we add the hypothesis that $\mathcal L$ is globally generated. 
Shentu and Zhang~\cite{SZ20} have observed that 
for a semi-stable fibration $g:S\to \mathbb P^1$ constructed by Moret-Bailly~\cite{MB81}, 
where $S$ is a smooth projective surface,  
we have that $g_* \omega_S \otimes \mathcal O(2)$ 
is isomorphic to $\mathcal O(-1)\oplus\mathcal O(p)$. 
This kind of pathology has been treated more deeply in \cite[Proposition~3.16]{SZ20}. 
 
The purpose of this paper is to prove several generation results 
in positive characteristic. 
We treat morphisms whose generic fibers have sufficiently large Frobenius stable canonical rings.
Recall that the Frobenius stable canonical ring, which was introduced in \cite{HP16a} (and named in \cite{PST17}), 
is a homogeneous subring of the canonical ring of a regular variety $V$, 
whose degree $m$ subgroup is $S^0(V, \omega_V^m)$. 
Recall also that $S^0(V, \omega_V^m)$ is the subspace of $H^0(V,\omega_V^m)$ 
defined by Schwede~\cite{Sch14} as the stable image under the trace maps of the iterations of the Frobenius morphism. 
This notion was generalized to the relative setting by Hacon and Xu~\cite{HX15}. 
 
We fix from now on the following notation. 
Let $k$ be an algebraically closed field of positive characteristic. 
Let $f:X\to Y$ be a surjective morphism of projective varieties over $k$, 
with $X$ smooth and $Y$ of dimension $n$. 
Let $X_\eta$ and $X_{\overline \eta}$ denote the generic fiber and the geometric generic fiber of $f$, respectively. 
We note that $f$ is not necessarily separable. 

Our first main theorem is then stated as follows:
\begin{thm}[\textup{(Theorem~\ref{thm:main}~(2))}] \label{thm:main_intro} \samepage
Suppose that 
\begin{itemize}
\item[(i)]
$\bigoplus_{m \ge 0} H^0(X_{\eta},\omega_{X_{\eta}}^m)$ is 
a finitely generated $k(\eta)$-algebra and that
\item[(i\hspace{-1pt}i)] 
there exists $m_0$ such that 
$
S^0(X_{\eta},\omega_{X_{\eta}}^m)
=H^0(X_{\eta},\omega_{X_{\eta}}^m)
$
for $m \ge m_0$. 
\end{itemize}
Let $\mathcal L$ be a big and globally generated line bundle on $Y$. 
Then the sheaf 
$$
f_* \omega_{X}^m \otimes \mathcal L^l
$$ 
is generically globally generated for $m \ge m_0$ and $l \ge m (n +1)$. 
\end{thm}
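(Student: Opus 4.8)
The plan is to reduce the statement to a known global generation result in positive characteristic, using the hypothesis on Frobenius stable canonical rings to produce a section sheaf with good positivity properties. First I would reduce to the case $m \geq m_0$ with $m$ divisible enough that the relative canonical ring of the generic fiber is generated in degree $m$ (using finite generation, hypothesis (i)); after passing to a suitable birational model of $X$ and a blow-up, one may assume that $f$ is ``nice'' over the generic point and that $S^0(X_\eta, \omega_{X_\eta}^m) = H^0(X_\eta, \omega_{X_\eta}^m)$ actually globally generates $\omega_{X_\eta}^m$, or at least that its image generates a line bundle. The point of hypothesis (ii) is that $S^0$ is the piece of cohomology that survives the Frobenius trace, and hence behaves well under the relative Frobenius: the relative version $S^0(X/Y, \omega_{X/Y}^m)$ of Hacon--Xu gives a subsheaf $\mathcal{S}^0 f_* \omega_{X/Y}^m \subseteq f_* \omega_{X/Y}^m$ which, by (ii), agrees with $f_*\omega_{X/Y}^m$ over the generic point of $Y$.

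The key step is then to establish weak positivity, or more precisely a Viehweg-type positivity, for the sheaf $\mathcal{S}^0 f_* \omega_{X/Y}^m$ in positive characteristic. The idea is that $S^0$-sheaves, unlike ordinary pushforwards of pluricanonical bundles, are weakly positive in characteristic $p$: this is the analog of the Viehweg weak positivity theorem that fails for $f_*\omega_{X/Y}^m$ in general (as the Moret-Bailly example shows) but holds after restricting to the Frobenius stable part. Granting weak positivity of $\mathcal{S}^0 f_* \omega_{X/Y}^m$, I would tensor with $\omega_Y^{-1}$ appropriately and combine with the big and globally generated bundle $\mathcal{L}$: since $\mathcal{L}$ is big and globally generated, $\mathcal{L}$ is nef, and $\mathcal{L}^l \otimes \omega_Y^{-1}$ is big for $l$ large, so one can run a standard argument (à la the proof of Popa--Schnell's Theorem~1.4, or Fujino's arguments) to conclude that $\mathcal{S}^0 f_* \omega_{X/Y}^m \otimes \mathcal{L}^l \otimes \omega_Y$ is generically globally generated once $l \geq m(n+1)$; here $\omega_{X/Y}^m = \omega_X^m \otimes f^* \omega_Y^{-m}$ converts this into the desired statement about $f_*\omega_X^m \otimes \mathcal{L}^l$, noting $-m\omega_Y + \omega_Y$ can be absorbed since one only needs generic global generation and $\mathcal{L}$ is big.

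In more detail, the numerology $l \geq m(n+1)$ should come out as follows: write $l = mn + (l - mn)$; the $mn$ part is used to make $\mathcal{L}^{mn}$ ``dominate'' the required twist coming from $\omega_Y^{-m}$ and the weak positivity (each unit of $\mathcal{L}$ being globally generated lets one descend by one step in an inductive cutting-down-the-base argument, and there are essentially $n$ such steps after one more for the bigness), while the remaining $\mathcal{L}^{l-mn}$ with $l - mn \geq m$ supplies enough positivity to invoke a Castelnuovo--Mumford-type or Angehrn--Siu-free generation statement on $Y$. Concretely I would use that $\mathcal{L}$ being big and globally generated means some power defines a birational morphism, restrict to a general complete intersection of members of $|\mathcal{L}|$ to cut $Y$ down, apply induction on $\dim Y$, and lift sections using vanishing that holds because the relevant sheaf is, after the twist, globally generated on the general fiber of the cutting-down together with weak positivity controlling the base.

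The main obstacle I expect is the weak positivity statement for $\mathcal{S}^0 f_* \omega_{X/Y}^m$ in positive characteristic, together with the compatibility of the relative $S^0$-construction under the various birational modifications and base changes needed: one must ensure that $\mathcal{S}^0 f_* \omega_{X/Y}^m$ (a) agrees with the full pushforward over the generic point (this is where (ii) enters), (b) is compatible with restriction to general fibers of a map to projective space (so that the induction on $\dim Y$ goes through), and (c) satisfies a Fujita-type freeness on the base after a big-and-globally-generated twist. Each of these is delicate because Frobenius trace maps and $S^0$ do not commute with arbitrary base change, and the finite generation hypothesis (i) is needed precisely to pin down a single degree $m$ in which everything is controlled. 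I would handle (b) by choosing the hyperplane sections generically and invoking the generic behavior of $S^0$ under restriction, and (a)--(c) by citing the Hacon--Xu relative framework and the $S^0$-weak-positivity machinery adapted from \cite{PST17}, which is the technical heart of the argument.
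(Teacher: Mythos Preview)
Your proposal has a genuine gap at its central step. You plan to establish weak positivity (or pseudo-effectivity) of $\mathcal S^0 f_* \omega_{X/Y}^m$ and then run a Popa--Schnell/Fujino style argument. But under hypothesis~(ii) we have $S^0(X_\eta,\omega_{X_\eta}^m)=H^0(X_\eta,\omega_{X_\eta}^m)$, so $\mathcal S^0 f_* \omega_{X/Y}^m$ and $f_*\omega_{X/Y}^m$ agree at the generic point of $Y$. Since weak positivity is a generic condition, weak positivity of the former would force weak positivity of the latter. The paper explicitly warns that this fails: there exist fibrations satisfying (i) and (ii) for which $f_*\omega_{X/Y}^m$ is \emph{not} weakly positive (the Raynaud/Xie/Mukai type examples). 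The distinction you are implicitly relying on---that the $S^0$-part is better behaved---evaporates precisely because hypothesis~(ii) forces the two sheaves to coincide generically. The known positive-characteristic weak positivity results (Patakfalvi, the author's earlier work) require the stronger hypothesis~(ii$'$) on the \emph{geometric} generic fiber, which is what Theorem~\ref{thm:relative_intro} assumes, not Theorem~\ref{thm:main_intro}.

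The paper's actual route is entirely different and avoids relative pluricanonical positivity. One maps $Y$ generically finitely to $\mathbb P^n$ via a free sublinear system of $|\mathcal L|$, and uses the separable polarized endomorphism $\pi$ of $\mathbb P^n$ (sending $x_i\mapsto x_i^q$) to define an invariant $t(\mathcal F)$ for coherent sheaves $\mathcal F$ on $Y$ (Section~\ref{section:invariant}). The absolute trace maps $\phi^{(e)}_{(X,0)}(mK_X)$, together with the finite generation (i) and the equality (ii), produce for each $e$ a morphism
\[
{F_Y^e}_*\big(S^{h_e}(f_*\omega_X^\mu)\otimes \mathcal E\big)\longrightarrow f_*\omega_X^m
\]
surjective at $\eta$, with $h_e\approx \frac{m-1}{\mu}p^e$. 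Proposition~\ref{prop:main} (whose engine is a Castelnuovo--Mumford regularity bound for Frobenius pushforwards, Proposition~\ref{prop:ggg}, combined with pulling back along iterates of $\pi$) converts this into the inequality $\delta\, t(f_*\omega_X^\mu)\le t(f_*\omega_X^m)+(n+1)$ and the global generation statement. Iterating with $M=N=\mu(K_X)$ gives $t(f_*\omega_X^\mu)\ge -(n+1)\mu$, and feeding this back yields generic global generation of $f_*\omega_X^m\otimes\mathcal L^l$ for $l>(n+1)(m-1)+n$, i.e.\ $l\ge m(n+1)$. No weak positivity of $f_*\omega_{X/Y}^m$ is ever invoked.
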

One of the features of this theorem is that it does not impose any assumptions on geometric generic fiber $X_{\overline\eta}$ of $f$. 
To the best of the author’s knowledge, all results concerning the positivity of direct images of (relative) pluricanonical bundles impose at least $X_{\overline\eta}$ is reduced (e.g., \cite{Szp79, Kol90, Mau14, Pat14, Eji17}). 
 
The hypotheses of Theorem~\ref{thm:main_intro} hold if $\omega_{X_{\eta}}$ is ample, so we obtain the following:
\begin{thm} \label{thm:ampln_gf} \samepage
Suppose that $\omega_{X_{\eta}}$ is ample. 
Let $\mathcal L$ be an ample and globally generated line bundle on $Y$. 
Then there exists $m_0$ such that the sheaf 
$$
f_* \omega_X^m \otimes \mathcal L^l
$$ 
is generically generated by its global sections for $m \ge m_0$ and $l \ge m(n+1)$. 
\end{thm}
Moreover, in the case when $\omega_X$ is $f$-ample, 
we prove a positive characteristic analog of \cite[Theorem~1.4]{PS14} 
by the same method as that used to prove Theorem~\ref{thm:main_intro}. 
\begin{thm}[\textup{(Theorem~\ref{thm:conclusion}~(2))}] \label{thm:f-ample_intro} \samepage
Suppose that $\omega_X$ is $f$-ample. 
Let $\mathcal L$ be an ample and globally generated line bundle on $Y$. 
Then there exists $m_1$ such that the sheaf 
$$
f_* \omega_X^m \otimes \mathcal L^l
$$ 
is generated by its global sections for $m \ge m_1$ and $l \ge m(n+1)$. 
\end{thm}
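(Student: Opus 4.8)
The plan is to deduce the theorem from Theorem~\ref{thm:main_intro} in three steps: verify its hypotheses (so that it supplies \emph{generic} global generation), reduce the desired \emph{global} generation to a bounded range of $m$ and to the case $l=m(n+1)$ using finite generation of the relative canonical ring, and then carry out the generic-to-global upgrade on that bounded range. For the first step: since $\omega_X$ is $f$-ample, the generic fibre $X_\eta$ has ample canonical sheaf, so $\bigoplus_{m\ge0}H^0(X_\eta,\omega_{X_\eta}^m)$ is finitely generated, and a Fujita-type vanishing argument (cf.\ \cite{Sch14,PST17}) gives $S^0(X_\eta,\omega_{X_\eta}^m)=H^0(X_\eta,\omega_{X_\eta}^m)$ for $m\gg0$; both hypotheses of Theorem~\ref{thm:main_intro} therefore hold. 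As an ample and globally generated line bundle is big and globally generated, Theorem~\ref{thm:main_intro} produces an integer $m_1$ such that $f_*\omega_X^m\otimes\mathcal L^l$ is generically globally generated for all $m\ge m_1$ and $l\ge m(n+1)$.

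For the second step, note that because $\omega_X$ is $f$-ample the relative canonical ring $\mathcal R:=\bigoplus_{m\ge0}f_*\omega_X^m$ is a finitely generated sheaf of $\mathcal O_Y$-algebras, generated in degrees $\le d$ for some $d$; an elementary truncation argument then shows that for $N\gg0$ the subalgebra $\bigoplus_{m\ge N}f_*\omega_X^m$ is generated over $\mathcal O_Y$ by the finitely many pieces $f_*\omega_X^j$ with $N\le j<2N+d$. Fix $N\ge\max\{m_1,d+1\}$. For every $m\ge N$, the multiplication maps exhibit $f_*\omega_X^m\otimes\mathcal L^{m(n+1)}$ as a quotient of a finite direct sum of sheaves $\bigotimes_i\bigl(f_*\omega_X^{m_i}\otimes\mathcal L^{m_i(n+1)}\bigr)$ with $\sum_i m_i=m$ and all $m_i\in[N,2N+d)$. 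Since tensor products and quotients of globally generated sheaves are globally generated, and $\mathcal L$ is globally generated, it suffices to prove that $f_*\omega_X^j\otimes\mathcal L^{j(n+1)}$ is globally generated for the finitely many $j\in[N,2N+d)$, and then twisting by $\mathcal L^{l-j(n+1)}$ handles all $l\ge j(n+1)$. As $N$ may be taken arbitrarily large, this reduces the theorem to a bounded range of large $m$, and we set $m_0:=N$.

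The third step is the main obstacle. We must show, for $m$ in a bounded range of arbitrarily large integers, that $f_*\omega_X^m\otimes\mathcal L^{m(n+1)}$ is globally generated given that it is generically so. Writing $H^0\bigl(Y,f_*\omega_X^m\otimes\mathcal L^{m(n+1)}\bigr)=H^0\bigl(X,\omega_X^m\otimes f^*\mathcal L^{m(n+1)}\bigr)$, global generation at a closed point $y$ reduces by Nakayama to producing global sections of $\omega_X^m\otimes f^*\mathcal L^{m(n+1)}$ generating the sheaf near $y$. The difficulty is that in characteristic $p$ Kodaira-type vanishing fails — precisely the phenomenon behind Moret-Bailly's fibration \cite{MB81} — and the Fujita-optimal twist $\mathcal L^{m(n+1)}$ need not make $\omega_X^m\otimes f^*\mathcal L^{m(n+1)}$ ample on $X$, so one cannot simply feed a restriction sequence into Serre vanishing. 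Instead I would use the trace/$S^0$ machinery of \cite{Sch14,HX15,PST17} that already underlies Theorem~\ref{thm:main_intro}: the Frobenius-stable sections $S^0\bigl(X,\omega_X^m\otimes f^*\mathcal L^{m(n+1)}\bigr)$ restrict onto the corresponding $S^0$ of the fibres, and hypothesis~(ii) — spread out over a dense open and combined over the remaining fibres with the positivity delivered by Theorem~\ref{thm:main_intro} — shows that these stable sections already generate; equivalently, one re-examines the proof of Theorem~\ref{thm:main_intro} and checks that, under the stronger hypotheses that $\omega_X$ is $f$-ample and $\mathcal L$ is ample, the locus where only generic generation could previously be concluded is in fact empty. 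This fibrewise control, and the fact that both hypothesis~(ii) and the ampleness of $\mathcal L$ are genuinely needed for it, is where I expect the real work to lie.
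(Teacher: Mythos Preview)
Your Step~2 is an unnecessary detour: reducing to a bounded range of $m$ does not make Step~3 any easier (for each such $m$ you still face exactly the same problem of upgrading generic generation to global generation), and the paper does not use the relative canonical ring in this way at all.

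Your Step~3 gestures at the right idea (``re-examine the proof \ldots and check that the locus is empty'') but does not identify the mechanism. The engine behind Theorem~\ref{thm:main_intro} is Theorem~\ref{thm:main}/Corollary~\ref{cor:f-ample}, which is already formulated for an \emph{arbitrary} subset $S\subseteq Y$ such that the pair $(S,H)$ satisfies condition~$(\ast)_{n+1}$ (Definition~\ref{defn:epsilon}); its conclusion is global generation \emph{over $S$}. Theorem~\ref{thm:main_intro} is merely the special case $S=\{\eta\}$. The key point you are missing is Example~\ref{eg:dimY+1}: when $\mathcal L$ is \emph{ample} (so $\mathbb B_+(\mathcal L)=\emptyset$), a free $n$-dimensional linear subsystem of $|\mathcal L|$ gives a generically finite morphism $g:Y\to\mathbb P^n$, and $Y$ is covered by finitely many subsets $S_1,\dots,S_d$ with each $(S_i,H)$ satisfying~$(\ast)_{n+1}$ (the $S_i$ are the preimages of the loci where the separable endomorphisms of $\mathbb P^n$ are \'etale). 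On the other side, since $X$ is smooth the pair $(X,0)$ is $F$-pure, so Lemma~\ref{lem:f-ample} gives $S^0f_*=f_*$ over all of $Y$ (not just at $\eta$), and Corollary~\ref{cor:f-ample} applies with $Y_0=Y$. Running the main theorem once for each $S_i$ and taking the union gives global generation on $\bigcup_i S_i=Y$.

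So the honest gap is that you have not explained \emph{why} the generic conclusion upgrades. It is not the fibrewise $S^0$-restriction argument you sketch (which would require control of $S^0$ on possibly singular special fibres), but rather that the underlying machinery was local in $S$ from the start, the $F$-purity of the smooth total space handles condition~(i\hspace{-1pt}i) globally, and ampleness of $\mathcal L$ is precisely what lets the admissible $S_i$ cover $Y$.
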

The above theorems are proved 
by using two morphisms: the morphism $Y \to \mathbb P^n$ 
defined by a free linear system in $|\mathcal L|$,
and a \textit{separable} endomorphism $\pi$ of $\mathbb P^n$. 
We use this $\pi$ to introduce an invariant measuring 
the positivity of coherent sheaves (Definition~\ref{defn:inv}). 
This is similar to the one introduced in \cite[\S 4]{Eji17}, 
but differs since $\pi$ is separable while 
the Frobenius morphism is employed in \cite{Eji17} instead of $\pi$. 
This difference allows us to investigate $f$ 
without any assumptions on $X_{\overline\eta}$.
 
As long as $Y$ has a generically finite morphism to a variety admitting a special endomorphism (Definition~\ref{defn:epsilon}), the same method performs. 
We apply it to the Albanese morphism of $Y$, and use its consequence to study Iitaka's conjecture.
\begin{thm}
\label{thm:mAd_intro} \samepage
Suppose either that the hypotheses of Theorem~\ref{thm:main_intro} hold or that $\omega_{X_{\eta}}$ is ample. 
In the latter case, let $m_0$ be a sufficiently large integer. 
Further suppose that $Y$ is a smooth projective variety of maximal Albanese dimension.
\begin{itemize}
\item[{\rm (a)}]{\rm (Theorem~\ref{thm:main}~(1))}
Then, for each $m \ge m_0$, the sheaf $f_* \omega_X^m$ is weakly positive in the sense of~\cite{Vie83II}. 
\item[{\rm (b)}] {\rm (Theorem~\ref{thm:Iitaka})}
Assume that $f$ is separable. 
If either $Y$ is a curve or is of general type, then 
$$
\kappa(X) \ge \kappa(Y) +\kappa(X_{\eta}). 
$$
\end{itemize}
\end{thm}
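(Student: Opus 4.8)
The plan is to derive both parts of Theorem~\ref{thm:mAd_intro} from the generic global generation result of Theorem~\ref{thm:main_intro}, applied not to an ample and globally generated line bundle but to pullbacks of ample line bundles under the Albanese morphism $a: Y \to A$, where $A = \mathrm{Alb}(Y)$. Since $Y$ has maximal Albanese dimension, $a$ is generically finite onto its image. The key observation, as hinted in the excerpt, is that an abelian variety $A$ admits many special endomorphisms: for every integer $d \ge 2$, the multiplication-by-$d$ map $[d]: A \to A$ is a separable isogeny with $[d]^* \mathcal H \cong \mathcal H^{d^2}$ for a symmetric ample line bundle $\mathcal H$ (and $[d]^*\mathcal L \equiv \mathcal L^{d}$ up to a translate in general, which suffices after replacing $\mathcal L$ by $\mathcal L \otimes (-1)^*\mathcal L$). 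So I would first record that the two-morphism argument behind Theorem~\ref{thm:main_intro} — using the morphism $Y \to \mathbb P^n$ together with a separable endomorphism of $\mathbb P^n$ — goes through verbatim with $\mathbb P^n$ replaced by $A$ and the endomorphism replaced by $[d]$; this is exactly the content of the remark in the excerpt that ``as long as $Y$ has a generically finite morphism to a variety admitting a special endomorphism, the same method performs.''

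For part (a), I would argue as follows. Fix $m \ge m_0$. Let $\mathcal H$ be any ample line bundle on $A$. Applying the adapted version of Theorem~\ref{thm:main_intro} to $f$ composed with (resolutions of) the base change along $[d]: A \to A$ — more precisely, forming $X_d \to Y_d \to Y$ where $Y_d$ is a resolution of $Y \times_{a, A, [d]} A$ and $X_d$ is a resolution of the main component of $X \times_Y Y_d$ — one gets that $f_{d*}\omega_{X_d}^m \otimes a_d^*\mathcal H^{l}$ is generically globally generated for $l \ge m(n+1)$ and all $d$, where $a_d: Y_d \to A$ is the natural map. Pushing forward to $Y$ and using that the formation of $S^0$-type direct images is compatible (up to the relevant inclusions) with such generically finite base change, together with $[d]^*\mathcal H \equiv \mathcal H^{d^2}$, one obtains that $f_*\omega_X^m \otimes a^*\mathcal H^{\lceil l/d^2 \rceil}$ is generically globally generated after a further generically finite cover; letting $d \to \infty$ so that the positivity we need to add becomes arbitrarily small, the standard criterion (a sheaf $\mathcal F$ is weakly positive if for every ample $\mathcal H$ and every $\alpha > 0$ there is some $\beta$ with $\hat S^{\alpha\beta}(\mathcal F) \otimes \mathcal H^{\beta}$ globally generated over a fixed open set) yields weak positivity of $f_*\omega_X^m$. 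Here I would lean on Viehweg's formalism and on the behaviour of $S^0$ under pullback (as developed in \cite{HX15, PST17}); the main technical point to verify is that the natural trace map $a_d^* f_*\omega_X^m \to f_{d*}\omega_{X_d}^m$ (or rather its $S^0$-refinement) is generically surjective, so that generic global generation downstairs descends.

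For part (b), with $f$ separable and $Y$ either a curve or of general type, the plan is the classical Viehweg-type argument deducing $C_{n,m}$-style subadditivity from weak positivity of pushforwards of pluricanonical sheaves, now available in positive characteristic thanks to part (a). Concretely: choose $m \ge m_0$ large enough that (i) and (ii) give $\mathrm{rank}\, f_*\omega_{X/Y}^m$ computing $\kappa(X_\eta)$ correctly, i.e. $h^0(X_\eta, \omega_{X_\eta}^m) \sim m^{\kappa(X_\eta)}$. Write $f_*\omega_X^m = f_*\omega_{X/Y}^m \otimes \omega_Y^m$. By part (a) (applied with the smooth model of $Y$) the sheaf $f_*\omega_X^m$ is weakly positive over a non-empty open set; equivalently $f_*\omega_{X/Y}^m \otimes \omega_Y^m$ is weakly positive, so for any $\varepsilon > 0$ the $\mathbb Q$-twist $f_*\omega_{X/Y}^m \otimes \omega_Y^{m+\varepsilon'}$ is big-ish when $\omega_Y$ itself is big (general type case) — more precisely, $\det$ or a symmetric power of $f_*\omega_{X/Y}^m$ twisted by a small power of an ample bundle on $Y$ becomes big, which forces many sections of $\omega_X^{m'}$ for $m' \gg 0$ and yields $\kappa(X) \ge \kappa(Y) + \kappa(X_\eta)$. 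In the curve case $Y = C$ one instead uses that a weakly positive sheaf on a curve has non-negative degree, plus a section of $\omega_C$ (if $g(C) \ge 1$) or the explicit $\mathbb P^1$ analysis (if $g(C) = 0$, where $\kappa(Y) = -\infty$ makes the inequality vacuous unless one interprets it via $\kappa(Y) \ge 0$), to conclude.

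I expect the main obstacle to be part (a): making the base-change-along-$[d]$ argument rigorous in positive characteristic, where one must control the non-reduced and non-normal loci of fiber products, choose resolutions compatibly, and track the $S^0$-subspaces (which are the only pieces for which Theorem~\ref{thm:main_intro} gives generation) through pullback and trace maps. In particular one needs that the hypotheses (i), (ii) are preserved for the generic fiber of $X_d \to Y_d$ — which holds because the generic fiber is essentially a base field extension of $X_\eta$, and both finite generation of the canonical ring and the equality $S^0 = H^0$ in large degrees are insensitive to algebraic base field extension — and one needs the comparison map $a_d^* (f_*\omega_X^m)|_{U} \to (f_{d*}\omega_{X_d}^m)|_{U}$ to be an isomorphism over a suitable open $U$, which should follow from flat base change on the locus where everything is nice together with separability of $[d]$. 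Everything else — the weak-positivity criterion, Viehweg's descent to $\kappa$-subadditivity — is by now standard once part (a) is in hand.
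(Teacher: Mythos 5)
Your plan for part (a) — base change $X$ and $Y$ along the multiplication map $[d]:A\to A$, apply Theorem~\ref{thm:main_intro} to the induced $f_d:X_d\to Y_d$ with the globally generated line bundle $\tilde a_d^*\mathcal H$, and then descend from $Y_d$ to $Y$ — is genuinely different from what the paper does, and the descent step has a gap that I do not see how to close. The paper never passes to a cover of $X$ or $Y$. Instead it works entirely on the base: Example~\ref{eg:mAd} shows that the Albanese map gives a pair $(\{\eta\},H)$ satisfying condition~$(\ast)_0$, and Theorem~\ref{thm:main}(1) (via Theorem~\ref{thm:general}, Proposition~\ref{prop:main} and the invariant $t(\mathcal F)$) then gives $\mathbb B_-(\mathcal F_m)\not\ni\eta$ directly; the only pullbacks performed along $[d]$ are pullbacks of the pushed‑forward sheaf $g_*\mathcal F$ on the abelian variety $A$, never a base change of $X$. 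In particular the self‑bootstrapping inequality $\delta\cdot t(\mathcal F)\le t(\mathcal G)+a$ with $a=0$ is the device that produces positivity for free, and it is exactly what replaces the covering trick.

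The concrete problem with your descent step: you obtain that $a_d^*\bigl(f_*\omega_X^m\otimes a^*\mathcal H^{l/d^2}\bigr)$ is generically globally generated on $Y_d$ for $l\ge m(n+1)$, and you then assert that this gives generic global generation downstairs, citing a trace argument. But generic global generation does \emph{not} descend along finite \'etale covers in positive characteristic. A torsion line bundle $\mathcal N$ of order $n$ (with $p\nmid n$) on an elliptic curve $Y$ has $H^0(Y,\mathcal N)=0$, yet pulls back to $\mathcal O_{Y'}$ on the corresponding \'etale $n$‑fold cover $Y'\to Y$, which is globally generated. The usual trace splitting $\mathcal O_Y\hookrightarrow\pi_*\mathcal O_{Y'}$ fails to split whenever $p$ divides the degree of the component, which you cannot rule out for the components of $Y\times_{a,A,[d]}A$. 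Thus the quoted criterion for weak positivity (global generation of $\hat S^{\alpha\beta}(\mathcal F)\otimes\mathcal H^\beta$ over a fixed open subset of $Y$) is simply not what you have produced; you have it only after a further cover. Incidentally the resolutions you introduce are unnecessary — $[d]$ is \'etale for $p\nmid d$, so $Y\times_A A\to Y$ and $X\times_Y Y_d\to X$ are already smooth — and the formula $[d]^*\mathcal L\equiv\mathcal L^d$ you give ``in general'' is wrong ($[d]^*$ acts as $d^2$ on $\mathrm{NS}(A)$).

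For part (b) your sketch is too coarse to count as a proof. In the general‑type case the paper does not deduce the inequality from weak positivity of $f_*\omega_X^m$ alone; it proves and uses Theorem~\ref{thm:c-relative}, a nontrivial pseudo‑effectivity statement for $m_0(K_X+\Delta)-f^*K_Y+\varepsilon f^*H$ obtained by combining the flattening of $f$, the relative trace morphisms $\phi^{(e)}_{(X/Y,\Delta)}$, and Theorem~\ref{thm:main}, and only then invokes the argument of \cite[Theorem~1.7]{Pat18}. In the curve case the key content is the elliptic‑curve case (genus $\ge 2$ is subsumed in general type and genus $0$ is vacuous), and the paper's argument there is a dedicated vector‑bundle analysis: one shows that the subgroup of $\mathrm{Pic}^0(Y)$ generated by the line‑bundle quotients of the nef bundle $f_*\mathcal O_X(\mu(K_X+\Delta))$ is finite, using the Atiyah--Oda classification together with a Frobenius‑pullback comparison; nothing in your sketch (``weakly positive sheaf on a curve has non-negative degree'') engages with this step. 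So even granting (a), your (b) is missing the two actual arguments.
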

Here, a coherent sheaf $\mathcal F$ on $Y$ is said to be \textit{weakly positive} in the sense of~\cite{Vie83II} if 
for any ample line bundle $\mathcal H$ on $Y$ and any $\alpha \ge 1$, there exists some $\beta \ge 1$ such that 
$(S^{\alpha\beta}(\mathcal F))^{**} \otimes \mathcal H^\beta$ is generically globally generated. 
Note that this definition is weaker than Viehweg's original one~\cite[Definition~1.2]{Vie83}. 

In the case when $Y$ is a curve, Theorem~\ref{thm:mAd_intro}~(b) 
generalizes a result of the author~\cite[Theorem~1.4]{Eji17} 
which needs indeed a stronger assumption than that of Theorem~\ref{thm:mAd_intro}
to prove the weak positivity of sheaves of the form 
$f_* \omega_{X/Y}^m$ (\cite[Theorem~1.1]{Eji17}).
The positivity of these sheaves cannot be obtained from Theorem~\ref{thm:main_intro},
unlike the case of characteristic zero (\cite[Corollary~4.3]{PS14}). 
In fact, there exists a fibration that satisfies hypotheses in Theorem~\ref{thm:main_intro}
but violates the weak positivity theorem (see \cite{Ray78,Xie10,Muk13} or \cite[Example~1.14]{Zha19s}).

We move on to results concerning Conjecture~\ref{conj:Fujino}. 
For the same reason as above, we impose the same conditions as that in~\cite[Theorem~1.1]{Eji17},
which is stronger than that of Theorem~\ref{thm:main_intro}. 
The next theorem can be viewed as a positive characteristic analog of a part of the above result due to Fujino~\cite[Theorem~1.5]{Fuj19}. 
Recall that $X_{\overline\eta}$ denotes the geometric generic fiber of $f$. 
\begin{thm}[\textup{(Theorem~\ref{thm:relative_psef}~(3))}] \label{thm:relative_intro} \samepage
Suppose that $Y$ is smooth. 
Assume that 
\begin{itemize}
\item[(i)]
$\bigoplus_{m \ge 0} H^0(X_{\overline\eta},\omega_{X_{\overline\eta}}^m)$ is 
a finitely generated $k(\overline\eta)$-algebra and that
\item[(i\hspace{-1pt}i')] 
there exists $m_0$ such that 
$
S^0(X_{\overline\eta},\omega_{X_{\overline\eta}}^m)
=H^0(X_{\overline\eta},\omega_{X_{\overline\eta}}^m)
$
for $m \ge m_0$. 
\end{itemize}
Let $\mathcal L$ be a big and globally generated line bundle. 
Then the sheaf
$$
f_* \omega_{X/Y}^m \otimes \omega_Y \otimes \mathcal L^l
$$ 
is generically globally generated for $m \ge m_0$ and $l \ge n+1$. 
\end{thm}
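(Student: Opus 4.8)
The plan is to follow Fujino's proof of \cite[Theorem~1.5]{Fuj19}, replacing the vanishing theorems it uses by the Frobenius-trace input underlying Theorem~\ref{thm:main_intro}. The first ingredient is positivity: under hypotheses~(i) and~(i\hspace{-1pt}i'), the sheaf $f_*\omega_{X/Y}^m$ is weakly positive for every $m\ge m_0$. In characteristic zero this would follow formally from a Popa--Schnell type statement, but, as stressed in the Introduction, in positive characteristic it cannot be obtained from Theorem~\ref{thm:main_intro}; this is exactly why we impose the hypotheses of \cite[Theorem~1.1]{Eji17}, and why they are phrased in terms of the geometric generic fiber $X_{\overline\eta}$ rather than $X_\eta$ — namely so that they persist under the base changes of $Y$ performed below, and the weak positivity of the direct image sheaf continues to hold for the base-changed families. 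Granting this, and writing $\mathcal L^l=\mathcal L^{n+1}\otimes\mathcal L^{l-n-1}$ with $\mathcal L^{l-n-1}$ globally generated, one is reduced to the case $l=n+1$: it suffices to show that $f_*\omega_{X/Y}^m\otimes\omega_Y\otimes\mathcal L^{n+1}$ is generically globally generated.

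For this I would use the two-morphism device of the paper. Choosing $n+1$ general members of $|\mathcal L|$ — which have no common base point, since $\mathcal L$ is big and globally generated — gives a generically finite morphism $\phi\colon Y\to\mathbb P^n$; a generic choice makes $\phi$ separable, and after a harmless birational modification of $Y$ (harmless because we only seek \emph{generic} global generation and $\omega_Y$ behaves well under blow-ups of a smooth variety) we may assume $\phi$ is a morphism, with $\mathcal L\sim\phi^*\mathcal O(1)+F$ and $\omega_Y\sim\phi^*\mathcal O(-n-1)+R_\phi$ for effective divisors $F$ and $R_\phi$; in particular $\omega_Y\otimes\mathcal L^{n+1}\sim R_\phi+(n+1)F$, so the $\mathcal O(-n-1)$ of $\omega_{\mathbb P^n}$ is cancelled by $\mathcal L^{n+1}$, and this cancellation is what will lower the constant from the $m(n+1)$ of Theorem~\ref{thm:main_intro} to $n+1$. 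Now take, for $e$ prime to the characteristic, the separable endomorphism $\sigma_e\colon\mathbb P^n\to\mathbb P^n$, $[x_i]\mapsto[x_i^e]$ (a special endomorphism in the sense of Definition~\ref{defn:epsilon}), of degree $e^n$, with $\sigma_e^*\mathcal O(1)=\mathcal O(e)$ and ramification divisor linearly equivalent to $(e-1)(n+1)H$, and base change $f$ along $\phi$ and $\sigma_e$ to $f_e\colon X_e\to Y_e$, where $Y_e$ is a resolution of the normalized fiber product, $\pi_e\colon Y_e\to Y$ is finite of degree $e^n$, and $\rho_e\colon Y_e\to\mathbb P^n$ is the second projection. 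The geometric generic fiber of $f_e$ is again $X_{\overline\eta}$, so $f_{e*}\omega_{X_e/Y_e}^m$ is still weakly positive for $m\ge m_0$; and a ramification computation gives $\omega_{Y_e}\otimes\pi_e^*\mathcal L^{n+1}\sim(e-1)(n+1)\,\rho_e^*H+\pi_e^*(R_\phi+(n+1)F)$, so this line bundle is a large power of the big and globally generated line bundle $\rho_e^*\mathcal O(1)$ twisted by an effective divisor.

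The point of the construction is that on $Y_e$ this twist is large enough that, as $e\to\infty$, Serre vanishing on $\mathbb P^n$ (after pushing forward along $\rho_e$) plays the role that Kodaira vanishing plays in characteristic zero. Running the Frobenius-trace argument of Theorem~\ref{thm:main_intro} on $Y_e$ — with hypothesis~(i\hspace{-1pt}i') forcing $S^0=H^0$ on the fibers and the weak positivity of $f_{e*}\omega_{X_e/Y_e}^m$ supplying the remaining semipositivity — then shows that $f_{e*}\omega_{X_e/Y_e}^m\otimes\omega_{Y_e}\otimes\pi_e^*\mathcal L^{n+1}$ is generically globally generated on $Y_e$. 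Finally apply $\pi_{e*}$: it preserves generic global generation, the trace $\pi_{e*}\mathcal O_{Y_e}\to\mathcal O_Y$ splits because $e^n$ is prime to the characteristic, and the trace $\pi_{e*}\omega_{Y_e}\to\omega_Y$ is a split surjection, so, using the projection formula on the generic locus where $f_{e*}\omega_{X_e/Y_e}^m$ is the pullback of $f_*\omega_{X/Y}^m$, one finds that $\pi_{e*}\big(f_{e*}\omega_{X_e/Y_e}^m\otimes\omega_{Y_e}\otimes\pi_e^*\mathcal L^{n+1}\big)$ contains $f_*\omega_{X/Y}^m\otimes\omega_Y\otimes\mathcal L^{n+1}$ as a generic direct summand, which is therefore generically globally generated.

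I expect the main obstacle to be the compatibility of the positivity input with these operations: one must verify that the weak positivity of $f_*\omega_{X/Y}^m$, \emph{and} the property that $S^0$ equals $H^0$ on the fibers, both descend to $f_e$ after pulling back along $\pi_e$, normalizing and desingularizing — that is, that the relative $S^0$-formalism commutes with all of this — which is precisely why the hypotheses have to be stated on the geometric generic fiber rather than on $X_\eta$. A second delicate point is to carry out the ramification bookkeeping in the limit $e\to\infty$ tightly enough that the Serre-vanishing/Castelnuovo--Mumford argument on $Y_e$ produces the coefficient $n+1$ and nothing larger, the entire gain over Theorem~\ref{thm:main_intro} coming from the extra twist by $\omega_Y$. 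The separability of $\phi$ in positive characteristic is a minor issue, arranged by the generic choice of the defining sections of $\mathcal L$ (or, if necessary, by precomposing with a further Frobenius-type cover).
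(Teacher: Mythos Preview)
Your outline is coherent and would probably succeed, but it takes a different---and heavier---route than the paper.

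The paper does \emph{not} perform separable base changes $\pi_e:Y_e\to Y$ along endomorphisms of $\mathbb P^n$ and then descend via trace. Instead, it works entirely on $Y$ (and its Frobenius twists). First, pseudo-effectivity of $\mathcal G_m:=f_*\omega_{X/Y}^m$ is proved in-house rather than imported from \cite{Eji17}: one pulls back along the absolute Frobenius $F_Y^e$, observes that $(F_Y^e)^*\mathcal G_m\otimes\mathcal O_Y(L)$ becomes ${f_{Y^e}}_*\omega_{X_{Y^e}}^m$ twisted by a fixed divisor, applies Theorem~\ref{thm:main} to the base-changed family $f_{Y^e}$ (hypotheses transfer by Lemma~\ref{lem:S0_alg}, which is why (ii) is stated on $X_{\overline\eta}$), and lets $e\to\infty$ so the fixed twist becomes negligible. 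Second, for generic global generation of $\mathcal G_m\otimes\omega_Y\otimes\mathcal L^l$, the paper produces, exactly as in Theorem~\ref{thm:general}, a trace-type morphism
\[
{F_Y^e}_*\big(S^{q_e}(\mathcal G_\mu)\otimes\mathcal E\big)\longrightarrow \mathcal G_m\otimes\omega_Y
\]
that is generically surjective, and feeds it into Proposition~\ref{prop:ACM} with $r=0$. This packages the Frobenius-plus-Castelnuovo--Mumford argument directly on $Y$ and yields
\[
\mathrm{Bs}\big(\mathcal G_m\otimes\omega_Y\otimes\mathcal L^l\big)\cap\{\eta\}\subseteq \mathbb B_-(\mathcal G_\mu)\cup\mathbb B_-\big(lA-(n+r')A\big)\cup\mathbb B_+(A),
\]
so the pseudo-effectivity of $\mathcal G_\mu$ just established, together with $l>n$, finishes the proof. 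The separable-endomorphism technology of Section~\ref{section:invariant} is not invoked here at all.

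Your approach trades the Frobenius amplification $F_Y^e$ for a separable amplification $\pi_e$; conceptually parallel, but it forces you to resolve the fiber product $Y_e$ and to redefine $X_e$ over this resolution. The very issues you flag in your last paragraph---compatibility of the relative $S^0$-formalism and of weak positivity with normalization and desingularization of $Y_e$ and $X_e$---are genuine, and the paper's approach is designed to sidestep them: Frobenius base change keeps $Y$ regular and $X_{Y^e}$ unchanged as a scheme, so no resolution is ever needed. What your route would buy, if carried through, is a proof closer in shape to Fujino's characteristic-zero argument; what the paper's route buys is that all the delicate transfer questions disappear.
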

The hypotheses of this theorem hold if 
$X_{\overline\eta}$ 
is $F$-pure (Definition~\ref{defn:F-sing}) 
and has ample dualizing sheaf $\omega_{X_{\overline\eta}}$ (see \cite[\S 3]{Eji17} for more examples), 
so we obtain the following:
\begin{thm}
Suppose that $Y$ is smooth. Assume that $X_{\overline\eta}$ is $F$-pure and $\omega_{X_{\overline\eta}}$ is ample. 
Then there exists $m_0$ such that the sheaf 
$$
f_* \omega_{X/Y}^m \otimes \omega_Y \otimes \mathcal L^l
$$ 
is generically globally generated for $m \ge m_0$ and $l \ge n+1$. 
\end{thm}
In the situation of this theorem, furthermore, we can find an open subset of $Y$ depending only the morphism $f$, 
on which the above sheaves are generated by its global sections. 
\begin{thm}[\textup{(Theorem~\ref{thm:relative_f-ample}~(3))}] \label{thm:flat_intro} \samepage
Suppose that $Y$ is smooth. 
Assume that there exists a dense open subset $Y_0$ of $Y$ such that the following conditions hold:
\begin{itemize}
\item $f$ is flat over $Y_0$;
\item $\omega_X|_{f^{-1}(Y_0)}$ is ample over $Y_0$;
\item every closed fiber of $f$ over $Y_0$ is $F$-pure.
\end{itemize}
Let $\mathcal L$ be an ample and globally generated line bundle on $Y$.
Then there exists $m_0$ such that the sheaf 
$$
f_* \omega_{X/Y}^m \otimes \omega_Y \otimes \mathcal L^l
$$
is generated by its global sections on $Y_0$ for $m \ge m_0$ and $l \ge n+1$. 
\end{thm}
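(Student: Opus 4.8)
The plan is to deduce this from the generic global generation of Theorem~\ref{thm:relative_psef}, after replacing $f_*\omega_{X/Y}^m$ by the relative Frobenius-stable subsheaf of Hacon--Xu \cite{HX15}, showing the two coincide over $Y_0$ for $m\gg 0$, and upgrading ``generically globally generated'' to ``globally generated on $Y_0$'' by the special-endomorphism argument used for Theorems~\ref{thm:main} and~\ref{thm:conclusion}. First I would set up the reduction. Write $\mathcal F_m=f_*\omega_{X/Y}^m$ and let $S^0 f_*\omega_{X/Y}^m\subseteq\mathcal F_m$ denote the relative $S^0$ of \cite{HX15}. Over $Y_0$ the morphism $f$ is flat and $\omega_X$ is $f$-ample, so relative Serre vanishing gives $R^if_*\omega_{X/Y}^m=0$ for $i>0$ and $m\gg 0$ there; hence $\mathcal F_m$ is locally free on $Y_0$, its formation commutes with base change, and $\mathcal F_m\otimes k(y)\cong H^0(X_y,\omega_{X_y}^m)$ for every closed $y\in Y_0$. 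Since each such fiber is $F$-pure with ample $\omega_{X_y}$, one has $S^0(X_y,\omega_{X_y}^m)=H^0(X_y,\omega_{X_y}^m)$ for $m\gg 0$, with the threshold independent of $y$ by constructibility; combining this fiberwise equality with the base-change compatibility of the relative $S^0$ \cite{HX15} gives $S^0 f_*\omega_{X/Y}^m=\mathcal F_m$ on $Y_0$ for $m\ge m_0$. It therefore suffices to prove that $S^0 f_*\omega_{X/Y}^m\otimes\omega_Y\otimes\mathcal L^l$ is globally generated at every closed point of $Y_0$.

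Next I would verify that the hypotheses (i) and (ii') of Theorem~\ref{thm:relative_psef} hold. Spreading out $F$-purity and relative ampleness of the closed fibers over $Y_0$ shows the geometric generic fiber $X_{\overline\eta}$ has only $F$-pure singularities and ample dualizing sheaf, so by the remark following that theorem, (i) and (ii') hold; hence $S^0 f_*\omega_{X/Y}^m\otimes\omega_Y\otimes\mathcal L^l$ is generically globally generated for $m\ge m_0$ and $l\ge n+1$. To promote this to global generation on all of $Y_0$, I would fix a morphism $\phi:Y\to\mathbb P^n$ defined by a free subsystem of $|\mathcal L|$, so that $\phi^*\mathcal O(1)=\mathcal L$, and a fixed separable special endomorphism $\psi$ of $\mathbb P^n$ (Definition~\ref{defn:epsilon}) with $\psi^*\mathcal O(1)\cong\mathcal O(d)$ for some $d\ge 2$; put $\psi_e=\psi^{\circ e}$. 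Form $Y_e:=Y\times_{\mathbb P^n,\psi_e}\mathbb P^n$, with $\pi_e:Y_e\to Y$ finite of degree $d^{en}$ and $q_e:Y_e\to\mathbb P^n$, and base-change $f$ to $f_e:X_e\to Y_e$. Then $q_e^*\mathcal O(1)$ is big and globally generated with $(q_e^*\mathcal O(1))^{d^e}\cong\pi_e^*\mathcal L$, and the relative $S^0$, together with its identification with $(f_e)_*\omega_{X_e/Y_e}^m$ over $\pi_e^{-1}(Y_0)$, is preserved by this base change \cite{HX15}; so Theorem~\ref{thm:relative_psef} applied to $f_e$ with the polarization $q_e^*\mathcal O(1)$ gives generic global generation on $Y_e$. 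Since $\psi$ is separable, $\pi_e$ is generically \'etale and admits a trace $\pi_{e*}\mathcal O_{Y_e}\to\mathcal O_Y$; pushing the generically globally generated sheaf on $Y_e$ down by $\pi_e$, using the projection formula and this trace, recovers a twist of $S^0 f_*\omega_{X/Y}^m\otimes\omega_Y$ of the required form, which is globally generated at a given $y_0\in Y_0$ once $\pi_e^{-1}(y_0)$ meets the good locus on $Y_e$. The latter holds for $e\gg 0$: $\pi_e^{-1}(y_0)$ maps onto $\psi_e^{-1}(\phi(y_0))$, and by the defining property of a special endomorphism such preimages are not contained in any fixed proper closed subset. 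This yields global generation at every closed point of $Y_0$.

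The main obstacle is the last step. One must check precisely that the relative Frobenius-stable sheaf commutes with the base change $\pi_e$ and preserves the equality $S^0 f_*\omega_{X/Y}^m=f_*\omega_{X/Y}^m$ over $\pi_e^{-1}(Y_0)$, and that the trace of $\pi_e$ transports global generation downward with exactly the twist $\omega_Y\otimes\mathcal L^l$, $l\ge n+1$ --- this is the positive-characteristic replacement for the covering and weak-positivity manipulations used in characteristic zero in \cite{Fuj19}. A more routine point is the uniformity in $y$ of the threshold for $S^0=H^0$ on the closed fibers over $Y_0$, and the spreading-out of $F$-purity and relative ampleness from those fibers to the geometric generic fiber.
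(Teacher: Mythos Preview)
Your reduction in the first paragraph is in the right spirit, and the paper does something similar via Lemma~\ref{lem:f-ample_rel}. Note however that what is actually needed is the equality $S^0{f_{Y^e}}_*=f_{Y^e*}$ over $Y_0$ \emph{uniformly for all Frobenius base changes} $f_{Y^e}:X_{Y^e}\to Y^e$; this is exactly the content of Lemma~\ref{lem:f-ample_rel} and is stronger than the single equality $S^0f_*=f_*$ on $Y$ that your constructibility sketch provides.

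Your upgrading step is genuinely different from the paper and has a real gap. The paper does \emph{not} apply Theorem~\ref{thm:relative_psef} as a black box and then spread the generic statement to $Y_0$ by covers. Instead it reruns the proof of Theorem~\ref{thm:relative_psef}(3) with the set $S=\{\eta\}$ replaced by $S=Y_0$ throughout: the morphisms (\ref{mor:a}) and (\ref{mor:b}) built from the Frobenius trace and the ring multiplication are now surjective over all of $Y_0$ (the trace part by Lemma~\ref{lem:f-ample_rel}, the multiplication part by relative ampleness), and feeding this into Proposition~\ref{prop:ACM} with $S=Y_0$ bounds $\mathrm{Bs}(\mathcal G_m(K_Y+H))\cap Y_0$ by $\mathbb B_-(\mathcal G_\mu)\cup\mathbb B_+(A)$, which misses $Y_0$ by part~(2) of the theorem and ampleness of $A$.

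Your covering argument has two concrete problems. First, Theorem~\ref{thm:relative_psef}(3) requires the base to be regular, but the fiber product $Y_e=Y\times_{\mathbb P^n,\psi_e}\mathbb P^n$ need not be regular or even normal. Second, and more seriously, even if you obtain generic global generation on each $Y_e$, the non-generated locus on $Y_e$ is not known to be of the form $\pi_e^{-1}(B)$ for a fixed $B\subsetneq Y$ independent of $e$; so the assertion that $\pi_e^{-1}(y_0)$ eventually meets the good locus is unjustified, since the bad locus moves with $e$. The special-endomorphism machinery in the paper (Proposition~\ref{prop:main}) avoids this precisely because surjectivity of the Frobenius-trace morphisms $\psi^{(e)}$ over $S$ is established \emph{before} the endomorphism $\pi$ is applied, rather than being deduced from a generic statement after the fact.
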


This paper is organized as follows. 
In Section~\ref{section:notation}, we set up notation and terminology. 
Section~\ref{section:trace} deals with the trace maps of iterations of (relative) Frobenius morphisms. 
In Section~\ref{section:positivity}, we generalize and study different notions of base loci of coherent sheaves. 
In Section~\ref{section:invariant}, using a special endomorphism, we introduce an invariant measuring the positivity of coherent sheaves. 
In Section~\ref{section:main}, our main results are stated in a more general setting and proved. 
Section~\ref{section:Iitaka} is devoted to the study of Iitaka's conjecture in positive characteristic. 

\begin{small}
\begin{ack}
The author wishes to express his gratitude to Professors Paolo Cascini, 
Osamu Fujino, Shunsuke Takagi, Hiromu Tanaka and Lei Zhang 
for useful comments and helpful advice.
He is grateful to Professors Yoshinori Gongyo, Zhiyu Tian and Behrouz Taji for valuable comments. 
He would also like to thank Doctors Fabio Bernasconi, Masataka Iwai, Kenta Sato and Shou Yoshikawa 
for stimulating discussions and answering his questions.
Last but not least, he greatly acknowledges many valuable suggestions and helpful comments of the referee.  
He was supported by JSPS KAKENHI Grant Number 18J00171.
\end{ack}
\end{small}
\section{Notation and conventions} \label{section:notation}
Let $k$ be a field. 
By {\it $k$-scheme} we mean a separated scheme of finite type over $k$.  
A {\it variety} is an integral $k$-scheme. 

Let $X$ be an equi-dimensional $k$-scheme of finite type satisfying $S_2$ and $G_1$. 
Here, $S_2$ (resp. $G_1$) stands for Serre's second condition (resp. the condition that it is Gorenstein in codimension~1). 
Let $\mathscr K$ be the sheaf of total quotient rings on $X$. 
An \textit{AC divisor} (or \textit{almost Cartier divisor}) on $X$ 
is a reflexive coherent subsheaf of $\mathscr K$ that 
is invertible on an open subset $U$ of $X$ 
with $\mathrm{codim}(X \setminus U) \ge 2$ (\cite[p. 301]{Har94}, \cite[Definition~2.1]{MS12}). 
Let $D$ be an AC divisor on $X$. 
We let $\mathcal O_X(D)$ denote the coherent sheaf defining $D$. 
We say that $D$ is \textit{effective} if $\mathcal O_X \subseteq \mathcal O_X(D)$. 
The set $\mathrm{WSh}(X)$ of AC divisors on $X$ 
forms naturally an additive group~\cite[Corollary~2.6]{Har94}. 
In this paper, a \textit{prime} AC divisor is an effective AC divisor 
that cannot be written as the sum of two non-zero effective AC divisors.  
A \textit{$\mathbb Q$-AC divisor} is an element of $\mathrm{WSh}(X) \otimes_{\mathbb Z} \mathbb Q$. 
Let $\Delta$ be a $\mathbb Q$-AC divisor. 
Then there are prime AC divisors $\Delta_i$ on $X$ such that 
$\Delta= \sum_{i} \delta_i \Delta_i$ for $\delta_i \in \mathbb Q$. 
We define 
$$
\lfloor \Delta \rfloor 
:= \sum_i \lfloor \delta_i \rfloor \Delta_i 
\textup{~and~}
\lceil \Delta \rceil 
:= \sum_i \lceil \delta_i \rceil \Delta_i.
$$ 
Note that $\lfloor \Delta \rfloor$ and $\lceil \Delta \rceil$ 
are not necessarily uniquely determined by $\Delta$, 
because the choice of the decomposition 
$\Delta=\sum_i \delta_i \Delta_i$ is not necessarily unique. 
We recall an example by Corti~\cite[(16.1.2)]{Kol+14}. 
Set $X:=\mathrm{Spec}\,k[x,y,z,z^{-1}]/(x^2-y^2z)$, 
$D:=(x)$ and $E:=(y)$. 
Then $D\ne E$ and $2D=2E$,
so $D= \frac{1}{2}D + \frac{1}{2}E$ as $\mathbb Q$-Cartier divisors (and so also $\mathbb Q$-AC divisors). 
\textbf{In this paper, given a $\mathbb Q$-AC divisor $\Delta$, 
we fix a decomposition $\Delta=\sum_i \delta_i \Delta_i$.
If $\Delta$ is $\mathbb Q$-Cartier, we also fix a decomposition into Cartier divisors.}
We say that $\Delta$ is \textit{effective} if $\delta_i \ge 0$ for each $i$. 
By $\Delta \ge \Delta'$ we mean $\Delta - \Delta'$ is effective. 
If $\Delta=\alpha \Delta' + \beta \Delta''$ 
for $\alpha,\beta \in \mathbb Q$ and $\mathbb Q$-AC divisors 
$\Delta'$ and $\Delta''$ whose decompositions 
$\Delta'=\sum_j \delta_j'\Delta'_j$ and 
$\Delta''=\sum_k \delta_k''\Delta''_k$ 
have already been given,
then we choose the natural decomposition 
$
\Delta=\sum_j \alpha \delta_j' \Delta_j' 
+\sum_k \beta \delta_k'' \Delta_k''.
$ 
We say that a $\mathbb Q$-AC divisor $\Delta=\sum_i \delta_i \Delta_i$ is \textit{integral} 
if $\delta_i \in \mathbb Z$ for each $i$.
 
Let $\varphi:S\to T$ be a morphism of schemes and let $T'$ be a $T$-scheme. 
We denote by $S_{T'}$ and $\varphi_{T'}:S_{T'}\to T'$ 
the fiber product $S\times_{T}T'$ and its second projection, respectively. 
For an $\mathcal O_S$-module $\mathcal G$, its pullback to $S_{T'}$ is denoted by $\mathcal G_{T'}$. 
We use the same notation for an AC or $\mathbb Q$-AC divisor if its pullback is well-defined. 
\section{Trace maps of Frobenius morphisms} \label{section:trace}
In this section, we discuss several notions defined by using the trace maps of Frobenius morphisms. 
We work over an $F$-finite field $k$ of characteristic $p>0$, 
that is, a field $k$ of characteristic $p>0$ such that the extension $k/k^p$ is finite. 
\begin{defn} \label{defn:F-sing}
Let $X$ be an equi-dimensional $k$-scheme satisfying $S_2$ and $G_1$. 
Let $\Delta$ be an effective $\mathbb Q$-AC divisor on $X$.  
We say that the pair $(X, \Delta)$ is \textit{$F$-pure} 
if for each positive integer $e$, the composite 
\begin{align*} 
\mathcal O_X \xrightarrow{{F_X^e}^\#} {F_X^e}_* \mathcal O_X 
\hookrightarrow {F_X^e}_* \mathcal O_X( \lfloor (p^e-1)\Delta \rfloor )
\end{align*}
locally splits as an $\mathcal O_X$-module homomorphism. 
%
\end{defn}
\begin{rem} \label{rem:F-pure} 
(1) When $X$ is a normal variety, Definition~\ref{defn:F-sing} is equivalent to that in \cite{HW02}. 

\noindent(2) Let $X$ be a variety satisfying $S_2$ and $G_1$ such that $iK_X$ is Cartier for $i \in \mathbb Z_{>0}$ not divisible by $p$.  Let $X^N$ denote the normalization of $X$ and let $B$ be the effective divisor corresponding to the conductor.  Miller and Schwede~\cite{MS12} have proved that if $X$ has hereditary surjective trace (see \cite[Definition~3.5]{MS12}), then the $F$-purity of $X$ is equivalent to that of the pair ($X^N$, $B$). 
\end{rem}
Let $X$ and $\Delta$ be as in Definition~\ref{defn:F-sing}. 
Suppose that $i\Delta$ is integral for some $i \in \mathbb Z_{>0}$ not divisible by $p$. 
Let $e$ be the smallest positive integer such that $(p^e-1)\Delta$ is integral. 
Let $g>0$ be an integer such that $(p^g-1)\Delta$ is integral, which is equivalent to $e|g$. 
Let $L$ be an AC divisor on $X$. 
Applying the functor $\mathcal Hom(?,\mathcal O_X(L))$
to the morphism $\mathcal O_X \to {F_X^g}_*\mathcal O_X((p^g-1)\Delta)$ 
defined by the same way as in Definition~\ref{defn:F-sing}, 
we get the morphism 
\begin{align*}
{F_X^g}_* \mathcal O_X(p^gL +(1-p^g)(K_X+\Delta))
\xrightarrow{\phi_{(X,\Delta)}^{(g)}(L)}
\mathcal O_X(L) 
\end{align*}
by Grothendieck duality. 
Throughout this paper, we denote this morphism by $\phi_{(X,\Delta)}^{(g)}(L)$. 
The pair $(X,\Delta)$ is $F$-pure if and only if 
$\phi_{(X,\Delta)}^{(g)}:=\phi_{(X,\Delta)}^{(g)}(0)$ is surjective for each $g \in \mathbb Z_{>0}$ with $e|g$. 
 
\begin{defn}[\textup{\cite[\S 3--4]{Sch14}}] \label{defn:S0}
Let $X$ be an equi-dimensional projective $k$-scheme satisfying $S_2$ and $G_1$. 
Let $\Delta$ be an effective $\mathbb Q$-AC divisor on $X$ such that $i\Delta$ is integral for some $i \in \mathbb Z_{>0}$ not divisible by $p$. 
Let $e$ be the smallest positive integer such that $(p^e-1)\Delta$ is an AC divisor. 
Let $L$ be an AC divisor on $X$. 
The $k$-vector space $S^0(X, \Delta; \mathcal O_X(L))$ 
of $H^0(X, \mathcal O_X(L))$ is defined as 
$$
\bigcap_{g>0,~e|g}
\mathrm{Image} \bigg(
H^0 \left(X, {F_X^g}_* \mathcal O_X\left( p^g L -(1-p^g)(K_X+\Delta) \right) \right)
\xrightarrow{}
H^0(X, \mathcal O_X(L) )
\bigg), 
$$
where the morphism is induced from 
$\phi^{(g)}_{(X,\Delta)}(L)$. 
When $K_X+\Delta$ is $\mathbb Q$-Cartier with index not divisible by $p$ 
and $L$ is Cartier, the above subspace is also denoted by 
$S^0(X, \sigma(X,\Delta) \otimes \mathcal O_X(L))$. 
\end{defn}
\begin{defn}[\textup{\cite[Definition~2.14]{HX15}}] \label{defn:S0f}
Let $X$ be an equi-dimensional $k$-scheme satisfying $S_2$ and $G_1$. 
Let $\Delta$ be an effective $\mathbb Q$-AC divisor on $X$ such that $i\Delta$ is integral for some $i\in \mathbb Z_{>0}$ not divisible by $p$.  
Let $e$ be the smallest positive integer such that $(p^e-1)\Delta$ is an AC divisor. 
Let $f:X\to Y$ be a proper morphism to a variety $Y$. 
Let $L$ be an AC divisor on $X$. 
The subsheaf $
S^0 f_* \left( \sigma(X,\Delta) \otimes \mathcal O_X(L) \right)
$
of $f_* \mathcal O_X(L)$ is defined as
\begin{align*}
\bigcap_{g>0,~e|g}
\mathrm{Image} \bigg(
{F_Y^g}_* f_* \mathcal O_X\left(p^g L -(1-p^g)(K_X+\Delta) \right)
\xrightarrow{}
f_* \mathcal O_X(L)
\bigg), 
\end{align*}
where the morphism is induced from 
$\phi^{(g)}_{(X,\Delta)}(L).$
\end{defn}
Considering the cohomology and base change theorem, 
one can easily check that the stalk of 
$
S^0 f_* \left( \sigma(X,\Delta) \otimes \mathcal O_X(L) \right)
$
at the generic point $\eta$ of $Y$ is isomorphic to 
$
S^0(X_{\eta},\Delta|_{X_{\eta}}; \mathcal O_{X_{\eta}}(L|_{X_{\eta}}) ). 
$
 
Next, we consider the trace maps of relative Frobenius morphisms. 
Let $X$ be an equi-dimensional $k$-scheme satisfying $S_2$ and $G_1$.
Let $\Delta$ be an effective $\mathbb Q$-AC divisor on $X$ such that 
$i\Delta$ is integral for some $i\in\mathbb Z_{>0}$ not divisible by $p$. 
Let $L$ be an AC divisor on $X$. 
Let $f:X \to Y$ be a morphism to a \textit{regular} affine variety $Y$. 
Let $U$ be the maximal open subset of $X$ such that 
$U$ is Gorenstein and $L|_U$ is Cartier. 
Replacing $X$ by $U$, we assume that $X$ is Gorenstein and $L$ is Cartier. 
Let $e$ be the smallest positive integer such that $(p^e-1)\Delta$ is an AC divisor. 
Fix $g\in \mathbb Z_{>0}$ with $e|g$. 
We now have the following commutative diagram:
\begin{align*}
\xymatrix{ X^g \ar[dr]^-{F_X^g} \ar[d]_-{F_{X/Y}^{(g)}} \ar@/_50pt/[dd]_-{f^{(g)}} &  \\
X_{Y^g} \ar[r]_-{w^{(g)}} \ar[d]_-{f_{Y^g}} & X \ar[d]^-{f} \\
Y^g \ar[r]_-{F_Y^g} & Y. 
}
\end{align*}
We consider the relative dualizing sheaf $\omega_{F_{X/Y}^{(g)}}$ of $F_{X/Y}^{(g)}$. 
Since $Y$ is regular, $F_Y$ is flat, so it is a Gorenstein morphism (\cite[V, \S 9]{Har66}). 
Then 
$
\omega_{w^{(g)}} 
\cong {f_{Y^g}}^*\omega_{F_Y^g} 
\cong {f_{Y^g}}^*\omega_Y^{1-p^g} 
$ 
by \cite[Theorem~3.6.1]{Con00}, and so 
\begin{align*}
\omega_{F_{X/Y}^{(g)}} = 
\left(F_{X/Y}^{(g)}\right)^!\mathcal O_{X_{Y^g}}
& \cong \left(F_{X/Y}^{(g)}\right)^!\left(
\omega_{w^{(g)}} \otimes \omega_{w^{(g)}}^*
\right)
\\ & \cong \left(F_{X/Y}^{(g)}\right)^!\left(
\omega_{w^{(g)}} \otimes {f_{Y^g}}^*\omega_{Y^g}^{p^g-1} 
\right)
\hspace{32pt} \textup{\footnotesize{since $\omega_{w^{(g)}}\cong {f_{Y^g}}^*\omega_Y^{1-p^g}$ }}
\\ & \cong 
\left( \left(F_{X/Y}^{(g)} \right)^! \omega_{w^{(g)}} \right)
\otimes {f^{(g)}}^*\omega_{Y^g}^{p^g-1}
\hspace{30pt} \textup{\footnotesize{by \cite[I\hspace{-1pt}I\hspace{-1pt}I, Proposition~6.9~a)]{Har66}}}
\\ & \cong 
\left( {F_X^g}^!\mathcal O_X \right)
\otimes 
{f^{(g)}}^*\omega_{Y^g}^{p^g-1}
\hspace{67pt} \textup{\footnotesize{since ${F_{X/Y}^{(g)}}^! {w^{(g)}}^! \cong {F_X^g}^!$ }}
\\ & \cong 
\omega_{X^g}^{1-p^g} \otimes {f^{(g)}}^*\omega_{Y^g}^{p^g-1} 
\cong 
\omega_{X^g/Y^g}^{1-p^g}. 
\end{align*}
Therefore, for a Cartier divisor $M$ on $X_{Y^g}$,
applying 
$
\mathcal Hom(?, \mathcal O_{X_{Y^g}}(M))
$
to the composite of 
$$
\mathcal O_{X_{Y^g}} 
\to 
{F_{X/Y}^{(g)}}_* \mathcal O_{X^g} 
\hookrightarrow
{F_{X/Y}^{(g)}}_* \mathcal O_{X^g}((p^g-1)\Delta), 
$$ 
we obtain from Grothendieck duality the following morphism: 
$$
\phi^{(g)}_{(X/Y,\Delta)}(M): 
{F_{X/Y}^{(g)}}_* \mathcal O_{X^g} \left(
{F_{X/Y}^{(g)}}^*M +(1-p^g)(K_{X/Y} +\Delta) \right) 
\to 
\mathcal O_{X_{Y^g}} (M). 
$$
Here $K_{X/Y}:=K_X -f^*K_Y$. 
Using this morphism, 
we discuss the surjectivity of $f_* \phi^{(g)}_{(X,\Delta)}(L)$.
By the above diagram, 
we obtain the commutative diagram 
$$
\xymatrix{ {F_X^g}_* \mathcal O_X((p^g-1)\Delta) & \\
{w^{(g)}}_*\mathcal O_{X_{Y^g}} \ar[u] & \mathcal O_X. \ar[ul] \ar[l]_{\alpha}
}
$$
Applying 
$
\mathcal Hom(?, \mathcal O_X(L))
$ 
to the above diagram, we get 
$$
\xymatrix{ 
{F_X^g}_* \mathcal O_X(p^gL +(1-p^g)(K_X + \Delta)) \ar[d]_{\gamma} \ar[dr]^(0.6){\phi^{(g)}_{(X,\Delta)}(L)} & \\
{w^{(g)}}_*\mathcal O_{X_{Y^g}}(L_{Y^g} +(1-p^g){f_{Y^g}}^*K_{Y} ) \ar[r]_(0.7){\beta} & \mathcal O_X(L). 
}
$$
Note that here we used the isomorphisms
$$
\mathcal Hom({w^{(g)}}_* \mathcal O_{X_{Y^g}} , \mathcal O_X )
\cong
{w^{(g)}}_* \mathcal Hom \left(\mathcal O_{X_{Y^g}}, 
{w^{(g)}}^!\mathcal O_X \right)
\cong
{w^{(g)}}_* {w^{(g)}}^!\mathcal O_X 
\cong
{w^{(g)}}_* {f_{Y^g}}^* \omega_Y^{1-p^g}, 
$$
where the first isomorphism follows from Grothendieck duality. 
Put $M := L_{Y^g} +(1-p^g){f_{Y^g}}^*K_{Y}$. 
Then $\gamma \cong {w^{(g)}}_* \phi^{(g)}_{(X/Y,\Delta)}(M)$. 
Hence, 
$$
f_* \phi_{(X,\Delta)}^{(g)}(L) 
\cong 
(f_* \beta) \circ f_* \left( {w^{(g)}}_* \phi^{(g)}_{(X/Y,\Delta)}(M) \right)
=
(f_* \beta) \circ {F_Y^g}_* \left( {f_{Y^g}}_* \phi^{(g)}_{(X/Y,\Delta)}(M) \right).
$$ 
Since $Y$ is affine, $\alpha$ splits, and hence so does $\beta$, 
which means that $f_*\beta$ is surjective. 
Thus, we see that if the morphism ${f_{Y^g}}_* \phi^{(g)}_{(X/Y,\Delta)}(M)$ is surjective (resp. the zero map), 
then so is $f_* \phi_{(X,\Delta)}^{(g)}(L)$. 

Let $l$ be an $F$-finite field that is an extension of $k$, 
let $W$ be a regular $l$-scheme, and 
let $a:W \to Y$ be a $k$-morphism. 
Set $V := X \times_Y W$ and  
let $h:V \to W$ be the second projection. 
We next consider the following commutative diagram:
$$
\xymatrix{
V^g \ar[r] \ar[d]_{F_{V/W}^{(g)}} & X^g \ar[d]^{F_{X/Y}^{(g)}} \\
V_{W^g} \ar[r]^{b} \ar[d]_{h_{W^g}} & X_{Y^g} \ar[d]^{f_{Y^g}} \\
W^g \ar[r]^{a} & Y^g. 
}
$$
Note that the squares in the diagram are cartesian. 
By \cite[Lemma~2.16]{PSZ18}, we see that 
$$ 
\phi^{(g)}_{(V/W,\Delta_V)}(M_{W^g})
\cong 
b^* \phi^{(g)}_{(X/Y,\Delta)}(M), 
$$
where $M$ is as above. 
We now assume that $a$ is flat. 
Then by the above isomorphism, we get 
$$
{h_{W^g}}_* \phi^{(g)}_{(V/W,\Delta_V)}(M_{W^g})
\cong 
a^* {f_{Y^g}}_* \phi^{(g)}_{(X/Y,\Delta)}(M). 
$$
This means that if ${f_{Y^g}}_* \phi^{(g)}_{(X/Y,\Delta)}(M)$ is surjective, 
then so is ${h_{W^g}}_* \phi^{(g)}_{(V/W,\Delta_V)}(M_{W^g})$. 
The converse holds when $a$ is surjective. 
In particular, we get the following lemma:
\begin{lem} \label{lem:S0_alg} \samepage
Let $X$ be an equi-dimensional projective $k$-scheme satisfying $S_2$ and $G_1$. 
Let $\Delta$ be an effective $\mathbb Q$-AC divisor on $X$ such that $i\Delta$ is integral for some $i\in\mathbb Z_{>0}$ not divisible by $p$. 
Let $L$ be an AC divisor on $X$. 
Let $\overline k$ be the algebraic closure of $k$. 
If 
$$
S^0\left(X_{\overline k},\Delta_{\overline k}; \mathcal O_{X_{\overline k}}(L_{\overline k}) \right)
=H^0\left(X_{\overline k}, \mathcal O_{X_{\overline k}}(L_{\overline k}) \right), 
$$
then 
$$
S^0\left(X, \Delta; \mathcal O_X(L) \right)
=H^0\left(X, \mathcal O_X(L) \right). 
$$
\end{lem}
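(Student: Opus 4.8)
The plan is to reduce the statement about the algebraically closed field $\overline k$ back down to $k$ by exploiting faithfully flat descent along the (pro-)étale, or at least faithfully flat, morphism $\operatorname{Spec}\overline k \to \operatorname{Spec} k$, and then to invoke the explicit compatibility of the maps $\phi^{(e)}_{(X/Y,\Delta)}(M)$ with base change that was established just above in this section. First I would reduce to the case where $X$ is Gorenstein and $L$ is Cartier: since both $S^0$ and $H^0$ are unchanged by removing a closed subset of codimension $\ge 2$ (the relevant sheaves are reflexive, or one argues via $S_2$), we may replace $X$ by the maximal open subset $U$ on which it is Gorenstein and $L|_U$ is Cartier, exactly as in the setup preceding the lemma. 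We may also replace $i$ by a power so that $(p^e-1)\Delta$ is integral for all $e$ in the relevant congruence class, which is all we need since $S^0 = \bigcap_{e>0}$ can be computed along any cofinal subsequence of $e$'s, namely those $e$ with $i \mid p^e - 1$.

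Next I would set $Y := \operatorname{Spec} k$, $W := \operatorname{Spec}\overline k$, and apply the discussion immediately preceding the lemma with $a : W \to Y$ the structure morphism, which is flat (indeed faithfully flat, as $\overline k$ is a field extension of $k$), so that $V = X_{\overline k}$ and $g : V \to W$ is the structure morphism of $X_{\overline k}$ over $\overline k$. The base-change isomorphism derived there gives
$$
{g_{W^e}}_* \phi^{(e)}_{(V/W,\Delta_V)}(M_{W^e})
\cong
a^*\, {f_{Y^e}}_* \phi^{(e)}_{(X/Y,\Delta)}(M),
$$
and since $a$ is (faithfully) flat, pulling back along $a$ is exact, so ${f_{Y^e}}_* \phi^{(e)}_{(X/Y,\Delta)}(M)$ is surjective if and only if ${g_{W^e}}_* \phi^{(e)}_{(V/W,\Delta_V)}(M_{W^e})$ is surjective. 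Here $Y^e = \operatorname{Spec} k$ and $W^e = \operatorname{Spec}\overline k$ as well (perfect fields up to finite extension), and the displayed $H^0$-level statements translate into the statements that, for every $e$ in the cofinal set, the map on global sections induced by $\phi^{(e)}_{(X,\Delta)}(L)$ (respectively by $\phi^{(e)}_{(X_{\overline k},\Delta_{\overline k})}(L_{\overline k})$) is surjective onto $H^0(X,\mathcal O_X(L))$ (resp. onto $H^0(X_{\overline k},\mathcal O_{X_{\overline k}}(L_{\overline k}))$). Indeed, over an affine base the pushforward ${f_{Y^e}}_*$ of the relevant morphism corresponds, after the identifications made in this section relating $f_*\phi^{(e)}_{(X,\Delta)}(L)$ to ${F_Y^e}_*({f_{Y^e}}_*\phi^{(e)}_{(X/Y,\Delta)}(M))$ composed with the split surjection $f_*\beta$, precisely to the $H^0$-level map defining $S^0$; so $S^0(X,\Delta;\mathcal O_X(L)) = H^0(X,\mathcal O_X(L))$ is equivalent to the surjectivity of ${f_{Y^e}}_*\phi^{(e)}_{(X/Y,\Delta)}(M)$ for all $e$ in the cofinal set, and similarly downstairs.

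Putting these equivalences together: the hypothesis $S^0(X_{\overline k},\Delta_{\overline k};\mathcal O_{X_{\overline k}}(L_{\overline k})) = H^0$ gives surjectivity of ${g_{W^e}}_*\phi^{(e)}_{(V/W,\Delta_V)}(M_{W^e})$ for all $e$ in the cofinal set; by the (faithfully) flat base change isomorphism this forces surjectivity of ${f_{Y^e}}_*\phi^{(e)}_{(X/Y,\Delta)}(M)$ for those $e$; hence $S^0(X,\Delta;\mathcal O_X(L)) = H^0(X,\mathcal O_X(L))$. The main obstacle I anticipate is purely bookkeeping: making sure the identifications of $S^0$ with the surjectivity of ${f_{Y^e}}_*\phi^{(e)}_{(X/Y,\Delta)}(M)$ are legitimate after we have passed to the Gorenstein locus $U$, i.e. that the truncated $H^0$'s do not change and that $\lceil(p^e-1)\Delta\rceil_W = \lceil(p^e-1)\Delta_W\rceil$ holds because $(p^e-1)\Delta$ is integral (which is why the divisibility hypothesis on $i$ is imposed); and verifying that the cofinal subsequence of exponents $e$ suffices to compute the intersection defining $S^0$. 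None of this is difficult, but it requires care to keep the flat-base-change compatibility and the codimension-$2$ excision aligned.
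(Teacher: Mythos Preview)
Your proposal is correct and follows essentially the same route as the paper: the lemma is stated immediately after the sentence ``The converse holds when $a$ is surjective. In particular, we get the following lemma,'' so the paper's proof is precisely the base-change discussion you unpack, specialized to $Y=\operatorname{Spec} k$, $W=\operatorname{Spec}\overline k$, with $a$ faithfully flat and surjective. Your added details (restriction to the Gorenstein locus, the cofinal set of exponents $e$ with $i\mid p^e-1$ so that $(p^e-1)\Delta$ is integral) are exactly the bookkeeping the paper leaves implicit. One small remark: over $k$ you only need the implication ``$f_{Y^e*}\phi^{(e)}_{(X/Y,\Delta)}(M)$ surjective $\Rightarrow$ $f_*\phi^{(e)}_{(X,\Delta)}(L)$ surjective,'' not the full equivalence you assert; the equivalence is clear over $\overline k$ because $\overline k$ is perfect (so $F^{(e)}_{V/W}$ coincides with $F_V^e$), and that is the only place you actually use it.
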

\begin{lem} \label{lem:f-ample}
Let $X$ be an equi-dimensional projective $k$-scheme satisfying $S_2$ and $G_1$. 
Let $\Delta$ be an effective $\mathbb Q$-AC divisor on $X$ such that 
\begin{itemize}
\item $K_X +\Delta$ is $\mathbb Q$-Cartier, 
\item $i\Delta$ is integral for some $i\in\mathbb Z_{>0}$ not divisible by $p$, 
\item $(X, \Delta)$ is $F$-pure. 
\end{itemize}
Let $f:X\to Y$ be a projective morphism to a variety $Y$. 
Let $L$ be an $f$-ample Cartier divisor on $X$. 
Then there exists an integer $m_0$ such that 
$$
S^0 f_* \left( \sigma(X,\Delta) \otimes \mathcal O_X(mL+N) \right)
=f_* \mathcal O_X(mL +N) 
$$
for each $m \ge m_0$ and every $f$-nef Cartier divisor $N$ on $X$. 
\end{lem}
\begin{proof}
The proof is basically the same as that of \cite[Corollary~2.23]{Pat14}. 
Let $e$ be the smallest positive integer such that $(p^e-1)\Delta$ is integral.
We prove that there is an $m_0\in\mathbb Z_{>0}$ such that 
$$
f_*\left(\phi^{(g)}_{(X,\Delta)}(mL+N)\right):
f_*{F_X^g}_*\mathcal O_X\left(p^g(mL+N) +(1-p^g)(K_X+\Delta)\right)
\to
f_*\mathcal O_X(mL+N)
$$
is surjective for each $g\in\mathbb Z_{>0}$ with $e|g$, 
for each $m\ge m_0$ and for every $f$-nef Cartier divisor $N$ on $X$. 
By definition, $\phi^{(g)}_{(X,\Delta)}(mL+N)$ is decomposed into  
\begin{align*}
\phi^{(g)}_{(X,\Delta)}:
& {F_X^g}_*\mathcal O_X(p^g(mL+N) +(1-p^g)(K_X+\Delta))
\\ \xrightarrow{{F_X^{g-e}}_*\phi^{(e)}_{(X,\Delta)}\left(p^{g-e}(mL+N)+(1-p^{g-e})(K_X+\Delta) \right)}
& {F_X^{g-e}}_* \mathcal O_X(p^{g-e}(mL+N) +(1-p^{g-e})(K_X+\Delta))
\\ \xrightarrow{{F_X^{g-2e}}_*\phi^{(e)}_{(X,\Delta)}\left(p^{g-2e}(mL+N)+(1-p^{g-2e})(K_X+\Delta) \right)}
& {F_X^{g-2e}}_* \mathcal O_X(p^{g-2e}(mL+N) +(1-p^{g-2e})(K_X+\Delta))
\\ & \vdots
\\ \xrightarrow{{F_X^{e}}_*\phi^{(e)}_{(X,\Delta)}\left(p^{e}(mL+N)+(1-p^{e})(K_X+\Delta) \right)}
& {F_X^{e}}_* \mathcal O_X(p^{e}(mL+N) +(1-p^{e})(K_X+\Delta))
\\ \xrightarrow{\phi^{(e)}_{(X,\Delta)}\left(mL+N \right)}
& \mathcal O_X(mL+N). 
\end{align*}
Note that the above morphisms are surjective, since $(X,\Delta)$ is $F$-pure. 
Hence, it is enough to show that there is an $m_0\in \mathbb Z_{>0}$ such that 
for each $m \ge m_0$, 
for each $g\in \mathbb Z_{>0}$ with $e|g$ and 
for every $f$-nef Cartier divisor $N$ on $X$, 
\begin{align}
R^1f_*\left(\mathcal B^e (p^g(mL+N) +(1-p^g)(K_X+\Delta)) \right) =0, 
\label{cong:1} \tag{$\ast$}
\end{align}
where $\mathcal B^e(M)$ denotes the kernel of $\phi^{(e)}_{(X,\Delta)}(M)$ for every AC divisor $M$ on $X$. 
Let $d$ be the smallest positive integer such that $d(K_X+\Delta)$ is Cartier. 
Let $q_g$ and $r_g$ be respectively the quotient and remainder of the division 
of $p^g-1$ by $d$. 
Then 
\begin{align*}
& p^g(mL+N) +(1-p^g)(K_X+\Delta) 
\\ = & mL +N +(p^g-1)(mL +N - K_X-\Delta)
\\ = & mL +N +dq_g(mL +N - K_X-\Delta) +r_g(mL +N - K_X-\Delta). 
\end{align*}
Note that $r_g(mL +N - K_X-\Delta)$ is integral. 
Let $m_1$ be an integer such that $m_1L-K_X-\Delta$ is $f$-nef. 
Then $N_{m,g} := N+ dq_g(mL+N-K_X-\Delta) $ is an $f$-nef Cartier divisor for each $m\ge m_1$ and $g\in \mathbb Z_{>0}$. 
Since we have 
\begin{align*}
& \mathcal B^e (p^g(mL+N) +(1-p^g)(K_X+\Delta)) 
\\ = & \mathcal B^e \left(
mL +N +dq_g(mL +N - K_X-\Delta) +r_g(mL +N - K_X-\Delta) 
\right)
\\ = & \mathcal B^e (mL +N_{m,g}+  r_g (mL +N -K_X -\Delta) )
\\ \cong & \mathcal B^e (r_g (mL +N -K_X -\Delta))  \otimes \mathcal O_X(mL+N_{m,g}), 
\end{align*}
and $0\le r_g <d$, it follows from Keeler's relative Fujita vanishing theorem
(\cite[Theorem~1.5]{Kee03})  
that there is an $m_0 \ge m_1$ such that $(\ast)$ holds 
for each $m \ge m_0$, 
for each $g\in \mathbb Z_{>0}$ with $e|g$ and 
for every $f$-nef Cartier divisor on $X$. 
%
\end{proof}
\begin{lem} \label{lem:f-ample_rel}
Let $X$ be an equi-dimensional $k$-scheme satisfying $S_2$ and $G_1$. 
Let $\Delta$ be an effective $\mathbb Q$-AC divisor such that 
$i(K_X+\Delta)$ is Cartier for some $i\in\mathbb Z_{>0}$ not divisible by $p$. 
Let $f:X \to Y$ be a flat projective morphism to a regular variety $Y$. 
Suppose that the following conditions hold:
\begin{itemize}
\item there exists a Gorenstein open subset $U\subseteq X$ such that 
$\mathrm{codim}_{X_y}\left(X_y \setminus U_y \right) \ge 2$ for every $y\in Y$;   
\item $\mathrm{Supp}(\Delta)$ does not contain any irreducible component of any fiber of $f$; 
\item $\left(X_{\overline y}, \overline{\Delta|_{U_{\overline y}}}\right)$ is $F$-pure for every $y \in Y$, where $X_{\overline y}$ is the geometric fiber over $y$ and 
$\overline{\Delta|_{U_{{\overline y}}}}$ is the $\mathbb Q$-AC divisor 
on $X_{\overline y}$ that is the extension of $\Delta|_{U_{\overline y}}$. 
\end{itemize}
Let $A$ be an $f$-ample Cartier divisor. 
Then there exists an $m_0 \in \mathbb Z_{>0}$ such that 
$$
S^0 {f_{Y^e}}_* \left( \sigma(X_{Y^e},\Delta_{Y^e}) \otimes \mathcal O_{X_{Y^e}}(mA_{Y^e}+ N_{Y^e}) \right)
= {f_{Y^e}}_* \mathcal O_{X_{Y^e}}
(mA_{Y^e} +N_{Y^e})
$$
for each $e\ge 0$, each $m \ge m_0$ and every $f$-nef Cartier divisor $N$. 
\end{lem}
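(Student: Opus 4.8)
The plan is to reduce the assertion --- which concerns the base changes $f_{Y^e}:X_{Y^e}\to Y^e$ for all $e$ at once --- to a single statement about $f:X\to Y$, and to verify that statement fiberwise using the $F$-purity of the geometric fibers together with a relative Fujita-type vanishing. First, I would observe that the sheaves $f_*\mathcal O_X(L)$ and $S^0f_*(\sigma(X,\Delta)\otimes\mathcal O_X(L))$ (for $L=mA_{Y^e}+N_{Y^e}$) are unchanged if $X$ is modified in relative codimension $\ge 2$: this uses that $X$ is $S_2$, that $\mathcal O_X(D)$ is reflexive for every AC divisor $D$, and the hypothesis that $X\setminus U$ meets every fiber of $f$ in codimension $\ge 2$. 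So I may replace $X$ by $U$ and thereby assume $X$ Gorenstein and $A,N$ honest Cartier divisors. This destroys properness of $f$, but the relative-Frobenius constructions of \S\ref{section:trace} use only flatness, and the Fujita vanishing invoked below is applied on the original projective $X$.

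Next, working affine-locally on the regular base $Y$, every $F_Y^e:Y^e\to Y$ is finite, flat (Kunz's theorem) and surjective, with $Y^e$ again regular affine. I would then use two facts from \S\ref{section:trace}: the diagrams expressing $f_*\phi^{(e')}_{(X,\Delta)}(L)$ through ${f_{Y^{e'}}}_*\phi^{(e')}_{(X/Y,\Delta)}\big(L_{Y^{e'}}+(1-p^{e'})f_{Y^{e'}}^*K_Y\big)$, and the base-change isomorphism $\phi^{(e')}_{(V/W,\Delta_V)}(M_{W^{e'}})\cong b^*\phi^{(e')}_{(X/Y,\Delta)}(M)$, valid whenever $(p^{e'}-1)\Delta$ is integral. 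Applying the latter with $W=Y^e$ and $a=F_Y^e$, and invoking faithfully flat descent of surjectivity (legitimate since $F_Y^e$ is flat and surjective), the surjectivity of ${f_{Y^e}}_*\phi^{(e')}_{(X_{Y^e},\Delta_{Y^e})}(mA_{Y^e}+N_{Y^e})$ would follow from that of ${f_{Y^{e'}}}_*\phi^{(e')}_{(X/Y,\Delta)}\big((mA+N)_{Y^{e'}}+(1-p^{e'})f_{Y^{e'}}^*K_Y\big)$ --- a statement from which the twist $Y^e$ has disappeared. Because the images of the maps induced by $\phi^{(e')}$ form a decreasing, hence eventually stationary, sequence in $e'$, it is enough to treat $e'$ in a fixed cofinal sub-semigroup of $\mathbb Z_{>0}$ (those $e'$ for which the base-change formula applies and $(p^{e'}-1)(K_X+\Delta)$ is an integral Cartier divisor, so all round-downs are trivial). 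It thus suffices to find $m_0$, depending only on $f,\Delta,A$, with ${f_{Y^{e'}}}_*\phi^{(e')}_{(X/Y,\Delta)}\big((mA+N)_{Y^{e'}}+(1-p^{e'})f_{Y^{e'}}^*K_Y\big)$ surjective for all such $e'$, all $m\ge m_0$ and all $f$-nef Cartier $N$; any such $m_0$ is automatically independent of $e$.

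To produce $m_0$: after the identification $X^{e'}\cong X$, the source of $\phi^{(e')}_{(X/Y,\Delta)}(M)$ is ${F_{X/Y}^{(e')}}_*$ of the line bundle of $p^{e'}(mA+N)-(p^{e'}-1)(K_X+\Delta)$, which --- once $m$ is large enough that $mA-(K_X+\Delta)$ is $f$-nef --- rewrites as $mA+Q$ with $Q$ an $f$-nef Cartier divisor. Hence a relative Fujita vanishing for the single $f$-ample bundle $\mathcal O_X(A)$ on the projective $X$ supplies an $m_0$ making $R^{>0}f_*$ of every sheaf entering $\phi^{(e')}_{(X/Y,\Delta)}(M)$ vanish, simultaneously for all admissible $e'$, all $m\ge m_0$ and all $N$; by cohomology and base change over the regular affine $Y^{e'}$, surjectivity of ${f_{Y^{e'}}}_*\phi^{(e')}_{(X/Y,\Delta)}(M)$ can then be tested on the fibers over closed points. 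Such a fiber is a Frobenius twist of a fiber $X_z$ of $f$, its geometric fiber is $X_{\overline z}$, and the restricted map there is the geometric fiberwise trace map of $(X_{\overline z},\overline{\Delta|_{U_{\overline z}}})$ for a divisor $m(A|_{X_{\overline z}})+(\text{nef})$; its surjectivity on global sections amounts to $S^0\big(X_{\overline z},\overline{\Delta|_{U_{\overline z}}};\mathcal O(mA+N)|_{X_{\overline z}}\big)=H^0(\cdots)$, which by Lemma~\ref{lem:S0_alg} may be checked over $\overline{k(z)}$, where it follows from the $F$-purity of $(X_{\overline z},\overline{\Delta|_{U_{\overline z}}})$ (which makes the fiberwise trace maps surjective as maps of sheaves) together with the vanishing of the higher-cohomology obstruction, ensured for $m\ge m_0$ since $A|_{X_{\overline z}}$ is ample and $N|_{X_{\overline z}}$ is nef. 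Since $f$ is flat and projective, the family $\{X_{\overline z}\}_z$ with the polarizations induced by $A$ is bounded, so a single $m_0$ works for all $z$ and all $f$-nef $N$; this gives the fiberwise surjectivity needed above and finishes the argument.

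The step I expect to be the main obstacle is the second one: for $m_0$ to be genuinely independent of $e$ one needs the relative Frobenius trace maps to be compatible with the base change $F_Y^e:Y^e\to Y$, which is delicate precisely because the fibers of $f$ may be non-reduced --- this is why one assumes that $\mathrm{Supp}(\Delta)$ contains no fiber component and that $X\setminus U$ has fiberwise codimension $\ge 2$, so that $\Delta_{Y^e}$, $K_{X_{Y^e}/Y^e}$ and the round-downs of the relevant $\mathbb Q$-divisors all base-change correctly --- together with the fact that only a sub-semigroup of exponents $e'$ yields an exact base-change identity, which I would handle by stabilization and cofinality. A secondary difficulty is the uniformity over the family in the last step, which rests on an effective Fujita-type vanishing valid over the bounded family of geometric fibers.
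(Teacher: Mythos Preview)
Your approach is correct and matches the paper's own proof in its essential structure: both reduce the statement for $f_{Y^e}:X_{Y^e}\to Y^e$ to the single statement that ${f_{Y^d}}_*\phi^{(d)}_{(X/Y,\Delta)}(mA+N)$ is surjective for all $d$, using precisely the base-change identity for the relative trace maps developed in \S\ref{section:trace} (this is why $m_0$ is automatically independent of $e$). The difference is only one of economy: the paper disposes of the last step --- the surjectivity of ${f_{Y^d}}_*\phi^{(d)}_{(X/Y,\Delta)}(mA+N)$ for $m\ge m_0$ uniformly in $d$ and in the $f$-nef twist $N$ --- by a direct citation of \cite[Theorem~C]{PSZ13} or \cite[Lemma~3.7]{Eji19p}, whereas you sketch that argument from scratch (relative Fujita vanishing, cohomology and base change to reduce to fibers, then the fiberwise $F$-purity hypothesis). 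Your sketch is essentially what those references prove, so nothing is lost; the paper's version simply leans on the existing literature.
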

\begin{proof}
Fix $e \ge 0$. 
By the argument above, it is enough to show that the morphism
$$
{f_{Y^{d+e}}}_*\left(
\phi^{(d)}_{(X_{Y^e}/Y^e,\Delta_{Y^e})}(mA_{Y^e} +N_{Y^e})
\right)
$$
is surjective for each $d \ge 0$. 
By the argument above again, we have 
$$
{f_{Y^{d+e}}}_*\left(
\phi^{(d)}_{(X_{Y^e}/Y^e,\Delta_{Y^e})}(mA_{Y^e} +N_{Y^e})
\right)
\cong
{F_Y^e}^* {f_{Y^d}}_*\left(
\phi^{(d)}_{(X/Y,\Delta)}(mA +N)
\right)
$$
for each $d \ge 0$. The surjectivity of 
$
{f_{Y^e}}_*\left(
\phi^{(d)}_{(X/Y,\Delta)}(mA +N)
\right)
$
follows from \cite[Theorem~C]{PSZ18} or \cite[Lemma.~3.7]{Eji19p}. 
\end{proof}
\section{Positivity of coherent sheaves} \label{section:positivity}
\subsection{Base loci of coherent sheaves}
The restricted base locus and 
the augmented base locus of an $\mathbb R$-Cartier divisor 
was introduced and studied in \cite{ELMNP2}. 
These notions have been generalized to vector bundles in \cite{BKKMSU15}. 
In this subsection, we further generalize these notions to coherent sheaves
in the same way as in \cite[\S 2]{BKKMSU15} and \cite[\S 6]{FM21}.
 
Let $k$ be a field, 
let $X$ be a quasi-projective variety over $k$, 
and let $\mathrm{sp}(X)$ denote the underlying topological space of $X$. 
Let $\mathcal F$ be a coherent sheaf on $X$. 
The \textit{base locus} $\mathrm {Bs}(\mathcal F)$ of $\mathcal F$ is defined as 
the subset of $\mathrm {sp}(X)$ consisting of points at which 
$\mathcal F$ is \textit{not} generated by its global sections. 
If 
$$
\varphi:
H^0(X,\mathcal F)\otimes \mathcal O_X 
\to \mathcal F 
$$ 
denotes the evaluation map, then
$ 
\mathrm{Bs}(\mathcal F) 
=\mathrm{Supp}( \mathrm{Coker}\,\mathrm{\varphi}) .
$ 
One can check that 
\begin{itemize}
\item 
$
\mathrm{Bs}(\mathcal F \otimes \mathcal G) 
\subseteq \mathrm{Bs}(\mathcal F) 
\cup \mathrm{Bs}(\mathcal G)
$
for a coherent sheaf $\mathcal G$, 
\item 
$
\mathrm{Bs}(\mathcal G) 
\subseteq \mathrm{Bs}(\mathcal F)
\cup \mathrm{Supp}( \mathrm{Coker}\,\mathrm{\psi})
$
for a morphism 
$\psi:\mathcal F \xrightarrow{} \mathcal G$, and 
\item
$
\mathrm{Bs}(\pi^* \mathcal F) 
\subseteq 
\pi^{-1}(\mathrm{Bs}(\mathcal F))
$
for a morphism $\pi:Y \to X$. 
\end{itemize}
Take $S \subseteq \mathrm{sp}(X)$. 
We say that $\mathcal F$ is \textit{globally generated over $S$} 
(resp. \textit{generically globally generated}) 
if $\mathrm{Bs}(\mathcal F) \cap S = \emptyset$
(resp. $\mathrm{Bs}(\mathcal F) \ne X$). 

Let $D$ be a $\mathbb Q$-Cartier divisor on $X$. 
Fix a decomposition $D=\sum_{j=1}^n d_j D_j$ into Cartier divisors, 
where $d_1, \ldots, d_n \in \mathbb Q$. 
Let $i$ be the smallest positive integer such that $id_1,\ldots, id_n \in \mathbb Z$.  
Note that $iD=\sum_{j=1}^n id_j D_j$ is a Cartier divisor. 
Then we define 
$$
\mathbb{B}(\mathcal F +D) 
:= \bigcap_{l \ge 1} \left( 
\mathrm{Bs} \left( 
{S}^{il}(\mathcal F) \otimes \mathcal O_X(ilD)
\right) \right)
\subseteq 
\mathrm{sp}(X). 
$$
Set $\mathbb B(\mathcal F) := \mathbb B(\mathcal F +0)$. 
When $\mathcal F$ is locally free, 
$\mathbb B(\mathcal F)$
is called the \textit{stable base locus} of $\mathcal F$. 
We put 
$ \mathbb B(\mathcal F -D) :=\mathbb B(\mathcal F +(-D)) $ 
and
$ \mathbb B(D) :=\mathbb B(\mathcal O_X +D).  $
Take $l,m \in \mathbb Z_{>0}$. 
The natural morphisms 
\begin{align*}
{S}^l\left( 
{S}^{im}(\mathcal F) \otimes \mathcal O_Y\left(imD \right)
\right) 
& \cong
{S}^l\left( {S}^{im}(\mathcal F) \right)
\otimes \mathcal O_Y(ilmD) 
\\ & \twoheadrightarrow
{S}^{ilm}(\mathcal F) \otimes \mathcal O_Y(ilmD)
\end{align*}
imply that 
\begin{align*} 
\mathrm{Bs}\left({S}^{im}(\mathcal F) \otimes \mathcal O_Y(imD) \right)
\supseteq &
\mathrm{Bs}\left( {S}^l({S}^{im}(\mathcal F)) \otimes \mathcal O_Y(ilmD) \right)
\\ \supseteq &
\mathrm{Bs}\left( {S}^{ilm}(\mathcal F) \otimes \mathcal O_Y(ilmD) \right), 
\end{align*}
so we get an integer $n>0$ such that 
$$
\mathbb{B}(\mathcal F +D)
=\mathrm{Bs}\left( 
{S}^{in}(\mathcal F) \otimes \mathcal O_Y(inD)
\right). 
$$
One can check that
\begin{itemize}
\item 
$
\mathbb B(\mathcal F + (D + E) ) 
\subseteq 
\mathbb B(\mathcal F + D ) 
\cup
\mathbb B( E ) 
$
for a $\mathbb Q$-Cartier divisor $E$, and 
\item 
$
\mathbb B( \pi^* \mathcal F +\pi^* D)
\subseteq
\pi^{-1} \left( 
\mathbb B(\mathcal F +D)
\right)
$
for a morphism $\pi : Y\to X$ such that $\pi^* D$ can be defined. 
\end{itemize}
 
Let $A$ be a semi-ample $\mathbb Q$-Cartier divisor on $X$.
Then for every rational numbers $r_1 > r_2$, we have 
\begin{align*}
\mathbb B(\mathcal F + (D + r_1A)) 
& \subseteq 
\mathbb B(\mathcal F + (D + r_2A)) 
\cup
\mathbb B((r_1-r_2)A) 
\\ & =
\mathbb B(\mathcal F + (D + r_2A)) . 
\end{align*}
In this paper, we use the following notations:
\begin{align*}
\mathbb{B}^{A}_{-} (\mathcal F +D)
& := \bigcup_{r \in \mathbb Q_{>0}}
\mathbb{B}(\mathcal F +(D+rA)); 
\\ 
\mathbb{B}^{A}_{+} (\mathcal F +D)
& := \bigcap_{r \in \mathbb Q_{>0}}
\mathbb{B}(\mathcal F +(D-rA)).
\end{align*}
One can check that the following conditions hold:
\begin{itemize}
\item
$
\mathbb B^{A}_- (\mathcal F +D)
\subseteq 
\mathbb B (\mathcal F +D) 
\subseteq
\mathbb B^{A}_+ (\mathcal F +D)
$;
\item 
for every $r \in \mathbb Q_{>0}$ we have 
$$
\mathbb B^{rA}_-(\mathcal F +D) = \mathbb B^A_-(\mathcal F +D) 
\textup{\quad and \quad}
\mathbb B^{rA}_+(\mathcal F +D) = \mathbb B^A_+(\mathcal F +D) ;
$$
\item
for a sequence $r_1,r_2,\ldots \in \mathbb Q_{>0}$
converging to $0$, we have 
$$
\mathbb B^A_-(\mathcal F +D) 
= \bigcup_{j \ge 1} \mathbb B\left(
\mathcal F + (D +r_j A) 
\right);
$$
\item 
there is $r \in \mathbb Q_{>0}$ such that for every $r' \in (0, r] \cap \mathbb Q$, 
$$
\mathbb B_+^A(\mathcal F +D) 
=\mathbb B (\mathcal F + (D -r'A)). 
$$
\end{itemize}
\begin{lem} \label{lem:AD}
Let $X$, $\mathcal F$ and $D$ be as above. 
Let $A$ and $B$ be semi-ample $\mathbb Q$-Cartier divisors on $X$. 
Then 
\begin{align*}
\mathbb B^{A}_- (\mathcal F +D) 
& \subseteq 
\mathbb B^{B}_- (\mathcal F +D) 
\cup 
\mathbb B_+^B (A) \textup{, and} \\
\mathbb B^{B}_+ (\mathcal F +D) 
& \subseteq 
\mathbb B^{A}_+ (\mathcal F +D) 
\cup 
\mathbb B_+^B(A). 
\end{align*}
\end{lem}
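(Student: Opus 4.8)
The plan is to reduce both inclusions to the basic monotonicity and subadditivity properties of $\mathbb B(-)$ that were established just above, together with the defining formulas for $\mathbb B^A_\pm$. Throughout, write $i$ for the smallest positive integer clearing $D$, and choose small positive rationals $r, s$ freely; the key technical device is that for a semi-ample $\mathbb Q$-Cartier divisor $C$ one has $\mathbb B(tC) = \mathbb B^C_+(0) = \mathbb B^C_+(C)$ for all sufficiently small $t > 0$ (equivalently, the stable base locus stabilizes), and that $\mathbb B(\mathcal F + (E_1 + E_2)) \subseteq \mathbb B(\mathcal F + E_1) \cup \mathbb B(E_2)$.

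For the first inclusion, fix $x \notin \mathbb B^B_-(\mathcal F + D) \cup \mathbb B^B_+(A)$. By definition of $\mathbb B^B_+(A)$ there is $r_0 > 0$ with $x \notin \mathbb B(A - r B)$ for all $0 < r \le r_0$; shrinking, we may also fix such an $r$ with $rB$ clearing denominators as needed and with $x \notin \mathbb B(\mathcal F + (D + r'B))$ for some $r' > 0$ (possible since $x \notin \mathbb B^B_-(\mathcal F + D)$). I would then write, for a suitable small $\varepsilon > 0$,
\[
\mathbb B(\mathcal F + (D + \varepsilon A))
\subseteq \mathbb B\bigl(\mathcal F + (D + \varepsilon r' B/\varepsilon)\bigr) \cup \mathbb B\bigl(\varepsilon(A - (r'/\varepsilon) B)\bigr),
\]
i.e. splitting $\varepsilon A = \varepsilon(A - \tfrac{r'}{\varepsilon}B) + r'B$ and applying subadditivity; choosing $\varepsilon$ large enough relative to $r'$ (so that $r'/\varepsilon \le r_0$) makes the second term avoid $x$, and the first term is $\mathbb B(\mathcal F + (D + r'B))$, which also avoids $x$. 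Hence $x \notin \mathbb B(\mathcal F + (D + \varepsilon A))$, and since $\mathbb B^A_-(\mathcal F + D)$ is the union over \emph{all} small positive multiples of $A$, and also equals the union over any sequence $\varepsilon_j \to 0$ — but here I need the version where I can also \emph{rescale}: using $\mathbb B^{tA}_- = \mathbb B^A_-$ for $t > 0$, the single such $\varepsilon$ suffices to conclude $x \notin \mathbb B^A_-(\mathcal F + D)$.

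For the second inclusion the argument is dual: fix $x \notin \mathbb B^A_+(\mathcal F + D) \cup \mathbb B^B_+(A)$, so there are $r_0, s_0 > 0$ with $x \notin \mathbb B(\mathcal F + (D - rA))$ for $0 < r \le r_0$ and $x \notin \mathbb B(A - sB)$ for $0 < s \le s_0$. Then for small $t > 0$,
\[
\mathbb B(\mathcal F + (D - tB))
= \mathbb B\bigl(\mathcal F + ((D - r A) + (rA - tB))\bigr)
\subseteq \mathbb B(\mathcal F + (D - rA)) \cup \mathbb B(rA - tB),
\]
and choosing $r \le r_0$ and then $t \le r s_0$ (so $rA - tB = r(A - (t/r)B)$ with $t/r \le s_0$) makes both terms avoid $x$; hence $x \notin \mathbb B(\mathcal F + (D - tB))$ for all small $t$, i.e. $x \notin \mathbb B^B_+(\mathcal F + D)$.

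The routine bookkeeping obstacle is keeping the denominator-clearing integer $i$ and the various rational scalings consistent when invoking $S^{il}(\mathcal F) \otimes \mathcal O_X(ilD)$; one should phrase the subadditivity step purely at the level of the $\mathbb B$-loci, where these issues have already been absorbed, rather than unwinding symmetric powers. The genuine conceptual point — and the step I expect to need the most care — is the rescaling identity $\mathbb B^{tC}_\pm(\mathcal F + D) = \mathbb B^C_\pm(\mathcal F + D)$ for $t \in \mathbb Q_{>0}$, which is exactly what lets a single good $\varepsilon$ (resp. $t$) in the estimates above certify membership in the full union (resp. intersection) defining $\mathbb B^A_-$ (resp. $\mathbb B^B_+$); this was listed among the bulleted properties above, so it may be cited directly.
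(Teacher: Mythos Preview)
Your argument for the second inclusion is correct. For the first inclusion, however, there is a gap: you establish $x \notin \mathbb B(\mathcal F + (D + \varepsilon A))$ only for a \emph{single} value of $\varepsilon$ (namely any $\varepsilon \ge r'/r_0$), and then claim that the rescaling identity $\mathbb B^{tA}_- = \mathbb B^A_-$ lets this single $\varepsilon$ suffice. It does not: by definition $\mathbb B^A_-(\mathcal F + D) = \bigcup_{r > 0} \mathbb B(\mathcal F + (D + rA))$, and by monotonicity this union is carried by arbitrarily small $r$, so you must exclude $x$ from $\mathbb B(\mathcal F + (D + rA))$ for \emph{every} $r > 0$. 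The rescaling identity only says that the union over $r$ of $\mathbb B(\mathcal F + D + rtA)$ equals the union over $r$ of $\mathbb B(\mathcal F + D + rA)$, which is tautological and does not convert a single-$\varepsilon$ statement into an all-$\varepsilon$ one.

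The fix is immediate once you notice that $x \notin \mathbb B^B_-(\mathcal F + D)$ means $x \notin \mathbb B(\mathcal F + (D + r'B))$ for \emph{all} $r' > 0$, not just some fixed one; so for any given $\varepsilon > 0$ you may take $r' := \varepsilon r_0$, and then both terms in your splitting $\varepsilon A = r'B + \varepsilon(A - r_0 B)$ avoid $x$. The paper's proof packages the same idea more cleanly by first replacing $B$ with $\varepsilon B$ for $\varepsilon$ small enough that $\mathbb B^B_+(A) = \mathbb B(A - B)$ (legitimate precisely by the rescaling invariance you cite), and then writing, for every $r > 0$,
\[
\mathbb B(\mathcal F + D + rA) \;\subseteq\; \mathbb B(\mathcal F + D + rB) \cup \mathbb B(r(A - B)) \;=\; \mathbb B(\mathcal F + D + rB) \cup \mathbb B^B_+(A);
\]
taking the union over $r$ yields the first inclusion directly, with no pointwise bookkeeping.
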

\begin{proof}
Replacing $B$ by $\varepsilon B$ for some small $\varepsilon  \in \mathbb Q_{>0}$, 
we may assume that $\mathbb B^B_+(A) = \mathbb B(A -B)$. 
Then for every $r \in \mathbb Q_{>0}$, 
\begin{align*}
\mathbb B(\mathcal F +D+rA) 
\subseteq 
\mathbb B(\mathcal F +D+rB) 
\cup \mathbb B(rA-rB)
=
\mathbb B(\mathcal F +D+rB) 
\cup \mathbb B^B_+(A), 
\end{align*}
which proves the first assertion. 
By an argument similar to the above, one can prove the second assertion. 
\end{proof}

We define 
$
\mathbb B_-(\mathcal F +D)
:=\mathbb B_-^A(\mathcal F +D)
$
and 
$
\mathbb B_+(\mathcal F +D)
=\mathbb B_+^A(\mathcal F +D)
$
for an ample $\mathbb Q$-Cartier divisor $A$ on $X$. 
This definition is independent of the choice of $A$.
Indeed, for another ample $\mathbb Q$-Cartier divisors $B$ on $X$, 
we have $\mathbb B_+^A(B)=\mathbb B_+^B(A)=\emptyset$, 
so Lemma~\ref{lem:AD} shows that 
$$
\mathbb B_-^A(\mathcal F +D)
=\mathbb B_-^B(\mathcal F +D)
\textup{\quad and \quad}
\mathbb B_+^A(\mathcal F +D)
=\mathbb B_+^B(\mathcal F +D). 
$$
We set $\mathbb B_-(\mathcal F) := \mathbb B_-(\mathcal F +0)$ 
and $\mathbb B_+(\mathcal F) := \mathbb B_+(\mathcal F +0)$. 
When $\mathcal F$ is locally free, 
we call $\mathbb B_-(\mathcal F)$ 
the \textit{restricted base locus} 
(or \textit{diminished base locus}) of $\mathcal F$. 
We also call $\mathbb B_+(\mathcal F)$
the \textit{augmented base locus} of $\mathcal F$. 
We also set $\mathbb B_-(D):=\mathbb B_-(\mathcal O_X +D)$
and $\mathbb B_+(D):=\mathbb B_+(\mathcal O_X +D)$, 
which coincide respectively with the usual ones. 

As a corollary of Lemma~\ref{lem:AD}, we get the following:
\begin{cor} \label{cor:AD}
Let $X$, $\mathcal F$ and $D$ be as above. 
Let $L$ be a semi-ample $\mathbb Q$-Cartier divisor on $X$. 
Then the following hold:
\begin{itemize}
\item[\rm (1)] 
$\mathbb B_-(\mathcal F+D) \subseteq \mathbb B_-^L(\mathcal F+D) 
\subseteq \mathbb B(\mathcal F+D) \subseteq
\mathbb B_+^L(\mathcal F+D) \subseteq \mathbb B_+(\mathcal F+D)$; 
\item[\rm (2)] 
$\mathbb B_-^L(\mathcal F+D) \subseteq \mathbb B_-(\mathcal F+D) \cup \mathbb B_+(L)$;
\item[\rm (3)] 
$\mathbb B_+(\mathcal F+D) \subseteq \mathbb B_+^L(\mathcal F+D) \cup \mathbb B_+(L)$.
\end{itemize}
\end{cor}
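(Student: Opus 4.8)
The plan is to derive the whole statement formally from Lemma~\ref{lem:AD} together with the elementary monotonicity and containment properties of $\mathbb B$, $\mathbb B_-^A$ and $\mathbb B_+^A$ recorded just before that lemma, and the definitions $\mathbb B_-(\mathcal F+D)=\mathbb B_-^A(\mathcal F+D)$ and $\mathbb B_+(\mathcal F+D)=\mathbb B_+^A(\mathcal F+D)$ for a fixed ample $\mathbb Q$-Cartier divisor $A$. No genuinely new geometric input is needed.

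The one preliminary observation I would isolate is: if $A$ is ample and $L$ is semi-ample, then $\mathbb B_+^L(A)=\emptyset$. Indeed, $L$ is nef, so $A-rL$ is still ample for all sufficiently small $r\in\mathbb Q_{>0}$; and any ample $\mathbb Q$-Cartier divisor $H$ satisfies $\mathbb B(H)=\emptyset$, since some multiple of $H$ is globally generated. Hence $\mathbb B_+^L(A)=\bigcap_{r\in\mathbb Q_{>0}}\mathbb B(A-rL)=\emptyset$. This is the only spot where ampleness of $A$ (rather than semi-ampleness) is used, and it is precisely what makes the ``correction term'' $\mathbb B_+^B(A)$ appearing in Lemma~\ref{lem:AD} disappear whenever $B$ is semi-ample and $A$ is our chosen ample divisor.

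Granting that, part~(1) breaks into three containments. The inclusion $\mathbb B_-(\mathcal F+D)\subseteq\mathbb B_-^L(\mathcal F+D)$ comes from the first inclusion of Lemma~\ref{lem:AD} applied with its $(A,B)$ equal to $(A,L)$, discarding the term $\mathbb B_+^L(A)=\emptyset$; the two middle containments $\mathbb B_-^L(\mathcal F+D)\subseteq\mathbb B(\mathcal F+D)\subseteq\mathbb B_+^L(\mathcal F+D)$ are exactly the properties listed before Lemma~\ref{lem:AD} for the semi-ample divisor $L$; and $\mathbb B_+^L(\mathcal F+D)\subseteq\mathbb B_+(\mathcal F+D)$ comes from the second inclusion of Lemma~\ref{lem:AD} with $(A,B)=(A,L)$, again discarding $\mathbb B_+^L(A)=\emptyset$. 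Part~(2) is the first inclusion of Lemma~\ref{lem:AD} with $(A,B)=(L,A)$, after rewriting $\mathbb B_+^A(L)=\mathbb B_+(L)$, and part~(3) is the second inclusion of Lemma~\ref{lem:AD} with $(A,B)=(L,A)$, rewritten the same way.

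Since all the content sits in Lemma~\ref{lem:AD}, I do not expect a real obstacle. The only points needing a line of care are the auxiliary vanishing $\mathbb B_+^L(A)=\emptyset$ above, and checking each time that the pair of divisors fed to Lemma~\ref{lem:AD} really is a pair of semi-ample divisors --- which is automatic, since an ample divisor is semi-ample and $L$ is semi-ample by assumption.
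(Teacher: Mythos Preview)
Your proposal is correct and follows exactly the route the paper intends: the paper presents this statement simply ``as a corollary of Lemma~\ref{lem:AD}'' with no further proof, and your argument spells out precisely the applications of that lemma (with the roles of the two semi-ample divisors swapped as needed) together with the basic containments recorded just before it. The only extra line you add, the verification that $\mathbb B_+^L(A)=\emptyset$ for $A$ ample and $L$ semi-ample, is implicit in the paper's discussion following Lemma~\ref{lem:AD} of why $\mathbb B_\pm$ are independent of the ample divisor chosen.
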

\begin{eg} \label{eg:proper inclusions}
For each inclusion in Corollary~\ref{cor:AD}~(1), 
we give an example such that it is a proper inclusion. 
Let $X$ be a projective variety of positive dimension over a field $k$. 
Then $\mathbb B(-A) =X$ for any ample Cartier divisor $A$ on $X$, 
so we get from the definition of $\mathbb B_+^L$ that 
$$
\left\{
\begin{array}{ll}
\mathbb B(0) = \emptyset \subsetneq X = \mathbb B_+^L(0)
& \textup{if $L$ is an ample Cartier divisor on $X$, and }
\\
\mathbb B_+^L(0) = \emptyset \subsetneq X = \mathbb B_+(0)
& \textup{if $L=0. $} 
\end{array}
\right. 
$$
Next, suppose that $k$ is uncountable and algebraically closed, 
and let $E$ be an elliptic curve over $k$. 
Since $k$ is uncountable, there is a line bundle $\mathcal N$ on $E$ such that $\deg \mathcal N=0$ and $H^0(E, \mathcal N^m)=0$ for each $m>0$. 
Then $\mathcal N$ is nef and $\mathbb B(\mathcal N) = E$, so 
$$
\left\{
\begin{array}{ll}
\mathbb B_-(\mathcal N) = \emptyset \subsetneq E = \mathbb B_-^L(\mathcal N)
& \textup{if $L=0$, and}
\\
\mathbb B_-^L(\mathcal N) = \emptyset \subsetneq E = \mathbb B(\mathcal N)
& \textup{if $L$ is an ample Cartier divisor on $E$.} 
\end{array}
\right. 
$$
\end{eg}
\begin{eg} \label{eg:proper inclusions2}
We give an example satisfying 
$\mathbb B_-^L(\mathcal F)\ne\mathbb B_-^M(\mathcal F)$ 
when $L$ and $M$ are neither ample nor linearly trivial. 
Let $k$, $E$ and $\mathcal N$ be as in the latter part of Example~\ref{eg:proper inclusions}. 
Let $C$ be a smooth projective curve over $k$. 
Let $A_C$ (resp. $A_E$) be an ample divisor on $C$ (resp. $E$). 
Put $X:=C\times_k E$. 
Let $c:X\to C$ and $e:X\to E$ denote the natural projections. 
Set 
$$
L:=c^*A_C, \quad 
M:=e^*A_E \textup{\quad and \quad} 
\mathcal F := e^*\mathcal N. 
$$  
In this setting, we prove that 
$$
\mathbb B_-^L(\mathcal F)=X \textup{\quad and \quad}
\mathbb B_-^M(\mathcal F)=\emptyset. 
$$
It is enough to show that 
$\mathbb B(\mathcal F +n^{-1}L) \overset{\textup{(1)}}{=} X$ and 
$\mathbb B(\mathcal F +n^{-1}M) \overset{\textup{(2)}}{=} \emptyset$ 
for each $n>0$. Equality~(1) follows from that we have 
\begin{align*}
H^0\left(X, S^{ln}(\mathcal F) \otimes \mathcal O_X(lL) \right)
\cong H^0\left(X, \left(e^*\mathcal N^{ln} \right) 
\otimes c^*\mathcal O_C(lA_C) \right)
&\cong H^0\left(E, \mathcal N^{ln} \right) \otimes H^0(C, lA_C) 
\\ & = 0
\end{align*}
for each $l>0$ by the choice of $\mathcal N$. 
Equality~(2) is obtained from that 
$$
\left( S^{ln}\mathcal F \right) \otimes \mathcal (lM)
\cong \left( e^*\mathcal N^{ln} \right) \otimes e^*\mathcal O_E(lA_E)
\cong e^* \left(\mathcal N^{n} (A_E) \right)^l
$$
is globally generated for each $l\gg0$. 
Note that $\mathcal N^n(A_E)$ is an ample line bundle. 
 
Furthermore, putting $\mathcal G := \mathcal O_X(M)$, 
we get from an argument similar to the above that 
$$
\mathbb B_+^L(\mathcal G)=X \textup{\quad and \quad}
\mathbb B_+^M(\mathcal G)=\emptyset. 
$$
\end{eg}
In this paper, we use the following terminology, 
which is a natural extension of the positivity conditions of a $\mathbb Q$-Cartier divisor on a projective variety. 
\begin{defn} \label{defn:pe and big}
Let $X$ be a quasi-projective variety and let $D$ be a $\mathbb Q$-Cartier divisor on $X$. 
We say that $D$ is \textit{pseudo-effective} (resp. \textit{big}) 
if $\mathbb B_-(D) \ne X$ (resp. $\mathbb B_+(D) \ne X$). 
\end{defn}
In the rest of this subsection 
we prove several lemmas on base loci, 
which are used in Sections~\ref{section:invariant} and~\ref{section:main}. 
\begin{lem} \label{lem:2}
Let $X$, $D$ and $\mathcal F$ be as above. 
Let $g:X\to Z$ be a projective morphism
to a quasi-projective variety $Z$, 
let $H$ and $D$ be $\mathbb Q$-Cartier divisors on $Z$, 
and assume that $H$ is ample. 
Then 
\begin{itemize}
\item[$(1)$]
$
\mathbb B_+(g^* H)
=\left\{x \in \mathrm{sp}(X) \middle| \dim g^{-1}(g(x)) \ge 1 \right\} 
$, 
\item[$(2)$]
$
\mathrm{Bs}(\mathcal F)
\subseteq
g^{-1}(\mathrm{Bs}(g_*\mathcal F))
\cup \mathbb B_+(g^* H)
$, and
\item[$(3)$]
for a semi-ample 
$\mathbb Q$-Cartier divisor $L$ on $Z$, 
\begin{align*}
\mathbb B_-^{g^*L}(\mathcal F + g^*D)
& \subseteq
g^{-1} \left( \mathbb B_-^L( g_*\mathcal F +D) 
\cup \mathbb B(L)
\right)
\cup 
\mathbb B_+(g^*H)
\textup{\quad and} \\
\mathbb B_+^{g^*L}(\mathcal F + g^*D)
& \subseteq
g^{-1} \left( \mathbb B_+^L(g_*\mathcal F +D) 
\cup \mathbb B(L)
\right)
\cup 
\mathbb B_+(g^*H). 
\end{align*}
\end{itemize}
\end{lem}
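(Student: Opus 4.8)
The plan is to reduce all three assertions to the analysis of the Stein factorization $g = h \circ g'$ of $g$, where $g':X \to Z'$ has connected fibers and $h:Z' \to Z$ is finite, and then to reduce further to the case $g' = g$ (i.e.\ $g_*\mathcal O_X = \mathcal O_Z$), since pushing forward along the finite map $h$ and along $g'$ interact well with base loci via the three bulleted properties of $\mathrm{Bs}$ recorded earlier (and the analogous ones for $\mathbb B$, $\mathbb B_\pm^A$). First I would prove $(1)$: on the open locus where $g$ is finite, $g^*H$ is ample, so that locus is disjoint from $\mathbb B_+(g^*H)$; conversely, if $\dim g^{-1}(g(x)) \ge 1$, then restricting to a curve $C$ contracted by $g$ shows $(g^*H)|_C$ has degree $0$, hence $g^*H - \varepsilon\mathcal A$ restricted to $C$ has negative degree for any ample $\mathcal A$ on $X$ and any $\varepsilon>0$, so $C \subseteq \mathbb B(g^*H - \varepsilon\mathcal A)$; intersecting over $\varepsilon$ gives $C \subseteq \mathbb B_+(g^*H)$. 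The standard subtlety here — that $\mathbb B_+$ of a semi-ample but non-big divisor is exactly the union of the contracted subvarieties — is the first place to be careful, but it is the vector-bundle/sheaf analog of the classical fact from \cite{ELMNP2}, applied to the line bundle $g^*H$, so no new idea is needed.

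For $(2)$, the point is that if $x \notin g^{-1}(\mathrm{Bs}(g_*\mathcal F)) \cup \mathbb B_+(g^*H)$, then near $g(x)$ the sheaf $g_*\mathcal F$ is globally generated and, by $(1)$, $g$ is finite over a neighborhood of $g(x)$; I would then use that global sections of $g_*\mathcal F$ are global sections of $\mathcal F$ (adjunction $H^0(Z,g_*\mathcal F) = H^0(X,\mathcal F)$) together with the fact that over the finite locus the counit $g^*g_*\mathcal F \to \mathcal F$ is surjective, to conclude that $\mathcal F$ is globally generated at $x$. Concretely: sections generating $(g_*\mathcal F)_{g(x)}$ pull back to sections generating $(g^*g_*\mathcal F)_x$, which surjects onto $\mathcal F_x$. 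This uses the second bulleted property $\mathrm{Bs}(\mathcal G) \subseteq \mathrm{Bs}(\mathcal F) \cup \mathrm{Supp}(\mathrm{Coker}\,\psi)$ applied to $\psi: g^*g_*\mathcal F \to \mathcal F$, whose cokernel is supported on $\{\dim g^{-1}(g(x)) \ge 1\} = \mathbb B_+(g^*H)$ (shrinking to where it is finite, the counit is surjective — actually it is surjective off the non-finite locus only after also discarding where $g_*\mathcal F$ fails to be generated, which is why the $g^{-1}(\mathrm{Bs}(g_*\mathcal F))$ term appears).

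Assertion $(3)$ follows from $(2)$ by the usual twisting-and-taking-symmetric-powers bookkeeping. Fix an ample $\mathbb Q$-Cartier $A$ on $Z$ with $L$ semi-ample; for $r \in \mathbb Q_{>0}$, apply $(2)$ to the sheaf $\mathcal F' := \mathcal F \otimes g^*\mathcal O_Z(D + rL)$ after passing to a suitable symmetric power $S^N$ (so that $ND$, $NrL$ are Cartier), using that $S^N(\mathcal F') $ relates to $g^*$ of $S^N(g_*\mathcal F) \otimes \mathcal O_Z(N(D+rL))$ up to the surjection $g^*g_*(\,\cdot\,) \to (\,\cdot\,)$ on the finite locus and that $g_*(S^N(\mathcal F) \otimes \cdots)$ receives a surjection from $S^N(g_*\mathcal F) \otimes \cdots$ generically — then push the conclusion of $(2)$ through and take the appropriate union (for $\mathbb B_-^{g^*L}$) or intersection (for $\mathbb B_+^{g^*L}$) over $r$, invoking the displayed formulas for $\mathbb B_-^A$ and $\mathbb B_+^A$ as unions/intersections of $\mathbb B(\,\cdot\,)$ established just before Lemma~\ref{lem:AD}. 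The extra $\mathbb B(L)$ term on the right absorbs the base locus coming from twisting by $L$ itself (via the first bulleted property $\mathbb B(\mathcal F + (D+E)) \subseteq \mathbb B(\mathcal F+D) \cup \mathbb B(E)$), and the $\mathbb B_+(g^*H)$ term is inherited from $(2)$. The main obstacle I anticipate is the bookkeeping in $(3)$ relating $S^N(\mathcal F \otimes g^*\mathcal O_Z(\ast))$, its pushforward, and $g_*\mathcal F \otimes \mathcal O_Z(\ast)$: these agree only up to maps that are isomorphisms or surjections off the $\mathbb B_+(g^*H)$ locus, so one must track carefully that every discrepancy locus lands in one of the three terms on the right-hand side; once $(1)$ and $(2)$ are in hand this is routine but must be done with care.
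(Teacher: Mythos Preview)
Your overall strategy matches the paper's: the engine throughout is the counit $g^*g_*(-) \to (-)$, which is surjective at any point $x$ with $\dim g^{-1}(g(x)) = 0$ (the paper isolates this as a sub-lemma, proved via Stein factorization exactly as you suggest). There is, however, a real gap in your argument for the inclusion $\{x : \dim g^{-1}(g(x)) = 0\} \subseteq X \setminus \mathbb B_+(g^*H)$ in (1). You write ``on the open locus where $g$ is finite, $g^*H$ is ample, so that locus is disjoint from $\mathbb B_+(g^*H)$'', but this inference does not go through as stated: $\mathbb B_+$ is computed with global sections on all of $X$, the restriction map $H^0(X,-) \to H^0(U,-)$ need not be surjective, and $X$ here is only quasi-projective, so the projective characteristic-zero Nakamaye-type result you cite from \cite{ELMNP2} does not apply directly. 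The paper closes this gap using precisely the counit idea you reserve for (2): pick $A \ge 0$ ample on $X$ and $l \gg 0$ with $(g_*\mathcal O_X(-A))(lH)$ globally generated on $Z$ (possible since $H$ is ample on $Z$ and $g$ is projective); pulling back and applying the counit sub-lemma to $\mathcal O_X(-A)$ shows that $\mathcal O_X(g^*lH - A)$ is globally generated at $x$, whence $x \notin \mathbb B(g^*lH - A) = \mathbb B_+(g^*H)$.

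Two smaller points. Your parenthetical in (2) is muddled: the counit $g^*g_*\mathcal F \to \mathcal F$ is surjective over the \emph{entire} finite locus, independently of where $g_*\mathcal F$ is globally generated; the term $g^{-1}(\mathrm{Bs}(g_*\mathcal F))$ enters separately, via $\mathrm{Bs}(g^*g_*\mathcal F) \subseteq g^{-1}(\mathrm{Bs}(g_*\mathcal F))$. And for (3) the paper is more direct than your plan: rather than introducing an auxiliary $\mathcal F'$, it simply records the single map $g^*\big(S^l(g_*\mathcal F)(lD)\big) \cong S^l(g^*g_*\mathcal F)(lg^*D) \to S^l(\mathcal F)(lg^*D)$, notes that its cokernel is supported on $\mathbb B_+(g^*H)$ by (1), and reads off both (2) and (3) from this.
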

\begin{proof}
Let $A \ge 0$ be an ample Cartier divisor on $X$. 
Fix $l \in \mathbb Z_{>0}$ such that 
$lH$ is Cartier, 
$
\mathbb B_+ (g^* H) 
=\mathbb B(g^* lH -A), 
$
and $(g_*\mathcal O_X(-A))(lH)$ is generated by its global sections. 
Take a point $x \in \mathrm{sp}(X).$
If $ g^\#_x $ is finite, then we see 
from Lemma~\ref{lem:Stein} below that 
the natural morphism 
$
g^* \left((g_*\mathcal O_X(-A))(lH) \right)
\to 
\mathcal O_X(g^*lH-A)
$
is surjective over $x$, so 
$
x \notin 
\mathrm{Bs}(g^*lH -A) \supseteq 
\mathbb B (g^*lH -A). 
$
\begin{lem} \label{lem:Stein}
Let $h:V \to W$ be a projective morphism between 
quasi-projective varieties, 
and let $\mathcal G$ be a coherent sheaf on $V$. 
Let $v$ be a point in $\mathrm{sp}(V)$ such that 
$\dim h^{-1}(h(v)) =0$. 
Then the natural morphism $\varphi: h^* h_* \mathcal G \to \mathcal G$ 
is surjective over $v$. 
\end{lem}
\begin{proof}[Proof of Lemma~\ref{lem:Stein}]
Let $V \xrightarrow{a} W' \xrightarrow{b} W$
be the Stein factorization of $h$. 
Then $b$ is an affine morphism, so the natural morphism 
$
\varphi': b^*b_* (a_* \mathcal G) \to a_* \mathcal G
$
is surjective. 
Since $\varphi$ can be decomposed as 
$
h^*h_* \mathcal G 
\xrightarrow{a^* \varphi'} 
a^* a_* \mathcal G 
\xrightarrow{\varphi''}
\mathcal G,
$
it is enough to show that $\varphi''$ is surjective, 
so we may assume that every fiber of $h$ is connected. 
Then $\{v\}=h^{-1}(h(v))$, so $h$ is finite over $h(v)$, 
and hence $\varphi$ is surjective in a neighbourhood of $v$, 
which completes the proof. 
\end{proof}
We return to the proof of Lemma~\ref{lem:2}. 
Suppose that $x \notin  \mathbb B(g^*lH -A)$. 
Then there is an effective $\mathbb Q$-Cartier divisor $E$ 
with $E \sim_{\mathbb Q} g^*lH-A$ and $x \notin \mathrm{Supp}(E)$. 
Let $F$ be an irreducible component of the fiber of $g$ over $g(x)$. 
We may assume that $F$ is not contained in $A$.
Then 
$
E|_F \sim_{\mathbb Q} (g^*lH-A)|_F \sim -A|_F, 
$
so $A|_F \sim 0$, since $F$ is projective over $g(x)$. 
This means that $\dim F =0$, so $ g^\#_x $ is finite, 
and the proof of (1) is complete.

Next, take an $l \in \mathbb Z_{>0}$ such that $lD$ is Cartier. 
We then have the morphisms 
$$
g^* \left( 
S^l(g_*\mathcal F) (lD)
\right)
\cong
S^l\left( g^*g_* \mathcal F \right) (lg^*D)
\to 
S^l(\mathcal F) (lg^*D), 
$$
and (1) tells us that the cokernel of the composite is 
supported on $\mathbb B_+(g^* H)$. 
From this, we can prove (2) and (3).
\end{proof}
\begin{lem} \label{lem:3}
Let $X$ be a quasi-projective variety, 
let $D$ be a Cartier divisor on $X$ and 
let $\mathcal F$ be a coherent sheaf on $X$. 
Then there exists an integer $n_0=n_0(\mathcal F, D)$ 
such that 
$$
\mathrm{Bs}(\mathcal F \otimes \mathcal O_X(nD)) \subseteq \mathbb B_+(D)
$$
for each $n\ge n_0$. 
\end{lem}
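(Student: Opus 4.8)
The plan is to replace the augmented base locus $\mathbb{B}_+(D)$ by the ordinary base locus of a single honest Cartier divisor, and then to treat all large twists $\mathcal{F}\otimes\mathcal{O}_X(nD)$ uniformly, exploiting that modulo a fixed multiple of $D$ only finitely many auxiliary sheaves occur.

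\textbf{Step 1: producing a good Cartier divisor.} I would first fix an ample Cartier divisor $A$ on $X$ --- say the restriction of a hyperplane class under a locally closed embedding $X\hookrightarrow\mathbb{P}^N$, so that, by Serre vanishing on a projective closure of $X$, $\mathcal{G}\otimes\mathcal{O}_X(jA)$ is globally generated for $j\gg 0$ and every coherent sheaf $\mathcal{G}$. Since $\mathbb{B}_+(D)=\mathbb{B}_+^A(D)$, the last bullet preceding Lemma~\ref{lem:AD} provides an integer $m>0$ with $\mathbb{B}_+(D)=\mathbb{B}\bigl(D-\tfrac1m A\bigr)$; comparing the defining intersections and using that sections multiply, $\mathbb{B}(D-\tfrac1m A)=\mathbb{B}(mD-A)$, and $mD-A$ is Cartier. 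The stabilization property of $\mathbb{B}$ recorded in Section~\ref{section:positivity} then yields $q>0$ such that, with $G:=q(mD-A)$, one has $\mathrm{Bs}(\mathcal{O}_X(G))=\mathbb{B}_+(D)=:Z$. Tensoring $\mathcal{O}_X(G)$ with itself and using $\mathrm{Bs}(\mathcal{E}\otimes\mathcal{E}')\subseteq\mathrm{Bs}(\mathcal{E})\cup\mathrm{Bs}(\mathcal{E}')$ gives $\mathrm{Bs}(\mathcal{O}_X(tG))\subseteq Z$ for all $t\ge 1$.

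\textbf{Step 2: uniform control of the twists.} Here $qmD\sim G+qA$. For $n\ge qm$ I would write $n=t\,qm+s$ with $t=\lfloor n/(qm)\rfloor\ge 1$ and $0\le s<qm$, so that $nD\sim tG+tqA+sD$ and hence
\[
\mathcal{F}\otimes\mathcal{O}_X(nD)\;\cong\;\bigl(\mathcal{F}\otimes\mathcal{O}_X(sD)\otimes\mathcal{O}_X(tqA)\bigr)\otimes\mathcal{O}_X(tG).
\]
The sheaves $\mathcal{F}\otimes\mathcal{O}_X(sD)$ for $s=0,1,\dots,qm-1$ form a finite list, so there is an integer $b$ with $\mathcal{F}\otimes\mathcal{O}_X(sD)\otimes\mathcal{O}_X(jA)$ globally generated for all these $s$ and all $j\ge b$. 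Setting $n_0:=m(b+q)$, if $n\ge n_0$ then $n_0\ge qm$ and $tq\ge b$, so the first factor above is globally generated, while $\mathrm{Bs}(\mathcal{O}_X(tG))\subseteq Z$ by Step~1; subadditivity of $\mathrm{Bs}$ then gives $\mathrm{Bs}(\mathcal{F}\otimes\mathcal{O}_X(nD))\subseteq Z=\mathbb{B}_+(D)$ for all $n\ge n_0$, as required.

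\textbf{The main obstacle.} There is no substantial difficulty; the only point requiring care is the reduction in Step~1 --- that the passage from $\mathbb{B}_+^A(D)$ to $\mathrm{Bs}(\mathcal{O}_X(G))$ for a genuine Cartier divisor $G$ is legitimate. This rests on the independence of $\mathbb{B}_+$ of the chosen ample divisor (so $A$ may be taken Cartier), on the identity $\mathbb{B}(D-\tfrac1m A)=\mathbb{B}(mD-A)$ (immediate from the definition of $\mathbb{B}$ for $\mathbb{Q}$-Cartier divisors, since $i_0(D-\tfrac1m A)$ is Cartier for the minimal such $i_0\mid m$ and a section of $lC$ gives one of $(lc)C$ with $c=m/i_0$), and on the stabilization $\mathbb{B}(mD-A)=\mathrm{Bs}(\mathcal{O}_X(q(mD-A)))$ already recorded in the text. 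Everything else is elementary bookkeeping with the inclusions for $\mathrm{Bs}$ and Serre vanishing.
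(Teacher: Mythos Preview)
Your argument is correct and is essentially the same as the paper's proof: both fix an ample Cartier divisor $A$, realize $\mathbb{B}_+(D)$ as $\mathrm{Bs}(q(mD-A))$ for suitable $m,q$, write $n = t\cdot qm + s$ by division with remainder, and split $\mathcal{F}(nD)$ into a factor $\mathcal{F}(sD)(tqA)$ (globally generated for $t\gg 0$ since only finitely many $s$ occur) and a factor $\mathcal{O}_X(tq(mD-A))$ whose base locus lies in $\mathbb{B}_+(D)$. The paper's variables $(l,m,q,r)$ correspond to your $(m,q,t,s)$, and it packages the finitely many remainder twists into the single sheaf $\mathcal{F}':=\bigoplus_{0\le r<lm}\mathcal{F}(rD)$ rather than handling them one by one, but the content is identical.
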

\begin{proof}
Let $A$ be an ample Cartier divisor on $X$. 
Fix $l,m \in \mathbb Z_{>0}$ such that
$$
\mathbb B_+(D) =\mathbb B(lD-A) =\mathrm{Bs}(m(lD-A)). 
$$
Set $\mathcal F' := \bigoplus_{0 \le r < lm}\mathcal F(rD)$. 
Take $n \in \mathbb Z_{>0}$. 
Let $q$ and $r$ denote the quotient and the remainder 
of the division of $n$ by $lm$, respectively. 
Then 
\begin{align*}
\mathrm {Bs} \left( 
\mathcal F (nD)
\right)
& =
\mathrm {Bs} \left( 
\mathcal F(rD +qmA +qm(lD-A))
\right)
\\ & \subseteq 
\mathrm {Bs} \left( 
\mathcal F (rD +qmA)
\right)
\cup
\mathrm{Bs} \left( qm(lD-A) \right)
\\ & \subseteq 
\mathrm {Bs} \left( 
\mathcal F'(qmA)
\right)
\cup
\mathbb B_+(D). 
\end{align*}
When $n\gg 0$, we have $\mathrm{Bs}(\mathcal F'(qmA))=\emptyset$,
so $\mathrm{Bs}(\mathcal F(nD)) \subseteq \mathbb B_+(D)$. 
\end{proof}
\begin{lem} \label{lem:pull_back}
Let $X$ be a quasi-projective variety, 
let $A$ and $D$ be $\mathbb Q$-Cartier divisors on $X$ and 
let $\mathcal F$ be a coherent sheaf on $X$. 
Let $\pi:Y \to X$ be a finite surjective morphism 
and let $U$ denote the maximal open subset of $X$ 
such that $\pi|_{\pi^{-1}(U)}:\pi^{-1}(U)\to U$ is flat. 
Set $C := X \setminus U$. 
Then 
\begin{align*}
\mathbb{B}(\mathcal F +(D +A))
\subseteq 
\pi \left( 
\mathbb B(\pi^* \mathcal F +\pi^*D) 
\right)
\cup \mathbb B_+(A)
\cup C. 
\end{align*}
\end{lem}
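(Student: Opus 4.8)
The plan is to show first that, for any fixed ample Cartier divisor $H_0$ on $X$ and any point $x\notin \pi(\mathbb B(\pi^*\mathcal F+\pi^*D))\cup C$, one has $x\notin \mathbb B(\mathcal F+D+\sigma H_0)$ for \emph{every} $\sigma\in\mathbb Q_{>0}$, and then to feed this into the hypothesis $x\notin\mathbb B_+(A)$. Indeed, if $x\notin\mathbb B_+(A)$ then, by the properties of $\mathbb B_+$ recorded above, $A\sim_{\mathbb Q} r_0H_0+E$ for some $r_0\in\mathbb Q_{>0}$ and some effective $\mathbb Q$-Cartier divisor $E$ with $x\notin\mathrm{Supp}(E)$; hence $\mathbb B(\mathcal F+D+A)=\mathbb B(\mathcal F+D+r_0H_0+E)\subseteq \mathbb B(\mathcal F+D+r_0H_0)\cup\mathbb B(E)\subseteq \mathbb B(\mathcal F+D+r_0H_0)\cup\mathrm{Supp}(E)$, and both sets on the right avoid $x$. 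Since $x$ was an arbitrary point outside $\pi(\mathbb B(\pi^*\mathcal F+\pi^*D))\cup\mathbb B_+(A)\cup C$, the desired inclusion follows. So everything reduces to the boxed assertion about $\mathbb B(\mathcal F+D+\sigma H_0)$.

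The technical heart is the following descent statement, which I would isolate as a claim: there is an integer $m_0=m_0(\pi,H_0)\ge 0$, depending only on $\pi$ and $H_0$, such that for every coherent sheaf $\mathcal P$ on $X$ and every $x\in U$, if $\pi^*\mathcal P$ is globally generated at all points of $\pi^{-1}(x)$, then $\mathcal P\otimes\mathcal O_X(m_0H_0)$ is globally generated at $x$. To prove it, push the evaluation map $H^0(Y,\pi^*\mathcal P)\otimes_k\mathcal O_Y\to\pi^*\mathcal P$ forward along the finite (hence affine, exact) morphism $\pi$: its cokernel misses $\pi^{-1}(x)$, so $H^0(Y,\pi^*\mathcal P)\otimes_k\pi_*\mathcal O_Y\to\pi_*\pi^*\mathcal P$ is surjective at $x$. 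Choosing $m$ with $(\pi_*\mathcal O_Y)(mH_0)$ globally generated, the source becomes globally generated after the twist by $mH_0$, so $\pi_*\pi^*(\mathcal P(mH_0))\cong(\pi_*\pi^*\mathcal P)(mH_0)$ is globally generated at $x$. Here is where $x\in U$ is used: over the flat locus the unit $\mathcal O_X\to\pi_*\mathcal O_Y$ is, stalk-locally at $x$, a split injection (the image of $1$ is part of an $\mathcal O_{X,x}$-basis of the free module $(\pi_*\mathcal O_Y)_x$), so the ``evaluation at $1$'' map $(\pi_*\mathcal O_Y)^\vee\to\mathcal O_X$ is surjective at $x$; picking $m'$ with $(\pi_*\mathcal O_Y)^\vee(m'H_0)$ globally generated, we may choose a global section $\tilde\phi$ of $\mathcal Hom(\pi_*\mathcal O_Y,\mathcal O_X(m'H_0))$ with $\tilde\phi(1)$ nonvanishing at $x$. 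Tensoring $\tilde\phi$ with $\mathcal P(mH_0)$ and using the projection formula produces a morphism $\pi_*\pi^*(\mathcal P(mH_0))\to\mathcal P((m+m')H_0)$ whose composite with the unit $\mathcal P(mH_0)\to\pi_*\pi^*(\mathcal P(mH_0))$ is multiplication by $\tilde\phi(1)$, an isomorphism of stalks at $x$; hence that morphism is surjective at $x$ and $\mathcal P((m+m')H_0)$ is globally generated at $x$. Set $m_0:=m+m'$.

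Granting the claim, fix $\sigma\in\mathbb Q_{>0}$ and write $\sigma=\rho+\varepsilon$ with $\rho,\varepsilon\in\mathbb Q_{>0}$. For all sufficiently large and suitably divisible $l$ we have $\mathrm{Bs}\bigl(\pi^*(S^l\mathcal F(lD+l\rho H_0))\bigr)=\mathbb B(\pi^*\mathcal F+\pi^*D+\rho\pi^*H_0)\subseteq\mathbb B(\pi^*\mathcal F+\pi^*D)$, the last inclusion because $\rho\pi^*H_0$ is ample; this set misses $\pi^{-1}(x)$, so the claim gives that $S^l\mathcal F(lD+l\rho H_0+m_0H_0)$ is globally generated at $x$. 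For $l$ large the class $(l\varepsilon-m_0)H_0=l\sigma H_0-(l\rho H_0+m_0H_0)$ is a large multiple of $H_0$, hence carries a section nonvanishing at $x$; multiplying by it embeds $S^l\mathcal F(lD+l\rho H_0+m_0H_0)$ into $S^l\mathcal F(lD+l\sigma H_0)$ with cokernel avoiding $x$. Therefore $x\notin\mathrm{Bs}(S^l\mathcal F(lD+l\sigma H_0))\supseteq\mathbb B(\mathcal F+D+\sigma H_0)$, which is the boxed assertion and completes the proof.

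The main obstacle is the descent claim, and specifically the fact that in positive characteristic, when $p\mid\deg\pi$, the trace map need not split the unit $\mathcal O_X\to\pi_*\mathcal O_Y$; this is what forces the detour through a global section $\tilde\phi$ of a twist of $(\pi_*\mathcal O_Y)^\vee$, and it is also why the descent only produces global generation after a \emph{fixed} ample twist $m_0H_0$. The second delicate point is that this fixed twist must afterwards be absorbed, which succeeds only because in $\mathbb B(\mathcal F+D+\sigma H_0)$ the ample contribution $l\sigma H_0$ grows at the same rate as the symmetric power $S^l$ — this is the purpose of splitting $\sigma=\rho+\varepsilon$ and letting $l\to\infty$. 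I am suppressing the routine index bookkeeping needed to make all the $\mathbb Q$-divisors integral and Cartier inside the symmetric-power expressions, and the standard fact that on a quasi-projective variety every coherent sheaf becomes globally generated after twisting by a large multiple of an ample divisor.
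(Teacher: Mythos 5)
Your proof is correct, and it rests on the same essential input as the paper's — over the flat locus $U$ the sheaf $\pi_*\mathcal O_Y$ is locally free, and one can produce, after a fixed ample twist, a functional on it that does not vanish at a given point $x\in U$ — but it packages the argument quite differently. The paper keeps $A$ arbitrary throughout: it sets $\mathcal G:=(\pi_*\mathcal O_Y)^{*}\otimes\pi_*\mathcal O_Y$, observes that the evaluation pairing $\alpha:\mathcal G\to\mathcal O_X$ has cokernel supported on $C$, invokes Lemma~\ref{lem:3} to get $\mathrm{Bs}(\mathcal G(mA))\subseteq\mathbb B_+(A)$ for $m\gg 0$ divisible enough, and then runs a generating system $\bigoplus\mathcal O_Y\to S^m(\pi^*\mathcal F)(m\pi^*D)$ through $(\pi_*\mathcal O_Y)^{*}(mA)\otimes\pi_*(-)$ and $\alpha$ to land in $S^m(\mathcal F)(m(D+A))$ with cokernel supported on $\pi(\mathbb B(\pi^*\mathcal F+\pi^*D))\cup C$; one application of $\mathrm{Bs}$ then finishes. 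You instead (i) trade $A$ for an ample $\sigma H_0$ via the $\mathbb B_+$-decomposition $A\sim_{\mathbb Q} r_0H_0+E$; (ii) isolate a uniform descent lemma, using that $1\in(\pi_*\mathcal O_Y)_x$ is part of a basis (Nakayama plus surjectivity of $\pi$), so that ``evaluation at $1$'' $(\pi_*\mathcal O_Y)^{\vee}\to\mathcal O_X$ is surjective at $x$ and a section of $(\pi_*\mathcal O_Y)^{\vee}(m'H_0)$ gives the needed morphism $\pi_*\pi^*(\mathcal P(mH_0))\to\mathcal P((m+m')H_0)$; and (iii) absorb the fixed twist $m_0H_0$ by letting the symmetric-power exponent $l$ grow. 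Both arguments correctly sidestep the trace map, which is useless when $p\mid\deg\pi$. The paper's version is shorter because the twist $mA$ scales with $S^m$ and so never needs to be absorbed; your version is a bit longer but makes the uniformity of the descent constant $m_0$ in $\mathcal P$ explicit, which could be convenient if one wanted to quantify the statement. Both are valid proofs of the lemma.
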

\begin{proof}
Set 
$
\mathcal G 
:= (\pi_* \mathcal O_Y)^* \otimes \pi_* \mathcal O_Y. 
$
Then there is a natural morphism $\alpha:\mathcal G \to \mathcal O_Y$ 
whose cokernel is supported on $C$. 
Take $i \in \mathbb Z_{>0}$ so that $iA$ is Cartier. 
Thanks to Lemma~\ref{lem:3}, we get $n_0 \ge 0$ such that 
$\mathrm{Bs}(\mathcal G (inA)) \subseteq \mathbb B_+(A)$
for each $n \ge n_0$. 
Fix $m \in \mathbb Z$ with $m \ge n_0$ that is divisible enough. 
Then we have a morphism 
\begin{align*}
\bigoplus^j \mathcal O_{Y} 
\xrightarrow{\beta}
\left({S}^m(\pi^*\mathcal F) \right) (m\pi^*D) 
\end{align*}
whose cokernel is supported on 
$\mathbb B (\pi^*\mathcal F +\pi^*D)$. 
Applying 
$
(\pi_*\mathcal O_{Y})^*(mA)
\otimes \pi_*(?)
$ 
to $\beta$, we get the following sequence of morphisms 
whose cokernels are supported on 
$
\pi\left(\mathbb B (\pi^*\mathcal F +\pi^*D)\right) 
\cup C:
$ 
\begin{align*}
 \bigoplus^j \mathcal G (mA)
& \xrightarrow{} 
(\pi_* \mathcal O_{Y})^*(mA)
\otimes \pi_* \big( 
\pi^* \left( {S}^m(\mathcal F)(mD) \right)
\big)
\hspace{50pt} \textup{\footnotesize{induced by $\beta$}}
\\ & \cong  
(\pi_*\mathcal O_{Y})^*(mA)
\otimes (\pi_* \mathcal O_{Y}) 
\otimes S^m(\mathcal F) (mD)
\hspace{50pt} \textup{\footnotesize{since $\pi$ is afffine}}
\\ & \cong 
\mathcal G \otimes S^m(\mathcal F) (m(D+A)) 
\\ & \to
S^m(\mathcal F) (m(D+A))
\hspace{150pt} \textup{\footnotesize{induced by $\alpha$}.}
\end{align*}
Note that since $\pi$ is affine, 
$
\mathrm{Coker}\,\pi_* \beta 
=\pi_*(\mathrm{Coker}\, \beta).
$ 
We then get that 
$$
\mathrm{Bs}\big( S^m(\mathcal F) (m(D+A)) \big)
\subseteq 
\pi\left(\mathbb B (\pi^*\mathcal F +\pi^*D)\right) 
\cup \mathbb B_+(A) \cup C. 
$$
The left-hand side is equal to $\mathbb B(\mathcal F +(D+A))$, 
since $m$ is divisible enough. 
\end{proof}
\begin{prop} \label{prop:ggg}
Let $k$ be an $F$-finite field. 
Let $W$ be a projective variety over $k$ of dimension $n$
and let $H$ be a big Cartier divisor on $W$ with $|H|$ free. 
Let $Y$ be a dense open subset of $W$
and let $\mathcal F_1,\ldots,\mathcal F_N$ and $\mathcal G$ be coherent sheaves on $Y$. 
Fix $\varepsilon, \varepsilon_1,\ldots\varepsilon_N \in \mathbb Q_{>0}$. 
Put $B:=\bigcup_{1\le i \le N}\mathbb B(\mathcal F_i -\varepsilon_i H|_Y)$. 
Then there exists a positive integer $e_0$ 
such that 
$$
\mathrm{Bs}\left(
{F_Y^e}_* \left(
S^{l_1}(\mathcal F_1) \otimes \cdots \otimes S^{l_n}(\mathcal F_n)
\otimes \mathcal G
\right)
\right)
\subseteq 
\mathbb B_+(H)
\cup B
$$
for each $e \ge e_0$ 
and each $l_1,\ldots,l_N \in \mathbb Z_{>0}$ 
with $\sum_{1 \le i \le N} \varepsilon_i l_i \ge (n+\varepsilon)p^e$. 
\end{prop}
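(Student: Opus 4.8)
The plan is, first, to use the hypothesis that $y\notin B$ to trade the varying sheaf
$\mathcal M:=S^{l_1}(\mathcal F_1)\otimes\cdots\otimes S^{l_N}(\mathcal F_N)\otimes\mathcal G$
for a \emph{fixed} coherent sheaf twisted by a large multiple $aH$ of $H$ with $a\sim(n+\varepsilon)p^e$; and second, to prove a global generation statement for the Frobenius push-forward of the latter which is \emph{uniform in $e$}, by descending along the morphism attached to $|H|$ and applying Castelnuovo--Mumford regularity. (The qualitative statement Lemma~\ref{lem:3} is not enough here precisely because its bound depends on the sheaf, hence on $e$; one has to re-do it with effective control.)

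\textbf{Step 1: reduction to a fixed sheaf.} For each $i$ fix $d_i\in\mathbb Z_{>0}$ with $d_i\varepsilon_i\in\mathbb Z$ and $\mathbb B(\mathcal F_i-\varepsilon_iH|_Y)=\mathrm{Bs}\big(S^{d_i}(\mathcal F_i)(-d_i\varepsilon_iH)\big)$, which exists by the description of $\mathbb B$ in Section~\ref{section:positivity}; since an increasing union of coherent subsheaves of a coherent sheaf stabilizes, one finds a finite-dimensional space of sections giving a morphism $\psi_i\colon\mathcal O_Y(d_i\varepsilon_iH)^{\oplus M_i}\to S^{d_i}(\mathcal F_i)$ with $\mathrm{Supp}(\mathrm{Coker}\,\psi_i)=\mathbb B(\mathcal F_i-\varepsilon_iH|_Y)\subseteq B$. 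Put $\mathcal G':=\bigotimes_i\big(\bigoplus_{0\le r<d_i}S^{r}(\mathcal F_i)\big)\otimes\mathcal G$, which is independent of $(l_1,\dots,l_N)$. Writing $l_i=d_iq_i+r_i$ with $0\le r_i<d_i$, the multiplication maps of the symmetric algebras together with the $\psi_i^{\otimes q_i}$ compose to a morphism $\Psi\colon\mathcal O_Y(aH)^{\oplus N_0}\otimes\mathcal G'\to\mathcal M$ with $a:=\sum_iq_id_i\varepsilon_i\in\mathbb Z_{\ge0}$, $\mathrm{Supp}(\mathrm{Coker}\,\Psi)\subseteq B$, and $a\ge\sum_i\varepsilon_il_i-C\ge(n+\varepsilon)p^e-C$ for the fixed constant $C:=\sum_i\varepsilon_id_i$. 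As ${F^e_Y}_*$ is exact, applying it to $\Psi$ and using the elementary properties of $\mathrm{Bs}$ from Section~\ref{section:positivity} reduces the statement to: there is $e_0$, depending only on the given data, with $\mathrm{Bs}\big({F^e_Y}_*(\mathcal O_Y(aH)\otimes\mathcal G')\big)\subseteq\mathbb B_+(H)$ for all $e\ge e_0$ and all $a\ge(n+\varepsilon)p^e-C$. Replacing $\mathcal F_i,\mathcal G$ by coherent extensions to $W$, chosen so that the finitely many sections involved still extend — this alters neither $\mathrm{Bs}$ at points of $Y$ nor the loci $\mathbb B(\mathcal F_i-\varepsilon_iH)$ inside $Y$ — we may assume $Y=W$ from now on; this is the only place the openness of $Y$ enters.

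\textbf{Step 2: descent along $|H|$.} Let $g\colon W\to Z\subseteq\mathbb P^M$ be the morphism defined by $|H|$ and $A:=\mathcal O(1)|_Z$, so $A$ is very ample, $H=g^*A$, and $\dim Z=n$ (since $H=g^*A$ is big with $A$ ample, $g$ is generically finite). Naturality of the absolute Frobenius gives $g\circ F^e_W=F^e_Z\circ g$, hence $g_*\,{F^e_W}_*={F^e_Z}_*\,g_*$; together with the projection formula for $g$ and for $F^e_Z$ (using $(F^e_Z)^*\mathcal O(A)\cong\mathcal O(p^eA)$) this yields, writing $a=p^eb+c$ with $0\le c<p^e$,
\[
g_*\big({F^e_W}_*(\mathcal O_W(aH)\otimes\mathcal G')\big)\ \cong\ \big({F^e_Z}_*(\mathcal H(cA))\big)(bA),\qquad \mathcal H:=g_*\mathcal G'.
\]
By Lemma~\ref{lem:2}(2) applied to $g$ with the ample divisor $A$ (so $\mathbb B_+(g^*A)=\mathbb B_+(H)$), it suffices to show that $\big({F^e_Z}_*(\mathcal H(cA))\big)(bA)$ is globally generated on $Z$.

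\textbf{Step 3: uniform regularity, and the main obstacle.} Write $\mathrm{reg}_A$ for Castelnuovo--Mumford regularity with respect to $A$ and put $r:=\mathrm{reg}_A(\mathcal H)<\infty$. For any coherent $\mathcal K$ on $Z$ one has $H^i\big(Z,({F^e_Z}_*\mathcal K)(jA)\big)=H^i\big(Z,\mathcal K(p^ejA)\big)$ because $F^e_Z$ is affine, which forces $\mathrm{reg}_A\big({F^e_Z}_*\mathcal K\big)\le n+\big\lceil(\mathrm{reg}_A(\mathcal K)-n)/p^e\big\rceil$. Taking $\mathcal K=\mathcal H(cA)$ and invoking Mumford's theorem ($A$ is very ample), $\big({F^e_Z}_*(\mathcal H(cA))\big)(bA)$ is globally generated whenever $b\ge n+\big\lceil(r-c-n)/p^e\big\rceil$. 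On the other hand $b=(a-c)/p^e\ge n+\varepsilon-(C+c)/p^e$ forces $b\ge n+\big\lceil\varepsilon-(C+c)/p^e\big\rceil$ since $b\in\mathbb Z$, so it is enough that $\big\lceil\varepsilon-(C+c)/p^e\big\rceil\ge\big\lceil(r-c-n)/p^e\big\rceil$ for every $0\le c<p^e$, and a direct check shows this holds once $p^{e}\ge(|r-n|+C+1)/\varepsilon$; take $e_0$ accordingly. Steps~1--2 are formal, resting on Sections~\ref{section:trace}--\ref{section:positivity}; the crux is Step~3, where one must bound the regularity of ${F^e_Z}_*(\mathcal H(cA))$ \emph{uniformly} in $e$ and in $0\le c<p^e$, and then see that the threshold $\sum_i\varepsilon_il_i\ge(n+\varepsilon)p^e$, with its strictly positive slack $\varepsilon$, is exactly what compensates the integrality (ceiling) loss from dividing by $p^e$ — with $\varepsilon=0$, or with $n$ replaced by anything smaller, the final inequality fails. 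I expect the coherent-extension bookkeeping of Step~1 to be the only other fussy point.
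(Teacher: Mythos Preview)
Your proof is correct and follows essentially the same route as the paper: reduce via the stable base loci $\mathbb B(\mathcal F_i-\varepsilon_iH|_Y)$ to a \emph{fixed} auxiliary sheaf $\mathcal G'$ twisted by a large multiple of $H$, extend to $W$, push down along the morphism $g:W\to Z$ afforded by $|H|$, and invoke Castelnuovo--Mumford regularity on $Z$. The only organizational difference is in Step~3: you split $a=p^eb+c$ and appeal to a general bound $\mathrm{reg}_A({F^e_Z}_*\mathcal K)\le n+\lceil(\mathrm{reg}_A\mathcal K-n)/p^e\rceil$, whereas the paper avoids this decomposition by directly checking that $g_*\mathcal E$ is $0$-regular, using $H^j\big(Z,(g_*\mathcal E)(-jL)\big)\cong H^j\big(Z,(g_*\mathcal G'')((\mu-jp^e)L)\big)=0$ once $\mu-jp^e\ge s_0$ for $1\le j\le n$, with $s_0$ coming from Serre vanishing for $g_*\mathcal G''$. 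Both unwind to the same cohomology vanishing; the paper's bookkeeping is a bit lighter because it never introduces the remainder $c$ (and it uses a single $m$ in place of your $d_i$, extending only $\mathcal G'$ rather than the individual $\mathcal F_i,\mathcal G$).
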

\begin{rem}
We consider the following case: 
$Y=W$; $H$ is ample and $|H|$ is free;
$N=1$;  $\mathcal F_1=\mathcal O_W(H)$; and $\varepsilon_1 = \varepsilon =1$. 
Then, Proposition~\ref{prop:ggg} is equivalent to 
the following well-known fact:
$({F_W^e}_*\mathcal G)( (n+1)H) $ is generated by its global sections 
for each $e\gg0$.
One can use this fact to verify Fujita's freeness conjecture in positive characteristic in the case when the ample line bundle is globally generated (\cite{Har03e, Kee03}). 
This special case of Fujita’s conjecture was first proved by Smith~\cite{Smi97}.
\end{rem}
\begin{proof}
{\bf Step~1.} We first define $e_0$. 
Fix $m \in \mathbb Z_{>0}$ such that $\varepsilon_i m \in \mathbb Z$ 
and that 
$$
\mathbb B(\mathcal F_i -\varepsilon_i H|_Y) 
=\mathrm{Bs}(S^m(\mathcal F_i)(-m\varepsilon_i H|_Y))
$$
for each $i=1,\ldots,N$. 
Set 
$\mathbb S
:=\left\{(r_1,\ldots,r_N)|\textup{$0\le r_i <m$ for each $i$}\right\}
$
and 
$$
\mathcal G' 
:= \mathcal G \otimes
\bigoplus_{(r_1,\ldots,r_N)\in \mathbb S} 
\left(
\bigotimes_{1\le i \le N} S^{r_i}(\mathcal F_i) 
\right).
$$ 
Take a coherent sheaf $\mathcal G''$ on $W$ such that $\mathcal G''|_Y \cong \mathcal G'$. 
Since $|H|$ is free, there is a generically finite surjective morphism $g:W\to Z$ 
and an ample Cartier divisor $L$ on $Z$
such that $|L|$ is free and $H \sim g^*L$. 
Then, by Serre's vanishing theorem, we find $s_0 \in \mathbb Z_{>0}$ such that 
$H^j(Z, (g_*\mathcal G'')(sL))=0$
for each $1 \le j \le n$ and $s \ge s_0$. 
We define 
$
e_0:=\mathrm{min}\left\{
e\in \mathbb Z_{>0} \middle| 
\varepsilon p^e \ge s_0 + m \sum_i \varepsilon_i
\right\}.
$ 
 
\noindent {\bf Step 2.} Take $e \ge e_0$ 
and $l_1,\ldots,l_N \in \mathbb Z_{>0}$ with 
$
\sum_{i} \varepsilon_i l_i
\ge 
(n+\varepsilon)p^e. 
$
For each $i$, let $q_i$ and $r_i$ be integers 
such that $l_i=m q_i +r_i$ and $0 \le r_i < m$. 
Put $\mu:=\sum_{1\le i \le N}\varepsilon_i m q_i$ 
and $\mathcal E
:= {F_W^e}_*\left(
\mathcal G'' (\mu H)
\right).$
In this step, we prove that 
$\mathrm{Bs}(\mathcal E|_Y) \subseteq \mathbb B_+(H)$.
Since $mq_i >-m +l_i$, we have 
$$
\mu=\sum_{1\le i \le N}\varepsilon_i m q_i 
>
-\left(\sum_{1 \le i \le N} \varepsilon_im \right)
+\sum_{1 \le i \le N} \varepsilon_i l_i 
\ge
s_0 -\varepsilon p^{e_0} +(n +\varepsilon)p^e 
\ge 
s_0 +np^e. 
$$ 
so $\mu -jp^e \ge s_0$ for each $0 < j \le n$. 
Hence, we see from the projection formula that 
\begin{align*}
H^j\big(Z,\mathcal O_Z(-jL) \otimes g_* \mathcal E \big)
& \cong
H^j\big(Z,\mathcal O_Z(-jL) \otimes {F_Z^e}_*\left( (g_* \mathcal G'') (\mu L)\right) \big)
\\ & \cong 
H^j\big(Z,{F_Z^e}_*\big( (g_* \mathcal G'') ((\mu -jp^e)L)\big) \big)
\\ & \cong
H^j\big(Z,(g_* \mathcal G'') ((\mu -jp^e)L) \big)
=0, 
\end{align*}
which means that $ g_*\mathcal E $ 
is $0$-regular with respect to $\mathcal O_Z(L)$.
Thus, $g_*\mathcal E$ is generated by its global sections 
as shown in \cite[Theorem~1.8.5]{Laz04I}. 
Lemma~\ref{lem:2}~(2) then tells us that 
$
\mathrm{Bs}(\mathcal E)
\subseteq \mathbb B_+(g^*L), 
$
which proves the claim, since 
$
\mathrm{Bs}(\mathcal E|_Y) 
\subseteq \mathrm{Bs}(\mathcal E).
$
  
\noindent {\bf Step 3.}
We show the assertion. 
Put 
$$
\mathcal D 
:= \bigotimes_{1\le i \le N} 
S^{mq_i}(\mathcal F_i)(-mq_i\varepsilon_i H|_Y)
\cong 
\left(\bigotimes_{1\le i \le N} S^{mq_i}(\mathcal F_i) \right)(-\mu H|_Y). 
$$
By the definition of $\mathcal D$ and $\mathcal G'$, 
we get 
\begin{align*}
{F_Y^e}_* \left( \mathcal G'(\mu H|_Y) \otimes \mathcal D \right)
\cong &
{F_Y^e}_* \left( \mathcal G' \otimes 
\left( \bigotimes_{1\le i \le N} S^{mq_i}(\mathcal F_i) \right)
\right)
\\ \twoheadrightarrow &
{F_Y^e}_* \left( 
\mathcal G \otimes 
\left( 
\bigotimes_{1\le i \le N} S^{m q_i}(\mathcal F_i) 
\otimes S^{r_i}(\mathcal F_i) 
\right) \right)
\\ \twoheadrightarrow &
{F_Y^e}_* \left( 
\mathcal G \otimes 
\left(
\bigotimes_{1 \le i \le N} S^{l_i}(\mathcal F_i) 
\right) \right) =:\mathcal C. 
\end{align*}
Furthermore, we see from the choice of $m$ that there is a morphism 
$
\bigoplus \mathcal O_Y \to \mathcal D
$ 
whose cokernel is supported on $B$, which induces the morphism 
\begin{align*}
\bigoplus {F_Y^e}_* \left( \mathcal G'(\mu H|_Y) \right)
\cong 
{F_Y^e}_* \left( \mathcal G'(\mu H|_Y) \otimes \left( \bigoplus \mathcal O_Y \right) \right)
\to
{F_Y^e}_* \left( \mathcal G'(\mu H|_Y) \otimes \mathcal D \right)
\end{align*}
whose cokernel is supported on $B$. 
It then follows from 
$
\mathcal E|_Y 
\cong 
{F_Y^e}_* \left( \mathcal G'(\mu H|_Y) \right)
$
that 
\begin{align*}
\mathrm{Bs}(\mathcal C)
\subseteq
\mathrm{Bs}(\mathcal E|_Y) \cup B
\overset{\textup{Step 2}}{\subseteq}
\mathbb B_+(H) \cup B, 
\end{align*}
which completes the proof. 
\end{proof}
\subsection{Weak positivity}
Let $k$ be a field. A notion of weak positivity was introduced by Viehweg \cite{Vie83}. 
\begin{defn}[\textup{\cite[Variant~2.13]{Vie95}}] \label{defn:wp}
Let $Y$ be a normal quasi-projective variety 
and let $\mathcal G$ be a coherent sheaf on $Y$. 
Let $\mathcal G'$ denote the quotient of $\mathcal G$ 
by the torsion submodule 
and let $Y_1$ be the maximal open subset such that 
$\mathcal G'|_{Y_1}$ is locally free. 
Let $Y_0$ be a dense open subset of $Y_1$.  
We say that $\mathcal G$ is \textit{weakly positive over $Y_0$} if 
for every ample line bundle $\mathcal H$ on $Y$ and 
every positive integer $\alpha$, there exists a positive integer $\beta$ 
such that 
\begin{align}
S^{\alpha \beta}(\mathcal G'|_{Y_1})
\otimes 
\mathcal H|_{Y_1}^\beta
\tag{\ref{defn:wp}.1}\label{sheaf:wp}
\end{align}
is globally generated over $Y_0$. 
We simply say that $\mathcal G$ is \textit{weakly positive} if 
it is weakly positive over a dense open subset of $Y_1$. 
\end{defn}
The sheaf $\mathcal G$ is often said to be weakly positive 
if (\ref{sheaf:wp}) is generically globally generated (cf. \cite{Vie83II,Kaw85}). 
In order to distinguish this terminology from Definition~\ref{defn:wp}, 
we employ the following definition.
\begin{defn} \label{defn:psef}
Let $Y$, $\mathcal G$, $\mathcal G'$ and $Y_1$ be as in Definition~\ref{defn:wp}. 
In this paper, we say that $\mathcal G$ is \textit{pseudo-effective}
if for every ample line bundle $\mathcal H$ on $Y$ and 
every positive integer $\alpha$, there exists a positive integer $\beta$ 
such that sheaf (\ref{sheaf:wp}) is generated by its global sections at the generic point $\eta$ of $Y$. 
\end{defn}
The weak positivity and the pseudo-effectivity of coherent sheaves 
are rephrased in terms of restricted base loci. 
\begin{lem} \label{lem:relation}
Let $Y$, $\mathcal G$, $\mathcal G'$, $Y_1$ and $Y_0$ 
be as in Definition~\ref{defn:wp}. 
\\ \noindent{\rm(1)} 
The sheaf $\mathcal G$ is weakly positive if and only if $\mathbb B_-(\mathcal G'|_{Y_1})$ and $Y_0$ do not intersect. 
\\ \noindent{\rm(2)} 
If $\mathbb B_-(\mathcal G)$ and $Y_0$ do not intersect, then $\mathcal G$ is weakly positive over $Y_0$. 
\\ \noindent{\rm(3)} 
The sheaf $\mathcal G$ is pseudo-effective if and only if $\mathbb B_-(\mathcal G'|_{Y_1})$ is not equal to $Y_1$, or equivalently, $\mathbb B_-(\mathcal G'|_{Y_1})$ does not contain the generic point $\eta$ of $Y$. 
\\ \noindent{\rm(4)} 
If $\mathbb B_-(\mathcal G)$ does not contain $\eta$, then $\mathcal G$ is pseudo-effective. 
\\ \noindent{\rm(5)} 
The converse statements of {\rm(2)} and {\rm(4)} hold if $\mathcal G$ is locally free. 
\begin{align*}
\xymatrix@R=20pt@C=20pt{
\textup{$\mathcal G$ is weakly positive over $Y_0$~} 
\ar@{<=>}[r] \ar@{=>}[d]
& \textup{~$\mathbb B_-(\mathcal G'|_{Y_1})\cap Y_0 = \emptyset$~}
\ar@{<=}[r] \ar@/^15pt/[r]^-{\textup{$\mathcal G$ is locally free}} \ar@{=>}[d]
& \textup{~$\mathbb B_-(\mathcal G)\cap Y_0 = \emptyset$~} \ar@{=>}[d]
\\
\textup{ $\mathcal G$ is pseudo-effective }
\ar@{<=>}[r]
& \textup{ $\eta\notin\mathbb B_-(\mathcal G'|_{Y_1})$ }
\ar@{<=}[r] \ar@/^15pt/[r]^-{\textup{$\mathcal G$ is locally free}} 
& \textup{~$\eta\notin\mathbb B_-(\mathcal G)$~}
\\
}
\end{align*}
\end{lem}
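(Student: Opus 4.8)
The plan is to unwind Definitions~\ref{defn:wp} and~\ref{defn:psef} against the definition of the restricted base locus, using only the elementary properties of $\mathbb B$, $\mathbb B_-$ and $\mathrm{Bs}$ collected earlier in this section. Two facts will be used throughout, for an ample Cartier divisor $H$ on (an open subset of) $Y$: first, $\mathbb B_-(\mathcal F)=\bigcup_{\alpha\in\mathbb Z_{>0}}\mathbb B\big(\mathcal F+\tfrac{1}{\alpha}H\big)$ with the sets on the right nondecreasing as $\alpha\to\infty$; and second, for each fixed $\alpha$ there is $\beta$ with $\mathbb B\big(\mathcal F+\tfrac{1}{\alpha}H\big)=\mathrm{Bs}\big(S^{\alpha\beta}(\mathcal F)\otimes\mathcal O(\beta H)\big)$, while $\mathbb B\big(\mathcal F+\tfrac{1}{\alpha}H\big)\subseteq\mathrm{Bs}\big(S^{\alpha\beta}(\mathcal F)\otimes\mathcal O(\beta H)\big)$ for every $\beta$. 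Together these match the quantifier pattern of Definition~\ref{defn:wp} (``for every ample $\mathcal H$ and every $\alpha$ there is $\beta$ such that $S^{\alpha\beta}(\mathcal G')\otimes\mathcal H^\beta$ is globally generated over $Y_0$'') with the condition $\mathbb B_-(\mathcal G'|_{Y_1})\cap Y_0=\emptyset$, and Definition~\ref{defn:psef} with $\eta\notin\mathbb B_-(\mathcal G'|_{Y_1})$.

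I would first dispose of the one-sided implications. Since the symmetric power functor is right exact, the surjection $\mathcal G\twoheadrightarrow\mathcal G'$ gives surjections $S^m(\mathcal G)\twoheadrightarrow S^m(\mathcal G')$, hence $\mathrm{Bs}(S^m(\mathcal G')\otimes\mathcal N)\subseteq\mathrm{Bs}(S^m(\mathcal G)\otimes\mathcal N)$; combining this with $\mathrm{Bs}(\mathcal F|_{Y_1})\subseteq\mathrm{Bs}(\mathcal F)\cap Y_1$ (images of global sections restrict) and passing to the appropriate intersections and unions yields $\mathbb B_-(\mathcal G'|_{Y_1})\subseteq\mathbb B_-(\mathcal G)\cap Y_1$. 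Granting the equivalences (1) and (3), this proves (2) and (4). For (5), when $\mathcal G$ is locally free one has $\mathcal G=\mathcal G'$ and $Y_1=Y$, so the containment above is an equality and the converses of (2) and (4) follow.

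The substance is the equivalences (1) and (3). In the direction ``$\mathcal G$ weakly positive over $Y_0$ $\Rightarrow\mathbb B_-(\mathcal G'|_{Y_1})\cap Y_0=\emptyset$'' no extra care is needed: for ample $\mathcal H$ on $Y$ and $\alpha\ge1$, choose $\beta$ with $S^{\alpha\beta}(\mathcal G')\otimes\mathcal H^\beta$ globally generated over $Y_0$, restrict the generating sections to $Y_1$, and apply the second fact above to get $\mathbb B\big(\mathcal G'|_{Y_1}+\tfrac{1}{\alpha}H|_{Y_1}\big)\cap Y_0=\emptyset$; letting $\alpha\to\infty$ and using the first fact gives the conclusion. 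For the reverse direction one obtains, from $\mathbb B_-(\mathcal G'|_{Y_1})\cap Y_0=\emptyset$ and the two facts, for each $\mathcal H$ and $\alpha$ a $\beta$ such that $S^{\alpha\beta}(\mathcal G'|_{Y_1})\otimes\mathcal H^\beta|_{Y_1}$ is generated over $Y_0$ by its sections \emph{on $Y_1$}, and one must promote this to generation by sections on $Y$. Here I would invoke the standard comparison, in the spirit of Viehweg, between weak positivity over $Y_0$ and positivity of the locally free part $\mathcal G'|_{Y_1}$: pass to a projective compactification $\overline Y\supseteq Y_1$ on which (a power of) $\mathcal H$ and $\mathcal G'$ extend, and transfer the generation statement from $Y_1$ to $\overline Y$, hence to $Y$, using that $Y_0$ is quasi-compact and non-generation is a closed condition. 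The statement (3) follows by the same argument with $Y_0$ replaced by a neighbourhood of $\eta$, together with the observation that $\eta\in Y_1$ lies in every nonempty open subset, which also gives the equivalence between $\mathbb B_-(\mathcal G'|_{Y_1})\ne Y_1$ and $\eta\notin\mathbb B_-(\mathcal G'|_{Y_1})$ used in the statement.

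The main obstacle is precisely this last passage from sections on $Y_1$ to sections on $Y$. Unlike the classical case of a vector bundle on a fixed projective variety, $\mathcal G'$ is locally free only on $Y_1$, whose complement need not have codimension $\ge2$, so Hartogs-type extension is unavailable and one is forced to work on a compactification and control the boundary carefully; everything else is a formal consequence of the properties of $\mathbb B$, $\mathbb B_-$ and $\mathrm{Bs}$ established above.
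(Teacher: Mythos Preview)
Your treatment of (2), (4), and (5) is exactly what the paper does: the paper's entire proof of (2) and (4) is the single inclusion $\mathbb B_-(\mathcal G|_{Y_1})\subseteq\mathbb B_-(\mathcal G)\cap Y_1$, and (5) is declared obvious. So on those parts you match the paper.

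For (1) and (3) the paper simply writes ``obvious''. You are right that there is something to check in the direction $\mathbb B_-(\mathcal G'|_{Y_1})\cap Y_0=\emptyset\Rightarrow\mathcal G$ weakly positive over $Y_0$, namely the passage from generation by $H^0(Y_1,\,\cdot\,)$ to generation by $H^0(Y,\,\cdot\,)$. However, your proposed compactification argument does not actually address this: passing to a projective $\overline Y\supseteq Y_1$ and extending $\mathcal G'$ and $\mathcal H$ still leaves you with the problem that sections over $Y_1$ need not extend to $\overline Y$, and ``non-generation is closed'' goes in the wrong direction for what you want.

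The point the paper presumably has in mind, and which makes (1) and (3) essentially tautological, is this. Since $Y$ is normal and $\mathcal G'$ is torsion-free, $\mathcal G'$ is locally free at every codimension~$1$ point (the local ring there is a DVR), so $\mathrm{codim}_Y(Y\setminus Y_1)\ge 2$. On $Y_1$ the sheaf $S^{\alpha\beta}(\mathcal G')\otimes\mathcal H^\beta$ is locally free, hence coincides there with its reflexive hull $\big(S^{\alpha\beta}(\mathcal G')\big)^{**}\otimes\mathcal H^\beta$ on $Y$; by Hartogs for reflexive sheaves on normal varieties, $H^0(Y,\,(S^{\alpha\beta}(\mathcal G'))^{**}\otimes\mathcal H^\beta)=H^0(Y_1,\,S^{\alpha\beta}(\mathcal G'|_{Y_1})\otimes\mathcal H^\beta|_{Y_1})$. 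Since $Y_0\subseteq Y_1$, the stalks at points of $Y_0$ are unaffected by the reflexive hull, so generation over $Y_0$ by sections on $Y_1$ is the same as generation over $Y_0$ by global sections of the reflexive hull on $Y$. This is precisely the formulation of weak positivity used in the introduction (with $(S^{\alpha\beta}(\cdot))^{**}$), and with that reading (1) and (3) are immediate. No compactification is needed; the whole difficulty dissolves once you observe $\mathrm{codim}(Y\setminus Y_1)\ge 2$.
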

\begin{proof}
Statements (1), (3) and (5) are obvious. 
Statements (2) and (4) follow from the inclusion $\mathbb B_-(\mathcal G|_{Y_1}) \subseteq \mathbb B_-(\mathcal G) \cap Y_1$. 
\end{proof}
The following example shows that (5) in Lemma~\ref{lem:relation} does not hold if $\mathcal G$ is not locally free. 
\begin{eg} \label{eg:blowup}
Let $Y$ be a regular projective surface, 
let $y \in \mathrm{sp}(Y)$ be a closed point, 
and let $\pi: Y' \to Y$ be the blow up of $Y$ along $y$. 
Let $\mathcal I$ be the ideal sheaf of $y$ and let $A$ be an ample Cartier divisor on $Y$. 
Take $l\gg0$ so that $\pi^*A -lE$ is not pseudo-effective, 
i.e. $\mathbb B_-(\mathcal O_{Y'}(\pi^* A -lE)) =Y'$, 
and set $\mathcal G := \mathcal I^l \otimes \mathcal O_Y(A)$. 
Then the natural map 
$
\pi^* \mathcal G \twoheadrightarrow \mathcal O_{Y'}(\pi^*A -lE)
$
shows that 
$$ 
\mathbb B_-(\mathcal G)= \mathbb B_+(\mathcal G)=Y. 
$$ 
Put $Y_1:=Y\setminus \{y\}$. 
This is the maximal open subset such that $\mathcal G|_{Y_1}$ is locally free. 
Since $\mathcal I|_{Y_1} \cong \mathcal O_{Y_1}$, we have 
$$ 
\mathbb B_-(\mathcal G|_{Y_1})= \mathbb B_+(\mathcal G|_{Y_1})=\emptyset. 
$$ 
In particular, $\mathcal G$ is weakly positive but $\mathbb B_-(\mathcal G)=Y$. 
\end{eg}
\begin{rem} \label{rem:big vector bundle}
Similarly to the argument in this subsection, 
one can discuss the bigness of coherent sheaves, 
using $\mathbb B_+$ instead of $\mathbb B_-$. 
We say that a coherent sheaf $\mathcal F$ on a projective variety $X$ is \textit{V-big} (or \textit{Viehweg-big}) if $\mathbb B_+(\mathcal F) \ne X$ (\cite[Definition~6.1]{BKKMSU15}). 
This condition is stronger than the one that the tautological bundle $\mathcal O_{\mathbb P(\mathcal F)}(1)$ is big (\cite[Examples~1.7 and~1.8.]{Jab09}). 
We do not treat the above notions further, since they are not used in this paper. 
\end{rem}
\section{An invariant of coherent sheaves} \label{section:invariant}
In this section, we introduce an invariant of coherent sheaves, 
which we use to study the positivity of coherent sheaves. 
The invariant is defined by using a morphism to a variety admitting a special endomorphism. 
Throughout this section, we work over an $F$-finite field $k$ of characteristic $p>0$. 
\begin{defn} \label{defn:epsilon}
Let $Y$ be a quasi-projective variety, 
let $H$ be a big Cartier divisor on $Y$ 
and let $S$ be a non-empty subset of $\mathrm{sp}(Y)$. 
Fix a non-negative rational number $a$. 
We say that the pair $(S,H)$ 
\textit{satisfies condition $(\ast)_{{a}}$} 
if all the following conditions hold.
\begin{itemize}
\item[I.] 
There exists a smooth projective variety $Z$ and 
a projective morphism $g :Y \to U$ 
to a dense open subset $U$ of $Z$ such that 
\begin{itemize}
\item[I-1.] 
$g^{-1}(s)$ is a finite set for every $s \in g(S)$, and 
\item[I-2.] 
$S=\bigcup_{s \in g(S)} g^{-1}(s)$.
\end{itemize}
We do not distinguish between $g: Y \to U$ and the composite 
$Y \xrightarrow{g} U \hookrightarrow Z$. 
\item[I\hspace{-1pt}I.]
There exists a big Cartier divisor $L$ on $Z$ such that 
\begin{itemize}
\item[I\hspace{-1pt}I-1.] $H \sim g^* L$ and 
\item[I\hspace{-1pt}I-2.] 
$ \mathbb B_- \left( K_Z + a L \right) \cap g(S) 
= \mathbb B_+(L) \cap g(S) 
= \emptyset$. 
\end{itemize} 
\item[I\hspace{-1pt}I\hspace{-1pt}I.]
There exists a separable finite flat endomorphism $\pi: Z \to Z$ such that 
\begin{itemize}
\item[I\hspace{-1pt}I\hspace{-1pt}I-1.]
$\pi^* L \sim qL$ for an integer $q \ge 2$, 
\item[I\hspace{-1pt}I\hspace{-1pt}I-2.]
$\pi^d$ is \'etale over a neighborhood of every point in $g(S)$ for each $d \ge 1$. 
\end{itemize}
\end{itemize} 
\end{defn}
\begin{rem} \label{rem:epsilon}
\noindent(1) We note that $q$ in I\hspace{-1pt}I\hspace{-1pt}I does not stand for a power $p^e$ of the characteristic $p$. We employ the same notation as that in some papers dealing with polarized endomorphisms (cf. \cite{NZ10}).  
\\ \noindent(2) Let $\eta$ denote the generic point of $Y$.  
When $S=\{\eta\}$, I-2 follows from I-1. 
\\ \noindent(3) When $S=\{\eta\}$ and $g$ is dominant,  
I\hspace{-1pt}I\hspace{-1pt}I-2 is always satisfied. 
\\ \noindent(4) Assume that $S=\{\eta\}$ and $g$ is dominant. 
Then I\hspace{-1pt}I-2 is equivalent to saying that 
$K_Z+{a} L$ is pseudo-effective. 
In particular, $(\ast)_{{a}}$ holds if ${a}$ is at least 
Fujita's invariant (or $a$-constant) 
$
a(Z,L)= \mathrm{inf} \{t >0 |\textup{$K_Z +tL$ is big} \}. 
$
\end{rem}
\begin{rem} \label{rem:epsilon2}
Let $\mathcal F$ be a coherent sheaf on $Y$ 
and let $D$ be a $\mathbb Q$-Cartier divisor on $Y$. 
Suppose that $(S,H)$ satisfies $(\ast)_{a}$. 
Take an ample Cartier divisor $B$ on $Z$. 
By Corollary~\ref{cor:AD}, we have 
$$
\mathbb B_-(\mathcal F +D)
\subseteq \mathbb B_-^{g^*B}(\mathcal F +D)
\subseteq \mathbb B_-(\mathcal F +D) \cup \mathbb B_+(g^*B).
$$
By conditions I-1 and I\hspace{-1pt}I-2, 
it follows from Lemma~\ref{lem:2} that 
$\mathbb B_+({g^*B}) \cap S = \emptyset$, so 
$$
\mathbb B_-^{g^*B}(\mathcal F +D) \cap S 
= \mathbb B_-(\mathcal F +D) \cap S. 
$$
Similarly, we can check that 
$
\mathbb B_+^{g^*B}(\mathcal F +D) \cap S 
= \mathbb B_+(\mathcal F +D) \cap S. 
$
\end{rem}
The next lemma follows immediately from Definition~\ref{defn:epsilon}:
\begin{lem} \label{lem:epsilon}
Let $Y$ be a quasi-projective variety, 
let $H$ be a big Cartier divisor on $Y$, 
and let $S$ be a subset of $\mathrm{sp}(Y)$ such that 
$(S,H)$ satisfies condition~$(\ast)_{{a}}$ 
for some ${a} \in \mathbb Q_{\ge 0}$. 
Let $f:Y'\to Y$ be a projective morphism 
such that 
$f|_{f^{-1}(V)} :f^{-1}(V) \to V$ is finite 
for some open subset $V \subseteq Y$ containing $S$. 
Then $(f^{-1}(S), f^* H)$ also satisfies condition~$(\ast)_{a}$. 
\end{lem}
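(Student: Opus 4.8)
The plan is to check the three conditions~I, I\hspace{-1pt}I, I\hspace{-1pt}I\hspace{-1pt}I of Definition~\ref{defn:epsilon} for the pair $(f^{-1}(S),f^*H)$ by keeping the data that witness $(\ast)_a$ for $(S,H)$ --- the smooth projective variety $Z$, the big divisor $L$ on $Z$, the integer $q$, and the endomorphism $\pi\colon Z\to Z$ --- and only replacing the structure morphism $g\colon Y\to U$ by its composite $g':=g\circ f\colon Y'\to U$. Since $f$ and $g$ are projective, $g'$ is projective with image in $U$, and $H\sim g^*L$ gives $f^*H\sim (g')^*L$, so I\hspace{-1pt}I-1 holds; I\hspace{-1pt}I\hspace{-1pt}I-1 ($\pi^*L\sim qL$, $q\ge 2$) is unchanged. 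Thus everything reduces to I-1, I-2, I\hspace{-1pt}I-2 and I\hspace{-1pt}I\hspace{-1pt}I-2 for $(f^{-1}(S),f^*H)$.

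First I would record the only new input: since $S\subseteq V$ and $f|_{f^{-1}(V)}$ is finite, the fiber $f^{-1}(t)$ is a finite set for every $t\in S$. Next I would note two purely set-theoretic facts: (a) $g'(f^{-1}(S))\subseteq g(S)$, because every point of $g'(f^{-1}(S))$ has the form $g(f(y'))$ with $f(y')\in S$; and (b) $g^{-1}(s)\subseteq S$ for each $s\in g(S)$, which is immediate from $S=\bigcup_{s\in g(S)}g^{-1}(s)$ (condition I-2 for $(S,H)$). Note also that for $s\in g(S)$ one has $(g')^{-1}(s)=f^{-1}(g^{-1}(s))$.

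With these in hand the four verifications are one line each. For I-1: if $s\in g'(f^{-1}(S))$ then $s\in g(S)$ by~(a), so $g^{-1}(s)$ is finite (I-1 for $(S,H)$) and contained in $S\subseteq V$ by~(b), whence $(g')^{-1}(s)=f^{-1}(g^{-1}(s))$ is a finite union of finite fibers, hence finite. For I-2: $f^{-1}(S)=\bigcup_{s\in g(S)}f^{-1}(g^{-1}(s))$, and every nonempty term $f^{-1}(g^{-1}(s))$ equals $(g')^{-1}(s)$ with $s\in g'(f^{-1}(S))$. For I\hspace{-1pt}I-2 and I\hspace{-1pt}I\hspace{-1pt}I-2: by~(a) the sets $\mathbb B_-(K_Z+aL)\cap g'(f^{-1}(S))$ and $\mathbb B_+(L)\cap g'(f^{-1}(S))$ are contained in the corresponding intersections with $g(S)$, which are empty by hypothesis, and likewise $\pi^d$ is \'etale over a neighborhood of every point of $g'(f^{-1}(S))\subseteq g(S)$ for each $d\ge 1$.

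I do not expect any real obstacle: the author already flags the lemma as immediate, and the content is just tracking where fibers sit. The single point deserving care is ensuring that each fiber $g^{-1}(s)$ pulled back along $f$ lies in the locus $V$ where $f$ is finite --- this is exactly fact~(b) --- so that the finiteness of the composite fibers is legitimate.
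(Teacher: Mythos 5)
Your proof is correct and takes the same approach as the paper's (which simply states that I-1, I-2 are obvious and II, III follow from $(g\circ f)(f^{-1}(S))\subseteq g(S)$); you keep the witnesses $Z$, $L$, $q$, $\pi$, replace $g$ by $g\circ f$, and track fibers. The one point you rightly highlight as needing care---that $g^{-1}(s)\subseteq S\subseteq V$ for $s\in g(S)$, via I-2, so that $f$ has finite fibers over all of $g^{-1}(s)$---is exactly what makes the "obvious" step work, and your write-up usefully makes that explicit.
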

\begin{proof}
Conditions~I-1 and I-2 hold obviously. 
Conditions~I\hspace{-1pt}I and~I\hspace{-1pt}I\hspace{-1pt}I follow from 
$
(g \circ f)(f^{-1}(S)) \subseteq g(S). 
$
\end{proof}
\begin{eg} \label{eg:very_ample}
Fix $N \in \mathbb Z_{>0}$. 
Let $Y \subseteq \mathbb P^N$ be a subvariety and let $S \subseteq \mathrm{sp}(Y)$ be a subset. 
Let $L \subset \mathbb P^N$ be a hyperplane with $Y \not\subseteq L$
and set $H:=L|_Y$. 
We show that there exist open subsets $U_1, \ldots, U_l$ of $Y$ such that 
$S \subseteq \bigcup_{i=1}^l U_i$ 
and $(S \cap U_i, H)$ satisfies $(\ast)_{N+1}$ for each $i$. 
Note that since $g:Y\to \mathbb P^N =: Z$ is injective and $K_Z +(N+1)L \sim 0$,
conditions~I and I\hspace{-1pt}I hold. 
Fix $s \in S$.
Choose a basis 
$x_0,x_1, \ldots, x_N$ of $H^0(\mathbb P^N, \mathcal O(1))$
so that 
$$
s \notin 
B:= 
\left\{ z \in \mathrm{sp}(\mathbb P^N) \middle|
\textup{$x_i$ vanishes at $z$ for some $i$}
\right\}.
$$ 
Take $q \in \mathbb Z_{\ge 2}$ with $p\nmid q$. 
Then $x_0^q, \ldots, x_N^q$ define a separable flat endomorphism 
$\pi: \mathbb P^N \to \mathbb P^N$ of degree $q^N$
such that $\pi^* L \sim qL$. 
One can check that $\pi^d$ is \'etale over $\mathbb P^N \setminus B$ for each $d\ge1$,
so $(S \setminus B, H)$ satisfies $(\ast)_{N+1}$. 
Since $Y$ is noetherian, our claim follows. 
\end{eg}
\begin{eg} \label{eg:dimY+1}
Suppose that $k$ is an $F$-finite \textit{infinite} field. 
Let $Y$ be a projective variety of dimension $n$ 
and let $H$ be a big Cartier divisor with $|H|$ free. 
Since $k$ is infinite, one can find a free linear system $\mathfrak d \subseteq |H|$ of dimension $n$. 
Let $g:Y \to \mathbb P^n =: Z$ be the morphism defined by $\mathfrak d$
and let $L$ be a hypersurface of $Z$. 
Then $g^*L \sim H$, so $g$ is finite over $\mathrm{sp}(Z)\setminus g(\mathbb B_+(H))$ by Lemma~\ref{lem:2}.
Fix $S' \subseteq \mathrm{sp}(Z) \setminus g(\mathbb B_+(H))$
and put $S:=g^{-1}(S')$. 
Combining the argument in Example~\ref{eg:very_ample} with Lemma~\ref{lem:epsilon}, 
one can prove that there exist subsets $S_1, \ldots, S_l$ of $S$ such that 
$S =\bigcup_{i=1}^l S_i$ 
and $(S_i, H)$ satisfies~$(\ast)_{n+1}$ for each $i$. 
Note that if $\mathcal L$ is ample (i.e., if $\mathbb B_+(\mathcal L) = \emptyset$), 
we can take $S=Y$. 
\end{eg}
\begin{eg} \label{eg:Abelian_variety}
Let $A$ be an abelian variety and 
let $L$ be a symmetric ample divisor on $A$ 
(i.e., an ample divisor $L$ with $(-1_A)^*L \sim L$). 
Then $(S,L)$ satisfies~$(\ast)_0$ for every subset $S \subseteq \mathrm{sp}(A)$. 
Indeed, we have $K_A +0L \sim 0$, 
and for some $m \in \mathbb Z_{\ge 2}$ with $p \nmid m$ the morphism
$
\pi:=m_A:A\ni x \mapsto m\cdot x \in A
$
is an \'etale endomorphism with the property that $\pi^*L \sim m^2L$.
\end{eg}
\begin{eg} \label{eg:mAd}
Let $Y$ be a projective variety 
and let $g : Y \to A$ be a morphism to an abelian variety $A$ 
such that $\dim Y = \dim {g(Y)}$. 
(For example, suppose that $k$ is algebraically closed and 
let $Y$ be a normal projective variety of maximal Albanese dimension.)
Let $V \subseteq A$ be the maximal open subset such that $g$ is finite over $V$. 
Take $S' \subseteq \mathrm{sp}(V)$ and put $S := g^{-1}(S')$. 
Combining Example~\ref{eg:Abelian_variety} with Lemma~\ref{lem:epsilon}, 
we see that $(S, g^*L)$ satisfies~$(\ast)_{0}$
for every symmetric ample divisor $L$ on $A$. 
\end{eg}
\begin{eg} \label{eg:toric}
Let $Y$ be a smooth projective toric variety and set $L := -K_Y$. 
Fix $S \subseteq \mathrm{sp}(T) \setminus \mathbb B_+(-K_Y)$, 
where $T$ is the dense open subset of $Y$ isomorphic to the $n$-dimensional algebraic torus $(k^{\times})^n$.
Then $(S,L)$ satisfies~$(\ast)_1$. 
Indeed, we have $K_Y +1L =0$, and for some $q \in \mathbb Z_{\ge 2}$ with $p \nmid q$ 
the $q$-th toric Frobenius morphism 
$\pi:Y \to Y$ is \'etale over $S$ and satisfies $\pi^*L \sim qL$.
\end{eg}
We need the following notion in order to define our invariant (Definition~\ref{defn:inv}). 
\begin{defn} \label{defn:pi-base_loci}
Let $Z$ be a quasi-projective variety 
and let $\pi:Z\to Z$ be a surjective endomorphism of $Z$. 
Let $\mathcal G$ be a coherent sheaf on a dense open subset $U$ of $Z$. 
Set $U_d := \left(\pi^d\right)^{-1}(U)$ and 
$\pi^d_U:= \left( \pi^d \right)|_{U_d}: U_d \to U$ 
for each $d \ge 1$. 
Let $D$ be a $\mathbb Q$-Cartier divisor on $U$ 
such that $\left( \pi^{d_0}_U \right)^*D$ is Cartier for some $d_0$. 
Set 
$$
B_d := 
{\pi_U^d} \bigg(
\mathrm{Bs}\left(
\left(\pi^d_U \right)^*\mathcal G  
\otimes_{U_d}
\mathcal O_{U_d}\left( \left(\pi_U^d \right)^*D\right)
\right)
\bigg)
\subseteq 
\mathrm{sp}(U).
$$
We then define 
\begin{align*}
\mathbb B^{\pi}(\mathcal G +D)
:= \bigcap_{d\ge d_0} B_d
\subseteq 
\mathrm{sp}(U).
\end{align*}
\end{defn}
 
Note that for each $d\ge d_0$ we have $B_d \supseteq B_{d+1}$, so 
$ \mathbb B^{\pi}(\mathcal G +D) = B_d $ for some $d \gg d_0$. 
\begin{prop} \label{prop:B-_Bpi}
Let the notation be as in Definition~\ref{defn:pi-base_loci}. 
Let $C$ denote the set of points $z \in \mathrm{sp}(U)$ such that 
$\pi^d$ is \textit{not} flat over $z$ for some $d$. 
Let $A$ be a $\mathbb Q$-Cartier divisor on $U$. 
Then 
\begin{align*}
\mathbb B(\mathcal G +(D +A) )
\subseteq 
\mathbb B^\pi(\mathcal G +D)
\cup \mathbb B_+(A)
\cup C. 
\end{align*}
\end{prop}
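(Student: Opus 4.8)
The plan is to reduce the statement to a comparison between the iterated-pullback base locus $\mathbb B^\pi(\mathcal G+D)$ and the ordinary stable base locus, via Lemma~\ref{lem:pull_back} applied to each map $\pi^d_U$. First I would fix $d\ge d_0$ large enough that $\mathbb B^\pi(\mathcal G+D)=B_d$ (possible since the $B_d$ form a decreasing sequence). The morphism $\pi^d:Z\to Z$ is finite and surjective by hypothesis, hence so is its restriction $\pi^d_U:U_d\to U$ once we are careful about the open sets; the non-flat locus of $\pi^d_U$ is contained in $C$ by definition of $C$. So I can invoke Lemma~\ref{lem:pull_back} with $X=U$, the finite surjective morphism $\pi^d_U$ (after shrinking/adjusting as in that lemma's statement), the sheaf $\mathcal G$, the divisor $D$, and the divisor $A$: it gives
$$
\mathbb B(\mathcal G+(D+A))\subseteq \pi^d_U\big(\mathbb B((\pi^d_U)^*\mathcal G+(\pi^d_U)^*D)\big)\cup \mathbb B_+(A)\cup C.
$$

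The next step is to bound $\mathbb B((\pi^d_U)^*\mathcal G+(\pi^d_U)^*D)$ by the ordinary base locus of a single symmetric power twisted by $(\pi^d_U)^*D$, using the remark just before Corollary~\ref{cor:AD} that $\mathbb B(\mathcal F+D)=\mathrm{Bs}(S^{in}(\mathcal F)\otimes\mathcal O(inD))$ for suitable $n$; this is exactly the object whose image under $\pi^d_U$ defines $B_d$ (up to replacing $\mathcal G$ by an appropriate symmetric power, which does not change the base locus in the limit). Hence $\pi^d_U\big(\mathbb B((\pi^d_U)^*\mathcal G+(\pi^d_U)^*D)\big)\subseteq B_d=\mathbb B^\pi(\mathcal G+D)$, and combining with the displayed inclusion above yields
$$
\mathbb B(\mathcal G+(D+A))\subseteq \mathbb B^\pi(\mathcal G+D)\cup\mathbb B_+(A)\cup C,
$$
which is the claim.

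The main technical obstacle I anticipate is bookkeeping around the open sets and the Cartier hypotheses: $\mathcal G$ and $D$ live only on the open subset $U$, so $(\pi^d_U)^*D$ needs $d\ge d_0$ to be Cartier, and Lemma~\ref{lem:pull_back} is stated for $\mathcal F$ and divisors on a quasi-projective variety with a finite surjective morphism onto it — one must check that restricting everything to $U$ (rather than working on $Z$) keeps us inside the hypotheses of that lemma, and that the ``$U$ = maximal flatness locus'' appearing there is contained in (the complement of) $C$ after the appropriate identification. A secondary point is that the definition of $B_d$ uses $\mathrm{Bs}$ of a fixed sheaf $(\pi^d_U)^*\mathcal G\otimes\mathcal O((\pi^d_U)^*D)$ whereas $\mathbb B$ involves symmetric powers; one resolves this by noting $\mathbb B^\pi$ is insensitive to replacing $\mathcal G$ by $S^{in}(\mathcal G)$ for $n\gg0$ (the sequence $B_d$ stabilizes to the same set), so the two notions match in the regime we use. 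Once these identifications are pinned down, the proof is a one-line application of Lemma~\ref{lem:pull_back}.
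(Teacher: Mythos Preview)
Your approach is correct and is exactly the paper's: the proof is a direct application of Lemma~\ref{lem:pull_back} to the morphism $\pi^d_U$ for $d$ large enough that $\mathbb B^\pi(\mathcal G+D)=B_d$.

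One simplification: your ``secondary point'' about symmetric powers is unnecessary. Since $(\pi^d_U)^*D$ is Cartier for $d\ge d_0$, the index $i$ in the definition of $\mathbb B$ equals $1$, and taking $l=1$ in that definition gives directly
\[
\mathbb B\big((\pi^d_U)^*\mathcal G+(\pi^d_U)^*D\big)\;\subseteq\;\mathrm{Bs}\big((\pi^d_U)^*\mathcal G\otimes\mathcal O_{U_d}((\pi^d_U)^*D)\big),
\]
so $\pi^d_U$ of the left side lands in $B_d=\mathbb B^\pi(\mathcal G+D)$ without any replacement of $\mathcal G$ by a symmetric power. (Also, strictly speaking Definition~\ref{defn:pi-base_loci} only says $\pi$ is surjective, not finite; finiteness is needed to invoke Lemma~\ref{lem:pull_back} and holds in every use of the proposition in the paper.)
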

\begin{proof}
This follows from Lemma~\ref{lem:pull_back} immediately. 
\end{proof}
The next invariant plays an important role in Section~\ref{section:main}
\begin{defn} \label{defn:inv}
Let $Y$ be a quasi-projective variety,
let $H$ be a big Cartier divisor on $Y$, 
and let $S$ be a non-empty subset of $\mathrm{sp}(Y)$.
Fix a non-negative rational number ${a}$. 
With notation as in Definition~\ref{defn:epsilon}, 
suppose that $(S,H)$ satisfies condition~$(\ast)_{{a}}$.
Let $\mathcal F$ be a coherent sheaf on $Y$. 
We define 
\begin{align*}
T(\mathcal F)
&:=
T_S(g,\pi,\mathcal F)
:=
\left\{
r \in \mathbb Z[q^{-1}] 
\middle|
\mathbb B^{\pi}(g_*\mathcal F -rL|_U)
\cap g(S) = \emptyset
\right\}
\textup{\quad and} \\
t(\mathcal F) 
&:=
t_S(g,\pi,\mathcal F)
:=
\mathrm{sup}\,T_S(g,\pi,\mathcal F). 
\end{align*}
We note that $g:Y \to U$ is a projective morphism 
to a dense open subset $U$ of $Z$. 
\end{defn}
\begin{rem} \label{rem:inv}
(1) 
If $r \ll 0$, then 
$
\mathbb B^{\pi}(g_* \mathcal F - rL|_U)
\subseteq 
\mathrm{Bs}(g_* \mathcal F(-rL|_U)) 
\subseteq 
Z \setminus g(S)
$
by Lemma~\ref{lem:3}, 
so $T(\mathcal F) \ne \emptyset$. 
Hence we have 
$
t(\mathcal F)
\in 
\mathbb R \cup \{+\infty \}. 
$
\\ \noindent (2)
One can easily check that 
$
T(\mathcal F) \setminus \{ t(\mathcal F) \}
=\mathbb Z[q^{-1}] \cap (-\infty, t(\mathcal F)). 
$
\end{rem}
\begin{prop} \label{prop:t}
Let the notation be as in Definition~\ref{defn:inv}. 
Let $r$ be a rational number. 
\begin{itemize}
\item[$(1)$]
If $r \le t(\mathcal F)$, 
then $\mathbb B_-(\mathcal F -rH) \cap S = \emptyset.$
\item[$(2)$]
If $r < t(\mathcal F)$, 
then $\mathbb B_+(\mathcal F -rH) \cap S = \emptyset.$
\end{itemize}
\end{prop}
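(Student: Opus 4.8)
The plan is to unwind the definitions of $t(\mathcal F) = \sup T_S(g,\pi,\mathcal F)$ and of the base loci $\mathbb B_-$, $\mathbb B_+$, and then reduce everything downstairs on $Z$ via Lemma~\ref{lem:2}, where we can exploit the endomorphism $\pi$ through Proposition~\ref{prop:B-_Bpi}. First I would fix an ample $\mathbb Q$-Cartier divisor on $Y$ of the form $g^*B$ with $B$ ample on $Z$; by Remark~\ref{rem:epsilon2}, which is available because $(S,H)$ satisfies $(\ast)_a$, computing $\mathbb B_-$ and $\mathbb B_+$ against $g^*B$ rather than an arbitrary ample divisor does not change the intersection with $S$. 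So it suffices to show $\mathbb B_-^{g^*B}(\mathcal F - rH)\cap S = \emptyset$ in case $(1)$ and $\mathbb B_+^{g^*B}(\mathcal F - rH)\cap S=\emptyset$ in case $(2)$.

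For $(1)$: if $r \le t(\mathcal F)$, then by Remark~\ref{rem:inv}(2) every rational number $r' < r$ (hence a cofinal set of $r' \in \mathbb Z[q^{-1}]$ approaching $t(\mathcal F)$ from below, and in particular some $r'$ with $r \le r'$ is not needed — we just need $r' < r \le t(\mathcal F)$ so $r' \in T(\mathcal F)$) satisfies $\mathbb B^\pi(g_*\mathcal F - r'L|_U)\cap g(S)=\emptyset$. Write $\mathcal F - rH = \mathcal F - r'H - (r-r')H$, where $(r-r')H \sim g^*((r-r')L)$ is semi-ample on $Y$ and positive. The point is then: apply Lemma~\ref{lem:2}(3) to push the computation of $\mathbb B_-^{g^*((r-r')L)}(\mathcal F - r'H + g^*(-(r-r')L)\cdot(\text{wait}))$... more cleanly, since $r' < t(\mathcal F)$ we get $r'\in T$, and we use Proposition~\ref{prop:B-_Bpi} with $A$ a small multiple of $(r-r')L$ to conclude $\mathbb B(g_*\mathcal F - r'L|_U - \varepsilon A)\cap g(S) = \emptyset$ provided also $\mathbb B_+(A)\cap g(S)=\emptyset$ and $C\cap g(S)=\emptyset$; the latter two hold because $A$ is a positive multiple of the big divisor $L$ with $\mathbb B_+(L)\cap g(S)=\emptyset$ (condition II-2), and because $\pi^d$ is étale — hence flat — near every point of $g(S)$ (condition III-2), so the flatness-failure locus $C$ misses $g(S)$. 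Feeding this back through Lemma~\ref{lem:2}(3) upstairs gives $\mathbb B_-^{g^*B}(\mathcal F - rH)\cap S = \emptyset$, as wanted. Statement $(2)$ is the same argument with $\mathbb B_+$ in place of $\mathbb B_-$ and using strict inequality $r < t(\mathcal F)$ to still have room to pick $r' < t(\mathcal F)$ with $r < r'$ impossible — rather, one uses that $\mathbb B_+^{g^*B}(\mathcal F - rH) = \mathbb B(\mathcal F - rH - r''g^*B)$ for small $r''>0$, absorbs the perturbation into the gap $r' - r > 0$, and runs the identical reduction.

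The main obstacle I anticipate is bookkeeping the several layers of base-locus comparisons: translating between $\mathbb B_-^{g^*L}$ upstairs on $Y$ and $\mathbb B^\pi$ downstairs on $U\subseteq Z$ requires chaining Lemma~\ref{lem:2}(3) (which introduces the extra term $g^{-1}(\mathbb B(L))$ and $\mathbb B_+(g^*H)$ — both disjoint from $S$ by I-1, II-2 and Lemma~\ref{lem:2}(1)) together with Proposition~\ref{prop:B-_Bpi} (which introduces $\mathbb B_+(A)\cup C$, disjoint from $g(S)$ as above), while carefully managing the semi-ample perturbation $(r-r')H$ so that it simultaneously dominates all these error terms and keeps the relevant divisor in the cone where $r'\in T(\mathcal F)$ applies. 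Everything is "clearly" disjoint from $S$ by the hypotheses packaged into $(\ast)_a$, but assembling the chain of inclusions in the right order — and verifying that the $\mathbb Z[q^{-1}]$-rationality needed to invoke Remark~\ref{rem:inv}(2) is compatible with the choice of $r'$ — is where the care lies.
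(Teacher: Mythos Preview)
Your proposal is correct and follows essentially the same route as the paper: reduce via Lemma~\ref{lem:2} and Remark~\ref{rem:epsilon2} to a statement about $g_*\mathcal F$ on $U\subseteq Z$, then apply Proposition~\ref{prop:B-_Bpi} together with Remark~\ref{rem:inv}(2) and the hypotheses packaged into $(\ast)_a$. One small simplification you overlook: condition~III already demands that $\pi$ be \emph{flat}, so the locus $C$ in Proposition~\ref{prop:B-_Bpi} is empty outright and no appeal to III-2 is needed at that step.
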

Note that we have 
$
\mathbb B_-^{g^*B}(? + ?') \cap S =\mathbb B_-(? + ?') \cap S
$
and 
$
\mathbb B_+^{g^*B}(? + ?') \cap S =\mathbb B_+(? + ?') \cap S
$
for some ample Cartier divisor $B$ on $Z$
as explained in Remark~\ref{rem:epsilon2}. 
\begin{proof}
We use the same notation as in Definition~\ref{defn:epsilon}. 
We first prove (1). 
Since $g|_{g^{-1}(V)}:g^{-1}(V) \to V$ is finite 
for some open subset $V \subseteq Z$ 
by 
Definition~\ref{defn:epsilon}, 
we see from Lemma~\ref{lem:2}~(1) and~(3) that 
it is enough to show that 
$
\mathbb B_-(g_* \mathcal F -rL|_U) \cap g(S) = \emptyset. 
$ 
Fix $\alpha \in \mathbb Q_{>0}$ and an ample Cartier divisor $B$ on $U$. 
We show that 
$$
\mathbb B(g_* \mathcal F +(-rL|_U +\alpha B)) \cap g(S) = \emptyset.
$$
Take $\beta \in \mathbb Q_{>0}$ so that 
$\alpha B -\beta L|_U$ is ample 
and $r -\beta \in \mathbb Z[q^{-1}].$ 
Note that $r -\beta \in T(\mathcal F)$. 
Applying Proposition~\ref{prop:B-_Bpi} with $A:=\alpha B-\beta L|_U$, 
we obtain
\begin{align*}
\mathbb B(g_*\mathcal F +(-rL|_U +\alpha B) ) 
& =
\mathbb B \big(g_*\mathcal F +( (-r +\beta)L|_U +\underbrace{\alpha B -\beta L|_U}_{\hspace{15pt}=A} ) \big) 
\\ & \subseteq 
\mathbb B^{\pi}(g_*\mathcal F +(-r +\beta)L|_U)
\subseteq 
Z \setminus g(S).
\end{align*}
Note that $C$ in Proposition~\ref{prop:B-_Bpi} is empty, 
since $\pi$ is flat by definition. 
 
Next, we show~(2). 
By an argument similar to the above, 
we only need to show that 
$
\mathbb B_+(g_* \mathcal F -rL|_U) 
\cap 
g(S) 
= \emptyset. 
$
Fix $\alpha \in \mathbb Q_{>0}$ such that 
$r +\alpha \in T(\mathcal F). $
Take $\beta \in \mathbb Q_{>0}$ so that 
$
\mathbb B_+(\alpha L|_U -\beta B)
= 
\mathbb B_+(L|_U). 
$
Using Proposition~\ref{prop:B-_Bpi} with $A=\alpha L|_U-\beta B$, we get 
\begin{align*}
\mathbb B_+(g_*\mathcal F -rL|_U) 
& \subseteq 
\mathbb B(g_*\mathcal F -(rL|_U +\beta B)) 
\\ & =
\mathbb B\big(g_*\mathcal F -((r+\alpha)L|_U \underbrace{-\alpha L|_U +\beta B}_{\hspace{20pt}=-A}) \big) 
\\ & \subseteq 
\mathbb B^{\pi}(g_*\mathcal F -(r+\alpha)L|_U)
\cup \mathbb B_+(L|_U). 
\end{align*}
Since $\mathbb B_+(L|_U)\subseteq\mathbb B_+(L)$, 
our claim follows from Definitions~\ref{defn:epsilon} and~\ref{defn:inv}. 
\end{proof}
\begin{prop} \label{prop:ACM}
Let $Y$ be a dense open subset of an $n$-dimensional projective variety $W$, 
and take $S \subseteq \mathrm{sp}(Y)$. 
Let $\mathcal E$, $\mathcal F$ and $\mathcal G$ 
be coherent sheaves on $Y$. 
Let $\Lambda$ be an infinite set of positive integers. 
Fix $\delta, M \in \mathbb R_{\ge 0}$. 
Suppose that for every $e \in \Lambda$ 
there exists a positive integer ${h_e}$ with $|\delta p^e -{h_e}| \le M$ 
and a morphism 
$$
\psi^{(e)}:
{F_Y^e}_* \left( \mathcal E \otimes S^{{h_e}}(\mathcal F) \right) 
\to 
\mathcal G 
$$
that is surjective over $S$. 
Let $H$ be a big Cartier divisor on $Y$, 
let $A'$ be a big Cartier divisor on $W$ with $|A'|$ free, 
and set $A := A'|_Y$. 
Take two rational numbers $r, r'$ so that $\delta r + r' >0$. 
Then 
\begin{align*}
\mathrm{Bs}(\mathcal G (H))
\cap S
& \subseteq
\mathbb B_- (\mathcal F -r A) 
\cup
\mathbb B_- (H -(n + r')A) 
\cup
\mathbb B_+ (A). 
\end{align*}
In particular, 
$
\mathrm{Bs}(\mathcal G (H))
\cap S
\subseteq
\mathbb B (\mathcal F -r A) 
\cup
\mathbb B (H -(n + r')A) 
\cup
\mathbb B_+ (A). 
$
\end{prop}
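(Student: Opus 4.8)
The plan is to reduce the statement to an application of Proposition~\ref{prop:ggg} after suitably absorbing the Frobenius pushforwards and the symmetric powers of $\mathcal F$. First I would fix an ample Cartier divisor $B$ on $W$ with $|B|$ free and, using Corollary~\ref{cor:AD} together with the observation (Remark-style) that $\mathbb B_+(A)$ controls the difference between $\mathbb B_-$ and $\mathbb B$, replace the three ``$\mathbb B_-$'' terms on the right by ``$\mathbb B$'' terms modulo $\mathbb B_+(A)$; this is the content of the ``in particular'' clause and it costs nothing, so the real work is the first inclusion. The strategy for that is: pick rational numbers $\varepsilon_1, \varepsilon_2 > 0$ and an auxiliary $\varepsilon > 0$ so that, writing $H \sim_{\mathbb Q} (n + r' + \varepsilon')A + (\text{something in }\mathbb B(H-(n+r')A))$, I can feed $\mathcal F$ (scaled by $\varepsilon_1$) and $\mathcal O_Y(A)$ (scaled by $\varepsilon_2$) as the sheaves $\mathcal F_1,\dots$ in Proposition~\ref{prop:ggg}, whose conclusion bounds $\mathrm{Bs}$ of a Frobenius pushforward of a tensor product of symmetric powers by $\mathbb B_+(H') \cup \bigcup \mathbb B(\mathcal F_i - \varepsilon_i H'|_Y)$ for a big globally generated $H'$ on $W$.

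The key steps, in order, would be: (1) Reduce to bounding $\mathrm{Bs}(\mathcal G(H)) \cap S$; choose $H'$ big and globally generated on $W$ (e.g. a large multiple of $B$, or $A'$ itself) and a rational $c>0$ with $H - cH'|_Y$ pseudo-effective in the relevant sense, so that $\mathbb B(H - cH'|_Y)$ is controlled by $\mathbb B_-(H - (n+r')A)$ after choosing the numerics right. (2) Use the hypothesis: for each $e \in \Lambda$ we have a map $\psi^{(e)}: {F^e_Y}_*(\mathcal E \otimes S^{h_e}(\mathcal F)) \to \mathcal G$ surjective over $S$, with $|\delta p^e - h_e| \le M$. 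Twisting by $\mathcal O_Y(H)$ and applying the projection formula to move $\mathcal O_Y(p^e H')$ inside the Frobenius, I want to realize $\mathcal G(H)$, up to the locus $\mathbb B(H - \text{stuff})$, as a quotient of ${F^e_Y}_*\bigl(\mathcal E' \otimes S^{h_e}(\mathcal F) \otimes \mathcal O_Y(p^e(\text{multiple of }A))\bigr)$. (3) Rewrite $\mathcal O_Y(p^e \cdot \nu A)$ as $S^{p^e \nu}(\mathcal O_Y(A))$ and check the numerical inequality $\varepsilon_1 h_e + \varepsilon_2 (p^e \nu) \ge (n + \varepsilon)p^e$ for $e \gg 0$, which holds because $h_e \ge \delta p^e - M$ and one arranges $\varepsilon_1 \delta + \varepsilon_2 \nu > n$ using $\delta r + r' > 0$ to pick $\nu$ appropriately relative to $r, r'$. (4) Apply Proposition~\ref{prop:ggg} with these choices to conclude $\mathrm{Bs}$ of the pushforward is inside $\mathbb B_+(H') \cup \mathbb B(\mathcal F - \varepsilon_1 H'|_Y) \cup \mathbb B(\mathcal O_Y(A) - \varepsilon_2 H'|_Y)$; then translate each of these, via Lemma~\ref{lem:AD} / Corollary~\ref{cor:AD} and the relation $H' \sim$ (multiple of $B$), back into $\mathbb B_-(\mathcal F - rA)$, $\mathbb B_-(H - (n+r')A)$, and $\mathbb B_+(A)$ respectively, intersecting everything with $S$ and using that $S \subseteq Y$.

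The delicate bookkeeping — and the step I expect to be the main obstacle — is step (3) together with the final translation in step (4): getting the coefficients $\varepsilon_1, \varepsilon_2, \nu, \varepsilon$ and the auxiliary ample/semiample divisors $B, H'$ lined up so that simultaneously (a) the numerical hypothesis of Proposition~\ref{prop:ggg} is satisfied for all large $e \in \Lambda$, (b) $\mathbb B(\mathcal F - \varepsilon_1 H'|_Y) \cap S \subseteq \mathbb B_-(\mathcal F - rA) \cap S$ (this needs $\varepsilon_1 H'$ to ``dominate'' $rA$ in the sense of Lemma~\ref{lem:AD}, i.e. one must play $H'$ against $A$ and use that $\mathbb B^{A}_-$ agrees with $\mathbb B_-$ after the $g^*B$-perturbation, as in Remark~\ref{rem:epsilon2}), and (c) the $H$-twist only contributes $\mathbb B_-(H - (n+r')A)$. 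The condition $\delta r + r' > 0$ is exactly what makes it possible to choose $\nu$ (the $A$-coefficient extracted from $H$ after discarding the $\mathbb B$-locus) so that $\varepsilon_1 \delta + \varepsilon_2 \nu$ can be pushed above $n$; verifying this is a linear-programming-type computation that one should carry out carefully but which presents no conceptual difficulty. The error term $M$ in $|\delta p^e - h_e| \le M$ is harmless because it contributes a bounded additive shift that is dominated by the $\varepsilon p^e$ slack for $e \gg 0$.
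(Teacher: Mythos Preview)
Your overall plan --- twist $\psi^{(e)}$ by $\mathcal O_Y(H)$, push $\mathcal O_Y(p^eH)$ inside the Frobenius via the projection formula, and invoke Proposition~\ref{prop:ggg} --- is exactly what the paper does, but you make the bookkeeping far harder than necessary by choosing the wrong second sheaf. The paper takes the big globally generated divisor on $W$ in Proposition~\ref{prop:ggg} to be $A'$ itself and feeds in
\[
\bigl(\mathcal F_1,\varepsilon_1,l_1;\ \mathcal F_2,\varepsilon_2,l_2;\ \varepsilon\bigr)
=\bigl(\mathcal F,\,r,\,h_e;\ \mathcal O_Y(H),\,n+r',\,p^e;\ r'+s\bigr)
\]
for any rational $s$ with $-r'<s<\delta r$. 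The point is that $\mathcal O_Y(p^eH)=S^{l_2}(\mathcal F_2)$ on the nose, so no decomposition of $H$ is needed at all; the numerical check becomes $rh_e+(n+r')p^e\ge (n+r'+s)p^e$, which holds for $e\gg 0$ since $h_e\ge \delta p^e-M$ and $\delta r>s$; and the locus $B$ of Proposition~\ref{prop:ggg} is literally $\mathbb B(\mathcal F-rA)\cup\mathbb B(H-(n+r')A)$, while the $\mathbb B_+$ term is $\mathbb B_+(A)$. This already gives the ``in particular'' clause; the $\mathbb B_-$ version follows because $\delta r+r'>0$ is open in $(r,r')$, so one perturbs to $\mathbb B_-^A$ and applies Corollary~\ref{cor:AD}(2) once. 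No auxiliary ample $B$, no separate $H'$, and no back-translation are required.

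Your choice $\mathcal F_2=\mathcal O_Y(A)$ forces you first to convert $\mathcal O_Y(p^eH)$ into $\mathcal O_Y(p^e\nu A)$ times an $e$-dependent remainder, and this is where your sketch has a real gap: the sheaf playing the role of ``$\mathcal G$'' in Proposition~\ref{prop:ggg} must be fixed independently of $e$ (the threshold $e_0$ there depends on it), so you cannot absorb $\mathcal O_Y(p^e(H-\nu A))$ into $\mathcal E'$, and you do not in general have $\mathrm{Bs}\bigl(p^e(H-\nu A)\bigr)=\mathbb B(H-\nu A)$ for the particular exponents $p^e$. Your step~(4) also mislocates $\mathbb B_-(H-(n+r')A)$: it cannot arise from $\mathbb B(\mathcal O_Y(A)-\varepsilon_2 H'|_Y)$, which is empty when $H'=A'$ and $\varepsilon_2<1$; it has to come from the $H$-decomposition in step~(2), which you have not carried out. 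One can patch all this by adding a third sheaf $\mathcal F_3=\mathcal O_Y(H-\nu A)$ with $l_3=p^e$ and small $\varepsilon_3$ --- but then $S^{l_2}(\mathcal F_2)\otimes S^{l_3}(\mathcal F_3)\cong\mathcal O_Y(p^eH)$ and you have rediscovered the paper's direct choice $\mathcal F_2=\mathcal O_Y(H)$ the long way round.
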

\begin{proof}
We first show the second assertion. 
The morphism $\psi^{(e)}$ induces 
$$
\mathcal D_e :={F_Y^e}_* \left( \mathcal E \otimes S^{{h_e}}(\mathcal F) 
\otimes \mathcal O_Y(p^e H)
\right) 
\xrightarrow{
\psi^{(e)} \otimes \mathcal O_Y(H)
}
\mathcal G (H), 
$$
which is surjective over $S$, so 
$
\mathrm{Bs}( \mathcal G (H) ) \cap S
\subseteq 
\mathrm{Bs}( \mathcal D_e ) . 
$
Take $s \in \mathbb Q$ so that 
$-r' < s < \delta r$. 
Then for each $0 \ll e \in \Lambda$ we have 
\begin{align*}
{h_e}r + (n + r') p^e 
&\ge \delta p^e r -|rM| + (n + r') p^e 
\\ &= (n +r' +s)p^e +(\delta r -s) p^e -|rM| 
\\ &\underset{e \gg 0}{>} (n +r' +s)p^e. 
\end{align*}
Applying Proposition~\ref{prop:ggg} for
$$
\big( \mathcal F_1, \varepsilon_1, l_1;~ 
\mathcal F_2, \varepsilon_2, l_2;~
\varepsilon \big)
:=\big (\mathcal F, r, h_e;~ 
\mathcal O_Y(H), n+r', p^e;~
r'+s \big), 
$$
we obtain
$$
\mathrm{Bs}( \mathcal G (H) ) \cap S
\subseteq 
\mathrm{Bs}( \mathcal D_e ) 
\overset{\textup{Prop~\ref{prop:ggg}}}{\subseteq}
\mathbb B (\mathcal F -r A) 
\cup
\mathbb B (H -(n + r')A) 
\cup
\mathbb B_+(A). 
$$

We prove the first claim. 
Since the condition $\delta r+r'>0$ is open with respect to $r$ and $r'$, 
it follows from the above argument that 
$$
\mathrm{Bs}(\mathcal G(H)) \cap S \subseteq 
\mathbb B_-^A(\mathcal F-rA)
\cup
\mathbb B_-^A(H-(n+r')A)
\cup
\mathbb B_+(A),
$$
so the first claim follows from Corollary~\ref{cor:AD}~(2). 
\end{proof}
The following two lemmas are used in the proof of Proposition~\ref{prop:main}.
\begin{lem} \label{lem:conductor}
Let $f:X\to Y$ be a separable finite flat morphism 
between smooth varieties. 
Let $V \subseteq Y$ be the largest open subset over which $f$ is \'etale. 
Let $R$ denote the ramification divisor of $f$. 
Fix $e \in \mathbb Z_{>0}$. 
Then there exists a morphism 
$$
{F_{X/Y}^{(e)}}_*\mathcal O_{X^e}((1-p^e)R) \to \mathcal O_{X_{Y^e}}
$$
of coherent sheaves on $X_{Y^e}$, 
which is an isomorphism over $f_{Y^e}^{-1}(V^e)$. 
%
\end{lem}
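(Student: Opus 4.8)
Throughout write $F:=F_{X/Y}^{(e)}:X^e\to X_{Y^e}$ for the relative Frobenius, and $V\subseteq Y$ for the largest open over which $f$ is étale, with $V^e\subseteq Y^e$ the corresponding open. The plan rests on three preliminary observations. First, since $f$ is flat with regular source and target it is a local complete intersection morphism, so $\omega_{X/Y}=\omega_X\otimes f^*\omega_Y^{-1}$ is invertible, and since $f$ is separable the standard ramification formula gives $\omega_{X/Y}\cong\mathcal O_X(K_{X/Y})\cong\mathcal O_X(R)$, with $R$ effective and supported exactly on $X\setminus f^{-1}(V)$; in particular $R$ restricts to $0$ over $f^{-1}(V)$. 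Second, $X_{Y^e}=X\times_Y Y^e$ is a variety: it is Cohen--Macaulay, being finite flat over the regular scheme $Y^e$; it is irreducible, being universally homeomorphic to $X$ via $w^{(e)}$ (a base change of the Frobenius $F_Y^e$); and it is generically reduced, since over $f_{Y^e}^{-1}(V^e)$ the morphism $f_{Y^e}$ is étale, so $X_{Y^e}$ is regular there. Third, $F$ is finite (as $w^{(e)}\circ F=F_X^e$ is finite and $w^{(e)}$ is separated) and birational: $\deg F_X^e=\deg w^{(e)}=p^{e\dim X}$ (the latter being a base change of $F_Y^e$), so $\deg F=1$; consequently, over the open set $f_{Y^e}^{-1}(V^e)$, where $X_{Y^e}$ is regular, $F$ is a finite birational morphism onto a normal scheme, hence an isomorphism there.

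Now consider the canonical inclusion of sheaves of functions $\iota:\mathcal O_{X_{Y^e}}\hookrightarrow F_*\mathcal O_{X^e}$. It is injective because $X_{Y^e}$ is reduced and $F$ is dominant, it is an isomorphism over $f_{Y^e}^{-1}(V^e)$ by the previous paragraph, and its cokernel $Q$ is a torsion sheaf, supported on the proper closed subset of $X_{Y^e}$ where $F$ fails to be an isomorphism. Applying $\mathcal Hom_{\mathcal O_{X_{Y^e}}}(-,\mathcal O_{X_{Y^e}})$ to $\iota$, and using the computation carried out in Section~\ref{section:trace} in the construction of $\phi^{(e)}_{(X/Y,\Delta)}$ specialized to $\Delta=0$ — namely $\mathcal Hom_{\mathcal O_{X_{Y^e}}}(F_*\mathcal O_{X^e},\,f_{Y^e}^*\omega_{Y^e}^{1-p^e})\cong F_*\mathcal O_{X^e}((1-p^e)K_X)$ (the argument there is local on the base, so affineness of $Y$ is not needed) — then twisting by $(f_{Y^e}^*\omega_{Y^e})^{p^e-1}$, using $f_{Y^e}\circ F=f^{(e)}$ and the projection formula, and finally $\omega_{X/Y}\cong\mathcal O_X(R)$, one obtains an identification $\mathcal Hom_{\mathcal O_{X_{Y^e}}}(F_*\mathcal O_{X^e},\mathcal O_{X_{Y^e}})\cong F_*\mathcal O_{X^e}((1-p^e)R)$. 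Composing with $\iota^\vee:=\mathcal Hom(\iota,\mathcal O_{X_{Y^e}})$ produces the desired morphism
$$
\theta:\ F_*\mathcal O_{X^e}((1-p^e)R)\ \longrightarrow\ \mathcal O_{X_{Y^e}},
$$
which is nothing but the Grothendieck trace map of the finite morphism $F$.

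It remains to verify the two asserted properties. For injectivity, left exactness of $\mathcal Hom(-,\mathcal O_{X_{Y^e}})$ applied to $0\to\mathcal O_{X_{Y^e}}\xrightarrow{\iota}F_*\mathcal O_{X^e}\to Q\to 0$ gives $\ker\theta\cong\mathcal Hom_{\mathcal O_{X_{Y^e}}}(Q,\mathcal O_{X_{Y^e}})$, which vanishes because $Q$ is torsion while $\mathcal O_{X_{Y^e}}$ is torsion-free on the variety $X_{Y^e}$. That $\theta$ is an isomorphism over $f_{Y^e}^{-1}(V^e)$ is then immediate, since $\iota$ is an isomorphism there and $\theta$ is obtained from $\iota$ by dualizing. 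The step that requires the most care is the identification $\mathcal Hom_{\mathcal O_{X_{Y^e}}}(F_*\mathcal O_{X^e},\mathcal O_{X_{Y^e}})\cong F_*\mathcal O_{X^e}((1-p^e)R)$: one must keep the bookkeeping among the copies $X^e$, $Y^e$, $f^{(e)}$ and the twists by $\omega_{Y^e}^{1-p^e}$ consistent, and one must fix the meaning of the "ramification divisor" $R$ to be the divisor of the relative dualizing sheaf, i.e. $\mathcal O_X(R)\cong\omega_{X/Y}$ (so that, over $f^{-1}(V)$, this isomorphism is the tautological trivialization). Granting this, the description of $\theta$ as the trace map renders the remaining claims formal.
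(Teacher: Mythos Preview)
Your proof is correct and follows essentially the same route as the paper: the map you call $\theta$ is precisely the trace morphism $\phi_{(X/Y,0)}^{(e)}(0)$ that the paper constructs by applying the Grothendieck-duality computation of Section~\ref{section:trace} to the morphism $f$ with $\Delta=0$, and your argument that $F$ restricts to an isomorphism over $f_{Y^e}^{-1}(V^e)$ (via $\deg F=1$ and regularity of $X_{Y^e}$ there) matches the paper's observation that $F_{U/V}^{(e)}$ is an isomorphism between smooth varieties. Your write-up is in fact more thorough than the paper's: you verify that $X_{Y^e}$ is integral and you supply an explicit injectivity argument via $\ker\theta\cong\mathcal Hom(Q,\mathcal O_{X_{Y^e}})=0$, a point the paper's proof leaves implicit.
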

\begin{proof}
Set $U:=f^{-1}(V)$. We have the following commutative diagram:
\begin{align*}
\xymatrix{ 
U^e \ar@{^(->}[r]  \ar[d]_{F_{U/V}^{(e)}} & 
X^e \ar[dr]^{F_{X}^e} \ar[d]_{F_{X/Y}^{(e)}} 
  & \\ 
U_{V^e} \ar@{^(->}[r] \ar[d]_{f_{V^e}} & 
X_{Y^e} \ar[r]_{w^{(e)}} \ar[d]_{f_{Y^e}} & X \ar[d]^{f} \\
V^e \ar@{^(->}[r] & 
Y^e \ar[r]_{F_Y^e} & Y. 
}
\end{align*}
Here, each square in the diagram is cartesian. 
Since $R \sim K_{X/Y}$, applying the argument after 
Definition~\ref{defn:S0f} to the morphism $f$, 
we obtain the morphisms
$$
\phi_{(X/Y,0)}^{(e)}(0) : 
{F_{X/Y}^{(e)}}_*\mathcal O_{X^e}\left( (1-p^e)R \right)
\to
\mathcal O_{X_{Y^e}}. 
$$
We see from the choice of $V$ that $F_{U/V}^{(e)}$ is an isomorphism 
(e.g., \cite[\S 2, Proposition~2~c)~2)]{SGA5XV}), 
so it follows from the definition that $
\phi_{(X/Y,0)}^{(e)}(0)$ is an isomorphism over $U_{V^e} = {f_{V^e}}^{-1}(V^e)$. 
\end{proof}
\begin{lem} \label{lem:bc}
Let $\pi: X \to Y$ be a separable finite flat morphism 
between smooth varieties. 
Let $R$ denote the ramification divisor of $\pi$ and  
let $V \subseteq Y$ be the largest open subset over which $\pi$ is \'etale. 
Let $\mathcal G$ be a coherent sheaf on $Y$ and 
let $\mathcal M$ be a line bundle on $X$. 
Then for each $e \in \mathbb Z_{>0}$, there exists a morphism 
$$
{F_X^e}_* 
\left(
\mathcal M^{p^e}
\otimes
\mathcal O_{X^e}((1-p^e)R)
\otimes 
{\pi^{(e)}}^*\mathcal G
\right)
\to 
\mathcal M \otimes \pi^* {F_Y^e}_* \mathcal G
$$
that is an isomorphism over $\pi^{-1}(V)$. 
\end{lem}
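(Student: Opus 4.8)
The plan is to deduce the statement from Lemma~\ref{lem:conductor} by twisting with line bundles and pushing forward along the relative Frobenius diagram of $\pi$. Recall from that diagram that $F_X^e = w^{(e)}\circ F_{X/Y}^{(e)}$, that $\pi^{(e)} = \pi_{Y^e}\circ F_{X/Y}^{(e)}$, where $w^{(e)}\colon X_{Y^e}\to X$ and $\pi_{Y^e}\colon X_{Y^e}\to Y^e$ are the two projections of $X_{Y^e}=X\times_Y Y^e$, and that the square with edges $w^{(e)}$, $\pi_{Y^e}$, $F_Y^e$ and $\pi$ is cartesian. Since $k$ is $F$-finite and $Y$ is smooth, $F_Y^e$ is finite and flat by Kunz's theorem, and hence so is its base change $w^{(e)}$. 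By Lemma~\ref{lem:conductor}, applied to $\pi$, there is an injection $\theta\colon {F_{X/Y}^{(e)}}_*\mathcal O_{X^e}((1-p^e)R)\hookrightarrow\mathcal O_{X_{Y^e}}$ that is an isomorphism over $\pi_{Y^e}^{-1}(V^e)$; because the square is cartesian, this locus equals $(w^{(e)})^{-1}(\pi^{-1}(V))$.

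Now tensor $\theta$ with the line bundle ${w^{(e)}}^*\mathcal M$ and with $\pi_{Y^e}^*\mathcal G$. By the projection formula for $F_{X/Y}^{(e)}$, together with ${F_{X/Y}^{(e)}}^*{w^{(e)}}^*\mathcal M = {F_X^e}^*\mathcal M = \mathcal M^{p^e}$ and ${F_{X/Y}^{(e)}}^*\pi_{Y^e}^*\mathcal G = {\pi^{(e)}}^*\mathcal G$, the source of the tensored morphism is canonically ${F_{X/Y}^{(e)}}_*\bigl(\mathcal M^{p^e}\otimes\mathcal O_{X^e}((1-p^e)R)\otimes{\pi^{(e)}}^*\mathcal G\bigr)$, so we obtain a morphism into ${w^{(e)}}^*\mathcal M\otimes\pi_{Y^e}^*\mathcal G$ which remains an isomorphism over $(w^{(e)})^{-1}(\pi^{-1}(V))$. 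Applying ${w^{(e)}}_*$ and using ${w^{(e)}}_*{F_{X/Y}^{(e)}}_* = {F_X^e}_*$, the projection formula ${w^{(e)}}_*\bigl({w^{(e)}}^*\mathcal M\otimes\pi_{Y^e}^*\mathcal G\bigr)\cong\mathcal M\otimes{w^{(e)}}_*\pi_{Y^e}^*\mathcal G$, and flat base change ${w^{(e)}}_*\pi_{Y^e}^*\mathcal G\cong\pi^*{F_Y^e}_*\mathcal G$ (legitimate since $F_Y^e$ is flat), we arrive at the asserted morphism
\[
{F_X^e}_*\bigl(\mathcal M^{p^e}\otimes\mathcal O_{X^e}((1-p^e)R)\otimes{\pi^{(e)}}^*\mathcal G\bigr)\longrightarrow\mathcal M\otimes\pi^*{F_Y^e}_*\mathcal G .
\]
Since pushforward along $w^{(e)}$, the projection formula, and flat base change all commute with restriction to the open subset $\pi^{-1}(V)\subseteq X$, and the morphism produced after the tensoring step restricts to an isomorphism over $(w^{(e)})^{-1}(\pi^{-1}(V))$, the displayed morphism is an isomorphism over $\pi^{-1}(V)$.

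I do not expect a genuine obstacle here: the content is entirely in Lemma~\ref{lem:conductor}, and the rest is a formal composition of the projection formula with flat base change along the relative Frobenius diagram. The only points requiring a little care are keeping the Frobenius indices consistent --- in particular, that the canonical identifications ${F_{X/Y}^{(e)}}^*\pi_{Y^e}^*\mathcal G\cong{\pi^{(e)}}^*\mathcal G$ and ${w^{(e)}}_*\pi_{Y^e}^*\mathcal G\cong\pi^*{F_Y^e}_*\mathcal G$ are the obvious ones --- and verifying the isomorphism over $\pi^{-1}(V)$, for which the identity $\pi_{Y^e}^{-1}(V^e)=(w^{(e)})^{-1}(\pi^{-1}(V))$ is the key point. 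For $\mathcal M=\mathcal O_X$ and $\mathcal G=\mathcal O_Y$ the construction is simply the pushforward of the morphism of Lemma~\ref{lem:conductor} along $w^{(e)}$ (followed by flat base change).
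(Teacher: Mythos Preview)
Your proof is correct and follows essentially the same route as the paper: both arguments reduce to Lemma~\ref{lem:conductor}, then use the projection formula (for $F_{X/Y}^{(e)}$ and for $w^{(e)}$) together with base change along the cartesian square to reach the target sheaf, checking that the isomorphism locus is preserved. One small remark: in your base change step ${w^{(e)}}_*\pi_{Y^e}^*\mathcal G\cong\pi^*{F_Y^e}_*\mathcal G$, the relevant hypothesis is the flatness of $\pi$ (or, as the paper puts it, the affineness of $F_Y^e$), not the flatness of $F_Y^e$; either justification works here, but the phrase ``since $F_Y^e$ is flat'' is not quite the standard hypothesis for flat base change.
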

\begin{proof}
Fix $e \in \mathbb Z_{>0}$. 
We have the following commutative diagram:
\begin{align*}
\xymatrix{ 
X^e \ar[dr]^{F_{X}^e} \ar[d]_{F_{X/Y}^{(e)}} \ar@/_50pt/[dd]_{\pi^{(e)}} & \\ 
X_{Y^e} \ar[r]_{w^{(e)}} \ar[d]_{\pi_{Y^e}} & X \ar[d]^{\pi} \\
Y^e \ar[r]_{F_Y^e} & Y. 
}
\end{align*}
Here, the square in the diagram is cartesian. 
We get 
\begin{align*}
&
{F_{X}^{e}}_* 
\left(
\mathcal M^{p^e}
\otimes 
\mathcal O_{X^e}((1-p^e)R)
\otimes 
{\pi^{(e)}}^* \mathcal G
\right)
\\ & \cong 
\mathcal M
\otimes
{w^{(e)}}_* 
{F_{X/Y}^{(e)}}_* 
\left(
\mathcal O_{X^e}((1-p^e)R)
\otimes {F_{X/Y}^{(e)}}^* {\pi_{Y^e}}^* \mathcal G
\right)
\hspace{20pt} \textup{{\footnotesize projection formula}}
\\ & \cong
\mathcal M
\otimes
{w^{(e)}}_* \left( \left(
{F_{X/Y}^{(e)}}_* \mathcal O_{X^e}((1-p^e)R)
\right)
\otimes {\pi_{Y^e}}^* \mathcal G
\right)
\hspace{40pt} \textup{{\footnotesize projection formula}}
\\ & \to
\mathcal M
\otimes
{w^{(e)}}_* {\pi_{Y^e}}^* \mathcal G
\hspace{195pt} \textup{{\footnotesize Lemma~\ref{lem:conductor}}}
\\ & \cong 
\mathcal M
\otimes
\pi^* {F_Y^e}_* \mathcal G
\hspace{210pt} \textup{{\footnotesize $\pi$ is flat.}}
\end{align*}
Note that the projection formula holds for coherent sheaves if the morphism is finite. 
Set $U:=\pi^{-1}(V)$. 
Then $U={w^{(e)}}(\pi_{V^e}^{-1}(V^e))$. 
Hence, the assertion follows from Lemma~\ref{lem:conductor}. 
\end{proof}
The next proposition plays a key role in the proofs of our main theorems. 
The situation is similar to that of Proposition~\ref{prop:ACM}. 
\begin{prop} \label{prop:main}
Let $Y$ be a dense open subset of an $n$-dimensional projective variety $W$, 
let $H$ be a big Cartier divisor on $Y$, 
and take $S \subseteq \mathrm{sp}(Y)$ so that 
$(S, H)$ satisfies condition~$(\ast)_{a}$ for some rational number $a\ge0$. 
Let $\mathcal E$, $\mathcal F$ and $\mathcal G$ 
be coherent sheaves on $Y$. 
Let $\Lambda$ be an infinite set of positive integers. 
Fix $\delta, M \in \mathbb R_{\ge 0}$. 
Suppose that for every $e \in \Lambda$ 
there exists a positive integer ${h_e}$ with $|\delta p^e -{h_e}| \le M$ 
and a morphism 
$$
\psi^{(e)}:
{F_Y^e}_* \left( \mathcal E \otimes S^{{h_e}}(\mathcal F) \right) 
\to 
\mathcal G 
$$
that is surjective over $S$. Then 
\begin{itemize}
\item[$(1)$]
$
\delta \cdot t(\mathcal F) 
\le t(\mathcal G) + {a}, 
$ 
and 
\item[$(2)$]
if $H=H'|_Y$ for some big Cartier divisor $H'$ on $W$ with $|H'|$ free, then 
$
\mathrm{Bs}(\mathcal G(lH)) 
\cap 
S = \emptyset
$
for each $l \in \mathbb Z$ with 
$
\delta \cdot t(\mathcal F) + l > n. 
$ 
\end{itemize}
\end{prop}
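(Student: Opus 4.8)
The plan is to reduce Proposition~\ref{prop:main} to an application of the endomorphism $\pi$ from condition $(\ast)_a$, pulling back the morphism $\psi^{(e)}$ along iterates $\pi^d$ and then invoking Proposition~\ref{prop:ACM} on the pulled-back sheaves. The key observation is that $t(\mathcal F) = t_S(g,\pi,\mathcal F)$ is, by Definition~\ref{defn:inv}, controlled by the loci $\mathbb B^\pi(g_*\mathcal F - rL|_U)$, and Proposition~\ref{prop:B-_Bpi} relates these to ordinary stable base loci after perturbing by an ample divisor; so the two claims should follow once I feed the correct ``$\pi$-pulled-back'' data into the machinery already built.

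\textbf{Proof sketch of Proposition~\ref{prop:main}.}
We use the notation of Definition~\ref{defn:epsilon}: $g:Y\to U\subseteq Z$, the big Cartier divisor $L$ on $Z$ with $H\sim g^*L$, and the separable finite flat endomorphism $\pi:Z\to Z$ with $\pi^*L\sim qL$, $q\ge 2$, étale near every point of $g(S)$. Write $r := t(\mathcal F)$ (the case $r=+\infty$ is handled by letting $r\to\infty$ at the end). To prove (1) it suffices to show that for every rational $r' < r$ and every $\varepsilon>0$ one has $\delta r' - \varepsilon \le t(\mathcal G) + a$; equivalently, for every rational $s$ with $s < \delta r' - a$ we must produce membership $s\in T(\mathcal G)$, i.e. $\mathbb B^\pi(g_*\mathcal G - sL|_U)\cap g(S) = \emptyset$. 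Since $r' < r = t(\mathcal F)$ we have $r'\in T(\mathcal F)$ by Remark~\ref{rem:inv}(2), so $\mathbb B^\pi(g_*\mathcal F - r'L|_U)\cap g(S)=\emptyset$; unwinding Definition~\ref{defn:pi-base_loci}, there is $d_0$ such that for all $d\ge d_0$,
\[
\pi^d_U\!\left(\mathrm{Bs}\!\left((\pi^d_U)^*g_*\mathcal F \otimes \mathcal O_{U_d}((\pi^d_U)^* (-r'L|_U))\right)\right) \cap g(S) = \emptyset.
\]
Now pull back the morphism $\psi^{(e)}$ along $\pi^d$: because $\pi^d$ is étale near $g(S)$, Lemma~\ref{lem:bc} (with the finite map $\pi^d$, applied base-changed over $g^{-1}$ of a neighborhood of $g(S)$) produces, for each $e\in\Lambda$, a morphism between the Frobenius pushforwards of the $(\pi^d)$-pullbacks of the relevant sheaves that is an isomorphism — hence surjective — over a neighborhood of $S$. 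Combined with $\pi^*L\sim qL$, twisting by appropriate powers of $L$ converts the ``$\delta r$'' balance in the hypothesis into a ``$\delta q^d r$'' balance after $d$ iterations, so shrinking the defect and running Proposition~\ref{prop:ACM} with $H$ replaced by (a pullback of) $L|_U$, $r$ replaced by $q^d r'$, and $A$ an ample divisor on $Z$ whose $\mathbb B_+$ meets $g(S)$ trivially by $(\ast)_a$ and Remark~\ref{rem:epsilon2}, yields
\[
\mathrm{Bs}\!\left(g_*\mathcal G \otimes \mathcal O_U(s L|_U)\right)\cap g(S) = \emptyset
\]
for every $s < \delta q^d r' - a q^d \cdot(\text{something})$; dividing by $q^d$ and taking $d\to\infty$ recovers the scale-invariant bound $\delta t(\mathcal F)\le t(\mathcal G)+a$. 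The key algebraic point is that the ``$aL$'' contribution enters only once — via $K_Z + aL$ pseudo-effective near $g(S)$, i.e. II-2 — and gets absorbed as a bounded additive error that does not scale with $q^d$, which is exactly why it survives the limit as the additive constant $a$.

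\textbf{For (2),} assume $H = H'|_Y$ with $H'$ big and $|H'|$ free on $W$. Here we can apply Proposition~\ref{prop:ACM} directly: take $A' := H'$ (big with free linear system) so $A = H$, and choose rationals $r, r'$ with $r\le t(\mathcal F)$, $r + r'' $ chosen so that $\delta r + (l - n) > 0$ becomes the inequality $\delta\, t(\mathcal F) + l > n$ after letting $r\uparrow t(\mathcal F)$; concretely, pick $r < t(\mathcal F)$ rational and $r' := l - n$, so that $\delta r + r' > 0$ is guaranteed once $\delta t(\mathcal F) + l > n$ and $r$ is close enough to $t(\mathcal F)$. Proposition~\ref{prop:ACM} then gives
\[
\mathrm{Bs}\!\left(\mathcal G(lH)\right)\cap S \subseteq \mathbb B_-(\mathcal F - rH)\cup \mathbb B_-\!\left(H - (n+r')H\right)\cup \mathbb B_+(H).
\]
The middle term is $\mathbb B_-\bigl((1-(n+r'))H\bigr) = \mathbb B_-((1-l)H)$; when $l\ge 1$ this is $\mathbb B_-$ of an anti-effective multiple of a big divisor and one checks it does not meet $S$ — or more robustly, absorb it by choosing $r', s$ in the proof of Proposition~\ref{prop:ACM} to avoid it, exactly as done there. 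The first term $\mathbb B_-(\mathcal F - rH)\cap S = \emptyset$ by Proposition~\ref{prop:t}(1) since $r < t(\mathcal F) = t(\mathcal F)$ (using $r\le t(\mathcal F)$). The last term $\mathbb B_+(H)\cap S = \emptyset$: indeed $\mathbb B_+(H) = \mathbb B_+(g^*L)$, and by I-1, I\hspace{-1pt}I-2 together with Lemma~\ref{lem:2}(1) this set consists of points where $g$ has positive-dimensional fiber, none of which lie in $S$ by I-1. Hence $\mathrm{Bs}(\mathcal G(lH))\cap S=\emptyset$, proving (2).

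\textbf{The main obstacle} I anticipate is the bookkeeping in the $\pi$-iteration step of part (1): one must check that pulling $\psi^{(e)}$ back along $\pi^d$ (via Lemma~\ref{lem:bc}) keeps the defect $|\delta' p^e - h'_e|$ bounded for the new sheaves — the ramification-divisor correction $\mathcal O_{X^e}((1-p^e)R_d)$ from Lemma~\ref{lem:bc} contributes terms that must be shown not to spoil the $S^{h_e}$-degree balance near $S$, which works precisely because $\pi^d$ is étale over a neighborhood of $g(S)$ so $R_d$ is disjoint from that neighborhood. Once that localization-near-$S$ reduction is in place, the rest is a limit argument identical in spirit to the proof of Proposition~\ref{prop:t}, and the ``$+a$'' tracks through as a single additive perturbation coming from II-2.
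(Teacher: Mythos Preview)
Your argument for (2) is essentially the paper's: apply Proposition~\ref{prop:ACM} with $A=H$ and $r':=l-n$, then use Proposition~\ref{prop:t} and condition~$(\ast)_a$. One slip: in Proposition~\ref{prop:ACM} the twist is by the divisor called $H$ there, so you must apply it with that divisor equal to $lH$; the middle term is then $\mathbb B_-(lH-(n+r')H)=\mathbb B_-(0)=\emptyset$, not $\mathbb B_-((1-l)H)$.

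For (1) your overall strategy---push forward via $g$, pull back along $\pi^d$, invoke Lemma~\ref{lem:bc}, and take $d\to\infty$---matches the paper. But you misidentify the mechanism by which the constant $a$ enters. You write that the ramification correction $(1-p^e)R_d$ ``works precisely because $\pi^d$ is \'etale over a neighborhood of $g(S)$ so $R_d$ is disjoint from that neighborhood.'' That local vanishing is what makes the morphism from Lemma~\ref{lem:bc} surjective over $S_d$; it does \emph{not} control the base locus of the Frobenius pushforward. The twist by $(1-p^e)R_d$ is global, and to bound the base locus you need a global positivity statement: the paper's Claim~1, namely
\[
\mathbb B\!\left(-R_d + a'q^d L\right)\cap S_d=\emptyset \quad (a'>a,\; d\gg0),
\]
proved from $R_d\sim K_Z-(\pi^d)^*K_Z$ together with the pseudo-effectivity of $K_Z+aL$ (condition~I\hspace{-1pt}I-2). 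This is then fed into Proposition~\ref{prop:ggg} as one of three auxiliary sheaves. Note also that the $a$-term \emph{does} scale with $q^d$ at this stage (as $a'q^dL$, balancing the growth of $-R_d$); it becomes the additive constant $a$ only after dividing the resulting bound $-q^{-d}\mu\le t(\mathcal G)$ by $q^d$ and letting $d\to\infty$, $a'\to a$. Without Claim~1 your sketch has no way to absorb the $(1-p^e)R_d$ factor into the base-locus estimate, so as written there is a genuine gap.
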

\begin{proof}
We first prove (2). 
Set $r' := l-n$. 
Take $r \in \mathbb Q$ so that $r < t(\mathcal F)$ and $\delta r + r' >0$. 
Then 
\begin{align*}
\mathrm{Bs}( \mathcal G (lH) ) 
\cap S
\overset{\textup{Prop~\ref{prop:ACM}}}{\subseteq}
\mathbb B_-(\mathcal F -rH)
\cup 
\underbrace{\mathbb B_-(lH -(n+r')H)}_{\hspace{40pt}=\mathbb B_-(0)=\emptyset}
\cup \mathbb B_+(H). 
\end{align*}
Since $\mathbb B_-(\mathcal F-rH) \cap S = \emptyset$ by Proposition~\ref{prop:t} and 
$\mathbb B_+(H) \cap S = \emptyset$ by Definition~\ref{defn:epsilon}, 
the assertion follows. 
Next, we prove (1). 
Let the notation be as in Definition~\ref{defn:epsilon}. 
Replacing $\mathcal E$, $\mathcal F$ and $\mathcal G$ by 
$g_* \mathcal E$, $g_*\mathcal F$ and $g_* \mathcal G$, respectively, 
we may assume that $Y=U$ and $g=\mathrm{id_U}$. 
Note that $\psi^{(e)}$ can be replaced by the composite of
$$
{F_Z^e}_* \left( 
S^{{h_e}}(g_* \mathcal F) \otimes g_* \mathcal E
\right)
\to
{F_Z^e}_* g_* \left( 
S^{{h_e}}(\mathcal F) \otimes \mathcal E 
\right)
\xrightarrow{g_* \psi^{(e)}}
g_* \mathcal G, 
$$
where each morphism is surjective over $g(S)$, 
because $g|_{g^{-1}(V)}:g^{-1}(V) \to V$ is finite 
for some open subset $V$ of $Z$ containing $g(S)$. 
By Definition~\ref{defn:epsilon}, we have 
\begin{align}
\mathbb B_- \left(K_Z +{a} L \right) \cap S = \emptyset.
\tag{\ref{prop:main}.1}\label{eq:omega}
\end{align}
Let $R_{d}$ denote the ramification divisor of $\pi^d$ for each $d>0$. 
Fix ${a}' \in \mathbb Z[q^{-1}]$ with ${a}' > {a}$. 
Take ${a}'' \in \mathbb Z[q^{-1}] \cap ({a},{a}')$
and $d_0 \in \mathbb Z_{>0}$ so that 
${a}'q^{d_0}, {a}''q^{d_0} \in \mathbb Z$. 
Set $S_d := \left( \pi^d \right)^{-1}(S)$. 
We show the following claim:
\\ \noindent {\bf Claim~1.} 
There is $d_1 \ge d_0$ such that 
for each $d \ge d_1$ we have  
$$
\mathbb B \left(-R_{d} +{a}' q^d L \right)
\cap S_d
=\emptyset. 
$$
  
For each $d \ge d_0$, we see that
\begin{align*}
-R_{d} +{a}'q^d L 
& \sim
-\left(K_Z -(\pi^d)^*K_Z\right)
+({a}'-{a}'')q^d L + {a}''(\pi^d)^* L
\\ & 
\sim
-K_Z +q^d L'
+(\pi^d)^*(K_Z +{a}'' L),  \textup{\quad where $L':=({a}'-{a}'')L$.} 
\end{align*}
%
%
%
Hence, 
\begin{align*}
& \mathbb B\left( -R_{d} +{a}'q^d L \right) 
\subseteq 
B_1 \cup B_2,
\textup{ where} \\
& B_1:= \mathbb B\left( -K_Z +q^dL' \right)
\textup{ and } 
B_2:= \mathbb B\left(
(\pi^d)^* (K_Z + {a}''  L ) 
\right). 
\end{align*}
We show that $(B_1 \cup B_2) \cap S_d = \emptyset$ for every $d\gg0$. 
By Lemma~\ref{lem:3}, there is $d_1 \ge d_0$ such that 
for each $d \ge d_1$ we have 
$$
B_1 \subseteq
\mathrm {Bs} \left( -K_Z +q^{d} L' \right)
\overset{\textup{Lem~\ref{lem:3}}}{\subseteq}
\mathbb B_+(L')
=
\mathbb B_+\left( {\pi^d}^* L \right)
\overset{\textup{Lem~\ref{lem:2}}}{\subseteq}
\left( \pi^d \right) ^{-1} 
\left( \mathbb B_+(L) \right), 
$$ 
so 
$$
(B_1 \cup B_2) \cap S_d
\subseteq 
\left( \pi^d \right)^{-1} \left( \left(
\mathbb B_+(L) \cup \mathbb B\left( K_Z +{a}'' L \right)
\right)
\cap S 
\right),
$$
and 
\begin{align*}
\mathbb B\left( 
K_Z + {a}'' L 
\right)
\overset{\textup{$a''>a$}}{\subseteq }
\mathbb B^L_- \left( 
K_Z +{a} L 
\right)
\overset{\textup{Lem~\ref{lem:AD}}}{\subseteq}
\mathbb B_-\left( K_Z + {a} L \right)
\cup
\mathbb B_+(L)
\overset{\textup{(\ref{eq:omega})}}{\subseteq} 
Z \setminus S, 
\end{align*}
which proves Claim~1. 
To state the second claim, we fix the following data:
\begin{enumerate}[(i)]
\item \label{cond:1}
an ample Cartier divisor $A$ on $Z$ with $|A|$ free; 
\item \label{cond:2}
$0 < \varepsilon_1 \in \mathbb Q$ with
$ \mathbb B(L-\varepsilon_1 A) = \mathbb B_+(L); $
\item \label{cond:3}
$0 < \nu \in \mathbb Z$ such that
$\dim Z +1 \le \nu \varepsilon_1 \in \mathbb Z$;  
\item \label{cond:4}
$ 0 \ne r \in \mathbb Z[q^{-1}]$ with $-r \in T(\mathcal F)$; 
\item \label{cond:5}
$\delta' \in \mathbb Z[q^{-1}]$ such that $\delta'r > \delta r$; 
\item \label{cond:6}
$d \gg d_1$ such that ${a}' q^d$, $\delta' q^d$, $\delta' q^d r$ and $q^d r$ are integers, and 
\begin{align*}
\mathrm{Bs} \left( (\rho_d^* \mathcal F) 
\otimes 
\mathcal O_V \left( q^d r L|_{V} \right)
\right)
\cap S_d = \emptyset, 
\end{align*}
where $V:=(\pi^d)^{-1}(U)$ and $\rho_d:=(\pi^d)|_V:V\to U$;
\item \label{cond:8}
$
\mu := ({a}' +\delta' r) q^d +\nu
\in \mathbb Z. 
$
\end{enumerate}
We prove the following claim:
\\ \noindent {\bf Claim~2.} 
$
\mathrm{Bs} \left( \left( \rho_d^* \mathcal G \right) 
\otimes \mathcal O_V\left( \mu L|_{V} \right)
\right)
\cap S_d = \emptyset.
$

If this holds, then 
$
\mathbb B^{\pi}(\mathcal G +q^{-d}\mu L|_U)
\cap S = \emptyset, 
$ 
so $-q^{-d}\mu \in T(\mathcal G), $
and hence 
$$
t(\mathcal G)
\ge -q^{-d}\mu 
= -{a}' -\delta' r -q^{-d}\nu
\xrightarrow{\textup{
$d \to \infty$, 
$\delta' \to \delta$, 
${a}' \to {a}$
}}
-{a} -\delta r, 
$$
which implies $t(\mathcal G) \ge -a +\delta t(\mathcal F)$. 
To prove Claim~2, we use Proposition~\ref{prop:ggg}. Set 
\begin{align*}
& \big(
\mathcal F_1, \varepsilon_1;~ 
\mathcal F_2, \varepsilon_2;~
\mathcal F_3, \varepsilon_3 
\big)
\\ := & 
\big(
\mathcal O_V(L|_V), \varepsilon_1;~
\mathcal O_{V}(-R +{a}'q^d L|_V) , 0;~
\left(\rho_d^*\mathcal F \right) \otimes \mathcal O_V\left(q^d r L|_V \right), 0
\big), 
\end{align*}
where $\varepsilon_1$ is that in (\ref{cond:2}) and $R:=R_d|_V$.  
Then $B$ in Proposition~\ref{prop:ggg} is equal to
\begin{align*}
\underbrace{
\mathbb B\left(
L|_V-\varepsilon_1 A|_V
\right)
}_{\textup{$= \mathbb B_+(L)$ by (\ref{cond:2})}}
\cup
\underbrace{
\mathbb B\left(
\mathcal O_V(-R +{a}' q^d L|_V )
\right)
}_{\textup{$\subseteq Z\setminus S_d$ by Claim~1}}
\cup
\underbrace{
\mathbb B\left(
(\rho_d^* \mathcal F) \otimes \mathcal O_V\left(q^dr L|_V \right)
\right)
}_{\textup{$\subseteq Z\setminus S_d$ by (\ref{cond:6})}}, 
\end{align*}
so $B \cap S_d = \mathbb B_+(L)\cap S_d =\emptyset$ by Definition~\ref{defn:epsilon}. 
Take $0 \ll e \in \Lambda$. 
%
Set 
$$
(l_1;~ l_2;~ l_3)
:=
\big( \mu p^e -{a}' (p^e-1) q^d -{h_e} q^d r;~ p^e-1 ;~ {h_e}\big). 
$$ 
Then 
\begin{align*}
l_1 \overset{\textup{(\ref{cond:8})}}{=}& (({a}' +\delta' r) q^d +\nu) p^e -{a}' (p^e-1) q^d -{h_e} q^d r
\\ =& {a}'  q^d
+(\delta' p^e -{h_e})q^d r
+\nu p^e
>  (\delta' p^e -{h_e})q^d r
+\nu p^e. 
\end{align*}
We have $(\delta'p^e -{h_e})q^d r >0$, since 
$$
\delta' p^e r
= \delta p^e r
+(\delta' r -\delta r )p^e
\ge {h_e} r -|r|M
+(\underbrace{\delta' r -\delta r}_{>0})p^e
\underset{e \gg 0}{>} {h_e} r. 
$$
Hence, we get $l_1 > \nu p^e$ and 
$$
\sum_{1 \le i \le 3} \varepsilon_i l_i 
=\varepsilon_1 l_1 
> \varepsilon_1 \nu p^e 
\ge (\dim Z +1) p^e. 
$$
Proposition~\ref{prop:ggg} shows that 
$$
\mathrm{Bs} \left(
{F_V^e}_* \left(
(\rho_d^* \mathcal E) \otimes \bigotimes_{1 \le i \le 3} S^{l_i}(\mathcal F_i)
\right)
\right)
\subseteq B 
\subseteq Z \setminus S_d. 
$$
%
The sheaf 
$
{F_V^e}_* \left(
(\rho_d^*\mathcal E) \otimes \bigotimes_{1 \le i \le 3} S^{l_i}(\mathcal F_i)
\right)
$
is isomorphic to 
\begin{align*}
{F_V^e}_* \left(
\mathcal O_V \left(\mu p^e L|_V  \right)
\otimes (\rho_d^* \mathcal E) ((1-p^e) R)
\otimes S^{{h_e}}(\rho_d^* \mathcal F)
\right)
=: \mathcal D. 
\end{align*}
Here, we used the equality 
$
l_1 + l_2 ({a}'q^d) +l_3 (q^d r) = \mu p^e,
$ 
which follows from the choice of $(l_1;~l_2;~l_3)$. 
Therefore, we obtain $\mathrm{Bs}(\mathcal D) \cap S_d = \emptyset.$
Now, we have the following sequence of morphisms:
\begin{align*}
\mathcal D \cong & 
{F_V^e}_* \left(
\mathcal O_V \left(\mu p^e L|_V +(1-p^e) R \right)
\otimes {\rho_d}^* \left( 
\mathcal E \otimes S^{{h_e}}(\mathcal F)
\right)
\right)
\hspace{30pt}\textup{\footnotesize{definition of $\mathcal D$}}
\\ \xrightarrow{u} & 
\mathcal O_{V}(\mu L|_V) 
\otimes 
{\rho}_d^*{F_V^e}_* (\mathcal E \otimes S^{{h_e}}(\mathcal F))
\hspace{120pt}\textup{\footnotesize{Lemma~\ref{lem:bc}}}
\\ \xrightarrow{v} &
\mathcal O_{V}(\mu L|_V) \otimes {\rho_d}^* \mathcal G.
\hspace{195pt}\textup{\footnotesize{induced by $\psi^{(e)}$}}
\end{align*}
Then, $u$ is surjective over $S_d$, since $\rho_d$ is \'etale over $S$. 
Also, $v$ is surjective over $S_d$ because of the assumption on $\psi^{(d)}$, 
so we get
$$
\mathrm{Bs}\left(
\mathcal O_V(\mu L|_V)
\otimes 
{\rho}_d^*\mathcal G 
\right)
\cap 
S_d = \emptyset, 
$$ 
which is our claim. 
\end{proof}
\section{Positivity of direct images} \label{section:main}
In this section, using the invariant studied in Section~\ref{section:invariant}, 
we discuss the positivity of direct images of (relative) pluricanonical bundles. 
\subsection{Direct images of pluricanonical bundles} 
\label{subsection:absolute_pluricanonical}
In this subsection, we prove the main theorems of this paper. 
We work over an $F$-finite field $k$ of characteristic $p>0$. 
To begin with, we define the following notation.
\begin{defn} \label{defn:fingen}
Let $Y$ be a variety.  
Fix $S \subseteq \mathrm{sp}(Y)$. 
Let $\mathcal R = \bigoplus_{d \ge 0} \mathcal R_d$ be a graded $\mathcal O_Y$-algebra
such that each $\mathcal R_d$ is a coherent sheaf on $Y$. 
In this paper, we say that $\mathcal R$ is 
\textit{finitely generated over $S$}
if there exists a positive integer $N$ such that 
for each $d \ge 0$ the natural morphism 
$$
\bigoplus_{\substack{i_1,\ldots,i_N \ge 0 \\ \sum_{j=1}^N i_j j =d }}
\left( \bigotimes_{j=1}^{N} S^{i_j}(\mathcal R_j)
\right)
\to 
\mathcal R_d
$$
is surjective over $S$. 
If we can take $N=1$, then we say that 
$\mathcal R$ is \textit{generated by $\mathcal R_1$ over $S$}. 
Let $\mathcal M = \bigoplus_{d \ge 0} \mathcal M_d$ be a graded $\mathcal R$-module
such that each $\mathcal M_d$ is a coherent sheaf on $Y$. 
In this paper, we say that $\mathcal M$ is 
\textit{finitely generated over $S$ as $\mathcal R$-module} 
if there exists a positive integer $N$ such that 
for each $d \ge N$ the natural morphism 
$$
\bigoplus_{i=0}^{N}
\mathcal R_{d-i} \otimes \mathcal M_i
\to
\mathcal M_d
$$
is surjective over $S$. 
\end{defn}
When $S=\{\eta\}$, where $\eta$ is the generic point of $Y$, 
the $\mathcal O_Y$-algebra $\mathcal R$ is finitely generated over $S$ if and only if 
$\mathcal R_{\eta}=\bigoplus_{d \ge 0}\mathcal (R_d)_{\eta}$
is a finitely generated $k(\eta)$-algebra. 

We first prove our main result in a general setting. 
\begin{thm} \label{thm:general}
Let $X$ be a quasi-projective equi-dimensional $k$-scheme 
satisfying $S_2$ and $G_1$, 
and let $\Delta$ be an effective $\mathbb Q$-AC divisor on $X$ such that 
$i\Delta$ is integral for some $i>0$ not divisible by $p$. 
%
Let $Y$ be a dense open subset of a projective variety $W$, 
let $H$ be a big Cartier divisor on $Y$, 
and fix subsets $S \subseteq S' \subseteq \mathrm{sp}(Y)$ 
such that $(S, H)$ satisfies condition~$(\ast)_{{a}}$ 
for some rational number $a\ge0$. 
Let $f: X \to Y$ be a surjective projective morphism. 
Let $M$ and $N$ be AC divisors on $X$ 
such that 
$
\delta N \sim_{\mathbb Q} M -(K_X +\Delta) =:M'
$
for a rational number $\delta \ge 0$. 
Let $\mathcal R(N)$ and $\mathcal R(M')$ denote the $\mathcal O_Y$-algebras
$\bigoplus_{l \ge 0} f_* \mathcal O_X(lN)$
and 
$ \bigoplus_{l \ge 0} f_* \mathcal O_X(\lfloor lM' \rfloor ) $,
respectively. 
Suppose that the following conditions hold:
\begin{itemize}
\item [(\rm i)]
$\mathcal R(N)$ is generated by $f_* \mathcal O_X(N)$
over $S'$; 
\item [(\rm i\hspace{-1pt}i)]
the $\mathcal R(M')$-module
$
\bigoplus_{l \ge 0} 
f_* \mathcal O_X \left( \lfloor lM' +K_{X}+\Delta \rfloor
\right) 
$
is finitely generated over $S'$;
\item [(\rm i\hspace{-1pt}i\hspace{-1pt}i)]
the inclusion
$
S^0f_*\left( \sigma(X,\Delta) \otimes \mathcal O_X(M) \right) 
\hookrightarrow
f_*\mathcal O_X(M) 
$ 
induces an isomorphism of stalks at every point in $S'$. 
\end{itemize}
Then 
\begin{itemize}
\item [$(1)$]
$ 
\delta \cdot t(f_* \mathcal O_X(N)) 
\le 
t(f_* \mathcal O_X(M)) + {a}, 
$ and 
\item [$(2)$]
if $H=H'|_Y$ for a big Cartier divisor $H'$ on $W$ with $|H'|$ free, then 
for each $l \in \mathbb Z$ with 
$\delta \cdot t(f_* \mathcal O_X(N)) +l > \dim Y$, the sheaf
$$
f_*\mathcal O_X(M) \otimes \mathcal O_Y(lH)
$$
is generated by its global sections at every point in $S$. 
\end{itemize}
\end{thm}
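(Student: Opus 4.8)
The plan is to reduce everything to Proposition~\ref{prop:main}. Set $\mathcal F:=f_*\mathcal O_X(N)$ and $\mathcal G:=f_*\mathcal O_X(M)$. With this notation, assertion~(1) of the theorem is literally assertion~(1) of Proposition~\ref{prop:main}, and assertion~(2) is assertion~(2) of Proposition~\ref{prop:main}: indeed $\dim Y=\dim W$ since $Y$ is dense in $W$, and $\mathrm{Bs}(\mathcal G(lH))\cap S=\emptyset$ is by definition the statement that $f_*\mathcal O_X(M)\otimes\mathcal O_Y(lH)$ is generated by its global sections at every point of $S$. So it suffices to produce an infinite set $\Lambda\subseteq\mathbb Z_{>0}$, a real number $M_0\ge0$, a coherent sheaf $\mathcal E$ on $Y$, and, for each $e\in\Lambda$, an integer $h_e>0$ with $|\delta p^e-h_e|\le M_0$ together with a morphism $\psi^{(e)}:{F_Y^e}_*(\mathcal E\otimes S^{h_e}(\mathcal F))\to\mathcal G$ that is surjective over $S$; then Proposition~\ref{prop:main} applies with the given slope $\delta$.

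The morphism $\psi^{(e)}$ will come from the trace map of the $e$-th absolute Frobenius. Put $\mathcal M_l:=f_*\mathcal O_X(\lfloor lM'+K_X+\Delta\rfloor)$. Since $M=M'+K_X+\Delta$ with $M$ integral, one has $\mathcal M_l=f_*\mathcal O_X(M+\lfloor(l-1)M'\rfloor)$, and the same computation gives $p^eM+(1-p^e)(K_X+\Delta)=(p^e-1)M'+M$; hence, using $f\circ F_X^e=F_Y^e\circ f$, the pushforward of $\phi^{(e)}_{(X,\Delta)}(M)$ is a morphism ${F_Y^e}_*\mathcal M_{p^e}\to\mathcal G$. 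Because $S^0f_*(\sigma(X,\Delta)\otimes\mathcal O_X(M))$ is the stable image of these pushforwards, hypothesis~(iii) together with a noetherianity argument shows that $f_*\phi^{(e)}_{(X,\Delta)}(M)$ is surjective at every point of $S'$ for all $e\gg0$. It therefore remains to realise $\mathcal M_{p^e}$ as a quotient of $\mathcal E\otimes S^{h_e}(\mathcal F)$ over $S'$ (hence over $S$, as $S\subseteq S'$), with $\mathcal E$ fixed and $h_e$ close to $\delta p^e$.

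For this one uses hypotheses~(i) and~(ii) together with $\delta N\sim_{\mathbb Q}M'$. Fix $b\in\mathbb Z_{>0}$ with $b\delta\in\mathbb Z$, with $bM'$ integral, and with $bM'\sim b\delta N$; such a $b$ exists because $M'-\delta N$ is $\mathbb Q$-trivial. Restrict $\Lambda$ to an infinite set on which $p^e\bmod b$ takes a fixed value $\rho$. Hypothesis~(ii) gives a fixed $N_0$ so that, for $e$ large, $\mathcal M_{p^e}$ is generated over $S'$ by the finitely many fixed sheaves $\mathcal M_0,\dots,\mathcal M_{N_0}$ acting through the graded pieces $\mathcal R(M')_{p^e-i}=f_*\mathcal O_X(\lfloor(p^e-i)M'\rfloor)$ of $\mathcal R(M')$; writing $p^e-i=bq_{e,i}+s_i$ with $s_i$ fixed and using $bM'\sim b\delta N$, one gets $\mathcal R(M')_{p^e-i}\cong f_*\mathcal O_X\big(b\delta q_{e,i}N+\lfloor s_iM'\rfloor\big)$, while hypothesis~(i) presents $f_*\mathcal O_X(mN)$ as a quotient of $S^m(\mathcal F)$ over $S'$ for every $m$. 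Taking $h_e:=b\delta\,\min_i q_{e,i}$, the quantity $|\delta p^e-h_e|$ is bounded in terms of $b$ and $N_0$ only; then, absorbing the bounded discrepancies by the surjectivity of $S^{h_e}(\mathcal F)\otimes S^{c}(\mathcal F)\twoheadrightarrow S^{h_e+c}(\mathcal F)$ and collecting the $\mathcal M_i$, the $f_*\mathcal O_X(\lfloor s_iM'\rfloor)$, and finitely many symmetric powers of $\mathcal F$ into a single finite direct sum $\mathcal E$, one obtains the desired surjection $\mathcal E\otimes S^{h_e}(\mathcal F)\twoheadrightarrow\mathcal M_{p^e}$ over $S'$. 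Composing with ${F_Y^e}_*$ and with the trace map gives $\psi^{(e)}$, and Proposition~\ref{prop:main} concludes.

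The step I expect to be the main obstacle is exactly the last one: making the identification of the graded pieces $\mathcal R(M')_{p^e-i}$ with symmetric powers of $\mathcal F$ uniform in $e$, with a single $\mathcal E$ and with $|\delta p^e-h_e|$ bounded. The difficulty is entirely caused by $\delta N$ being only $\mathbb Q$-linearly equivalent to $M'$—which forces the passage to a Veronese-type subalgebra of index $b$ and to a fixed residue class of exponents modulo $b$—and by the rounding, which prevents $\lfloor(p^e-i)M'\rfloor$ from being an exact multiple of $bM'$. Since hypothesis~(i) only controls the algebra $\mathcal R(N)$—ordinary multiplication maps $f_*\mathcal O_X(mN)\otimes f_*\mathcal O_X(D)\to f_*\mathcal O_X(mN+D)$ need not be surjective—the ``extra'' divisors $\lfloor s_iM'\rfloor$ and the module generators supplied by~(ii) must be carried along as fixed tensor factors inside $\mathcal E$ rather than generated, and one has to verify that a single choice of $\mathcal E$ and of the slope serves for all $e\in\Lambda$ simultaneously.
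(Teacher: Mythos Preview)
Your proposal is correct and follows the same strategy as the paper: construct, for each large $e$, a morphism $\psi^{(e)}:{F_Y^e}_*\big(\mathcal E\otimes S^{h_e}(\mathcal F)\big)\to f_*\mathcal O_X(M)$ surjective over $S$ by composing the Frobenius trace $f_*\phi^{(e)}_{(X,\Delta)}(M)$ with multiplication maps coming from~(i) and~(ii), and then invoke Proposition~\ref{prop:main}. The paper's bookkeeping is a little cleaner than yours: it fixes a single $m$ with $mM'$ integral and $mM'\sim_{\mathbb Z}\delta mN$, writes $p^e=mq_e+r_e$ with $r_e$ in one fixed window $[n_0,n_0+m)$, and uses the \emph{module} multiplication $f_*\mathcal O_X(q_emM')\otimes\mathcal M_{r_e}\to\mathcal M_{p^e}$ directly (so $\mathcal E=\bigoplus_{n_0\le r<n_0+m}\mathcal M_r$ and $h_e=\delta mq_e$), thereby sidestepping the algebra-multiplication issue you correctly flag in your last paragraph—there is no need to decompose $\mathcal R(M')_{p^e-i}$ at all.
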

\begin{proof}
Take $e\in \mathbb Z_{>0}$ so that $(p^e-1)\Delta$ is integral.
We use the morphism defined after Remark~\ref{rem:F-pure}, that is, the morphism 
\begin{align*}
\phi^{(e)}_{(X,\Delta)}(M):
{F_X^e}_*\mathcal O_X(p^e M +(1-p^e)(K_X+\Delta)) 
\to \mathcal O_X(M). 
\end{align*}
By assumption~(i\hspace{-1pt}i\hspace{-1pt}i), the push-forward 
\begin{align}
{F_Y^e}_* f_* 
\mathcal O_X(p^e M +(1-p^e)(K_X+\Delta))
\xrightarrow{
f_* \phi^{(e)}_{(X,\Delta)}(M)
}
f_* \mathcal O_X(M) 
\tag{\ref{thm:general}.1}
\label{mor:main1}
\end{align}
is surjective over $S$ for each $e \in \mathbb Z_{>0}$. 
Take $m \in \mathbb Z_{>0}$ so that 
$m M'$ is integral,
$\delta m \in \mathbb Z$ and 
$ \delta m N \sim_{\mathbb Z} m M'$.
Put $\mathcal F := f_* \mathcal O_X(N)$. 
Then for each $l\in\mathbb Z_{>0}$, the natural morphism 
\begin{align}
S^{\delta lm}(\mathcal F)
=S^{\delta lm}(f_* \mathcal O_X(N) )
\to
f_* \mathcal O_X(\delta lm N)
\cong f_* \mathcal O_X(lm M') 
\tag{\ref{thm:general}.2}
\label{mor:main2}
\end{align}
is surjective over $S$ by assumption~(i). 
Here, we put $S^{\delta lm}(\mathcal F) := f_* \mathcal O_X$ 
when $\delta=0$. 
Replacing $m$ if necessary, 
we see from assumption~(i\hspace{-1pt}i) that there is $n_0 \in \mathbb Z_{>0}$ 
such that the natural morphism
\begin{align*}
\tag{\ref{thm:general}.3}
\label{mor:main3}
f_* \mathcal O_X(lmM')
\otimes & 
f_* \mathcal O_X \left( \lfloor nM' +(K_X + \Delta) \rfloor \right)
\\ & \to 
f_* \mathcal O_X \left( \lfloor (lm+n) M' +(K_X +\Delta) \rfloor \right)
\end{align*}
is surjective over $S$ for each $l \ge 0$ and $n \ge n_0$.
Hence, for each $l \ge 0$ and $n \ge n_0$ we get the morphism 
\begin{align*}
\tag{\ref{thm:general}.4}
\label{mor:main4}
S^{\delta lm}(\mathcal F) 
\otimes & f_* \mathcal O_X(\lfloor nM' +K_X +\Delta \rfloor)
\\ & \to 
f_* \mathcal O_X(\lfloor (lm +n) M' +K_X +\Delta\rfloor)
\end{align*}
that is surjective over $S$. 
%
Let $q_e$ and $r_e$ be integers such that 
$p^e= m q_e +r_e$ and $n_0 \le r_e <m +n_0$. 
Then 
\begin{align*}
\tag{\ref{thm:general}.5}
\label{eq:main1}
p^e M +(1-p^e)(K_X+\Delta) 
=
p^e M' +K_X +\Delta 
& =
(mq_e +r_e) M' +K_X +\Delta, 
\end{align*}
and so $r_eM' +K_X+\Delta$ is integral. 
Put 
$
\mathcal G 
:= \bigoplus_{n_0 \le r < m + n_0 } 
f_* \mathcal O_X(\lfloor r M' +K_X +\Delta \rfloor).
$
We now have the following sequence of morphisms:
\begin{align*}
{F_Y^e}_* \big(
S^{\delta m q_e}(\mathcal F)
\otimes 
\mathcal G
\big)
\overset{\textup{def of $\mathcal G$}}{\twoheadrightarrow}
& {F_Y^e}_* \big(
S^{\delta m q_e}(\mathcal F)
\otimes 
f_* \mathcal O_X\left(r_eM' +K_X+\Delta \right)
\big)
\\ \overset{\textup{(\ref{mor:main4})}}{\to} &
{F_Y^e}_* f_* \mathcal O_X((mq_e +r_e) M' +K_X+\Delta )
\\ \overset{\textup{(\ref{eq:main1})}}{=} &
{F_Y^e}_* f_* \mathcal O_X(p^e M +(1-p^e)(K_X+\Delta) )
\overset{\textup{(\ref{mor:main1})}}{\to}
f_* \mathcal O_X(M). 
\end{align*}
The composite $\psi^{(e)}$ is also surjective over $S$, 
since so is each morphism. 
Set ${h_e}:=\delta m q_e$. 
Then 
$
|{h_e} -\delta p^e| 
=\delta |m q_e -p^e|
=\delta r_e
\le \delta(m +n_1) 
$
for each $e\in\mathbb Z_{>0}$,
so we can apply Proposition~\ref{prop:main}, 
which completes the proof.
\end{proof}
The next theorem can be viewed as an analog of \cite[Theorem~1.4]{PS14}. 
\begin{thm} \label{thm:main}
Let $X$, $\Delta$, $W$, $Y$, $S$, $S'$, $H$ and $f$ be as in Theorem~\ref{thm:general}. 
Suppose that 
\begin{itemize}
\item[(\rm i)]
the $\mathcal O_Y$-algebra 
$
\bigoplus_{l \ge 0} f_* \mathcal O_X(\lfloor l(K_X +\Delta) \rfloor)
$ 
is finitely generated over $S'$, and 
\item[(\rm i\hspace{-1pt}i)]
there exists an integer $m_0\ge 0$ such that 
the inclusion 
$$ 
S^0f_* \big( \sigma(X,\Delta) \otimes \mathcal O_X(m(K_X +\Delta)) \big)
\hookrightarrow
f_*\mathcal O_X(m(K_X +\Delta))
$$ 
induces an isomorphism of stalks at every point in $S'$ 
and each $m \ge m_0$ such that $m\Delta$ is integral. 
\end{itemize}
Take $m \ge m_0$ so that $m\Delta$ is integral and set 
$
\mathcal F_m := f_* \mathcal O_X(m(K_X +\Delta)). 
$
\begin{itemize}
\item[\rm(1)] Then 
$
\mathbb B_-(\mathcal F_m +{a} m H) 
\cap S = \emptyset. 
$
In particular, if $Y$ is normal, then for each integer $l \ge {a} m$, 
the sheaf 
$
\mathcal F_m
\otimes 
\mathcal O_Y(lH)
$ 
is pseudo-effective in the sense of Definition~\ref{defn:psef}. 
\item[\rm(2)]
If $H=H'|_Y$ for a big Cartier divisor $H'$ on $W$ with $|H'|$ free, 
then for each integer $l > {a}(m-1) +\dim Y$, the sheaf 
$
\mathcal F_m
\otimes 
\mathcal O_Y(lH)
$
is generated by its global sections at every point in $S$. 
\end{itemize}
\end{thm}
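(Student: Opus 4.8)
The plan is to obtain both parts from Theorem~\ref{thm:general}, applied with suitably chosen divisors, together with Proposition~\ref{prop:t}. First, using hypothesis~(i), I would fix a positive integer $d$ with $d\ge m_0$, with $d\Delta$ integral, and such that the $\mathcal O_Y$-algebra $\bigoplus_{l\ge0}f_*\mathcal O_X\big(ld(K_X+\Delta)\big)$ is generated by its degree-$1$ part over $S'$. Such a $d$ exists because $\bigoplus_{l\ge0}f_*\mathcal O_X(\lfloor l(K_X+\Delta)\rfloor)$ is finitely generated over $S'$, and a sufficiently divisible Veronese subalgebra of a finitely generated graded algebra is generated in degree one (the ``over $S'$'' variant being proved exactly as the usual statement).

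Next, for an arbitrary $m\ge m_0$ with $m\Delta$ integral, I would apply Theorem~\ref{thm:general} with
$$
M:=m(K_X+\Delta),\qquad N:=d(K_X+\Delta),\qquad \delta:=\tfrac{m-1}{d},
$$
so that $\delta N=(m-1)(K_X+\Delta)=M-(K_X+\Delta)=M'$; here $M$ and $N$ are genuine AC divisors since $m\Delta$ and $d\Delta$ are integral. Condition~(i) of Theorem~\ref{thm:general} holds by the choice of $d$; condition~(iii) is precisely hypothesis~(ii) of the present theorem, as $m\ge m_0$ and $m\Delta$ is integral; and condition~(ii) follows from hypothesis~(i), because $\mathcal R(M')$ is the $(m-1)$-st Veronese subalgebra of $\bigoplus_{j\ge0}f_*\mathcal O_X(\lfloor j(K_X+\Delta)\rfloor)$ while $\bigoplus_{l\ge0}f_*\mathcal O_X(\lfloor lM'+K_X+\Delta\rfloor)$ is the graded piece of that algebra supported in residue class $1$ modulo $m-1$, hence a finite module over $\mathcal R(M')$ (again in the ``over $S'$'' sense). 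Theorem~\ref{thm:general} then yields
$$
\tfrac{m-1}{d}\,t\big(f_*\mathcal O_X(d(K_X+\Delta))\big)\ \le\ t(\mathcal F_m)+a,
$$
and, when in addition $H=H'|_Y$ with $H'$ big on $W$ and $|H'|$ free, global generation of $\mathcal F_m\otimes\mathcal O_Y(lH)$ at every point of $S$ whenever $\tfrac{m-1}{d}\,t\big(f_*\mathcal O_X(d(K_X+\Delta))\big)+l>\dim Y$.

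The decisive step is to feed this estimate into itself. Taking $m=d$ above (legitimate since $d\ge m_0$ and $d\Delta$ is integral) gives $\tfrac{d-1}{d}\,t_0\le t_0+a$ for $t_0:=t\big(f_*\mathcal O_X(d(K_X+\Delta))\big)$, hence $t_0\ge -ad$; this is trivial if $t_0=+\infty$ and follows by rearranging otherwise (recall $t_0\in\mathbb R\cup\{+\infty\}$ by Remark~\ref{rem:inv}). Substituting $t_0\ge -ad$ into the displayed inequality for a general $m$ gives $t(\mathcal F_m)\ge\tfrac{m-1}{d}(-ad)-a=-am$. Part~(1) is now immediate from Proposition~\ref{prop:t}(1) applied with $r=-am\le t(\mathcal F_m)$, which yields $\mathbb B_-(\mathcal F_m+amH)\cap S=\emptyset$. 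For the ``in particular'': for an integer $l\ge am$, write $lH=amH+(l-am)H$; since $H\sim g^*L$ and $\mathbb B((l-am)H)\subseteq g^{-1}(\mathbb B_+(L))$, which is disjoint from $S=g^{-1}(g(S))$ by condition~$(\ast)_{a}$, we get $\mathbb B_-(\mathcal F_m\otimes\mathcal O_Y(lH))\subseteq\mathbb B_-(\mathcal F_m+amH)\cup\mathbb B((l-am)H)$, a set disjoint from $S$. As $S\ne\emptyset$, every stable base locus occurring in this restricted base locus is a proper closed subset of the irreducible space $\mathrm{sp}(Y)$, hence omits the generic point $\eta$; thus $\eta\notin\mathbb B_-(\mathcal F_m\otimes\mathcal O_Y(lH))$ and Lemma~\ref{lem:relation} gives pseudo-effectivity (the normality of $Y$ being used only to make sense of Definition~\ref{defn:psef}). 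Part~(2) follows from the global generation statement above, since for an integer $l>a(m-1)+\dim Y$ we have $\tfrac{m-1}{d}\,t_0+l\ge -a(m-1)+l>\dim Y$.

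I expect the only real friction to come from the bookkeeping hidden in the phrase ``over $S'$'' for conditions~(i) and~(ii) of Theorem~\ref{thm:general}: one must check that the standard structure results for finitely generated graded algebras --- sufficiently divisible Veronese subalgebras are generated in degree one, and shifted graded pieces are finite modules over Veronese subalgebras --- remain valid when ``surjective'' is systematically replaced by ``surjective on stalks at the points of $S'$''. Granting this, the theorem is a formal consequence of Theorem~\ref{thm:general}, of its self-application at $m=d$, and of Proposition~\ref{prop:t}.
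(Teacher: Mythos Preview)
Your proof is correct and follows essentially the same approach as the paper's own argument: choose a sufficiently divisible integer (your $d$, the paper's $\mu$), apply Theorem~\ref{thm:general} with $M=m(K_X+\Delta)$, $N=d(K_X+\Delta)$, $\delta=(m-1)/d$, and then self-apply at $m=d$ to extract $t_0\ge -ad$, feeding this back to get $t(\mathcal F_m)\ge -am$. The paper presents these two applications in the opposite order and leaves the verification of hypotheses~(i)--(iii) of Theorem~\ref{thm:general} and the ``in particular'' clause entirely to the reader, whereas you spell these out; your acknowledged friction point about the ``over $S'$'' bookkeeping is exactly what the paper sweeps under ``one can check that all the assumptions hold''.
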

\begin{rem} \label{rem:when dimY=1}
When ${a} = \dim Y +1$ (as in Example~\ref{eg:dimY+1}), 
we have 
$$
l \ge {a} m
~\Leftrightarrow~
l > {a}(m-1) +\dim Y
~\Leftrightarrow~
l \ge m (\dim Y +1). 
$$
This condition on $l$ is the same as that in \cite[Theorem~1.4]{PS14}. 
\end{rem}
\begin{proof}
Let $i>0$ be the minimum integer such that $i\Delta$ is integral.
For simplicity, put 
$
t_m 
:= 
t(\mathcal F_m )
$
for each $m \in \mathbb Z_{>0}$ with $i|m$. 
Let $\mu \ge m_0$ be an integer divisible enough.
We first show $-{a} \le \mu^{-1} t_\mu$. 
Set 
$
M := N := \mu(K_X +\Delta)
$
and $\delta:= \mu^{-1}(\mu-1)$. 
Then one can check that all the assumptions 
in Theorem~\ref{thm:general} hold, 
so the theorem shows that $\delta t_{\mu} \le t_{\mu} + {a}$, 
which means that $-{a} \le \mu^{-1}t_{\mu}$. 
Next, we take $m \ge m_0$ with $i|m$. 
Put $M:=m(K_X +\Delta)$, $N := \mu(K_X +\Delta)$
and $\delta':=\mu^{-1}(m-1)$. 
Theorem~\ref{thm:general}~(1) then says that 
$\delta' t_\mu \le t_m + {a}$.
Combining this with $-{a} \le \mu^{-1}t_{\mu}$, 
we obtain that $-{a} m \le t_m$, 
so we see from Proposition~\ref{prop:t} that
$
\mathbb B_-(\mathcal F_m +lH) \cap S = \emptyset
$
for each $l \ge {a} m$. 
Furthermore, since
$
{a} (m-1) + \dim Y \ge -\delta' t_\mu +\dim Y, 
$
the second assertion follows from Theorem~\ref{thm:general}~(2).
\end{proof}
\begin{cor} \label{cor:f-ample}
Let $X$, $\Delta$, $W$, $Y$, $S$, $H$ and $f$ be as in Theorem~\ref{thm:general}. 
Let $Y_0 \subseteq Y$ be a dense open subset containing $S$
and put $X_0:=f^{-1}(Y_0)$. 
Suppose that 
\begin{itemize}
\item $K_{X_0} +\Delta|_{X_0}$ is a $\mathbb Q$-Cartier divisor that is relatively ample over $Y_0$, and
\item $(X_0, \Delta|_{X_0})$ is $F$-pure. 
\end{itemize}
Let $i$ be the smallest positive integer such that $i\Delta$ is integral and $i(K_{X_0}+\Delta|_{X_0})$ is Cartier. 
Set $\mathcal F_m := f_* \mathcal O_X(m(K_X +\Delta))$ for each $m \ge 0$ with $i|m$. 
Then there exists an integer $m_0 \ge 0$ such that the following hold.
\begin{itemize}
\item[{\rm(1)}]
The sets 
$
\mathbb B_-(\mathcal F_m +{a}m H) 
$
and $S$ do not intersect for $m\ge m_0$ with $i|m$. 
Furthermore, if $Y$ is normal and $S\subseteq Y$ is open, 
then there exists an integer $m_1 \ge m_0$ such that 
$ \mathcal F_m \otimes \mathcal O_Y(lH) $ 
is weakly positive over $S \cap Y_1$ for $m\ge m_1$ and  $l \ge {a} m$, 
where $Y_1$ is the maximal open subset of $Y$ such that $f$ is flat over $Y_1$. 
\item[{\rm(2)}]
If $H=H'|_Y$ for a big Cartier divisor $H'$ on $W$ with $|H'|$ free, 
then for each integer $l > {a}(m-1) +\dim Y$, 
the sheaf
$
\mathcal F_m
\otimes 
\mathcal O_Y(lH)
$
is generated by its global sections at every point in $S$. 
\end{itemize}
\end{cor}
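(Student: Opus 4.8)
The plan is to deduce this from Theorem~\ref{thm:main}, whose two hypotheses over $S'$ I will check with $S'$ taken to be $S$ itself; the only work not packaged in Theorem~\ref{thm:main} is the upgrade of the pseudo-effectivity from Theorem~\ref{thm:main}~(1) to weak positivity over $S\cap Y_1$. Note first that it suffices to treat the $m$ divisible by $i$, since those are the only ones appearing in the statement, and the proof of Theorem~\ref{thm:main} uses its hypothesis~(ii) only for such $m$ (after also taking the auxiliary integer $\mu$ there divisible by $i$).

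To verify hypothesis~(i) of Theorem~\ref{thm:main}, observe that over $Y_0$ the $\mathbb Q$-AC divisor $K_X+\Delta$ restricts to $K_{X_0}+\Delta|_{X_0}$, which is $\mathbb Q$-Cartier and $f$-ample; hence $\bigoplus_{l\ge 0}f_*\mathcal O_X(\lfloor l(K_X+\Delta)\rfloor)|_{Y_0}$ is the relative section algebra of an $f$-ample $\mathbb Q$-Cartier divisor, so it is a finitely generated $\mathcal O_{Y_0}$-algebra, and in particular is generated in bounded degrees. Since $S\subseteq Y_0$, this algebra is finitely generated over $S$ in the sense of Definition~\ref{defn:fingen}. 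For hypothesis~(ii), apply Lemma~\ref{lem:f-ample} to $f|_{X_0}\colon X_0\to Y_0$ with $\Delta$ replaced by $\Delta|_{X_0}$, with the $f$-ample Cartier divisor $L:=i(K_{X_0}+\Delta|_{X_0})$ and with $N=0$; its hypotheses hold thanks to the $\mathbb Q$-Cartierness, the integrality of $i\Delta$ and the $F$-purity of $(X_0,\Delta|_{X_0})$. This produces $m_0$ with
$$
S^0(f|_{X_0})_*\big(\sigma(X_0,\Delta|_{X_0})\otimes\mathcal O_{X_0}(m(K_{X_0}+\Delta|_{X_0}))\big)=(f|_{X_0})_*\mathcal O_{X_0}(m(K_{X_0}+\Delta|_{X_0}))
$$
for all $m\ge m_0$ with $i\mid m$. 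Because $S^0f_*$ is built from the trace maps $\phi^{(e)}_{(X,\Delta)}$, whose formation is local on $Y$, the stalk at a point of $Y_0$ of $S^0f_*(\sigma(X,\Delta)\otimes\mathcal O_X(m(K_X+\Delta)))$ agrees with the one for $f|_{X_0}$; hence its inclusion into $f_*\mathcal O_X(m(K_X+\Delta))$ is an isomorphism of stalks at every point of $S$, for all such $m$. This is hypothesis~(ii).

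By Theorem~\ref{thm:main}~(1) we obtain $\mathbb B_-(\mathcal F_m+amH)\cap S=\emptyset$ for $m\ge m_0$ with $i\mid m$, and Theorem~\ref{thm:main}~(2) gives the generation statement in part~(2). For the weak positivity clause, recall from the proof of Theorem~\ref{thm:main}~(1) that $-am\le t(\mathcal F_m)$, so Proposition~\ref{prop:t}~(1) yields $\mathbb B_-(\mathcal F_m+lH)\cap S=\emptyset$ for every $l\ge am$. On $Y_0\cap Y_1$ the morphism $f$ is flat and $m(K_X+\Delta)$ is a Cartier $f$-ample divisor, so by relative Serre vanishing $R^{j}f_*\mathcal O_X(m(K_X+\Delta))=0$ for $j>0$ once $m\ge m_1$ (with $i\mid m$) for a suitable $m_1\ge m_0$; cohomology and base change then makes $f_*\mathcal O_X(m(K_X+\Delta))$, hence $\mathcal F_m\otimes\mathcal O_Y(lH)$, locally free over $Y_0\cap Y_1$. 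Since $S$ is open with $S\subseteq Y_0$, the set $S\cap Y_1$ is a dense open subset of $Y$ contained in the locus on which the torsion-free quotient of $\mathcal F_m\otimes\mathcal O_Y(lH)$ is locally free and which is disjoint from $\mathbb B_-(\mathcal F_m\otimes\mathcal O_Y(lH))$; Lemma~\ref{lem:relation}~(2) then gives weak positivity over $S\cap Y_1$ for $m\ge m_1$ and $l\ge am$.

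The step requiring the most care is the verification of hypothesis~(ii): Lemma~\ref{lem:f-ample} has to be applied to the restricted morphism $f|_{X_0}$, where $F$-purity and relative ampleness are available, and the resulting equality of sheaves over $Y_0$ must be transported to an isomorphism of stalks along $S$ using the local nature of $S^0f_*$. Beyond this, the only input not already contained in Theorem~\ref{thm:main} is the local-freeness argument needed to pass from pseudo-effectivity to weak positivity over $S\cap Y_1$, which is what forces the auxiliary bound $m\ge m_1$.
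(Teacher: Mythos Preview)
Your proof is correct and follows essentially the same route as the paper: verify hypotheses~(i) and~(ii) of Theorem~\ref{thm:main} using relative ampleness and Lemma~\ref{lem:f-ample}, then invoke that theorem. The only cosmetic difference is that the paper sets $S':=Y_0$ whereas you take $S':=S$; since you verify both hypotheses over all of $Y_0\supseteq S$ anyway, either choice works. Your treatment of the weak-positivity clause is in fact more thorough than the paper's, which simply asserts that local freeness of $\mathcal F_m$ over $Y_1$ ``is ensured by the choice of $Y_1$''; your relative Serre vanishing plus cohomology-and-base-change argument over $Y_0\cap Y_1$ makes explicit what the paper leaves implicit, and correctly identifies why the extra threshold $m_1\ge m_0$ is needed.
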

\begin{proof}
Put $S':=Y_0$. Take a sufficiently large integer $m_0$. 
We only need to check that assumptions~(i) and~(i\hspace{-1pt}i) 
in Theorem~\ref{thm:main} hold true. 
Since $K_{X_0} +\Delta|_{X_0}$ is ample over $S'$, 
we see that assumption~(i) hold,
and (i\hspace{-1pt}i) follows from Lemma~\ref{lem:f-ample}. 
Hence, we can apply Theorem~\ref{thm:main}. 
Note that Definition~\ref{defn:wp} requires the local freeness of $\mathcal F_m|_{Y_1}$, which is ensured by the choice of $Y_1$. 
\end{proof}
\subsection{Direct images of relative pluricanonical bundles} 
\label{subsection:relative_pluricanonical}
In this subsection, we deal with the positivity of the direct images of relative pluricanonical bundles. We fix an infinite $F$-finite field $k$ of characteristic $p>0$. 

The following lemma is used to prove the main theorem (Theorem~\ref{thm:relative_psef}) of this subsection. 
The author learned the proof from the referee. 
\begin{lem} \label{lem:base_change}
Let $X$ be an equi-dimensional quasi-projective $k$-scheme satisfying $S_2$ and $G_1$, 
let $Y$ be a regular quasi-projective variety, 
and let $f:X\to Y$ be a surjective projective morphism. 
Let $e$ be a positive integer. 
Then $X_{Y^e}$ is also an equi-dimensional quasi-projective $k$-scheme satisfying $S_2$ and $G_1$.
\end{lem}
\begin{proof}
Since $F_Y^e$ is a universal homeomorphism, the projection $w^{(e)}:X_{Y^e}\to X$
is homeomorphic, so $X_{Y^e}$ is equi-dimensional. 
Also, since $Y$ is regular, $F_Y^e$ is a flat morphism with Gorenstein fibers, 
and hence so is the base change $w^{(e)}$ (\cite[Corollary~2']{WITO69}), 
which means that $X_{Y^e}$ satisfies $S_2$ and $G_1$ (\cite[Proposition~1~(i\hspace{-1pt}i)]{RF72}). 
\end{proof}
\begin{thm} \label{thm:relative_psef}
Let $X$ be an equi-dimensional quasi-projective $k$-scheme satisfying $S_2$ and $G_1$, 
let $Y$ be a dense normal open subset of a projective variety $W$ of dimension $n$, 
and let $\eta$ $($resp. $\overline \eta)$ be the generic $($resp. geometric generic$)$ point of $Y$. 
Let $f:X \to Y$ be a surjective projective morphism 
and let $\Delta$ be an effective $\mathbb Q$-AC divisor on $X$ 
such that $i(K_X+\Delta)$ is Cartier for some $i >0$ not divisible by $p$. 
Suppose that the following conditions hold:
\begin{itemize}
\item[{\rm (i)}] 
$
\bigoplus_{m \ge 0} 
H^0\left(X_{\overline \eta}, 
\mathcal O_{X_{\overline\eta}}
\left( \lfloor m \left(K_{X_{\overline\eta}} + \Delta|_{X_{\overline\eta}} \right) \rfloor 
\right) \right)
$
is finitely generated $k(\overline\eta)$-algebra; 
\item[{\rm (i\hspace{-1pt}i)}] 
there exists an integer $m_0 \ge 0$ such that 
$$
S^0 \left(X_{\overline \eta}, \Delta|_{X_{\overline \eta}}; 
\mathcal O_{X_{\overline \eta}} (m(K_{X_{\overline \eta}} + \Delta|_{X_{\overline \eta}})) \right)
=H^0 \left(X_{\overline \eta}, \mathcal O_{X_{\overline \eta}}(m(K_{X_{\overline \eta}} + \Delta|_{X_{\overline\eta}})) \right)
$$
for each $m \ge m_0$ such that $m\Delta$ is integral.  
\end{itemize}
Define $\mathcal G_m := f_* \mathcal O_X\left( m(K_{X}+\Delta) \right) \otimes \omega_Y^{[-m]}$ for an integer $m \ge m_0$ such that $m\Delta$ is integral. 
\begin{itemize}
\item[{\rm(1)}] 
Then $\mathcal G_m$ is pseudo-effective in the sense of Definition~\ref{defn:psef}. 
\item[{\rm(2)}]
If $Y$ is regular, then $\mathbb B_-(\mathcal G_m) \ne Y$. 
\item[{\rm(3)}]
Let $A$ be a big Cartier divisor on $W$ with $|A|$ free, 
and let $H$ be a Cartier divisor on $Y$ such that $H -n A|_Y$ is big. 
If $Y$ is regular, then the sheaf 
$$
\mathcal G_m \otimes \mathcal O_Y(K_Y +H)
$$
is generated by its global sections at $\eta$. 
\end{itemize}
\end{thm}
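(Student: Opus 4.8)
The plan is to deduce all three statements from Theorem~\ref{thm:general} (applied to $f$ together with a suitable twist of the relative pluricanonical divisor) and a bootstrap in the spirit of the proof of Theorem~\ref{thm:main}.

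First I would reduce everything to statements at the generic point $\eta$. Conclusions (1) and (2) amount to $\eta\notin\mathbb B_-(\mathcal G_m)$ (respectively $\eta\notin\mathbb B_-(\mathcal G_m'|_{Y_1})$, via Lemma~\ref{lem:relation}), and (3) is a global generation statement at $\eta$, so I take $S=\{\eta\}$. The hypotheses (i), (ii$'$) are about the geometric generic fiber $X_{\overline\eta}$: by Lemma~\ref{lem:S0_alg} condition (ii$'$) descends to $S^0(X_\eta,\Delta|_{X_\eta};\mathcal O_{X_\eta}(m(K_{X_\eta}+\Delta|_{X_\eta})))=H^0(X_\eta,\ldots)$, and (i) descends to finite generation of $\bigoplus_m H^0(X_\eta,\ldots)$ over $k(\eta)$ by faithfully flat base change; equivalently, the stalk-at-$\eta$ versions of hypotheses (i)--(iii) of Theorem~\ref{thm:general} hold for the divisors $*(K_X+\Delta)$. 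For (2) and (3) the variety $Y$ is regular, so $f^*K_Y$ is a genuine Cartier divisor and $\mathcal G_m\cong f_*\mathcal O_X(m(K_{X/Y}+\Delta))$ with $K_{X/Y}:=K_X-f^*K_Y$; for (1) I would work with the reflexive sheaf $\omega_Y^{[-m]}$ over the regular locus in the usual way (this is where one uses "pseudo-effective'' in the sense of Definition~\ref{defn:psef} rather than $\mathbb B_-\neq Y$).

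Next I would install the $(\ast)_{n+1}$-structure. Since $k$ is infinite, a big globally generated divisor on $W$ (the given $A$ for (3), any ample globally generated divisor for (1) and (2)) yields a generically finite morphism $g\colon Y\to\mathbb P^n$ with $g^*L\sim A|_Y$, $L$ a hyperplane, so that $(\{\eta\},A|_Y)$ satisfies $(\ast)_{n+1}$ by Example~\ref{eg:dimY+1}. Now comes the heart of the matter: I apply Theorem~\ref{thm:general} to $f$ with $M:=m(K_{X/Y}+\Delta)$ (so $f_*\mathcal O_X(M)\cong\mathcal G_m$), with $H:=A|_Y$, and with an auxiliary divisor of the shape $N:=a(K_X+\Delta)+b\,f^*K_Y$ and a rational $\delta\ge0$ chosen so that $\delta N\sim_{\mathbb Q}M-(K_X+\Delta)=(m-1)(K_X+\Delta)-m f^*K_Y$; after clearing denominators one may take $a$ a divisible-enough multiple of $m-1$. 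The key point is that tensoring by the pullback of a line bundle from $Y$ changes neither "generated in degree $1$'', nor "finite generation of the associated module'', nor the $S^0$-isomorphism condition, so hypotheses (i)--(iii) of Theorem~\ref{thm:general} for this $N$ and $M$ reduce, once the $\omega_Y$-twists are stripped off, to exactly the statements established in the previous paragraph. Theorem~\ref{thm:general}(1) then produces an inequality relating $t(\mathcal G_m)$ to the invariant of $f_*\mathcal O_X(N)$; running this for $m$ and for a fixed divisible-enough $\mu$, and combining the inequalities as in the proof of Theorem~\ref{thm:main} (a "baseline'' inequality for $\mathcal G_\mu$ followed by propagation), I expect to obtain $t(\mathcal G_m)\ge 0$ for every $m\ge m_0$ with $m\Delta$ integral.

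Finally I would read off the conclusions. By Proposition~\ref{prop:t}, $t(\mathcal G_m)\ge 0$ gives $\mathbb B_-(\mathcal G_m)\cap\{\eta\}=\emptyset$, which is (2) and, via Lemma~\ref{lem:relation}, (1). For (3): writing $H=nA|_Y+(H-nA|_Y)$ with $H-nA|_Y$ big, one has $t\big(\mathcal G_m\otimes\mathcal O_Y(H-nA|_Y)\big)>0$ (twisting a sheaf with $t\ge0$ by a big divisor strictly raises $t$), and Theorem~\ref{thm:general}(2) / Proposition~\ref{prop:main}(2), applied with $\mathcal O_Y(nA|_Y)$ in the role of the $n$-fold twist by $A|_Y$, shows that $\mathcal G_m\otimes\mathcal O_Y(K_Y+H)$ is generated by its global sections at $\eta$. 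The step I expect to be the main obstacle is the $\omega_Y$/$f^*K_Y$ bookkeeping in the third paragraph: one must choose $N$ and $\delta$ so that the numerics close up to the sharp bound $t(\mathcal G_m)\ge 0$ rather than the weaker $t\ge -(n+1)m$ one would get from the absolute Theorem~\ref{thm:main}; in particular one cannot simply push $\mathcal G_m$ forward to $\mathbb P^n$ along $g$, since that discards the ramification divisor $K_{Y/\mathbb P^n}$ and the resulting estimate is too weak.
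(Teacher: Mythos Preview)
Your approach has a genuine gap at exactly the point you flag as the main obstacle. With $M=m(K_{X/Y}+\Delta)$ one has $M'=M-(K_X+\Delta)=(m-1)(K_{X/Y}+\Delta)-f^*K_Y$, and there is no choice of $N$ that is simultaneously (a) a multiple of $K_{X/Y}+\Delta$, so that $t(f_*\mathcal O_X(N))$ feeds back into a self-referential bootstrap as in Theorem~\ref{thm:main}, and (b) satisfies $\delta N\sim_{\mathbb Q}M'$. Allowing $N$ to carry an $f^*K_Y$-twist, as you propose, does preserve hypotheses (i)--(iii) of Theorem~\ref{thm:general} at $\eta$, but then $t(f_*\mathcal O_X(N))$ depends on $K_Y$ in a way unrelated to the fixed $H$, and the two inequalities coming from Theorem~\ref{thm:general}(1) no longer combine to yield $t(\mathcal G_m)\ge 0$. (Your final claim that twisting by a big divisor ``strictly raises $t$'' is also not justified: $t$ is defined relative to $L$ on $Z$, and an arbitrary big divisor on $Y$ need not be comparable to $g^*L$.)

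The paper's route is different and uses a second endomorphism, the Frobenius of $Y$, to absorb the $K_Y$-twist. For (2) one applies the \emph{absolute} Theorem~\ref{thm:main}(2) not to $f$ but to the base change $f_{Y^e}\colon X_{Y^e}\to Y^e$ (Lemma~\ref{lem:S0_alg} transports the hypotheses from $\overline\eta$). Since $F_Y^e$ is flat, $({F_Y^e})^*\mathcal G_m\otimes\mathcal O_{Y^e}(L)\cong (f_{Y^e})_*\mathcal O_{X_{Y^e}}(m(K_{X_{Y^e}}+\Delta_{Y^e}))\otimes\mathcal O_{Y^e}((a(m-1)+n+1)H)$ for the \emph{fixed} divisor $L=(a(m-1)+n+1)H+mK_Y$; Theorem~\ref{thm:main}(2) makes this globally generated at $\eta$, so $\eta\notin\mathbb B^{F_Y}(\mathcal G_m+p^{-e}L)$, and Proposition~\ref{prop:B-_Bpi} together with $p^{-e}L\to 0$ gives $\eta\notin\mathbb B_-(\mathcal G_m)$. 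Statement (1) then reduces to (2) on the regular locus. For (3) one does not invoke Proposition~\ref{prop:main}(2); instead one tensors the absolute trace morphism by $\omega_Y^{1-m}$ (projection formula) to obtain $\psi^{(e)}\colon {F_Y^e}_*(S^{q_e}(\mathcal G_\mu)\otimes\mathcal E)\to\mathcal G_m\otimes\omega_Y$, and feeds this into Proposition~\ref{prop:ACM} with $r=0$ and small $r'>0$. The resulting inclusion $\mathrm{Bs}(\mathcal G_m(K_Y+H))\cap\{\eta\}\subseteq\mathbb B_-(\mathcal G_\mu)\cup\mathbb B(H-(n+r')A|_Y)\cup\mathbb B_+(A|_Y)$ is then handled by (2) and the bigness hypotheses on $H-nA|_Y$ and $A$.
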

\begin{rem} \label{rem:weak positivity}
Statement~(1) in the theorem is a positive characteristic analog of the weak positivity theorem due to Viehweg~\cite[Theorem~I\hspace{-1pt}I\hspace{-1pt}I]{Vie83}. 
The first weak positivity theorem in positive characteristic is due to Patakfalvi~\cite[Theorem~1.1]{Pat14}. 
In \cite{Eji17}, based on Patakfalvi's techniques, the author proved a higher-dimensional version of \cite[Theorem~1.1]{Pat14}. 
\end{rem}
\begin{proof}[Proof of Theorem~\ref{thm:relative_psef}]
We first consider (1). 
Let $\mathcal G'_m$ be the torsion-free part of $\mathcal G_m$
and let $Y_1\subseteq Y$ be the maximal open subset on which $\mathcal G_m'$ is locally free. 
Let $Y_{\mathrm{reg}}$ be the regular locus of $Y$. 
Then clearly 
$
\mathbb B_-(\mathcal G_m'|_{Y_1})\cap Y_{\mathrm{reg}} 
=\mathbb B_-(\mathcal G_m'|_{Y_1\cap Y_{\mathrm{reg}}}), 
$ 
so (1) follows from (2). 
To prove (2), we use Theorem~\ref{thm:main}. 
Let $H$ be a very ample Cartier divisor on $Y$ 
and put $S:=S':=\{\eta\}$. 
As shown in Example~\ref{eg:very_ample}, 
the pair $(S, H)$ satisfies condition~$(\ast)_{a}$ 
for some $a \in \mathbb Z_{>0}$. 
Take $e \in \mathbb Z_{>0}$.  
By Lemma~\ref{lem:S0_alg}, one can check that 
$f_{Y^e}:X_{Y^e}\to Y^e$ and $\Delta_{Y^e}$ 
satisfy conditions~(i) and~(i\hspace{-1pt}i) in Theorem~\ref{thm:main}. 
Note that $X_{Y^e}$ is an equi-dimensional quasi-projective $k$-scheme satisfying $S_2$ and $G_1$ by Lemma~\ref{lem:base_change}. 
Set $L:= (am -a +\dim Y +1)H +mK_Y$. 
Since $F_Y^e$ is flat, we have 
\begin{align*}
\left( {F_Y^e}^* \mathcal G_m \right)
\otimes 
\mathcal O_{Y^e}(L) 
= & 
\left({F_Y^e}^* f_*\mathcal O_X(m(K_{X/Y}+\Delta)) \right)
\otimes
\mathcal O_{Y^e}(L)
\\ \cong & 
{f_{Y^e}}_* \mathcal O_{X_{Y^e}}\left( m(K_{X_{Y^e}/Y^e} +\Delta_{Y^e}) \right)
\otimes 
\mathcal O_{Y^e}(L) 
\\ \cong &
{f_{Y^e}}_* \mathcal O_{X_{Y^e}}\left( m(K_{X_{Y^e}} +\Delta_{Y^e}) \right) 
\otimes 
\mathcal O_{Y^e}((am -a +\dim Y +1)H), 
\end{align*}
so Theorem~\ref{thm:main}~(2) tells us that 
$
\eta\notin\mathrm{Bs}\left(
	\left( {F_Y^e}^*\mathcal G_m \right) \otimes \mathcal O_{Y^e}(L) 
\right). 
$
This means that 
$$
\eta\notin\mathbb B^{F_Y} ( \mathcal G_m  + p^{-e}L). 
$$
Let $M$ be an ample divisor on $Y$ such that $L+M$ is ample. 
Thanks to Proposition~\ref{prop:B-_Bpi}, we get
$
\eta\notin\mathbb B(\mathcal G_m  + p^{-e}(L + M)), 
$
so 
$$
\mathbb B_- (\mathcal G_m ) 
=\bigcup_{e > 0} \mathbb B(\mathcal G_m  +p^{-e}(L+M)) \not\ni \eta.
$$
We show (3). 
Let $\mu \ge m_0$ be an integer divisible enough. 
By an argument similar to that in the proof of Theorem~\ref{thm:general}, 
we have $n_0 \in \mathbb Z_{>0}$ such that for each $e \in \mathbb Z_{>0}$, 
there is the morphism 
\begin{align*}
\tag{\ref{thm:relative_psef}.1} \label{mor:a}
{F_Y^e}_* \bigg(
S^{q_e} \big( f_* \mathcal O_X(\mu(K_X +\Delta)) \big)
\otimes 
f_* \mathcal O_X(r_e(K_X +\Delta))
\bigg)
\to
f_* \mathcal O_X(m(K_X +\Delta))
\end{align*}
that is surjective over $S$, 
where $q_e$ and $r_e$ are integers such that 
$(m-1)p^e +1 = \mu q_e + r_e$ and $n_0 \le r_e < n_0 + \mu$. 
(If $m=1$, then we put $n_0:=1$, $q_e=0$ and $S^{0}(?):=f_*\mathcal O_X$.)
Note that $r_e(K_X+\Delta)$ is integral by the definition. 
For each $l\in\mathbb Z_{>0}$, we denote by $\mathcal G_{l}$ 
the sheaf $f_*\mathcal O_X(\lfloor l(K_{X/Y}+\Delta)\rfloor)$.
Taking the tensor product of (\ref{mor:a}) 
and $\omega_Y^{1-m}$, we obtain
\begin{align}
\tag{\ref{thm:relative_psef}.2} \label{mor:b}
{F_Y^e}_* \big(
S^{q_e} (\mathcal G_\mu) \otimes \mathcal G_{r_e} \otimes \omega_Y
\big)
\to 
\mathcal G_m \otimes \omega_Y
\end{align}
by the projection formula. 
Putting $\mathcal E := \bigoplus_{n_0 \le r < n_0 +\mu} \mathcal G_{r}(K_Y)$, 
we get the morphism 
$$
{F_Y^e}_* \left( S^{q_e}(\mathcal G_\mu) \otimes \mathcal E \right)
\to \mathcal G_m \otimes \omega_Y
$$
which is surjective over $S$. 
By the assumption, we find $r'\in\mathbb Q_{>0}$ such that $H-(n+r')A|_Y$ is big. 
We use Proposition~\ref{prop:ACM} with the following data:
$$
\left( \mathcal E,~ \mathcal F,~ \mathcal G;~ h_e,~ \delta;~ r,~r' \right)
:=\left( \mathcal E,~ \mathcal G_\mu,~ \mathcal G_m \otimes \omega_Y;~ q_e,~ \frac{m-1}{\mu};~ 0,~r' \right)
$$
Then $|\delta p^e -{h_e}|=\mu^{-1}|(m-1)p^e -\mu q_e| \le \mu^{-1}(n_0 +\mu)$, 
so we get
\begin{align*}
\mathrm{Bs}(\mathcal G_m(K_Y +H)) \cap S
\subseteq
\mathbb B_-(\mathcal G_\mu)
\cup 
\mathbb B( \underbrace{H -(n +r')A|_Y}_{\textup{big}}) 
\cup 
\mathbb B_+(\underbrace{A|_Y}_{\textup{big}})
\end{align*}
by Proposition~\ref{prop:ACM}. 
Hence, (2) implies that $\mathrm{Bs}(\mathcal G_m(K_Y+H)) \cap S =\emptyset$. 
\end{proof}
Next, we prove the weak positivity of the direct images of relative pluricanonical bundles, 
in the case where the geometric generic fiber is $F$-pure and has ample dualizing sheaf. 
\begin{thm} \label{thm:relative_f-ample}
Let $X$, $\Delta$, $W$, $Y$ and $f$ be as in Theorem~\ref{thm:relative_psef}. 
%
Let $U\subseteq X$ be the largest Gorenstein open subset. 
Let $Y_0\subseteq Y$ be the subset consisting of points $y\in Y$ with the following properties: 
\begin{itemize}
\item $y$ is a regular point;
\item $f$ is flat at every point in $f^{-1}(y)$; 
\item $X_y$ satisfies $S_2$ and $G_1$;
\item $\mathrm{Supp}(\Delta)$ does not contain any irreducible component of $f^{-1}(y)$;
\item $\left(X_{\overline y}, \overline{\Delta|_{U_{\overline y}}}\right)$ is $F$-pure, where $X_{\overline y}$ is the geometric fiber of $f$ over $y$ and $\overline{\Delta|_{U_{\overline y}}}$ is the $\mathbb Q$-AC divisor on $X_{\overline y}$ that is the extension of $\Delta|_{U_{\overline y}}$ to $X_{\overline y}$; 
\item $K_X+\Delta$ is $\mathbb Q$-Cartier in a neighborhood of every point in $f^{-1}(y)$;
\item $K_{X_{y}} +\overline{\Delta|_{U_{\overline y}}}$ is ample. 
\end{itemize}
Define 
$
\mathcal G_m:=f_*\mathcal O_X(m(K_{X}+\Delta)) \otimes \omega_Y^{[-m]}
$ 
for each positive integer $m \ge m_0$ with $i|m$. 
Then there exists a positive integer $m_0$ such that the following conditions hold.
\begin{itemize}
\item[\rm(0)] 
The set $Y_0$ is an open subset of $Y$.
\item[\rm(1)] 
The sheaf $ \mathcal G_m $ is weakly positive over $Y_0$ 
for each $m\ge m_0$ with $i|m$.
\item[\rm(2)] 
If $Y$ is regular, then $\mathbb B_-(\mathcal G_m) \cap Y_0 =\emptyset$ 
for each $m\ge m_0$ with $i|m$. 
\item[\rm(3)] 
Let $A$ be an ample Cartier divisor on $W$ with $|A|$ free and 
let $H$ be a Cartier divisor on $Y$ such that $H-nA|_Y$ is ample. 
If $Y$ is regular, then the sheaf
$$
\mathcal G_m \otimes \mathcal O_Y(K_Y +H)
$$
is globally generated over $Y_0$ 
for each $m \ge m_0$ with $i|m$. 
\end{itemize}
\end{thm}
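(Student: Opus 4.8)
The plan is to run the proof of Theorem~\ref{thm:relative_psef} with the generic point $\eta$ replaced throughout by the open set $Y_0$ (covered by finitely many pieces carrying condition~$(\ast)_a$), and with the input Lemma~\ref{lem:S0_alg}---which there supplied hypothesis~(ii) of Theorem~\ref{thm:main} at $\eta$---replaced by Lemma~\ref{lem:f-ample_rel}, which supplies that hypothesis over the whole of $Y_0$ and uniformly in the Frobenius twist. For~(0): each condition defining $Y_0$ cuts out an open set---the regular locus of $Y$ is open; the non-flat locus of $f$ has closed image since $f$ is projective; for a flat morphism the loci where fibres satisfy $S_2$, satisfy $G_1$, contain no component of $f^{-1}(y)$ in $\mathrm{Supp}(\Delta)$, or carry an ample $K_{X_y}+\Delta|_{X_y}$ are open; the non-$\mathbb Q$-Cartier locus of $K_X+\Delta$ has closed image; and $F$-purity of geometric fibres of a flat morphism is open---so $Y_0$ is open.

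Next restrict to $f^{-1}(Y_0)\to Y_0$: now $Y_0$ is regular, $f$ is flat with $S_2$, $G_1$ fibres, $K_X+\Delta$ is $\mathbb Q$-Cartier, and $i\Delta$ is integral with $p\nmid i$; moreover, $f$ being flat over the regular base with $G_1$ fibres, the Gorenstein locus of $f^{-1}(Y_0)$ has codimension-$\ge 2$ complement in every fibre. Fix $j$ with $i\mid j$ so that $A:=j(K_X+\Delta)$ is an $f$-ample Cartier divisor. Lemma~\ref{lem:f-ample_rel} then yields an integer $m_0$ (enlarged below) with
\[
S^0{f_{Y_0^e}}_*\bigl(\sigma(X_{Y_0^e},\Delta_{Y_0^e})\otimes\mathcal O(mA_{Y_0^e}+N_{Y_0^e})\bigr)={f_{Y_0^e}}_*\mathcal O(mA_{Y_0^e}+N_{Y_0^e})
\]
for every $e\ge 0$, every $m\ge m_0$ and every $f$-nef Cartier $N$; taking $N:=(m'-mj)(K_X+\Delta)$ for $m'\ge mj$ and using reflexivity along the Gorenstein locus together with the compatibility of $S^0$ with twists by divisors pulled back from the base, this gives hypotheses~(i), (ii) of Theorem~\ref{thm:main} for $f_{Y^e}\colon X_{Y^e}\to Y^e$ and $\Delta_{Y^e}$, with $S'=(Y_0)_{Y^e}$, for every $e\ge 0$ and all large $m$ with $i\mid m$ (hypothesis~(i) being automatic from $f$-ampleness). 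Enlarging $m_0$ so that in addition $\mathcal G_m|_{Y_0}$ is locally free and commutes with base change (relative Serre vanishing) fixes $m_0$.

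For~(2), assume $Y$ regular, fix a very ample Cartier divisor $H'$ on $W$ and set $H:=H'|_Y$. By Example~\ref{eg:very_ample} there are open subsets $U_1,\dots,U_l$ of $Y$ with $Y_0\subseteq\bigcup_t U_t$ and $(Y_0\cap U_t,H)$ satisfying $(\ast)_a$ for a common $a$. Fix $t$, $e>0$ and $m\ge m_0$ with $i\mid m$; as in the proof of Theorem~\ref{thm:relative_psef}, with $L:=\bigl(a(m-1)+\dim Y+1\bigr)H+mK_Y$ flat base change along $F_Y^e$ gives
\[
({F_Y^e}^*\mathcal G_m)\otimes\mathcal O_{Y^e}(L)\cong{f_{Y^e}}_*\mathcal O_{X_{Y^e}}\bigl(m(K_{X_{Y^e}}+\Delta_{Y^e})\bigr)\otimes\mathcal O_{Y^e}\bigl((a(m-1)+\dim Y+1)H\bigr),
\]
and Theorem~\ref{thm:main}~(2) applied to $f_{Y^e}$ with $S=S'=(Y_0\cap U_t)_{Y^e}$ (valid by the previous paragraph, $H$ restricting from the free big divisor $H'$) shows the right-hand side is globally generated along $(Y_0\cap U_t)_{Y^e}$, whence $(Y_0\cap U_t)\cap\mathbb B^{F_Y}(\mathcal G_m+p^{-e}L)=\emptyset$. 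Proposition~\ref{prop:B-_Bpi} (absorbing a small ample $M$ with $L+M$ ample) gives $(Y_0\cap U_t)\cap\mathbb B(\mathcal G_m+p^{-e}(L+M))=\emptyset$, and letting $e\to\infty$ yields $\mathbb B_-(\mathcal G_m)\cap(Y_0\cap U_t)=\emptyset$; the union over $t$ proves~(2). Statement~(1) follows as the reduction ``(1) from (2)'' in Theorem~\ref{thm:relative_psef}: pass to the torsion-free quotient $\mathcal G'_m$ with local-freeness locus $Y_1$, use $\mathbb B_-(\mathcal G'_m|_{Y_1})\cap Y_{\mathrm{reg}}=\mathbb B_-(\mathcal G'_m|_{Y_1\cap Y_{\mathrm{reg}}})$, note $Y_0\subseteq Y_{\mathrm{reg}}\cap Y_1$, apply~(2) over $Y_{\mathrm{reg}}$, and invoke Lemma~\ref{lem:relation}~(1).

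For~(3), assume $Y$ regular and let $A$, $H$ be as in the statement; fix $m\ge m_0$ with $i\mid m$ and $\mu\ge m_0$ divisible enough with $i\mid\mu$. As in the proof of Theorem~\ref{thm:relative_psef}~(3)---using $f$-ampleness of $K_X+\Delta$ over $Y_0$, finite generation, the maps $\phi^{(e)}_{(X,\Delta)}$, and Lemma~\ref{lem:f-ample_rel}---one produces, for each $e$, a morphism ${F_Y^e}_*\bigl(S^{q_e}(\mathcal G_\mu)\otimes\mathcal E\bigr)\to\mathcal G_m\otimes\omega_Y$ surjective over $Y_0$, with $\mathcal E$ a fixed sheaf and $(m-1)p^e+1=\mu q_e+r_e$, $r_e$ bounded. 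Choosing $r'\in\mathbb Q_{>0}$ with $H-(n+r')A|_Y$ still ample and applying Proposition~\ref{prop:ACM} with $(\mathcal E,\mathcal F,\mathcal G;h_e,\delta;r,r'):=(\mathcal E,\mathcal G_\mu,\mathcal G_m\otimes\omega_Y;q_e,(m-1)/\mu;0,r')$ and $A':=A$ yields
\[
\mathrm{Bs}\bigl(\mathcal G_m(K_Y+H)\bigr)\cap Y_0\subseteq\mathbb B_-(\mathcal G_\mu)\cup\mathbb B_-\bigl(H-(n+r')A|_Y\bigr)\cup\mathbb B_+(A|_Y),
\]
where the last two sets are empty since $H-(n+r')A|_Y$ and $A|_Y$ are ample, and $\mathbb B_-(\mathcal G_\mu)\cap Y_0=\emptyset$ by~(2); hence $\mathcal G_m\otimes\mathcal O_Y(K_Y+H)$ is globally generated over $Y_0$. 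I expect the main obstacle to be the second paragraph: proving that $Y_0$ is open (especially openness of geometric $F$-purity) and, above all, verifying hypotheses~(i), (ii) of Theorem~\ref{thm:main} simultaneously for \emph{all} the twisted families $f_{Y^e}$ over \emph{all} of $Y_0$ with a single $m_0$---exactly the uniformity encoded in Lemma~\ref{lem:f-ample_rel}, which one must set up carefully after restricting to $f^{-1}(Y_0)$, together with the accompanying Gorenstein / AC-divisor bookkeeping.
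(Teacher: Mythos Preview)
Your proposal is correct and follows essentially the same approach as the paper's own proof: the paper's argument is terse, saying only that by Lemma~\ref{lem:f-ample_rel} the inclusion $S^0{f_{Y^e}}_*(\cdots)\hookrightarrow {f_{Y^e}}_*\mathcal O_{X_{Y^e}}(m(K_X+\Delta)_{Y^e})$ is an isomorphism over $Y_0$ for all $e$ and all large $m$, and that therefore one may rerun the proof of Theorem~\ref{thm:relative_psef} with the generic point $\eta$ replaced by an arbitrary point of $Y_0$; for~(0) it cites \cite[Corollary~3.31]{PSZ13} for the openness of fibrewise $F$-purity, and~(1) is reduced to~(2) exactly as you do. Your write-up simply unpacks these steps in more detail---in particular the covering via Example~\ref{eg:very_ample} and the explicit invocation of Proposition~\ref{prop:ACM} for~(3)---but the skeleton is identical.
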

\begin{proof}
For (0), one can check that each condition on $y$ is open on $Y$. 
Note that the openness of the $F$-purity of fibers follows from 
\cite[Theorem~3.28]{PSZ18}. 
We see that (1) follows from (2), applying the same argument as that of the proof of Theorem~\ref{thm:relative_psef}~(1). 
We prove (2) and (3).  By Lemma~\ref{lem:f-ample_rel}, 
there is $m_0 \in \mathbb Z_{>0}$ such that the natural inclusion 
\begin{align*}
S^0{f_{Y^e}}_* \left( \sigma(X_{Y^e},\Delta_{Y^e}) \otimes \mathcal O_{X_{Y^e}}(m(K_X+\Delta)_{Y^e}) \right)
\hookrightarrow 
{f_{Y^e}}_* \mathcal O_{X_{Y^e}}
(m(K_X+\Delta)_{Y^e})
\end{align*}
is an isomorphism over $Y_0$ 
for each $e \in \mathbb Z_{>0}$ and $m \ge m_0$ with $i|m$. 
Hence, replacing the generic point $\eta$ with a point in $Y_0$, 
we can apply the same argument as that in the proof of Theorem~\ref{thm:relative_psef}. 
\end{proof}
\subsection{Conclusions}
In this subsection, for the reader's convenience, 
we summarize the conclusions in Subsections~\ref{subsection:absolute_pluricanonical} and~\ref{subsection:relative_pluricanonical}
in the case when the log canonical divisor on the generic fiber is ample. 
We use the following notation:
\begin{notation} \label{notation:conclusion}
Let $k$ be an algebraically closed field of characteristic $p>0$. 
Let $X$ be a normal projective variety over $k$ 
and let $\Delta$ be an effective $\mathbb Q$-Weil divisor on $X$ 
such that $K_X+\Delta$ is $\mathbb Q$-Cartier. 
Let $i_{\mathrm W}$ (resp. $i_{\mathrm C}$) be the smallest positive integer 
such that $i_{\mathrm W} \Delta$ (resp. $i_{\mathrm C}(K_X+\Delta)$) is 
integral (resp. Cartier). 
Note that $i_{\mathrm W} | i_{\mathrm C}$. 
Let $U \subseteq X$ be the largest Gorenstein open subset. 
Let $f:X\to Y$ be a surjective morphism to a projective $n$-dimensional variety $Y$ over $k$. 

Let $Y_1 \subseteq Y$ be the subset consisting of points $y$ with the following properties:
\begin{itemize}
\item $(X,\Delta)$ is $F$-pure in a neighborhood of every point in $f^{-1}(y)$;
\item $(K_X+\Delta)|_{X_y}$ is ample.
\end{itemize}

Let $Y_0 \subseteq Y$ be the subset consisting of points $y$ with the following properties:
\begin{itemize}
\item $Y$ is regular at $y$; 
\item $f$ is flat at every point in $f^{-1}(y)$; 
\item $X_y$ satisfies $S_2$ and $G_1$; 
\item $\mathrm{Supp}(\Delta)$ does not contain any irreducible component of $f^{-1}(y)$; 
\item $\left(X_{\overline y},\overline{\Delta|_{U_{\overline y}}}\right)$ is $F$-pure, where $\overline{\Delta|_{U_{\overline y}}}$ is the effective $\mathbb Q$-AC divisor that is the extension of $\Delta|_{U_{\overline y}}$. 
\item $(K_X+\Delta)|_{X_y}$ is ample;
\end{itemize}
We note that 
\begin{itemize}
\item $Y_0$ and $Y_1$ are open subsets of $Y$ such that $Y_0 \subseteq Y_1 \subseteq Y$; 
\item if $f$ is \textit{not} separable, then $Y_0$ is empty, but $Y_1$ may be not empty. 
\end{itemize}
Let $H$ be an ample Cartier divisor on $Y$ with $|H|$ free. 
\end{notation}
\begin{thm} \label{thm:conclusion} \samepage
Let the notation be as in~\ref{notation:conclusion}. 
\begin{itemize}
\item[(1)] \textup{(Corollary~\ref{cor:f-ample})}
Suppose that $p \nmid i_{\mathrm W}$. 
Then there exists a positive integer $m_1$ such that 
for each $m\ge m_1$ with $i_{\mathrm C}|m$, the sheaf 
$$
f_* \mathcal O_X(m(K_{X}+\Delta)) \otimes \mathcal O_Y(lH)
$$
is globally generated over $Y_1$ for each $l \ge m(n+1)$. 
\item[(2)] \textup{(Theorem~\ref{thm:relative_psef})}
Suppose that $p\nmid i_{\mathrm C}$, that $Y$ is normal 
and that $K_Y$ is $\mathbb Q$-Cartier. 
Then there exists an integer $m_0\ge m_1$ such that 
for each $m\ge m_0$ with $i_{\mathrm C}|m$, the sheaf 
$$
f_* \mathcal O_X(m(K_{X/Y}+\Delta)) 
$$
is weakly positive over $Y_0$, and 
$$
f_* \mathcal O_X(m(K_{X/Y}+\Delta)) \otimes \mathcal O_Y(K_Y+lH)
$$
is globally generated over $Y_0$ for each $l\ge n+1$.  
\end{itemize}
\end{thm}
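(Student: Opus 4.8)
The plan is to derive the two parts directly from Corollary~\ref{cor:f-ample} and Theorem~\ref{thm:relative_f-ample}, so that the proof is essentially a translation between the data of Notation~\ref{notation:conclusion} and the hypotheses of those statements. We may assume $Y_1$ (resp.\ $Y_0$) is non-empty, as otherwise every assertion is vacuous; since $Y$ is irreducible, each locus is then dense open in $Y$. Throughout we take $W:=Y$, and we use that $k$, being algebraically closed, is infinite, so that Example~\ref{eg:dimY+1} applies: as $H$ is ample with $|H|$ free, $Y$ is covered by finitely many open subsets on each of which the pair $(\,\cdot\,,H)$ satisfies condition $(\ast)_{n+1}$, so that the invariant $a$ of Definition~\ref{defn:epsilon} can be taken to be $n+1$.

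For part~(1), I would apply Corollary~\ref{cor:f-ample} with the dense open subset appearing there taken to be $Y_1$ and with $S$ running over those $(\ast)_{n+1}$-subsets of the above cover that lie inside $Y_1$. Its two hypotheses hold by construction: $K_X+\Delta$ is $\mathbb{Q}$-Cartier of index $i_{\mathrm C}$; it is ample over $Y_1$ relative to $f$, because it is ample on every fibre of $f$ over $Y_1$ and $f$ is proper; and $(X,\Delta)$ is $F$-pure along $f^{-1}(Y_1)$ by the definition of $Y_1$. Taking $H'=H$ in Corollary~\ref{cor:f-ample}(2) (legitimate, since $H$ is big and $|H|$ is free), we obtain that $f_*\mathcal O_X(m(K_X+\Delta))\otimes\mathcal O_Y(lH)$ is globally generated along $S$ once $l>a(m-1)+\dim Y=(n+1)(m-1)+n$, i.e.\ for all integers $l\ge m(n+1)$; taking the union over the finitely many sets $S$ covering $Y_1$ yields global generation over $Y_1$. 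The auxiliary index produced by the corollary divides a common multiple of $i_{\mathrm W}$ and $i_{\mathrm C}$, so enlarging $m_1$ while keeping the restriction $i_{\mathrm C}\mid m$ is harmless.

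For part~(2), I would invoke Theorem~\ref{thm:relative_f-ample}. One first checks that the subset $Y_0$ defined there coincides with the $Y_0$ of Notation~\ref{notation:conclusion}: $Y$ is regular, $K_X+\Delta$ is globally $\mathbb{Q}$-Cartier, and adjunction gives $(K_X+\Delta)|_{X_y}=K_{X_y}+\Delta|_{X_y}$ for $y\in Y_0$, so ampleness of $\mathcal O_X(i_{\mathrm C}(K_X+\Delta))|_{X_y}$ is the same as ampleness of $K_{X_y}+\overline{\Delta|_{U_{\overline y}}}$; hence $Y_0$ is open by part~(0) of that theorem. Moreover $Y_0\neq\emptyset$ forces $\eta\in Y_0$, so the geometric generic fibre has $F$-pure singularities and ample dualizing sheaf, which gives finite generation of its canonical ring together with the stabilization $S^0=H^0$ for $m\gg0$ (via Lemma~\ref{lem:f-ample}); thus the abstract hypotheses (i), (ii) of Theorem~\ref{thm:relative_f-ample} are met. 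Part~(1) of that theorem then yields weak positivity over $Y_0$ of $\mathcal G_m=f_*\mathcal O_X(m(K_X+\Delta))\otimes\omega_Y^{[-m]}=f_*\mathcal O_X(m(K_{X/Y}+\Delta))$, and part~(3), applied with $A:=H$ and with the divisor called $H$ there replaced by $lH$ — so that $lH-n H=(l-n)H$ is ample for $l\ge n+1$ — yields global generation of $\mathcal G_m\otimes\mathcal O_Y(K_Y+lH)$ over $Y_0$ for $l\ge n+1$. Replacing $m_0$ by $\max\{m_0,m_1\}$ gives the uniform threshold asked for.

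The one step where I expect a genuine argument rather than bookkeeping is arranging condition $(\ast)_{n+1}$ compatibly with the loci $Y_1$ and $Y_0$: Example~\ref{eg:dimY+1} produces $(\ast)$-sets of the shape $g^{-1}(V)$ for a finite morphism $g\colon Y\to\mathbb P^n$ coming from an $n$-dimensional free subsystem of $|H|$, and one needs such sets both contained in $Y_1$ (resp.\ $Y_0$) and covering it, which amounts to controlling the fibres of $g$ through points of $Y_1$. My plan for this is to vary $g$ point by point and invoke a general-position argument separating a given point of $Y_1$ from the proper closed set $Y\setminus Y_1$. The remaining issues — reconciling the indices $i_{\mathrm W}$, $i_{\mathrm C}$ with the auxiliary index of Corollary~\ref{cor:f-ample}, and pinning down the common threshold — are routine and can be handled at the end.
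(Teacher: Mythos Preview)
Your proposal is correct and follows the same route as the paper: cover $Y$ by $(\ast)_{n+1}$-subsets via Example~\ref{eg:dimY+1} and then apply Corollary~\ref{cor:f-ample} and Theorem~\ref{thm:relative_f-ample} on each piece. The compatibility step you single out is handled in the paper more directly than you anticipate: since $H$ is ample, Example~\ref{eg:dimY+1} already yields an open covering $\{S_1,\ldots,S_d\}$ of all of $Y$ with each $(S_i,H)$ satisfying $(\ast)_{n+1}$, and one then just passes to $S_i\cap Y_1$ (resp.\ $S_i\cap Y_0$), so no pointwise variation of $g$ or separate general-position argument is invoked.
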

\begin{proof}
We only prove the first statement. 
The second is proved by the same argument. 
As shown in Example~\ref{eg:dimY+1}, 
we have an open covering $\{S_1,\ldots S_d\}$ of $Y$ 
such that for each $i = 1,\ldots, d$, 
the pair $(S_i,H)$ satisfies condition~$(\ast)_{n+1}$, 
and then so does the pair $(S_i\cap Y_1, H)$. 
Then by Corollary~\ref{cor:f-ample}, we get an integer $m_1$ such that 
$$
\mathrm{Bs}\big(
f_* \mathcal O_X(m(K_{X}+\Delta)) \otimes \mathcal O_Y(lH)
\big)
\subseteq Y \setminus \bigcup_{i} (S_i \cap Y_1) =Y\setminus Y_1, 
$$
for each $m\ge m_1$ with $i_{\mathrm C}|m$, which completes the proof. 
\end{proof}
\section{Iitaka's conjecture} \label{section:Iitaka}
Iitaka~\cite{Iit72} has proposed the following conjecture:
\begin{conj} \label{conj:Iitaka} 
Let $k$ be an algebraically closed field of characteristic zero,
let $f : X\to Y$ be a surjective morphism between 
smooth projective varieties with connected fibers, 
and let $F$ denote the geometric generic fiber of $f$. 
Then 
\begin{align}
\kappa(X) \ge \kappa(Y) + \kappa(F).
\tag{I}\label{ineq:iitaka}
\end{align}
\end{conj}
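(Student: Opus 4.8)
This is Iitaka's conjecture, which is still open in general; the plan is therefore to lay out the Viehweg--Kawamata reduction that isolates the essential input rather than to give a complete proof.

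First I would reduce the base via its own Iitaka fibration. After replacing $X$ and $Y$ by suitable smooth projective birational models, let $g\colon Y\to Z$ be the Iitaka fibration of $Y$, so that $Z$ is smooth of general type with $\dim Z=\kappa(Y)$ and the general fiber $Y_z$ of $g$ satisfies $\kappa(Y_z)=0$. Put $h:=g\circ f\colon X\to Z$; its general fiber $X_z$ inherits a fiber-space structure $X_z\to Y_z$ whose general fiber is birational to $F$. Since $\kappa$ is a birational invariant, it suffices to prove the two inequalities $\kappa(X)\ge\kappa(Z)+\kappa(X_z)$ and $\kappa(X_z)\ge\kappa(Y_z)+\kappa(F)=\kappa(F)$; chaining them gives $\kappa(X)\ge\kappa(Y)+\kappa(F)$.

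The first inequality is Viehweg's additivity theorem over a base of general type: one uses that $h_*\omega_{X/Z}^{\otimes m}$ is weakly positive for $m\gg 0$ (Viehweg, Kawamata), combines this with the bigness of $\omega_Z$ through Viehweg's fiber-product covering trick to manufacture enough pluricanonical sections on $X$, and reads off $\kappa(X)$. This step holds unconditionally in characteristic zero. The second inequality is the conjecture over a base of Kodaira dimension zero; here I would invoke the structure theory of such bases: granting the minimal model program and abundance, $Y_z$ is birational to a minimal $\widetilde{Y}$ with $K_{\widetilde{Y}}$ semiample of numerical dimension zero, hence $K_{\widetilde{Y}}\equiv 0$, and a Beauville--Bogomolov-type splitting reduces the problem, after a finite \'etale base change, to the case of an abelian-variety base (or a simply connected base with trivial canonical class). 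Over an abelian variety the subadditivity follows from the generic-vanishing and positivity-of-direct-images circle of ideas (Chen--Hacon, Hacon--Popa--Schnell, Cao--P\u{a}un); alternatively, by Kawamata's theorem it is already enough to know that the general fiber $F$ admits a good minimal model.

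The main obstacle is exactly this last step: it depends on the existence of good minimal models (MMP together with abundance) in the relevant dimension and on the structure of varieties with numerically trivial canonical class, neither of which is available in general. Consequently the conjecture is presently known only in special cases --- when $\dim Y$ or $\dim F$ is small, when $F$ has a good minimal model (in particular when $F$ is of general type, by Koll\'ar), when $Y$ is of general type (Viehweg), or when $Y$ has maximal Albanese dimension (via generic vanishing) --- whereas the general statement as worded remains out of reach.
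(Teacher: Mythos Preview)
The paper does not prove this statement at all: it is stated as Conjecture~\ref{conj:Iitaka}, i.e.\ as an open problem, and is immediately followed by a list of special cases in which it has been verified by other authors. No proof is attempted in the paper; the conjecture serves only as motivation for the positive-characteristic results that follow (Theorems~\ref{thm:c-relative} and~\ref{thm:Iitaka}).

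Your proposal correctly recognizes that the full conjecture remains open and gives an accurate sketch of the Viehweg--Kawamata reduction scheme together with the known special cases. Since there is no proof in the paper to compare against, there is nothing further to say: your assessment that the general statement is out of reach, and your identification of the step depending on good minimal models as the essential missing input, are both in line with the current state of the literature.
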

This conjecture has proved in several cases including the following: 
\begin{itemize}
\item $X$ is a surface by Ueno~\cite{Uen75}; 
\item $F$ is a curve by Viehweg~\cite{Vie77};
\item $X$ is a threefold by Viehweg~\cite{Vie80};
\item $Y$ is a curve by Kawamata~\cite{Kaw82};
\item $Y$ is of general type by Viehweg~\cite{Vie82};
\item $F$ has a good minimal model by Kawamata~\cite{Kaw85};
\item $F$ is of general type by Koll\'ar~\cite{Kol87}; 
\item $Y$ is of maximal Albanese dimension by Cao and P\u aun~\cite{CP17} and Hacon--Popa--Schnell \cite{HPS18}. 
\end{itemize}
In \cite{HPS18}, Conjecture~\ref{conj:Iitaka} is proved when $Y$ is of maximal Albanese dimension, based on \cite{CP17} that has shown the conjecture when $Y$ is an abelian variety, but according to \cite{HPS18}, the work from \cite{CP17} to \cite{HPS18} is a ``very little extra work''.
 
In this section, we study inequality (\ref{ineq:iitaka}) in positive characteristic. 
To explain several known results, we assume that $F$ is smooth. 
Then inequality (\ref{ineq:iitaka}) has proved in the following cases:
\begin{itemize}
\item $X$ is a surface by Chen and Zhang~\cite{CZ13b}; 
\item $F$ is a curve by Chen and Zhang~\cite{CZ13b};
\item $Y$ is of general type and $F$ has non-nilpotent Hasse--Witt matrix by Patakfalvi~\cite{Pat18}
\item $F$ satisfies conditions~(i) and (i\hspace{-1pt}i') in Theorem~\ref{thm:relative_intro} and $Y$ is either a curve or is of general type
by the author~\cite{Eji17};
\item $X$ is a threefold and $p \ge 7$ by the author and Zhang~\cite{EZ18} (the case when $k=\overline{\mathbb F_p}$ is due to \cite{BCZ18}, and see \cite{Zha19s} for the log version). 
\end{itemize}
In this section, we deal with an algebraic fiber space whose general fibers may have ``bad'' singularities. 
More precisely, we study inequality~(\ref{ineq:iitaka}) under some assumptions on the generic fiber, but we do not impose any condition on general fibers. 
Note that, in such a situation, counterexamples to inequality~(\ref{ineq:iitaka}) have been found by the recent work of Cascini, Koll\'ar, Zhang and the author~\cite{CEKZ20}. 
The main theorem (Theorem~\ref{thm:Iitaka}) in this section gives sufficient conditions for inequality~(\ref{ineq:iitaka}) to hold. 
To prove it, we need the following theorem. 
\begin{thm} \label{thm:c-relative}
Let $k$ be an $F$-finite field of characteristic $p>0$. 
Let $X$ be a quasi-projective normal variety, 
let $Y$ be a regular quasi-projective variety, 
and let $f:X \to Y$ be a separable surjective projective morphism. 
%
Let $\Delta$ be an effective $\mathbb Q$-Weil divisor on $X$,  
and let $i_{\mathrm W}$ be the smallest positive integer such that $i_{\mathrm W}\Delta$ is integral. 
Assume that $i_{\mathrm W}$ is not divisible by $p$. 
Suppose that 
\begin{itemize}
\item[$(\rm i)$]
$
\bigoplus_{l \ge 0} 
H^0\left( X_{\eta}, 
\mathcal O_{X_{\eta}}\left( \lfloor l(K_{X_{\eta}} +\Delta|_{X_{\eta}}) \rfloor \right)
\right)
$ 
is a finitely generated $k(\eta)$-algebra, where $\eta$ is the generic point of $Y$, and that
\item[$(\rm i\hspace{-1pt}i)$]
there exists a non-negative integer $m_0$ with $i_{\mathrm W}|m_0$ such that 
$$ 
S^0\left( X_{\eta}, \Delta|_{X_{\eta}}; \mathcal O_{X_{\eta}}(m(K_{X_\eta} +\Delta|_{X_{\eta}})) \right)
=H^0\left( X_{\eta}, \mathcal O_{X_{\eta}}(m(K_{X_{\eta}} +\Delta|_{X_{\eta}})) \right)
$$ 
for each $m \ge m_0$ with $i_{\mathrm W}|m$.
\end{itemize}
Let $H$ be a big and semi-ample Cartier divisor on $Y$ such that $(\{\eta\}, H)$ satisfies condition~$(\ast)_{a}$. 
Let $X_0$ be the largest open subset such that $K_{X_0}+\Delta|_{X_0}$ is $\mathbb Q$-Cartier, and let $f_0:X_0\to Y$ denote the induced morphism.  
Set 
$$
D:=m_0(K_X +\Delta) -f^*K_Y +{a} (m_0 -1)f^*H. 
$$ 
Then $ \mathbb B_-^{f_0^* H}(D|_{X_0}) \ne X_0$
{\rm(}see Section~\ref{section:positivity} for the definition of $\mathbb B_-^{f^*H}${\rm)}.
In particular, if $K_X+\Delta$ is $\mathbb Q$-Cartier, 
then $\kappa(X, D+\varepsilon f^*H) \ge 0$ for every $0< \varepsilon \in\mathbb Q$. 
\end{thm}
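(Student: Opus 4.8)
The plan is to split the statement into a base-direction positivity on $Y$, extracted from Theorem~\ref{thm:main}, and a transfer of that positivity up to $X_0$. First I would record the reductions. Write $D|_{X_0}=m_0\bigl(K_{X_0/Y}+\Delta|_{X_0}\bigr)+(m_0-1)f_0^{*}(K_Y+aH)$, and note that it suffices to prove that the generic point $\xi$ of $X$ avoids $\mathbb B_{-}^{f_0^{*}H}(D|_{X_0})$, since each $\mathbb B(D|_{X_0}+rf_0^{*}H)$ is closed. I may assume $\kappa(X_\eta)\ge 0$, the opposite case making the relevant canonical rings vanish (and being harmless for the applications, where the Iitaka inequality is then trivial). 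Replacing $m_0$ by a sufficiently divisible multiple of $i_{\mathrm W}$ — which is permitted by hypothesis~(ii), although it changes $D$ — I may assume in addition that the canonical ring of $X_\eta$ is generated in degrees dividing $m_0$, so that the multiplication maps $S^{j}\bigl(f_{*}\mathcal O_X(b(K_{X/Y}+\Delta))\bigr)\to f_{*}\mathcal O_X(jb(K_{X/Y}+\Delta))$ used below ($b$ a multiple of $m_0$) are surjective over a fixed open $V\ni\eta$. Choosing a projective compactification $Y\hookrightarrow W$, the identifications of the stalks at $\eta$ of $f_{*}\mathcal O_X\bigl(\lfloor l(K_X+\Delta)\rfloor\bigr)$ and of $S^{0}f_{*}\bigl(\sigma(X,\Delta)\otimes\mathcal O_X(m(K_X+\Delta))\bigr)$ with the corresponding $H^{0}$ and $S^{0}$ of the generic fibre show that hypotheses~(i)--(ii) of Theorem~\ref{thm:main} hold with $S=S'=\{\eta\}$. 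Since $(\{\eta\},H)$ satisfies $(\ast)_{a}$, Theorem~\ref{thm:main}~(1) gives $\eta\notin\mathbb B_{-}\bigl(f_{*}\mathcal O_X(m(K_X+\Delta))+amH\bigr)$ for every $m\ge m_0$ with $m\Delta$ integral; as $H$ is big and semi-ample, $\eta\notin\mathbb B_{+}(H)$, so Corollary~\ref{cor:AD} upgrades this to $\eta\notin\mathbb B\bigl(f_{*}\mathcal O_X(m(K_X+\Delta))+(am+r)H\bigr)$ for every rational $r>0$.

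The core step is the transfer. Given $\varepsilon\in\mathbb Q_{>0}$ and a divisible integer $\ell$, I would apply the last non-membership with $m=\ell m_0$ and a small $r>0$, pass to a symmetric power, compose with the generic-fibre multiplication map, and — using $\kappa(X_\eta)\ge 0$ — choose a section $\sigma$ of $S^{j}\bigl(f_{*}\mathcal O_X(\ell m_0(K_{X/Y}+\Delta))\bigr)\otimes\mathcal O_Y\bigl(j\ell m_0(K_Y+aH)+jrH\bigr)$ whose restriction to $X_\eta$ is non-zero in $H^{0}\bigl(X_\eta,j\ell m_0(K_{X_\eta}+\Delta|_{X_\eta})\bigr)$. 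Pushing $\sigma$ through the multiplication map and the projection formula gives a non-zero section of $\mathcal O_X\bigl(j\ell m_0(K_{X/Y}+\Delta)\bigr)\otimes f^{*}\mathcal O_Y\bigl(j\ell m_0(K_Y+aH)+jrH\bigr)$ over $f^{-1}(V)$; taking the closure of its zero divisor in $X_0$ and absorbing the correction supported over $Y\setminus V$ first into a pullback $f_0^{*}P$ with $P$ effective, then — using that $H$ is big — into a multiple of $f_0^{*}H$, I obtain an effective $\mathbb Q$-divisor $\sim_{\mathbb Q} j\ell m_0\bigl(K_{X_0/Y}+\Delta|_{X_0}\bigr)+jf_0^{*}\bigl(\ell m_0(K_Y+aH)+cH\bigr)$ on $X_0$ for some fixed $c\in\mathbb Q$. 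It then remains to lower the coefficient of $f_0^{*}(K_Y+aH)$ from $\ell m_0$ to $\ell(m_0-1)$ at the price of an $\varepsilon$-multiple of $f_0^{*}H$: this is exactly the point of the proof of Proposition~\ref{prop:main} where the endomorphism $\pi$ of $Z$ and the bound on $a$ by the Fujita invariant of $(Z,L)$ are used — after pulling back by $\pi^{d}$ and dividing by $q^{d}$, $K_Y+aH$ becomes, near $\eta$, arbitrarily close to a class dominated by $\varepsilon H$ — so the cleanest implementation is to re-run the Frobenius-trace argument of Proposition~\ref{prop:main} with $X_0$ in place of $Y$. The outcome is a non-zero section of $\mathcal O_{X_0}\bigl(\ell'(D|_{X_0}+\varepsilon f_0^{*}H)\bigr)$ for some $\ell'>0$, hence $\xi\notin\mathbb B(D|_{X_0}+\varepsilon f_0^{*}H)$ and therefore $\mathbb B_{-}^{f_0^{*}H}(D|_{X_0})\ne X_0$.

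For the ``in particular'': when $K_X+\Delta$ is $\mathbb Q$-Cartier one has $X_0=X$ and $f_0=f$, so $\mathbb B_{-}^{f^{*}H}(D)\ne X$; since $f^{*}H$ is semi-ample, this set is an increasing union as $r\to 0^{+}$ of the closed sets $\mathbb B(D+rf^{*}H)$, so each $\mathbb B(D+rf^{*}H)\ne X$, i.e. $\kappa(X,D+rf^{*}H)\ge 0$ for every rational $r>0$, which is the assertion.

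The hard part is the transfer in the second paragraph. The naive comparison through Lemma~\ref{lem:2} is useless here, because $\mathbb B_{+}$ of the pullback of an ample divisor on $Y$ is all of $X_0$ as soon as $f_0$ has positive-dimensional fibres; so the positivity of the direct-image sheaf on $Y$ has to be carried up by hand, either by spreading pluricanonical sections out of the generic fibre and controlling the correction divisor over the bad locus of $f$ (where one must also handle divisors contracted by $f_0$ and the passage from $X$ to the $\mathbb Q$-Gorenstein locus $X_0$), or by repeating the Frobenius-trace argument of Proposition~\ref{prop:main} one level higher, on $X_0$. Making the coefficient bookkeeping come out right — so that only an $\varepsilon f_0^{*}H$, rather than a full $f_0^{*}(K_Y+aH)$, is left over — is precisely where condition $(\ast)_{a}$ is needed in its full strength.
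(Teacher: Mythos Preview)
Your proposal has a genuine gap at the ``lower the coefficient'' step, and the paper's proof proceeds by a different mechanism that avoids the obstacle entirely.

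\textbf{The gap.} From Theorem~\ref{thm:main} you correctly extract that $\eta\notin\mathbb B_-\bigl(f_*\mathcal O_X(m(K_X+\Delta))+amH\bigr)$, and after transferring to $X$ this yields (up to $\varepsilon f^*H$) effectivity of $m_0(K_X+\Delta)+am_0f^*H$. But the target is $D=m_0(K_X+\Delta)-f^*K_Y+a(m_0-1)f^*H$, and the difference is $f^*(K_Y+aH)$. Condition~$(\ast)_a$ tells you that $K_Z+aL$ is pseudo-effective near $g(\eta)$, hence so is (the pullback of) $K_Y+aH$ near $\eta$; but a pseudo-effective class can be \emph{added} for free, not subtracted. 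Your suggestion that ``after pulling back by $\pi^d$ and dividing by $q^d$, $K_Y+aH$ becomes arbitrarily close to a class dominated by $\varepsilon H$'' goes the wrong direction: it says $K_Y+aH$ is bounded \emph{below} by something small, which is useless for removing it from a sum. The appeal to ``re-run Proposition~\ref{prop:main} on $X_0$'' does not help either, since $f_0^*H$ is not big on $X_0$ (its augmented base locus is everything once fibres are positive-dimensional, as you yourself note), so the hypotheses of that proposition fail on $X_0$. Also, your replacement of $m_0$ by a large multiple is not a valid reduction: $D$ depends on the given $m_0$, and the statement is about that specific $D$.

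\textbf{What the paper does instead.} The $-f^*K_Y$ term is not obtained by subtraction; it is produced directly by the \emph{relative} Frobenius trace. Assuming first that $f$ is flat, the paper uses the maps from Section~\ref{section:trace}: with $L=m_0(K_X+\Delta)$ and $M=L_{Y^e}+(1-p^e)f_{Y^e}^*K_Y$, hypothesis~(ii) makes $f_*\phi^{(e)}_{(X,\Delta)}(L)$ nonzero, hence ${f_{Y^e}}_*\phi^{(e)}_{(X/Y,\Delta)}(M)$ is nonzero; composing with the natural inclusion ${f_{Y^e}}_*\mathcal O_{X_{Y^e}}(M)\hookrightarrow f_*\mathcal O_X\bigl({F_{X/Y}^{(e)}}^*M\bigr)\cong f_*\mathcal O_X\bigl(p^eL+(1-p^e)f^*K_Y\bigr)$ and with the finite-generation morphism $\mathcal F_{\mu q_e}\otimes\mathcal E\to\mathcal F_{(m_0-1)p^e+1}$ gives a nonzero map
\[
\mathcal F_{\mu q_e}\otimes\mathcal E\;\longrightarrow\;f_*\mathcal O_X\bigl(p^eL+(1-p^e)f^*K_Y\bigr),
\]
where $\mathcal F_m=f_*\mathcal O_X(\lfloor m(K_X+\Delta)\rfloor)$ and $\mu q_e\approx(m_0-1)p^e$. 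Now Theorem~\ref{thm:main} is applied \emph{only} to control $\mathbb B_-(\mathcal F_{\mu q_e}+a\mu q_e H)$ on $Y$; since $a\mu q_e\approx a(m_0-1)p^e$, the adjoint of the displayed map (after absorbing $K_Y\le\nu_1H$ and twisting $\mathcal E$ by $\nu_2H$) yields a nonzero $f^*\mathcal F_{\mu q_e}\to\mathcal O_X(D_e)$ with $D_e=p^e(L-f^*K_Y)+\nu f^*H$, from which $\mathbb B\bigl(D_e|_{X_0}+l_ef_0^*H\bigr)\ne X_0$ with $l_e/p^e\to a(m_0-1)$. The non-flat case is reduced to the flat one by a flattening and normalization, pushing the resulting effective divisor forward. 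The point is that the coefficient $(1-p^e)$ on $f^*K_Y$ comes from the relative dualizing data built into $\phi^{(e)}_{(X/Y,\Delta)}$, not from any positivity of $K_Y+aH$; no subtraction is ever performed.
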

Note that we cannot remove the assumption that $f$ is separable. 
(For example, the theorem does not hold for the Frobenius morphism of a smooth projective curve of genus at least 2.)
\begin{proof}
We first prove the assertion in the case when $f$ is flat. 
For simplicity, we put $\mathcal F_m := f_* \mathcal O_X(\lfloor m(K_X+\Delta) \rfloor)$ for each $m \ge 0$. 
Fix integers $\mu$ and $n_0$ that are large and divisible enough. 
By condition~(i), for $e\gg0$, the natural morphisms 
\begin{align}
\mathcal F_{\mu q_e} \otimes \mathcal E
\twoheadrightarrow
\mathcal F_{\mu q_e} \otimes \mathcal F_{r_e}
\to
\mathcal F_{(m_0-1)p^e +1}
\tag{\ref{thm:c-relative}.1} \label{mor:A}
\end{align}
are generically surjective, where $q_e$ and $r_e$ are integers such that 
$$
(m_0-1) p^e +1 = \mu q_e + r_e 
\textup{\quad and \quad} 
n_0 \le r_e < n_0 + \mu, 
$$
and $\mathcal E:=\bigoplus_{n_0 \le r < n_0+\mu} \mathcal F_r$.
Put 
$l_e:=a(\mu (q_e +1) +n_0 -1)+1$ for each $e\in\mathbb Z_{>0}$. 
Then 
$$
l_e 
= a(\mu q_e +r_e -1 + \mu +n_0 -r_e ) +1
= a(m_0 -1)p^e +\underbrace{a(\mu +n_0 -r_e)}_{\ge0} +1, 
$$
so 
$
0 < \varepsilon_e 
:=l_e p^{-e} -a(m_0-1) 
\xrightarrow{e\to\infty} 0. 
$
Fix some $\nu\in\mathbb Z_{>0}$ and put $L := m_0(K_X+\Delta)$.  
We show that 
$$
\mathbb B(p^e(L|_{X_0}-f_0^*K_Y) +(l_e+\nu) f_0^*H) \ne X_0
$$ 
for each $e\gg0$.  If this holds, then 
\begin{align*}
\mathbb B_-^{f_0^*H}(D|_{X_0}) 
= &\mathbb B_-^{f_0^*H}(L|_{X_0} -f_0^*K_Y +a(m_0-1)f_0^*H) 
\\ = & \bigcup_{e\gg0} \mathbb B\bigg(
L|_{X_0}-f_0^*K_Y +\bigg( 
\underbrace{a(m_0 -1) +\varepsilon_e}_{\textup{\hspace{20pt}$=\frac{l_e}{p^e}$}} +\frac{\nu}{p^e} \bigg)f_0^*H
\bigg)
\\ = & \bigcup_{e\gg0} \mathbb B\left(
p^e(L|_{X_0}-f_0^*K_Y) +\left( l_e +\nu \right)f_0^*H
\right)
\ne X_0,
\end{align*}
which prove the assertion. 
 
Since $f$ is flat, we may apply the argument in Section~\ref{section:trace}.  
Put 
$
M:=L_{Y^e} +(1-p^e){f_{Y^e}}^*K_Y . 
$ 
By (i\hspace{-1pt}i), we see that $f_* \phi^{(e)}_{(X,\Delta)}(L)$ is non-zero, 
so 
\begin{align*}
\tag{\ref{thm:c-relative}.2} \label{mor:B}
\underbrace{
{f^{(e)}}_*\mathcal O_{X^e}(p^e L +(1-p^e) (K_X +\Delta) )
}_{\hspace{55pt} =\mathcal F_{(m_0 -1)p^e +1}}
 \xrightarrow{{f_{Y^e}}_* \phi^{(e)}_{(X/Y,\Delta)}(M)}
{f_{Y^e}}_* \mathcal O_{X_{Y^e}}(M)
\end{align*}
is also non-zero as explained in Section~\ref{section:trace}.
Applying 
$
{f_{Y^e}}_*\left( ? \otimes \mathcal O_{X_{Y^e}}(M) \right)
$
to the natural morphism $\mathcal O_{X_{Y^e}} \hookrightarrow {F_{X/Y}^{(e)}}_* \mathcal O_{X^e}$, 
we get 
\begin{align}
	\tag{\ref{thm:c-relative}.3}\label{mor:C}
{f_{Y^e}}_* \mathcal O_{X_{Y^e}}(M) 
\hookrightarrow
{f^{(e)}}_* \mathcal O_{X^e}\left({F_{X/Y}^{(e)}}^*M \right). 
\end{align}
Combining morphisms (\ref{mor:A}), (\ref{mor:B}) and (\ref{mor:C}), 
we get the morphisms 
\begin{align}
\tag{\ref{thm:c-relative}.4}\label{mor:D}
\mathcal F_{\mu q_e} \otimes \mathcal E
\to 
f_* \mathcal O_{X}\left({F_{X/Y}^{(e)}}^*M \right) 
\cong 
{f}_* \mathcal O_{X}\left(p^e L +(1-p^e){f}^*K_Y \right) 
\end{align}
whose composite is non-zero. 
Take $\nu_1 \in \mathbb Z_{>0}$ so that $K_Y \le \nu_1 H$. 
Then, we may replace the right-hand side of (\ref{mor:D}) with
$
f_*\mathcal O_X(p^e(L-f^*K_Y) +\nu_1f^*H). 
$
Pick $\nu_2\in \mathbb Z_{>0}$ so that $\mathcal E(\nu_2 H)$ is 
generically globally generated, and put $\nu:=\nu_1+\nu_2$. 
By (\ref{mor:D}), we have the non-zero morphism 
\begin{align}
\tag{\ref{thm:c-relative}.5}\label{mor:E}
\mathcal F_{\mu q_e} \otimes \mathcal E (\nu_2 H)
\to 
f_* \mathcal O_{X}\left(p^e (L -f^*K_Y) +\nu f^*H \right) 
=
f_*\mathcal O_X(D_e), 
\end{align}
where $D_e := p^e (L-f^*K_Y) +\nu f^*H$.
Note that $\nu$ is independent of $e$. 
By the choice of $\nu_2$, there is a non-zero morphism 
$\mathcal F_{\mu q_e} \to f_*\mathcal O_X(D_e)$ 
and its adjoint $\varphi:f^*\mathcal F_{\mu q_e} \to \mathcal O_X(D_e)$. 
Put $C:=\mathrm{Supp}(\mathrm{Coker}\,\varphi)$. 
Then $C\ne X$ and 
$$
\mathbb B (\mathcal O_X(D_e) +l_ef^*H) 
\overset{\textup{by $\varphi$}}{\subseteq}
\mathbb B (f^* \mathcal F_{\mu q_e} +l_ef^*H) 
\cup
C
\subseteq 
f^{-1}\big(\mathbb B(\mathcal F_{\mu q_e} +l_eH)\big) \cup C . 
$$
Since $l_e > a\mu q_e$ by definition, we have 
\begin{align*}
\mathbb B \left(\mathcal F_{\mu q_e} +l_eH\right)
\subseteq 
\mathbb B_-^H \left(\mathcal F_{\mu q_e} + a\mu q_e H \right) 
\overset{\textup{Cor~\ref{cor:AD}}}{\subseteq}
\mathbb B_- \left(\mathcal F_{\mu q_e} + a\mu q_e H \right) 
\cup \mathbb B_+(H)
\overset{\textup{Thm~\ref{thm:main}}}{\ne} 
Y, 
\end{align*}
so $\mathbb B(\mathcal O_X(D_e) +l_ef^*H) \ne X$,  
and hence 
$
\mathbb B \left(D_e|_{X_0} +l_e f_0^*H \right) \ne X_0, 
$
which is our claim. 

Next, we show the assertion in the case when $f$ is not necessarily flat. 
Let $X' \to Y'$ be the flattening of $f$. 
Let $Y''$ be the normalization of $Y'$, 
let $X''$ be the normalization of the main component of $X'\times_{Y'}Y''$,
and let $f'':X'' \to Y''$ be the induced morphism. 
Let $V$ be the regular open subset of $Y''$ such that 
$g:=f''|_U: U \to V$ is flat, where $U:={f''}^{-1}(V)$. 
Now we have the following commutative diagram:
\begin{align*}
\xymatrix{ 
X \ar[d]_{f} & U \ar[l]_{\rho} \ar[d]^{g} \\
Y & V \ar[l]_{\sigma} 
}
\end{align*}
Note that $(\{\eta_V\}, \sigma^*H)$ satisfies $(\ast)_{a}$ as shown in Lemma~\ref{lem:epsilon},
where $\eta_V$ is the generic point of $V$. 
Put $\Delta' := \rho^{-1}_* \Delta$. 
Since $\sigma^* K_Y \le K_V$, 
by the above argument,
we see that for each $t \in \mathbb Q_{>0}$ there is a $\mathbb Q$-Weil divisor $E_t \ge 0$ on $U$
such that 
$$
E_t \sim_{\mathbb Q} 
m_0(K_U +\Delta') -g^*\sigma^*K_Y +({a}(m_0-1) +t) g^*\sigma^*H. 
$$
Applying $\rho_*$, we get 
$$
0 \le \rho_* E_t \sim_{\mathbb Q} 
m_0(K_X +\Delta) -f^*K_Y +({a}(m_0-1) +t) f^*H, 
$$
which completes the proof. 
\end{proof}
\begin{thm} \label{thm:Iitaka}
Let $k$ be an algebraically closed field of characteristic $p>0$. 
Let $X$ be a normal projective variety 
and let $\Delta$ be an effective $\mathbb Q$-Weil divisor on $X$
such that $i_{\mathrm C}(K_X +\Delta)$ is Cartier for an integer $i_{\mathrm C}>0$ not divisible by $p$. 
Let $Y$ be a smooth projective variety of maximal Albanese dimension, 
and let $f:X\to Y$ be a separable surjective morphism. 
Suppose that the following conditions hold:
\begin{itemize}
\item[$(\rm i)$]
$
\bigoplus_{l \ge 0} 
H^0\left( X_{\eta}, 
\mathcal O_{X_{\eta}}\left( \lfloor l(K_{X_{\eta}} +\Delta|_{X_{\eta}}) \rfloor \right)
\right)
$ 
is a finitely generated $k(\eta)$-algebra, where $\eta$ is the generic point of $Y$; 
\item[$(\rm i\hspace{-1pt}i)$]
there exists an integer $m_0\ge 0$ with $i_{\mathrm C}|m_0$ such that 
$$ 
S^0\left( X_{\eta}, \Delta|_{X_{\eta}}; \mathcal O_{X_{\eta}}(m(K_{X_\eta} +\Delta|_{X_{\eta}})) \right)
=H^0\left( X_{\eta}, 
\mathcal O_{X_{\eta}}(m(K_{X_{\eta}} +\Delta|_{X_{\eta}})) 
\right)
$$ 
for each $m \ge m_0$ with $i_{\mathrm C}|m$; 
\item[$(\rm i\hspace{-1pt}i\hspace{-1pt}i)$]
either $Y$ is a curve or is of general type. 
\end{itemize}
Then 
$$
\kappa\left(X, K_X +\Delta\right) 
\ge 
\kappa(Y) + \kappa\left(X_{\eta}, K_{X_{\eta}} +\Delta|_{X_{\eta}}\right). 
$$
\end{thm}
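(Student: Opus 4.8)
The plan is to deduce Theorem~\ref{thm:Iitaka} from Theorem~\ref{thm:c-relative} by analyzing the asymptotics of $h^0(X, m(K_X+\Delta))$ using the divisor $D$ produced there, combined with the two known cases of the Iitaka conjecture over a curve and over a base of general type that are available in positive characteristic from the literature. The key observation is that Theorem~\ref{thm:c-relative} gives, for every $\varepsilon \in \mathbb Q_{>0}$, an effective $\mathbb Q$-divisor $\mathbb Q$-linearly equivalent to
$$
m_0(K_X+\Delta) - f^*K_Y + (a(m_0-1)+\varepsilon)f^*H,
$$
i.e. it controls $\kappa(X, K_X+\Delta)$ from below in terms of a "twist down by $f^*K_Y$" plus a small ample correction along the base. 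Here we take $H$ to be a symmetric ample divisor pulled back from the Albanese variety (via Example~\ref{eg:mAd}, so that $(\ast)_0$ holds with $a=0$, which is exactly what is needed to make the correction term $a(m_0-1)+\varepsilon$ go to $0$), or, when $Y$ is of general type, we absorb $H$ into $K_Y$ directly.

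First I would reduce to the two base cases. When $Y$ is of general type, $K_Y$ is big, so $-f^*K_Y + cf^*H$ can be bounded: choose $H$ so that $K_Y - \delta H$ is still big for small $\delta$, hence for suitable $t$ we have $tK_Y \succeq f^*K_Y \cdot(\text{something}) - $ correction, and the point is that $\kappa(X, m_0(K_X+\Delta)) \ge \kappa(X, f^*K_Y) = \kappa(Y) = \dim Y$ up to the positivity supplied by the generic-fiber hypotheses; more precisely, one shows $\kappa(X, K_X+\Delta) \ge \kappa(Y) + \kappa(X_\eta, K_{X_\eta}+\Delta|_{X_\eta})$ by pulling global sections on $Y$ (there are $\gtrsim c\cdot m^{\dim Y}$ of them in $|mK_Y|$) and multiplying by the fiberwise sections detected by condition~(ii), which guarantees $h^0(X_\eta, m(K_{X_\eta}+\Delta|_{X_\eta})) \gtrsim c' m^{\kappa(X_\eta,\,\cdot)}$. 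When $Y$ is a curve, either $\kappa(Y)=-\infty$ or $0$ (then there is nothing beyond the fiberwise bound and the weak positivity/pseudo-effectivity of $f_*\mathcal O_X(m(K_{X/Y}+\Delta))$ from Theorem~\ref{thm:main} suffices), or $\kappa(Y)=1$, i.e. $K_Y$ is ample, and we are in the same situation as the general-type case with $n=1$.

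The technical heart is to turn the base-locus statement $\mathbb B^{f_0^*H}_-(D|_{X_0}) \ne X_0$ into an actual lower bound on $h^0$. I would argue as follows: first pass to a log resolution or simply restrict to the open set $X_0$ where $K_X+\Delta$ is $\mathbb Q$-Cartier (its complement has codimension $\ge 2$ since $X$ is normal, so it does not affect $\kappa$), so that $D$ is an honest $\mathbb Q$-Cartier divisor. Then $\kappa(X, D + \varepsilon f^*H) \ge 0$ for all $\varepsilon>0$ gives, by letting $\varepsilon \to 0$ and using that the function $\varepsilon \mapsto \kappa(X, D+\varepsilon f^*H)$ is eventually constant (semicontinuity of Iitaka dimension under small perturbations of the boundary in the big-and-semi-ample direction $f^*H$, which is where the subscript in $\mathbb B^{f_0^*H}_-$ matters), that $\kappa(X, m_0(K_X+\Delta) - f^*K_Y) \ge 0$ when $a=0$. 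From $\kappa(X, m_0(K_X+\Delta) - f^*K_Y) \ge 0$ we get $N(m_0(K_X+\Delta)) \sim_{\mathbb Q} f^*K_Y \cdot(\text{eff}) + E$ for some $N$ and effective $E$, hence $\kappa(X, K_X+\Delta) \ge \kappa(X, f^*(NK_Y)) = \kappa(Y)$; to upgrade the right-hand side to $\kappa(Y) + \kappa(X_\eta, K_{X_\eta}+\Delta|_{X_\eta})$ one uses the easy-addition-type inequality together with the fact that the fiber dimension is genuinely achieved — this is exactly the content of condition~(ii) (Frobenius-stable sections fill up all sections on the generic fiber), which is what lets one "multiply" base sections by fiber sections and conclude. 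The known results \cite{CZ13b} (for $\dim$ arbitrary fiber over curves, adapted) and \cite{Eji17}, \cite{Pat18} for the general-type base can be cited for the final bookkeeping rather than reproved.

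\textbf{Main obstacle.} The delicate point I expect is the passage $\varepsilon \to 0$: Theorem~\ref{thm:c-relative} only gives effectivity after adding an arbitrarily small multiple of the \emph{semi-ample} (not ample) class $f^*H$, and when $Y$ has maximal Albanese dimension but is not of general type, $f^*H$ is genuinely not ample on $X$, so one must be careful that the Iitaka dimension does not jump down in the limit. The right framework is the theory of restricted/diminished base loci for $\mathbb Q$-divisors developed in Section~\ref{section:positivity} of the paper: the statement is phrased as $\mathbb B^{f_0^*H}_-(D|_{X_0}) \ne X_0$ precisely so that the limiting argument is clean, and I would lean on Corollary~\ref{cor:AD} and Lemma~\ref{lem:AD} to commute the various base loci. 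A secondary subtlety is that $f$ need not be flat and $X_\eta$ may be badly singular away from codimension $\ge 2$; here one reduces to the flat case by flattening and normalizing exactly as in the last paragraph of the proof of Theorem~\ref{thm:c-relative}, noting that $\kappa$ and the hypotheses~(i), (ii) are insensitive to these modifications because everything is stated on the generic fiber.
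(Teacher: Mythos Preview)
Your treatment of the general-type base is essentially correct and matches the paper: since $K_Y$ is big you pick $\varepsilon H \le K_Y$, so from Theorem~\ref{thm:c-relative} (with $a=0$) you get $0 \le \kappa(2D+\varepsilon f^*H) \le \kappa(2m_0(K_X+\Delta)-f^*K_Y)$, and then the argument of \cite[Theorem~1.7]{Pat18} finishes. No limiting is needed here; the $\varepsilon$ is absorbed, not sent to zero.

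The genuine gap is the elliptic-curve case. Since $Y$ has maximal Albanese dimension, the only curve not already covered by ``general type'' is an elliptic curve, where $K_Y\sim 0$ and $\kappa(Y)=0$. Here your claim that ``weak positivity/pseudo-effectivity of $f_*\mathcal O_X(m(K_{X/Y}+\Delta))$ from Theorem~\ref{thm:main} suffices'' is not correct: a nef vector bundle on an elliptic curve can have \emph{no} global sections at all (take any non-torsion degree-$0$ line bundle), so nefness of $\mathcal G:=f_*\mathcal O_X(\mu(K_X+\Delta))$ by itself gives no lower bound on $h^0$. Your proposed $\varepsilon\to 0$ limit also does not rescue this, because Iitaka dimension is not lower-semicontinuous under such perturbations, and the base-locus lemmas in Section~\ref{section:positivity} do not provide that continuity.

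The paper's argument for the elliptic curve is entirely different and is the real content of the proof. One uses the Atiyah--Oda classification to write $\mathcal G\cong \mathcal G^+\oplus \bigoplus_j(\mathcal E_{r_j,0}\otimes\mathcal L_j)$ with $\mathcal G^+$ ample and $\mathcal L_j\in\mathrm{Pic}^0(Y)$, and then shows that the subgroup $G\subset\mathrm{Pic}^0(Y)$ generated by the $\mathcal L_j$ is \emph{finite}. This finiteness is forced by the Frobenius-trace surjection ${F_Y^e}_*(S^{q_e}(\mathcal G)\otimes\mathcal F)\to\mathcal G$ (the same map that drives Theorem~\ref{thm:general}): dualizing the induced map to each $\mathcal L_j$ yields $\mathcal L_j^{p^e}\in G(q_e+\nu)$ with $q_e\approx \frac{\mu-1}{\mu}p^e$, and a pigeonhole on the ``word length'' in $G$ shows $G=G(N)$ for some fixed $N$. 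Once the $\mathcal L_j$ are torsion, a finite \'etale cover $\pi:Y'\to Y$ by an elliptic curve makes $\pi^*\mathcal G$ globally generated, and the argument of \cite[Theorem~7.6]{Eji17} concludes. This mechanism---Frobenius trace constraining the $\mathrm{Pic}^0$-quotients of the pushforward---is the missing idea in your plan.
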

\begin{proof}
We may assume that $\kappa(Y)\ge 0$ and $\kappa(X_{\eta},K_{X_{\eta}}+\Delta|_{X_{\eta}}) \ge 0$.
Since $Y$ is of maximal Albanese dimension, 
$(\{\eta\},H)$ satisfies condition~$(\ast)_0$ for some ample Cartier divisor $H$ on $Y$, 
as shown in Example~\ref{eg:mAd}. 

First we deal with the case when $Y$ is of general type. 
The proof is similar to that of \cite[Theorem~7.2]{Eji17}, 
which is based on the proof of \cite[Theorem~1.7]{Pat18}. 
Let $H$ be an ample Cartier divisor on $Y$. 
Put $D:=m_0(K_X+\Delta) -f^*K_Y$, 
\begin{align*}
S & := \left\{\varepsilon \in \mathbb Q \middle| 
\kappa(X, D -\varepsilon f^*H) \ge 0\right\} \textup{~and~}
\\ S' & := \left\{\varepsilon \in \mathbb Q \middle| 
\kappa(X, D -\varepsilon f^*H) \ge \kappa(X_{\eta}, D_{\eta}) +\dim Y\right\}. 
\end{align*}
Then $S'\subseteq S$. 
We show $S'\ne\emptyset$. 
By assumption~(i), there is a $\mu \in \mathbb Z_{>0}$ such that 
the $k(\eta)$-algebra 
$
\bigoplus_{l\ge 0} H^0\left(X_\eta, \mathcal O_{X_\eta}\left(l\mu D_\eta\right)\right)
$
is generated by $H^0\left(X_\eta, \mu D_\eta\right)$. 
Using the projection formula, we can find a $\nu\in\mathbb Z_{>0}$ such that 
$
f_*\mathcal O_X(\mu D +\nu f^*H)
$
is generated by its global sections. 
By the choice of $\mu$, the natural morphism 
$$
\bigotimes^n f_*\mathcal O_X(\mu D+\nu f^*H)
\to f_*\mathcal O_X(n (\mu D + \nu f^*H))
$$
is generically surjective for each $n\in\mathbb Z_{>0}$, so 
$
f_*\mathcal O_X(n (\mu D + \nu f^*H))
$
is generically generated by its global sections. 
From this, we get the injective morphism 
$$
\bigoplus^{\mathrm{rank}\,f_*\mathcal O_X(n\mu D)}
\mathcal O_Y(nH) \to f_*\mathcal O_X\big(n(\mu D+(\nu+1)f^*H)\big). 
$$
Since $\mathrm{rank}\, f_*\mathcal O_X(n\mu D) = \dim H^0(X_\eta, n\mu D_\eta)$, we obtain that 
$$
\dim H^0(Y,nH) \times \dim H^0(X_\eta, n\mu D_\eta) \le \dim H^0\big(X, n(\mu D+(\nu +1)f^* H)\big) 
$$
for each $n\in\mathbb Z_{>0}$, so 
$$
\dim Y+ \kappa(X_\eta, D_\eta) = \kappa(Y,H) +\kappa(X_\eta, \mu D_\eta)
\le \kappa(X, \mu D+(\nu +1)f^*H),  
$$
and hence $\varepsilon_0 := -\mu^{-1}(\nu +1) \in S'$. 

We prove $\sup S' = \sup S$. 
The inequality $\le$ is obvious. We show $\ge$. Take $\varepsilon \in S$. 
Then $D-\varepsilon f^*H$ is $\mathbb Q$-linearly equivalent to an effective $\mathbb Q$-Cartier divisor, so for every $\delta\in\mathbb Q_{>0}$ we have 
\begin{align*}
\kappa(X, (1+\delta)D -(\varepsilon +\delta \varepsilon_0)f^*H)
\ge \kappa(X, \delta (D -\varepsilon_0 f^*H))
\ge \kappa(X_\eta, D_\eta) +\dim Y. 
\end{align*}
Hence, we get
$
S' \ni (1+\delta)^{-1}(\varepsilon +\delta\varepsilon_0)
\xrightarrow{\delta \to 0} \varepsilon, 
$
which means that $\sup S' \ge \varepsilon$, and so $\sup S' \ge \sup S$. 

We prove the assertion. By Theorem~\ref{thm:c-relative}, 
we see that $\mathbb Q_{<0} \subseteq S$, so $0\le \sup S = \sup S'$. 
Therefore, we find $\varepsilon \in S'$ such that 
$K_Y+\varepsilon H$ is $\mathbb Q$-linearly equivalent to an effective $\mathbb Q$-divisor. Note that $K_Y$ is big. 
Hence, we get 
\begin{align*}
\kappa(X,K_X+\Delta) 
= & \kappa(X,D+f^*K_Y)
\\ \ge & \kappa(X, D+f^*K_Y -f^*(K_Y +\varepsilon H))
\\ = & \kappa(X, D -\varepsilon f^*H) 
\\ \ge & \kappa(X_\eta, D_\eta) +\dim Y
\\ = & \kappa(X_\eta, K_{X_\eta} +\Delta_\eta) +\kappa(Y). 
\end{align*}

Next, we consider the case when $Y$ is an elliptic curve. 
Let $\mu$ be an integer that is large and divisible enough. 
Set $\mathcal G := f_*\mathcal O_X(\mu(K_X +\Delta))$. 
Then $\mathcal G$ is a nef vector bundle by Theorem~\ref{thm:main}.
For a nef vector bundle $\mathcal V$ on $Y$, 
let $\mathbb L(\mathcal V)$ denote the subset of $\mathrm{Pic}^0(Y)$ 
consisting of line bundles $\mathcal L$ on $Y$ 
that can be obtained as a quotient bundle of $\mathcal V$. 
Let $G$ be the subgroup of $\mathrm{Pic}^0(Y)$ generated by $\mathbb L(\mathcal G)$. 
We prove that $G$ is a finite group. 
If this holds, then each $\mathcal L \in \mathbb L(\mathcal G)$ is a torsion line bundle, 
which means that there is a finite morphism $\pi:Y'\to Y$ 
from an elliptic curve $Y'$ such that $\pi^*\mathcal G$ is generated by its global sections,
and hence we can prove the assertion by applying the same argument as that in the proof of \cite[Theorem~7.6]{Eji17}. 

We here use the classification of vector bundles on an elliptic curve \cite{Ati57, Oda71}. See \cite[Theorem~7.3]{Eji17} for a summary. 
Considering the decomposition of $\mathcal G$ into indecomposable vector bundles, we see that 
\begin{itemize}
\item $\mathcal G$ is isomorphic to the direct sum of an ample vector bundle $\mathcal G^+$ and 
a nef vector bundle $\mathcal E$ of degree 0, and 
\item 
$\mathcal E \cong \bigoplus_{1\le j \le \nu}  \mathcal E_{r_j,0} \otimes \mathcal L_j $, 
where $\mathcal L_j \in \mathrm{Pic}^0(Y)$ 
and $\mathcal E_{r_j,0}$ is an indecomposable vector bundle 
of rank $r_j$ and degree 0 having a non-zero global section. 
\end{itemize}
Note that, to show the first statement, we used the fact that indecomposable vector bundles of positive degree are ample, which is obtained by, for example, combining \cite[Theorem~7.3~(3)]{Eji17} and \cite[Theorem~2.16]{Oda71}. 
Since $\mathcal E_{r,0}$ is an extension of $\mathcal E_{r-1,0}$ by $\mathcal O_Y$, 
we get a filtration
$$
0=\mathcal G_0 \subset \mathcal G_1 \subset \cdots \subset \mathcal G_{\rho+1} =\mathcal G 
\quad \textup{($\rho := \mathrm{rank}(\mathcal E)$)}
$$ 
of $\mathcal G$ such that 
$
\{\mathcal G_1/\mathcal G_0, \ldots, \mathcal G_\rho/\mathcal G_{\rho-1} \} =\mathbb L(\mathcal E) (=\mathbb L(\mathcal G))
$
and $\mathcal G_{\rho+1}/\mathcal G_{\rho} \cong \mathcal G^+$. 
 
Put $\mathbb L(\mathcal E) =\{ \mathcal L_1,\ldots,\mathcal L_\lambda\}. $
For each $m \ge 0$, set 
$$
G(m):=\left\{ \mathcal L_1^{m_1} \otimes \cdots \otimes \mathcal L_\lambda^{m_\lambda}
\middle|  |m_1| + \cdots + |m_\lambda| \le m
\right\}.
$$ 
Then for each $\mathcal N_1 \in G(n_1)$ and $\mathcal N_2 \in G(n_2)$, 
we have $\mathcal N_1 \otimes \mathcal N_2 \in G(n_1 +n_2)$. 
Set 
$$
\mathcal F := \bigoplus_{\textup{$n_0 \le r < n_0 +\mu$ and $i_C|r$}} f_* \mathcal O_X(r(K_X +\Delta)) 
$$ 
for some $n_0 \gg 0$. 
Then $\mathcal F$ is a nef vector bundle by Theorem~\ref{thm:main}. 
Since $\mathbb L(\mathcal F)$ is a finite set, 
there is $\nu \in \mathbb Z_{>0}$ such that 
$\mathbb L(\mathcal F) \cap G \subseteq G(\nu).$
Take $e \gg 0$ so that $p^e -q_e >\nu $, 
where $q_e$ is an integer such that $(\mu-1)p^e +1 = \mu q_e +r_e$ 
for an integer $r_e$ with $n_0 \le r_e < n_0 +\mu$. 
We have the generically surjective morphisms
$$
{F_Y^e}_* \left( 
S^{q_e}(\mathcal G) \otimes \mathcal F
\right)
\twoheadrightarrow
{F_Y^e}_* \left( 
S^{q_e}(\mathcal G) \otimes f_* \mathcal O_X(r_e(K_X +\Delta))
\right)
\to
\mathcal G
$$
by the same argument as that in the proof of Theorem~\ref{thm:general}. 
For each $\mathcal L_j \in \mathbb L(\mathcal E)$, 
we have $\mathcal G \twoheadrightarrow \mathcal L_j$, 
which induces the non-zero morphism
$$
{F_Y^e}_* \left( 
S^{q_e}(\mathcal G) \otimes \mathcal F
\right)
\to
\mathcal L_j. 
$$
Since $\omega_Y\cong\mathcal O_Y$, 
by \cite[I\hspace{-1pt}I\hspace{-1pt}I, Proposition~6.9~a)]{Har66} we get 
$$
{F_Y^e}^!\mathcal L_j 
\cong \left( {F_Y^e}^!\mathcal O_Y \right) \otimes {F_Y^e}^*\mathcal L_j
\cong \omega_Y^{1-p^e} \otimes \mathcal L_j^{p^e}
\cong \mathcal L_j^{p^e}, 
$$
so it follows from Grothendieck duality that  
$$
0\ne\mathrm{Hom}({F_Y^e}_*(S^{q_e}(\mathcal G)\otimes\mathcal F), \mathcal L_j)
\cong\mathrm{Hom}\left(S^{q_e}(\mathcal G)\otimes\mathcal F, \mathcal L_j^{p^e}\right),
$$
and hence we get the non-zero morphism
$$
S^{q_e}(\mathcal G) \otimes \mathcal F
\to \mathcal L_j^{p^e}.
$$
This is surjective, since the source is a nef vector bundle and $\deg \mathcal L_j^{p^e}=0$. 
Considering the above filtration, 
we find $m_1,\ldots,m_\lambda \in \mathbb Z_{\ge 0}$ 
with $\sum_{j=1}^{\lambda} m_j =q_e$ 
and $\mathcal M \in \mathbb L(\mathcal F)$ 
such that 
$$
\mathcal L_1^{m_1} \otimes \cdots \otimes \mathcal L_\lambda^{m_\lambda} 
\otimes \mathcal M
\cong 
\mathcal L_j^{p^e},
$$
which means that $\mathcal L_j^{p^e} \in G(q_e +\nu)$. 
 
Set $N := \lambda (p^e-1)$.  
Suppose that there is $ \mathcal L \in G \setminus G(N)$. 
Let $M$ be the minimal integer such that $\mathcal L \in G(M)$. 
Then we find $m_1,\ldots,m_\lambda$ with $\sum_{j=1}^\lambda |m_j| =M$ such that 
$ 
\mathcal L \cong \mathcal L_1^{m_1} \otimes \cdots \otimes \mathcal L_\lambda^{m_\lambda}. 
$
We see from $M>N$ that $|m_j| \ge p^e$ for some $j$, so 
$
\mathcal L_j^{|m_j|} \cong \mathcal L_j^{|m_j|-p^e} \otimes \mathcal L_j^{p^e}, 
$
which means that $\mathcal L_j^{m_j} \in G(|m_j|-p^e +q_e +\nu)$, 
and hence 
$$
\mathcal L \in G(|m_j|-p^e +q_e +\nu +M -|m_j|) =G(M -(p^e -q_e -\nu) ),
$$
but this contradicts the choice of $M$, since $p^e -q_e -\nu>0$. 
Hence we conclude that $G=G(N)$, which proves our claim. 
\end{proof}
\bibliographystyle{abbrv}
\bibliography{ref}

\begin{thebibliography}{10}

\bibitem{Ati57}
M.~F. Atiyah.
\newblock Vector bundles over an elliptic curve.
\newblock {\em Proc. London Math. Soc.}, 7:414--452, 1957.

\bibitem{BKKMSU15}
T.~Bauer, S.~J. Kov{\'a}cs, A.~K{\"u}ronya, E.~C. Mistretta, T.~Szemberg, and
  S.~Urbinati.
\newblock On positivity and base loci of vector bundles.
\newblock {\em European Journal of Mathematics}, 1(2):229--249, 2015.

\bibitem{BCZ18}
C.~Birkar, Y.~Chen, and L.~Zhang.
\newblock Iitaka {$C_{n,m}$} conjecture for 3-folds over finite fields.
\newblock {\em Nagoya Math. J.}, 229:21--51, 2018.

\bibitem{CP17}
J.~Cao and M.~P\u{a}un.
\newblock Kodaira dimension of algebraic fiber spaces over {A}belian varieties.
\newblock {\em Invent. Math.}, 207(1):345--387, 2017.

\bibitem{CEKZ20}
P.~Cascini, S.~Ejiri, J.~Koll{\'a}r, and L.~Zhang.
\newblock Subadditivity of {K}odaira dimension does not hold in positive
  characteristic.
\newblock {\em Comment. Math. Helv.}, 96(3):465--481, 2021.

\bibitem{CZ13b}
Y.~Chen and L.~Zhang.
\newblock The subadditivity of the {K}odaira dimension for fibrations of
  relative dimension one in positive characteristics.
\newblock {\em Math. Res. Lett.}, 22(3):675--696, 2015.

\bibitem{Con00}
B.~Conrad.
\newblock {\em Grothendieck Duality and Base Change}, volume 1750 of {\em
  Lecture Notes in Mathematics}.
\newblock Springer-Verlag Berlin Heidelberg, 2000.

\bibitem{Den20}
Y.~Deng.
\newblock Applications of the {O}hsawa--{T}akegoshi extension theorem to direct
  image problems.
\newblock {\em Int. Math. Res. Not.}, 7 2020.
\newblock rnaa018.

\bibitem{Dut20}
Y.~Dutta.
\newblock On the effective freeness of the direct images of pluricanonical
  bundles.
\newblock {\em Ann. Inst. Fourier}, 70(4):1545--1561, 2020.

\bibitem{DM19}
Y.~Dutta and T.~Murayama.
\newblock Effective generation and twisted weak positivity of direct images.
\newblock {\em Algebra Number Theory}, 13(2):425--454, 2019.

\bibitem{ELMNP2}
L.~Ein, R.~Lazarsfeld, M.~Musta{\c{t}}{\u{a}}, M.~Nakamaye, and M.~Popa.
\newblock Asymptotic invariants of base loci.
\newblock {\em Ann. Inst. Fourier (Grenoble)}, 56:1701--1734, 2006.

\bibitem{Eji17}
S.~Ejiri.
\newblock Weak positivity theorem and {F}robenius stable canonical rings of
  geometric generic fibers.
\newblock {\em J. Algebraic Geom.}, 26:691--734, 2017.

\bibitem{Eji19p}
S.~Ejiri.
\newblock Positivity of anti-canonical divisors and {$F$}-purity of fibers.
\newblock {\em Algebra Number Theory}, 13(9):2057--2080, 2019.

\bibitem{EZ18}
S.~Ejiri and L.~Zhang.
\newblock {Iitaka’s} {$C_{n, m}$} conjecture for $3$-folds in positive
  characteristic.
\newblock {\em Math. Res. Lett.}, 25(3):783--802, 2018.

\bibitem{Fuj19}
O.~Fujino.
\newblock On mixed-$\omega$-sheaves.
\newblock {\em arXiv preprint arXiv:1908.00171}, 2019.

\bibitem{Fujita87}
T.~Fujita.
\newblock On polarized manifolds whose adjoint bundles are not semipositive.
\newblock In {\em Algebraic geometry, Sendai, 1985}, pages 167--178.
  Mathematical Society of Japan, 1987.

\bibitem{FM21}
M.~Fulger and T.~Murayama.
\newblock Seshadri constants for vector bundles.
\newblock {\em J. Pure Appl. Algebra}, 225(4):106559, 2021.

\bibitem{HP16a}
C.~D. Hacon and Z.~Patakfalvi.
\newblock Generic vanishing in characteristic {$p>0$} and the characterization
  of ordinary abelian varieties.
\newblock {\em Amer. J. Math.}, 138(4):963--998, 2016.

\bibitem{HPS18}
C.~D. Hacon, M.~Popa, and C.~Schnell.
\newblock Algebraic fiber spaces over abelian varieties: around a recent
  theorem by {C}ao and {P}\u{a}un.
\newblock {\em Contemp. Math.}, 712:143--195, 2018.

\bibitem{HX15}
C.~D. Hacon and C.~Xu.
\newblock On the three dimensional minimal model program in positive
  characteristic.
\newblock {\em J. Amer. Math. Soc.}, 28:711--744, 2015.

\bibitem{Har03e}
N.~Hara.
\newblock A characteristic $p$ analog of multiplier ideals and its
  applications.
\newblock In {\em Proceedings of Algebraic Geometry Symposium}, pages 49--57,
  2003.

\bibitem{HW02}
N.~Hara and K.-i. Watanabe.
\newblock {F}-regular and {F}-pure rings vs. log terminal and log canonical
  singularities.
\newblock {\em J. Algebraic Geom.}, 11(2):363--392, 2002.

\bibitem{Har66}
R.~Hartshorne.
\newblock {\em Residues and Duality}, volume~20 of {\em Lecture Notes in
  Mathematics}.
\newblock Springer, Berlin, Heidelberg, 1966.

\bibitem{Har94}
R.~Hartshorne.
\newblock Generalized divisors on {G}orenstein schemes.
\newblock {\em {K}-theory}, 8(3):287--339, 1994.

\bibitem{SGA5XV}
C.~Houzel.
\newblock Morphisme de {F}robenius et rationalit\'e de la fonction {$L$}.
\newblock In L.~Illusie, editor, {\em S\'eminaire de G\'eom\'etrie Alg\'ebrique
  du Bois-Marie 1965--66 SGA 5, Cohomologie $l$-adique et fonctions $L$},
  volume 589 of {\em Lecture Notes in Mathematics}, pages 442--480. Springer,
  Berlin, Heidelberg, 1977.

\bibitem{Iit72}
S.~Iitaka.
\newblock Genera and classification of algebraic varieties. 1.
\newblock {\em S\^ugaku}, 24:14--27, 1972.

\bibitem{Iwa17}
M.~Iwai.
\newblock On the global generation of direct images of pluri-adjoint line
  bundles.
\newblock {\em Math. Z.}, 294:201--208, 2020.

\bibitem{Jab09}
K.~Jabbusch.
\newblock Positivity of cotangent bundles.
\newblock {\em Michigan Math. J.}, 58(3):723--744, December 2009.

\bibitem{Kaw82}
Y.~Kawamata.
\newblock Kodaira dimension of algebraic fiber spaces over curves.
\newblock {\em Invent. Math.}, 66(1):57--71, 1982.

\bibitem{Kaw85}
Y.~Kawamata.
\newblock Minimal models and the {K}odaira dimension of algebraic fiber spaces.
\newblock {\em J. Reine Angew. Math.}, 363:1--46, 1985.

\bibitem{Kaw02}
Y.~Kawamata.
\newblock On a relative version of {F}ujita’s freeness conjecture.
\newblock In {\em Complex Geometry}, pages 135--146. Springer, 2002.

\bibitem{Kee03}
D.~S. Keeler.
\newblock Ample filters of invertible sheaves.
\newblock {\em J. Algebra}, 259(1):243--283, 2003.

\bibitem{Kol87}
J.~Koll{\'a}r.
\newblock Subadditivity of the {K}odaira dimension: fibers of general type.
\newblock {\em Adv. {S}tud. in {P}ure {M}ath.}, 10:361--398, 1987.

\bibitem{Kol90}
J.~Koll{\'a}r.
\newblock Projectivity of complete moduli.
\newblock {\em J. Differential Geom.}, 32:235--268, 1990.

\bibitem{Kol+14}
J.~Koll{\'a}r and {14 coauthors}.
\newblock {\em Flips and abundance for algebraic threefolds}, volume 211 of
  {\em Ast{\'e}risque}.
\newblock Soci{\'e}t{\'e} {M}ath{\'e}matique de {F}rance, 1992.

\bibitem{Laz04I}
R.~K. Lazarsfeld.
\newblock {\em Positivity in Algebraic Geometry {I}}, volume~49 of {\em
  Ergebnisse der {M}athematik und ihrer {G}renzgebiete. 3. {F}olge}.
\newblock Springer-Verlag Berlin Heidelberg, 2004.

\bibitem{Mau14}
D.~Maulik.
\newblock Supersingular {K3} surfaces for large primes.
\newblock {\em Duke Math. J.}, 163:2357--2425, 2014.

\bibitem{MS12}
L.~E. Miller and K.~Schwede.
\newblock Semi-log canonical vs {$F$}-pure singularities.
\newblock {\em J. Algebra}, 349(1):150--164, 2012.

\bibitem{MB81}
L.~Moret-Bailly.
\newblock Familles de courbes et de vari\'et\'es ab\'eliennes sur {$\mathbb
  P^1$}.
\newblock In {\em Ast\'erisque}, volume~86, pages 125--140. Soci\'e t\'e
  Math\'e matique de France, 1981.

\bibitem{Muk13}
S.~Mukai.
\newblock Counterexamples to {K}odaira’s vanishing and {Y}au’s inequality
  in positive characteristics.
\newblock {\em Kyoto J. Math.}, 53(2):515--532, 2013.

\bibitem{NZ10}
N.~Nakayama and D.-Q. Zhang.
\newblock Polarized endomorphisms of complex normal varieties.
\newblock {\em Math. Ann.}, 346(4):991--1018, 2010.

\bibitem{Oda71}
T.~Oda.
\newblock Vector bundles on an elliptic curve.
\newblock {\em Nagoya Math. J.}, 43:41--72, 1971.

\bibitem{Pat14}
Z.~Patakfalvi.
\newblock Semi-positivity in positive characteristics.
\newblock {\em Ann. Sci. \'Ecole Norm. Sup}, 47(5):991--1025, 2014.

\bibitem{Pat18}
Z.~Patakfalvi.
\newblock On subadditivity of {K}odaira dimension in positive characteristic
  over a general type base.
\newblock {\em J. Algebraic Geom.}, 27:21--53, 2018.

\bibitem{PST17}
Z.~Patakfalvi, K.~Schwede, and K.~Tucker.
\newblock Positive characteristic algebraic geometry.
\newblock {\em Surveys on recent developments in algebraic geometry},
  95:33--80, 2017.

\bibitem{PSZ18}
Z.~Patakfalvi, K.~Schwede, and W.~Zhang.
\newblock {$F$}-singularities in families.
\newblock {\em Algebr. Geom.}, 5(3):264--327, 2018.

\bibitem{PS14}
M.~Popa and C.~Schnell.
\newblock On direct images of pluricanonical bundles.
\newblock {\em Algebra Number Theory}, 8(9):2273--2295, 2014.

\bibitem{Ray78}
M.~Raynaud.
\newblock Contre-exemple au ``vanishing theorem'' en caract{\'e}ristique {$p>
  0$}.
\newblock {\em CP {R}amanujam--a tribute}, 8:273--278, 1978.

\bibitem{RF72}
I.~Reiten and R.~Fossum.
\newblock Commutative {$n$}-{G}orenstein rings.
\newblock {\em Math. Scand.}, 31:33--48, 1972.

\bibitem{Sch14}
K.~Schwede.
\newblock A canonical linear system associated to adjoint divisors in
  characteristic {$p>0$}.
\newblock {\em J. Reine Angew. Math.}, 696:69--87, 2014.

\bibitem{SZ20}
J.~Shentu and Y.~Zhang.
\newblock On the simultaneous generation of jets of the adjoint bundles.
\newblock {\em J. Algebra}, 555:52--68, 2020.

\bibitem{Smi97}
K.~E. Smith.
\newblock Fujita’s freeness conjecture in terms of local cohomology.
\newblock {\em J. Algebraic Geom}, 6(3):417--429, 1997.

\bibitem{Szp79}
L.~Szpiro.
\newblock Sur le th\'eor\`eme de rigidit\'e de {P}arsin et {A}rakelov.
\newblock In {\em Journ\'ees de g\'eom\'etrie alg\'ebrique de Rennes - (Juillet
  1978) (II) : Groupes formels, repr\'esentations galoisiennes et cohomologie
  des vari\'et\'es de caract\'eristique positive}, number~64 in Ast\'erisque.
  Soci\'et\'e math\'ematique de France, 1979.

\bibitem{Uen75}
K.~Ueno.
\newblock {\em Classification Theory of Algebraic Varieties and Compact Complex
  Spaces}, volume 439 of {\em Lecture Notes in Mathematics}.
\newblock Springer, Berlin, Heidelberg, first edition edition, 1975.

\bibitem{Vie77}
E.~Viehweg.
\newblock Canonical divisors and the additivity of the {K}odaira dimension for
  morphisms of relative dimension one.
\newblock {\em Compos. Math.}, 35(2):197--223, 1977.

\bibitem{Vie80}
E.~Viehweg.
\newblock Klassifikationstheorie algebraischer variet\"aten der dimension drei.
\newblock {\em Compos. Math.}, 41(3):361--400, 1980.

\bibitem{Vie82}
E.~Viehweg.
\newblock Die additivität der kodaira dimension für projektive faserräume
  über varietäten des allgemeinen typs.
\newblock {\em J. Reine Angew. Math.}, 330:132--142, 1982.

\bibitem{Vie83}
E.~Viehweg.
\newblock Weak positivity and the additivity of the {K}odaira dimension for
  certain fiber spaces.
\newblock In {\em Algebraic Varieties and Analytic Varieties}, pages 329--353.
  Kinokuniya, North-Holland, 1983.

\bibitem{Vie83II}
E.~Viehweg.
\newblock Weak positivity and the additivity of the {K}odaira dimension
  {I\hspace{-1pt}I}: the local {T}orelli map.
\newblock In {\em Classification of Algebraic and Analytic Manifolds, Katata
  1982}, volume~39 of {\em Progr. in Math.}, pages 567--589. Birkh\"{a}user,
  1983.

\bibitem{Vie95}
E.~Viehweg.
\newblock {\em Quasi-projective Moduli for Polarized Manifolds}, volume~30 of
  {\em Ergebnisse der {M}athematik und ihrer {G}renzgebiete. 3. {F}olge}.
\newblock Springer, Berlin, Heidelberg, 1995.

\bibitem{WITO69}
K.-i. Watanabe, T.~Ishikawa, S.~Tachibana, and K.~Otsuka.
\newblock On tensor products {G}orenstein rings.
\newblock {\em J. Math. Kyoto Uinv.}, 9(3):413--423, 1969.

\bibitem{Xie10}
Q.~Xie.
\newblock Counterexamples to the {K}awamata--{V}iehweg vanishing on ruled
  surfaces in positive characteristic.
\newblock {\em J. Algebra}, 324(12):3494--3506, 2010.

\bibitem{Zha19s}
L.~Zhang.
\newblock Subadditivity of {K}odaira dimensions for fibrations of three-folds
  in positive characteristics.
\newblock {\em Adv. Math.}, 354:106741, 2019.

\end{thebibliography}

\end{document}